\newtheorem{theorem}{Theorem}[section]
\newtheorem{lemma}[theorem]{Lemma}
\newtheorem{claim}[theorem]{Claim}
\newtheorem{corollary}[theorem]{Corollary}
\theoremstyle{definition}
\newtheorem{definition}[theorem]{Definition}
\newcommand{\N}{\mathbb{N}}
\newcommand{\Oh}{\ensuremath{\mathcal{O}}}
\renewcommand{\leq}{\leqslant}
\renewcommand{\geq}{\geqslant}
\newcommand{\famgenus}{\mathcal{F}_{\mathrm{genus}}}
\newcommand{\Vtw}{V_{\mathrm{tw}}}
\newcommand{\Vgenus}{V_{\mathrm{genus}}}
\newcommand{\ctime}{c_{\mathrm{time}}}
\newcommand{\cgenus}{c_{\mathrm{genus}}}
\newcommand{\funtw}{f_{\mathrm{tw}}}
\newcommand{\fungenus}{f_{\mathrm{genus}}}
\newcommand{\funchain}{f_{\mathrm{chain}}}
\newcommand{\funtime}{f_{\mathrm{time}}}
\newcommand{\funcut}{f_{\mathrm{cut}}}
\newcommand{\funstep}{f_{\mathrm{step}}}
\newcommand{\tw}{\mathrm{tw}}
\newcommand{\embed}{\Pi}
\newcommand{\nembed}{\mathcal{E}}
\newcommand{\nembedA}{\nembed}
\newcommand{\nembedB}{\widetilde{\nembed}}
\newcommand{\nembedC}{\widehat{\nembed}}
\newcommand{\nembedT}{\dddot{\nembed}}
\newcommand{\eulerg}{\mathtt{eg}}
\newcommand{\napices}{\mathtt{ap}}
\newcommand{\Apices}{\mathbf{A}}
\newcommand{\ApicesA}{\Apices}
\newcommand{\ApicesB}{\widetilde{\Apices}}
\newcommand{\ApicesC}{\widehat{\Apices}}
\newcommand{\WB}{\widetilde{W}}
\newcommand{\linkage}{\mathcal{L}}
\newcommand{\VortexSmb}{\circ}
\newcommand{\DongleSmb}{\triangle}
\newcommand{\Vortex}[2]{{#1}_{#2}^{\VortexSmb}}
\newcommand{\Dongle}[2]{{#1}_{#2}^{\DongleSmb}}
\newcommand{\VortexBag}[2]{\mathbf{B}_{#1,#2}^{\VortexSmb}}
\newcommand{\Main}[1]{{#1}_0}
\newcommand{\MainPlus}[1]{{#1}_{\oplus}}
\newcommand{\nvortices}{k^{\VortexSmb}}
\newcommand{\ndongles}{k^{\DongleSmb}}
\newcommand{\vortexlength}[1]{n^{\VortexSmb}_{#1}}
\newcommand{\vortexadhwidth}{k^{\VortexSmb}_{\mathrm{adh}}}
\newcommand{\vortexbagsize}{k^{\VortexSmb}_{\mathrm{bag}}}
\newcommand{\donglesize}{k^{\DongleSmb}_{\mathrm{size}}}
\newcommand{\SocietyVtx}[2]{v^{\VortexSmb}_{#1,#2}}
\newcommand{\DongleDisc}[1]{D^{\DongleSmb}_{#1}}
\newcommand{\VortexDisc}[1]{D^{\VortexSmb}_{#1}}
\newcommand{\PlusDongleVtx}[1]{\mathbf{v}^{\DongleSmb}_{#1}}
\newcommand{\PlusVortexVtx}[1]{\mathbf{v}^{\VortexSmb}_{#1}}
\newcommand{\PathProj}{\overrightarrow{\pi}}
\newcommand{\SubProj}{\pi}
\newcommand{\Predongles}{\mathcal{D}^{\mathrm{pre}\triangle}}
\newcommand{\PredonglesB}{\widetilde{\mathcal{D}}^{\mathrm{pre}\triangle}}
\newcommand{\preimage}{\overleftarrow{\mathbb{P}}}
\newcommand{\GraphA}{G}
\newcommand{\GraphB}{\widetilde{G}}
\newcommand{\GraphC}{\widehat{G}}
\newcommand{\GraphH}{\widetilde{H}}
\newcommand{\VortexA}[1]{\Vortex{\GraphA}{#1}}
\newcommand{\DongleA}[1]{\Dongle{\GraphA}{#1}}
\newcommand{\MainA}{\Main{\GraphA}}
\newcommand{\MainPlusA}{\MainPlus{\GraphA}}
\newcommand{\VortexB}[1]{\Vortex{\GraphB}{#1}}
\newcommand{\DongleB}[1]{\Dongle{\GraphB}{#1}}
\newcommand{\MainB}{\Main{\GraphB}}
\newcommand{\MainPlusB}{\MainPlus{\GraphB}}
\newcommand{\VortexC}[1]{\Vortex{\GraphC}{#1}}
\newcommand{\MainC}{\Main{\GraphC}}
\newcommand{\MainPlusC}{\MainPlus{\GraphC}}
\newcommand{\DongleH}[1]{\Dongle{\GraphH}{#1}}
\newcommand{\MainH}{\Main{\GraphH}}
\newcommand{\MainPlusH}{\MainPlus{\GraphH}}
\newcommand{\GraphT}{\dddot{\GraphA}}
\newcommand{\VortexT}[1]{\Vortex{\GraphT}{#1}}
\newcommand{\DongleT}[1]{\Dongle{\GraphT}{#1}}
\newcommand{\MainT}{\Main{\GraphT}}
\newcommand{\DongleComps}{\mathcal{D}}
\newcommand{\rball}[2]{\mathbf{B}_{#1}(#2)}
\newcommand{\rballX}[3]{\mathbf{B}^{#1}_{#2}(#3)}
\newcommand{\rdisc}[3]{\mathbf{D}^{#1}_{#2}(#3)}
\newcommand{\rcycle}[3]{\mathbf{C}^{#1}_{#2}(#3)}
\newcommand{\rintcycle}[4]{\mathbf{iC}^{#1}_{#2,#3}(#4)}
\newcommand{\attach}[2]{\Gamma^{#1}(#2)}
\newcommand{\attall}[1]{\Gamma^{#1}}
\newcommand{\cover}[1]{\Lambda^{#1}}
\newcommand{\projection}[2]{\pi^{#1}(#2)}
\newcommand{\rmax}{r_\mathrm{max}}
\newcommand{\bigC}{c}
\newcommand{\WallHitSize}{k^{\mathrm{wall}}}
\newcommand{\WallHitRadius}{r^{\mathrm{wall}}}
\newcommand{\AttCoverSize}{k^{\mathrm{att}}}
\newcommand{\AttCoverRadius}{r^{\mathrm{att}}}
\newcommand{\ProxCoverSize}{k^{\mathrm{prox}}}
\newcommand{\ProxCoverRadius}{r^{\mathrm{prox}}}
\newcommand{\WallHitC}{C^{\mathrm{wall}}}
\newcommand{\OptNembeds}{{\mathbb{E}}}
\newcommand{\VortexVtcs}{V^\circ}
\def\cqedsymbol{\ifmmode$\lrcorner$\else{\unskip\nobreak\hfil
\penalty50\hskip1em\null\nobreak\hfil$\lrcorner$
\parfillskip=0pt\finalhyphendemerits=0\endgraf}\fi} 
\newcommand{\cqed}{\renewcommand{\qed}{\cqedsymbol}}
\newcommand{\executeiffilenewer}[3]{%
\ifnum\pdfstrcmp{\pdffilemoddate{#1}}%
{\pdffilemoddate{#2}}>0%
{\immediate\write18{#3}}\fi%
} 
\newcommand{%
\executeiffilenewer{figures/.svg}{figures/.pdf}%
{inkscape -z -D --file=figures/.svg %
--export-pdf=figures/.pdf --export-latex}%
{\input{figures/.pdf_tex}}}[1]{%
\executeiffilenewer{figures/#1.svg}{figures/#1.pdf}%
{inkscape -z -D --file=figures/#1.svg %
--export-pdf=figures/#1.pdf --export-latex}%
{\input{figures/#1.pdf_tex}}}%
\newcommand{\embedsection}[1]{\subsection{#1}}
\title{Highly unbreakable graph with a fixed excluded minor\\are almost rigid}
\author{
  Daniel Lokshtanov\thanks{
    University of California, Santa Barbara, USA, \texttt{daniello@ucsb.edu}.
    Supported by BSF award 2018302 and NSF award CCF-2008838.
  }
  \and
  Marcin Pilipczuk\thanks{
    Institute of Informatics, University of Warsaw, Poland, \texttt{marcin.pilipczuk@mimuw.edu.pl}.
 This work is 
a part of project CUTACOMBS that has received funding from the European Research Council (ERC) 
under the European Union's Horizon 2020 research and innovation programme (grant agreement No.~714704).
  }
  \and
  Micha\l{} Pilipczuk\thanks{
    Institute of Informatics, University of Warsaw, Poland, \texttt{michal.pilipczuk@mimuw.edu.pl}.
 This work is 
a part of projects TOTAL and BOBR that have received funding from the European Research Council (ERC) 
under the European Union's Horizon 2020 research and innovation programme (grant agreements No.~677651 and~948057, respectively).
  }
  \and 
  Saket Saurabh\thanks{
    Institute of Mathematical Sciences, India, \texttt{saket@imsc.res.in}, and
    Department of Informatics, University of Bergen, Norway, \texttt{Saket.Saurabh@ii.uib.no}. Supported by the European Research Council (ERC) under the European Union's Horizon 2020 research and innovation programme (grant agreement No. 819416), and Swarnajayanti Fellowship (No. DST/SJF/MSA01/2017-18).
  }
}
\date{}
\begin{document}

\begin{titlepage}
\def\thepage{}
\thispagestyle{empty}
\maketitle

\begin{textblock}{20}(0, 12.7)
\includegraphics[width=40px]{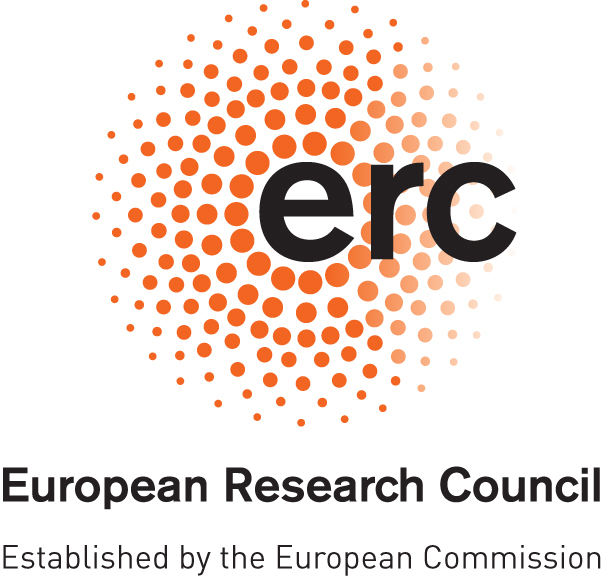}%
\end{textblock}
\begin{textblock}{20}(0, 13.5)
\includegraphics[width=40px]{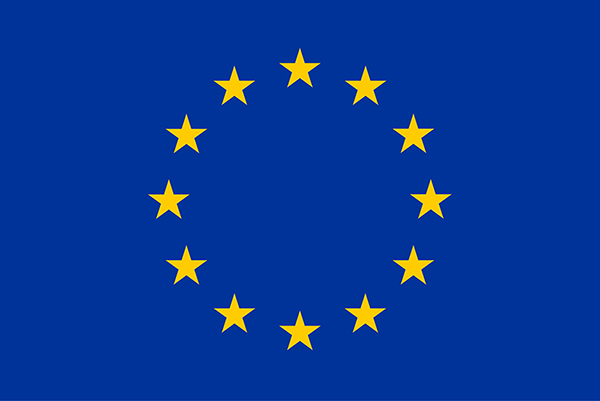}%
\end{textblock}

\begin{abstract}
A set $X \subseteq V(G)$ in a graph $G$ is {\em{$(q,k)$-unbreakable}} if every separation $(A,B)$ of order at most $k$ in $G$ satisfies $|A \cap X| \leq q$ or $|B \cap X| \leq q$.
In this paper, we prove the following result: If a graph $G$ excludes a fixed complete graph $K_h$ as a minor and satisfies certain unbreakability guarantees, then $G$ is almost rigid in the folloring sense: the vertices of $G$ can be partitioned in an isomorphism-invariant way into a part inducing a graph of bounded
treewidth and a part that admits a small isomorphism-invariant family of labelings. This result is the key ingredient in the fixed-parameter algorithm for \textsc{Graph Isomorphism}
parameterized by the Hadwiger number of the graph, which is presented in a companion paper.

\end{abstract}
\newpage
\tableofcontents
\end{titlepage}

\section{Introduction}
The \textsc{Graph Isomorphism} problem is arguably the most widely known problem whose membership in $\mathsf{P}$ is unknown, but which is not believed to be $\mathsf{NP}$-hard. 
After decades of research, a quasi-polynomial time algorithm was proposed by Babai in 2015~\cite{Babai16}.

We focus on the \textsc{Graph Isomorphism} problem restricted to graphs with a fixed
excluded minor. Recall that a graph $H$ is a minor of a graph $G$
if $H$ can be obtained from a subgraph of $G$ by a series of edge contractions.
Ponomarenko~\cite{ponomarenko} showed that when restricted to $H$-minor-free graphs,
the \textsc{Graph Isomorphism} problem can be solved in time $\Oh(n^{p_H})$ where $n$ is the size
of the input and $p_H$ is a constant depending on $H$ only. 

This work is a part of a two-paper series that proves that \textsc{Graph Isomorphism}
is fixed-parameter tractable when parameterized by the excluded minor $H$; that is,
it can be solved on $n$-vertex graphs in time bounded by $p_H \cdot n^c$ where $p_H$ depends on $H$
only and the constant $c$ is a universal constant (independent of $H$). 
The main paper~\cite{main-paper} provides an algorithmic framework for the \textsc{Graph Isomorphism}
problem relying on the notion of \emph{unbreakability} and decomposition into highly 
unbreakable parts. 
This paper, subordinate to the main one, provides the crucial ingredient coming from the graph
minors theory.
We refer to the introduction of~\cite{main-paper} for an extended discussion of the background
of our main result.

To state the main result of this paper, we need a few definitions.
For two graphs $G_1,G_2$, an \emph{isomorphism} is a bijection $\pi : V(G_1) \to V(G_2)$
such that $uv \in E(G_1)$ if and only if $\pi(u)\pi(v) \in E(G_2)$. 

Let $G$ be an undirected simple graph. 
A pair $(A,B)$ is a \emph{separation} if $A,B \subseteq V(G)$, $A \cup B = V(G)$, and there
is no edge of $G$ with one endpoint in $A \setminus B$ and the second endpoint in $B \setminus A$.
The \emph{order} of the separation $(A,B)$ is $|A \cap B|$. 
For integers $q,k \geq 0$, a set $X \subseteq V(G)$ is \emph{$(q,k)$-unbreakable}
if for every separation $(A,B)$ of order at most $k$, we have $|A \cap X| \leq q$
or $|B \cap X| \leq q$. In other words, separations of order at most $k$ are not able
to split $X$ into two parts larger than $q$.
A graph $G$ is \emph{$(q,k)$-unbreakable} if $V(G)$ is $(q,k)$-unbreakable.

In this work we consider $H$-minor-free graphs with very high unbreakability properties. 
As we will show, such graphs are close to being embeddable in a fixed surface:
they are so-called nearly-embeddable graphs. 
Furthermore, we show that if one properly defines a ``quality measure'' of a near-embedding,
the space of optimal (under this measure) near-embeddings of such graphs is very constrained:
all near-embeddings are essentially the same and only differ in some local details.
Let us proceed to formal definitions. 

We need to make assumptions stronger than simply $(q,k)$-unbreakability for some fixed $q,k$. In essence, we require a whole sequence of unbreakability assertions, as explained formally in the following definition.

\begin{definition}[unbreakability chain]
Let $G$ be a graph and $f\colon \N\to \N$ be a nondecreasing function with $f(x) > x$ for every $x \in \N$.
An {\em{unbreakability chain}} of $G$ with \emph{length} $\zeta$ and \emph{step} $f$ is any sequence $((q_i,k_i))_{i=0}^\zeta$ of pairs of integers satisfying the following:
\begin{itemize}[nosep]
 \item $G$ is $(q_i,k_i)$-unbreakable, for each $0\leq i\leq \zeta$; and
 \item $k_i=f(q_{i-1} + k_{i-1})$, for each $1\leq i\leq \zeta$.
\end{itemize}
We shall often say that such a chain {\em{starts}} at $k_0$.
\end{definition}
We are now ready to state the main result, whose proof spans the rest of the paper.
\begin{theorem}\label{thm:rigid}
There exist constants $\ctime$, $\cgenus$, computable functions $\funtw$, $\funstep$, $\funchain$, $\funtime$, $\funcut$, $\fungenus$ (with $\funstep 
\colon \N \to \N$ being nondecreasing
    and satisfying $\funstep(x) > x$ for every $x \in \N$), and an algorithm that, given
a graph $H$ and an $H$-minor-free graph $G$
together with $((q_i,k_i))_{i=0}^\zeta$ being an unbreakability chain of $G$ of length $\zeta \coloneqq \funchain(H)$ with step $\funstep$ starting at $k_0 \coloneqq \funcut(H)$,
works in time bounded by $\funtime(H,\sum_{i=0}^\zeta q_i) \cdot |V(G)|^{\ctime}$ and computes a partition
$V(G) = \Vtw \uplus \Vgenus$ and a family $\famgenus$ of bijections $\Vgenus \mapsto [|\Vgenus|]$ 
such that 
\begin{enumerate}
\item the partition $V(G) = \Vtw \uplus \Vgenus$ is isomorphism-invariant;
\item $G[\Vtw]$ has treewidth bounded by $\funtw(H,\sum_{i=0}^\zeta q_i)$;
\item $|\famgenus| \leq \fungenus(H,\sum_{i=0}^\zeta q_i) \cdot |V(G)|^{\cgenus}$; and
\item $\famgenus$ is isomorphism-invariant.
\end{enumerate}
\end{theorem}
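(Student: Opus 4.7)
The plan is to combine the Graph Minors Structure Theorem (GMST) of Robertson and Seymour with a rigidity analysis of the space of $\funq$-optimal near-embeddings of $G$, where the unbreakability chain serves precisely to enforce rigidity.

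First, I invoke a constructive variant of the GMST on $G$. Using the initial unbreakability level $(q_0, k_0)$ with $k_0 = \funcut(H)$ chosen larger than all torso adhesions produced by the GMST (and larger than the scale at which Flat Wall / Excluded Grid arguments apply), the induced tree decomposition into nearly-embeddable torsos collapses to a single dominant torso that carries all of $V(G)$ except a residue of treewidth bounded by roughly $q_0$. The dominant torso gives a near-embedding $\nembed$ of most of $G$ into a surface $\Sigma$ of bounded Euler genus, with a bounded number of apices $\Apices$, vortices of bounded adhesion and width, and dongle attachments; the residue, the apices, the vortex interiors, and the dongle contents are the initial candidates for $\Vtw$.

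Second, I define a lexicographic quality measure $\funq$ on near-embeddings prioritising $\eulerg(\Sigma)$, then $\nvortices$, then total vortex length, then $\ndongles$, then $\napices$, and fix a $\funq$-minimum near-embedding $\nembed$. The central structural claim is \emph{rigidity}: any other $\funq$-minimum near-embedding $\nembedB$ induces, up to a bounded symmetric difference, the same partition $V(G) = \Vtw \uplus \Vgenus$ and agrees with $\nembed$ on the cyclic orderings around faces up to bounded local isotopies. I prove rigidity by induction along the unbreakability chain: any local modification that turns $\nembed$ into a different $\funq$-optimum is witnessed by a small separation of $G$, and $(q_i,k_i)$-unbreakability with $k_i = \funstep(q_{i-1}+k_{i-1})$ is chosen to forbid those separations at scale $k_i$. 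The chain length $\zeta = \funchain(H)$ is set large enough to resolve all nesting depths of discrepancies (inside vortices, across apices, at face boundaries) that can occur in an $H$-minor-free graph, with attachment- and cover-lemmas captured by $\attach{\cdot}{\cdot}$ and $\cover{\cdot}$ making the scale-by-scale argument precise.

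Third, I set $\Vtw$ to be the union of the Step-1 residue, the apex set of $\nembed$, its vortex interiors, and its dongle contents, so that $G[\Vtw]$ has treewidth bounded by $\funtw(H, \sum_i q_i)$ by construction (each part is either of bounded size or of bounded path-width, and the residue has bounded treewidth by $(q_0,k_0)$-unbreakability). Set $\Vgenus := V(G) \setminus \Vtw$; rigidity gives isomorphism-invariance since any isomorphism of $G$ maps $\nembed$ to another $\funq$-optimum, which by rigidity yields the same partition. For $\famgenus$, I use the fact that an embedding of $G[\Vgenus]$ into a surface of bounded Euler genus admits only $\Oh(|V(G)|^{\cgenus})$ canonical bijections: choose an oriented face of $\Sigma$ together with a starting half-edge on its boundary, then traverse the embedded graph in a canonical BFS order using the rotation system; varying over the $\Oh(|V(G)|^{\cgenus})$ starting configurations (and over the bounded local ambiguity in the choice of $\nembed$) produces the required family, again isomorphism-invariant by rigidity.

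The hard part is the rigidity step. The difficulty is that potential discrepancies between $\funq$-optima can be \emph{nested} across several layers of the near-embedding (a rerouting inside a vortex can force an apex to move, which in turn can shift a face boundary on $\Sigma$), so a single unbreakability guarantee is insufficient and one genuinely needs the whole chain $((q_i,k_i))_{i=0}^\zeta$ with $k_i$ growing like $\funstep(q_{i-1}+k_{i-1})$ to resolve discrepancies round by round. Determining how large $\funstep$ and $\funchain$ must be, and verifying that the increasingly delicate attachment- and cover-arguments survive at each scale, is the technical heart of the paper.
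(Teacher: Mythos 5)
Your high-level plan (GMST, exploit the unbreakability chain to pin down the near-embedding, use face-width plus embedding uniqueness for the labelings) matches the paper's direction, but there is a genuine gap in how you derive isomorphism-invariance that is the crux of the whole argument. You fix \emph{one} $\funq$-optimal near-embedding $\nembed$ and declare $\Vtw$ to be its apices, vortex interiors, dongle contents, and the residue. But the choice of $\nembed$ is not isomorphism-invariant, and your own rigidity claim only says that another optimum agrees ``up to a bounded symmetric difference.'' A bounded symmetric difference does not give invariance: if an isomorphism $\pi$ of $G$ maps your $\nembed$ to some other optimum $\nembedB$ whose apex set differs from $\pi(\Apices)$ by even a single vertex, then $\pi(\Vtw)\neq \Vtw$ and property (1) fails. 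The paper avoids this by taking $Z$ to be the \emph{union}, over all optimal near-embeddings with tidy dongles and minimal vortices, of the apex sets and vortex vertex sets ($\Apices_\OptNembeds\cup\VortexVtcs_\OptNembeds$), which is invariantly defined, and then proving the proximity lemma (Lemma~\ref{lem:proximity}) precisely to show that this union is still ``local'' in any one fixed optimum --- covered by $\Oh(1)$ radial discs of bounded radius --- so that the complement $H$ still has a unique large $3$-connected component with large face-width and $G[\Vtw]$ still has bounded treewidth. Your proposal has no analogue of this union/proximity step, and without it the treewidth bound on $\Vtw$ and the invariance of the partition both collapse.

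There is also a secondary missing ingredient: the paper devotes Section~\ref{ss:tidy-dongles} to canonizing dongles, because without a canonical choice of dongles the comparison arguments between two different optimal near-embeddings (and the replacement surgery in the proof of Lemma~\ref{lem:proximity}) are not well-defined. Your lexicographic quality measure $\funq$ (genus, then vortices, then vortex length, then dongles, then apices) also does not match what is needed: putting $\napices$ last means optima could have wildly varying apex counts, defeating the treewidth bound, and minimizing genus before allowing more apices breaks the inductive descent used to define the unbreakability chain (cutting along a noose to reduce genus costs apices, so the ``level'' invariant of the paper ties apex budget, vortex count, and genus together in one monotone quantity along the chain). Fixing these would require essentially reinventing the paper's level notion, tidy dongles, and proximity lemma, which are the real content; your sketch identifies the right ingredients at a high level but, as written, the rigidity step would not go through.
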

Here, \emph{isomorphism-invariant} output means that for every $H$ and for
every two $H$-minor-free graphs $G_1$ and $G_2$ and every isomorphism $\pi$ between them,
the output of the algorithm invoked on $H$, $G_1$ (and some unbreakability chain)
and the output of the algorithm invoked on $H$, $G_2$ (and the same unbreakability chain)
are isomorphic and, furthermore, the isomorphism $\pi$ extends to the isomorphism
of the outputs. 

Intuitively, the $\Vgenus$ part corresponds to the main skeleton of the whole graph,
which is embedded in the same way in every optimal near-embedding. On the other hand, $\Vtw$ consists
of parts of bounded size that ``dangle'' from the skeleton; an optimal near-embedding may take some local decisions
about how to exactly represent them.

\medskip

We provide an introduction to the proof
in Section~\ref{sec:overview}. Here, we mainly sketch the proof of an analog of Theorem~\ref{thm:rigid}
for classes of graphs embeddable in a fixed surface. This proof already highlights main ideas
behind the proof of the general case, while being technically less challenging.
After preliminaries in Section~\ref{sec:prelims},
 we continue with the full proof of Theorem~\ref{thm:rigid} in subsequent sections.

\section{Overview of the proof}\label{sec:overview}
In this section we provide an intuitive overview of the proof of Theorem~\ref{thm:rigid}.

To simplify the exposition, we first focus on proving Theorem~\ref{thm:rigid} under a stronger assumption that $G$ actually has bounded genus. This case already allows us to show most of the key conceptual steps used in the argument for the general case of unbreakable $H$-minor-free graphs.
Then, we briefly discuss traps, issues, and caveats that arise when working in the general setting with near-embeddings rather than embeddings.

\subsection{Graphs of bounded genus}

In this section we sketch how to prove the following statement, which is weaker variant of Theorem~\ref{thm:rigid} tailored to the bounded genus case.

\begin{theorem}\label{thm:genus}
There exist computable functions $\funcut,\funtw,\funtime$ and an algorithm $\mathcal{A}$ with the following specification.
The algorithm $\mathcal{A}$ is given as input a graph $G$ and integers $g$ and $q$
with a promise that 
$G$ is of Euler genus at most $g$ and is $(q,k)$-unbreakable for $k = \funcut(g)$. 
Then $\mathcal{A}$
runs in time bounded by $\funtime(g,q) \cdot n^{\Oh(1)}$
and computes an isomorphism-invariant partition of $V(G)$ into $\Vtw \uplus \Vgenus$ 
and a nonempty isomorphism-invariant family $\famgenus$ of size $\Oh(|E(G)|)$ of bijections $\Vgenus \mapsto [|\Vgenus|]$
such that $G[\Vtw]$ is of treewidth at most $\funtw(g) \cdot q$.
\end{theorem}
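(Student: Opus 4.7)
The plan is to exploit the rigidity of 2-cell embeddings of highly connected graphs with large representativity on a fixed surface. First I would compute, in polynomial time, a 2-cell embedding $\embed$ of $G$ into a surface $\Sigma$ of Euler genus at most $g$ using standard surface-embedding algorithms. The high-level goal is then to canonically identify, from the abstract graph $G$ alone, a ``main skeleton'' $\Vgenus \subseteq V(G)$ whose induced embedding is essentially rigid, and to collect all remaining small ``dangling'' pieces into $\Vtw$.

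For the isomorphism-invariant partition, I would declare $v \in \Vtw$ iff $v$ belongs to some \emph{small pendant}, i.e., a subset $S \subseteq V(G)$ with $|S| \le q$ and $|N(S) \setminus S| \le k = \funcut(g)$; all remaining vertices go to $\Vgenus$. The $(q,k)$-unbreakability of $G$ with $k$ chosen sufficiently large ensures that two such small pendants cannot both be ``large sides'' of separations witnessing each other, so the union of all small pendants stays in a thin vicinity of the boundary of the embedded skeleton. Crucially, the definition depends only on $G$ as an abstract graph, so the partition $V(G) = \Vtw \uplus \Vgenus$ is isomorphism-invariant.

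Next I would argue that $G[\Vgenus]$, together with the embedding induced from $\embed$, is 3-connected and has representativity exceeding any prescribed function of $g$: any small-order separation or short noncontractible cycle in $G[\Vgenus]$ would, combined with unbreakability, exhibit a subset of vertices that by definition should have been stripped into $\Vtw$. The classical uniqueness theorem (Robertson--Vitray, see also Mohar--Thomassen) then asserts that the 2-cell embedding of $G[\Vgenus]$ in $\Sigma$ is unique up to reflection and automorphisms of $\Sigma$. A labeling in $\famgenus$ is obtained by picking an oriented edge $e \in E(G[\Vgenus])$ as a root (at most $2|E(G)|$ choices) plus an optional reflection bit, and then running a canonical face-tracing traversal on the now fully specified combinatorial embedding. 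The resulting family $\famgenus$ has size $\Oh(|E(G)|)$ and is isomorphism-invariant by construction.

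Finally, for treewidth: every connected component of $G[\Vtw]$ is contained in some small pendant, hence has at most $q$ vertices, so $\tw(G[\Vtw]) \le q - 1 \le \funtw(g) \cdot q$ for any $\funtw(g) \ge 1$. The main obstacle I foresee is the canonical definition of small pendants: a naive definition may peel off too much and destroy the representativity or 3-connectivity of the remaining skeleton, while a too-conservative definition may leave large low-connectivity ``bubbles'' inside $\Vgenus$. The right remedy is to peel pendants in rounds, each round using a fresh unbreakability guarantee --- precisely what the unbreakability chain in the statement of Theorem~\ref{thm:rigid} is designed to enable. In the bounded-genus case a small constant number of rounds suffice, but the general $H$-minor-free setting requires the full chain machinery together with near-embedding uniqueness in the presence of apices, vortices, and dongles, which is the technical heart of the subsequent sections.
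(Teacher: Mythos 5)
Your proposal misses the central difficulty that the paper's proof is designed to overcome, and the ``small pendant'' definition of $\Vtw$ does not deliver the rigidity you need for $\Vgenus$.

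The gap: you define $\Vtw$ as the union of all sets $S$ with $|S|\le q$, $|N(S)\setminus S|\le k$, and then claim that $G[\Vgenus]$, under the embedding inherited from $\embed$, is $3$-connected with face-width exceeding any prescribed function of $g$. The second claim fails. A short noncontractible noose in a minimum-genus embedding of the large $3$-connected part exposes a vertex set $X$ of size roughly $f_{\mathrm{ue}}(g)$ whose removal drops the Euler genus, but it does \emph{not} expose a small pendant: if the noose is nonseparating (or separating with two large sides), $G-X$ still has only huge pieces, so no vertex of $X$ is forced into $\Vtw$ by your rule. Hence after stripping small pendants the embedding can still have face-width below the uniqueness threshold, and Theorem~\ref{thm:overview:fw-unique} does not apply. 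This is precisely why the paper's proof does not work with pendant-stripping; instead it defines \emph{deletion sets} -- moderate-size vertex sets $X$ whose removal strictly decreases the Euler genus of the unique large $3$-connected component $G^X$ -- and iterates, each iteration permitting a larger deletion set, until the residual minimum-genus embedding has face-width much larger than the current cut budget. The unbreakability chain is consumed by these growing deletion-set sizes $\kappa(0)<\kappa(1)<\cdots$, not by rounds of pendant removal.

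A second gap follows from the first: once deletion sets enter the picture, the choice of a particular $X\in\mathcal{X}$ is not isomorphism-invariant, so the paper must prove that all deletion sets attach locally to the surface (the two ``local'' lemmas, Lemma~\ref{lem:overview:genus:local1} and Lemma~\ref{lem:overview:genus:local2}) and then take $Z=\bigcup\mathcal{X}$. Both the localization argument (a $K_5$-packing near attachment points) and the replacement argument showing two deletion sets cannot attach far apart are absent from your proposal, and there is no obvious way to bypass them. Relatedly, your claim that every component of $G[\Vtw]$ lies inside a single small pendant of size at most $q$ is unsupported: small pendants can overlap and chain, so $G[\Vtw]$ can have connected components much larger than $q$; the paper instead bounds $\tw(G[\Vtw])$ via the locality lemmas together with locally bounded treewidth of surface-embedded graphs, which is a genuinely different and more involved argument.
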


Without loss of generality assume that  $q \geq k \geq 2$. 
If $G$ is of treewidth $\Oh(q)$, then we can return $\Vgenus = \emptyset$.
Hence, we assume that the treewidth of $G$ is much larger than $q$. 

Consider Tutte's decomposition of $G$ into $3$-connected components\footnote{The $3$-connected components of a graph are the torsos of the bags of its Tutte's decomposition. See Section~\ref{sec:3conn} for a discussion.}. Since $G$ is $(q,k)$-unbreakable and has treewidth much larger than $q$, it follows that there is a unique 3-connected component of $G$ that has more than $q$ vertices
and treewidth much larger than $q$, while all the other 3-connected components are of size at most $q$.
The same conclusion holds for the graph $G-X$ for any set $X \subseteq V(G)$ of size at most $k-2$: 
If $(A,B)$ is a separation of order at most $2$ in $G-X$, then $(A\cup X, B \cup X)$ is a separation
of order at most $k$ in $G$ and, hence, either $|A \setminus B| \leq q$ or $|B \setminus A| \leq q$.
For any set $X \subseteq V(G)$ of size at most $k-2$, by $G^X$ we denote the unique
3-connected component of $G-X$ that has more than $q$ vertices.
Note that the treewidth assumption implies that the treewidth of $G^X$ is in fact much larger than $q$.

Our goal is to define $\Vgenus$ so that $G[\Vgenus]$ is almost rigid --- admits only few automorphisms --- and this rigidity will be derived from the rigidity of embedded graphs.
Observe that if $\embed$ is an embedding of a connected graph $G$ in some surface, then an automorphism 
of $(G,\embed)$ is fixed by fixing only the image of one edge with distinguished endpoint
and an indication which side of the edge is ``left'' and which is ``right''. This gives at most $4|E(G)|$
automorphisms.

Unfortunately, a graph can have many different embeddings in a surface of Euler genus $g$, even assuming it is $3$-connected. However,
the number of different embeddings drops if the embeddings have large face-width (the minimum number of vertices on a noncontractible face-vertex noose, i.e., a closed curve without self-intersections):
\begin{theorem}[\cite{SeymourT96}]\label{thm:overview:fw-unique}
There is a function $f_{\mathrm{ue}}(g) \in \Oh(\log g / \log \log g)$
such that if $G$ is a $3$-connected graph and 
$\embed$ is an embedding of $G$ of Euler genus $g$
 and face-width at least $f_{\mathrm{ue}}(g)$,
 then $g$ is equal to the (minimum) Euler genus of $G$ and $\embed$ is the unique embedding of 
 Euler genus $g$.
\end{theorem}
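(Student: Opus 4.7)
The plan is to derive uniqueness from Whitney's theorem (a $3$-connected planar graph has a unique embedding in the sphere, up to reflection) combined with the observation that large face-width forces every local region of $\embed$ to behave planarly. First I would establish a local planarity lemma: for any noose of length strictly less than $f_{\mathrm{ue}}(g)$, the closed disk it bounds contains a subgraph of $G$ that, together with its boundary cycle, inherits a $3$-connected planar structure, so by Whitney its embedding inside the disk is unique. In particular, the combinatorial rotation system of $\embed$ at every vertex is forced once we know which short nooses exist around that vertex.

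Second, I would cover the surface by a system of disks (with overlaps) whose boundaries are short nooses, producing a patch decomposition of $(G,\embed)$ in which every patch has rigid local structure. Given any other embedding $\embed'$ of $G$ in a surface $\Sigma'$ of Euler genus at most $g$, I would argue that $\embed'$ must have the same combinatorial rotation system at every vertex as $\embed$: if $\embed'$ disagreed at some vertex $v$, then the rotation around $v$ would contradict Whitney's uniqueness on a planar patch containing $v$, since the face-width bound on $\embed'$ (which inherits a face-width bound from $\embed$ via the shared underlying graph) keeps the local analysis planar. Identical rotation systems yield identical face boundaries, hence $\embed'=\embed$ as combinatorial embeddings, and in particular $\Sigma'$ is homeomorphic to $\Sigma$. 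For the minimum-genus claim, I would argue by contradiction: if $G$ embedded in a surface of Euler genus $g'<g$, any noncontractible simple closed curve on the smaller surface would pull back to a noose in $\Sigma$ of length below $f_{\mathrm{ue}}(g)$, contradicting the face-width assumption.

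The quantitative bound $f_{\mathrm{ue}}(g)\in\Oh(\log g/\log\log g)$ should arise from a counting argument bounding the number of essentially distinct pants-style decompositions of a surface of Euler genus $g$ along short noncontractible curves: the threshold $f_{\mathrm{ue}}(g)$ must exceed the longest short cycle that can appear in such a cutting system, and the maximum length grows only as $\Oh(\log g/\log\log g)$ because a surface of Euler genus $g$ admits a decomposition into a bounded number of pants whose cuff lengths are controlled logarithmically in $g$.

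The main obstacle is the global gluing step. Local rigidity immediately gives agreement of combinatorial rotation systems, but this only yields a \emph{combinatorial} embedding isomorphism; passing to a genuine surface homeomorphism requires checking that the pairings of noose boundaries across overlapping patches are consistent \emph{simultaneously} for a generating system of the first homology of $\Sigma$. It is this simultaneous pinning down of all topological loops, rather than any single local argument, that necessitates the $\log g/\log\log g$ dependence in $f_{\mathrm{ue}}$ and that forms the technical heart of the Seymour–Thomas theorem.
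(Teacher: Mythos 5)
This is a cited theorem, not one the paper proves: the paper invokes it directly from Seymour and Thomas~\cite{SeymourT96} (as Theorem~\ref{thm:overview:fw-unique} and again as Theorem~\ref{thm:fw-unique}) and gives no proof. So there is no in-paper argument to compare against; I can only assess your sketch on its own terms, and it has several genuine gaps.

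The central step you lean on --- that disagreement of rotation systems ``would contradict Whitney's uniqueness on a planar patch containing $v$'' --- does not go through as stated. Whitney's theorem requires the graph in question to be $3$-connected, and the subgraph of $G$ drawn inside a short noose's disk is \emph{not} in general $3$-connected, even when $G$ itself is. To make a local rigidity argument work one must carefully analyze bridges of the noose cycle (this is essentially what the actual Seymour--Thomas proof and Mohar's related work do, via the theory of bridges and relative embeddings), not merely cite Whitney. Your genus-minimality argument is also circular: you claim that a noncontractible curve in a hypothetical lower-genus embedding $\embed'$ ``would pull back to a noose in $\Sigma$ of length below $f_{\mathrm{ue}}(g)$,'' but there is no a~priori map between the two surfaces --- constructing such a correspondence is precisely what needs to be proved, and face-width is a property of $\embed$, not of $G$, so $\embed'$ does not automatically inherit any face-width lower bound. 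Finally, the heuristic ``pants decomposition with logarithmically controlled cuff lengths'' is not how the $\Oh(\log g/\log\log g)$ bound is obtained, and there is no reason a pants decomposition of an Euler-genus-$g$ surface would yield a system of cuts each of combinatorial length $\Oh(\log g/\log\log g)$ in an arbitrary face-width-$w$ embedding. The correct bound comes from a much more delicate argument; a simpler but weaker exponential-in-$g$ threshold is easier to obtain, and the paper would be equally served by that, but your sketch does not actually supply it.
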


The first idea would be to apply Theorem~\ref{thm:overview:fw-unique} to $G^\emptyset$ --- the unique large $3$-connected component of $G$. That is,
consider an embedding $\embed$ of $G^\emptyset$ of minimum possible Euler genus. If the face-width of this embedding is at least
$f_{\mathrm{ue}}(g)$, then we may simply set $\Vgenus = V(G^\emptyset)$. Then $\embed$ is the unique embedding of $G[\Vgenus]$ of minimum Euler genus, which gives rise to an isomorphism-invariant family of at most $4|E(G)|$ automorphisms of $G$, from which a suitable family $\famgenus$ can be easily derived. Note that by $(q,k)$-unbreakability, every connected component of $G[\Vtw] = G-\Vgenus$ has at most $q$ vertices, hence in particular $G[\Vtw]$ has treewidth at most $q$.

However, it may happen that $\embed$ has face-width smaller than $f_{\mathrm{ue}}(g)$ and Theorem~\ref{thm:overview:fw-unique} cannot be applied. But then there is a set $X_1 \subseteq V(G^\emptyset)$ of size at most $f_{\mathrm{ue}}(g)$
such that $G^\emptyset-X_1$ has strictly smaller Euler genus than $G^\emptyset$. 
This implies that $G^{X_1}$ has strictly smaller Euler genus than $G^\emptyset$. 

We can iterate this process: if we take any minimum Euler genus embedding $\embed_1$ of $G^{X_1}$ and it turns out that $\embed_1$ has small face-width,
there is a set $X_2$ of small cardinality such that $G^{X_1 \cup X_2}$ has strictly smaller Euler genus. 
It will be useful later to allow subsequent steps in this process to ask for larger and larger
face-width (and thus allowing larger cut sets $X_2,X_3,\ldots$). Note that the number of steps is bounded by the Euler genus of $G$.

More formally, let us define a function $\kappa \colon \{0,1,\ldots,g,g+1\} \to \mathbb{N}$ by setting $\kappa(0) = 0$ and $\kappa(\gamma+1) = p_\kappa(\kappa(\gamma))$
for a polynomial $p_\kappa(x) = x + f_{\mathrm{ue}}(g)+ \bigC\cdot (x+g+1)^4$, where $\bigC$ is a sufficiently large constant.
We set $k = \funcut(g) = \kappa(g+1) \coloneqq p_\kappa(\kappa(g)) \in  2^{2^{\Oh(g)}}$.

Let $0 \leq \gamma \leq g$. A set $X \subseteq V(G)$ is a \emph{potential deletion set for $\gamma$} if $|X| \leq \kappa(\gamma)$ and the Euler genus of $G^X$ is at most $g-\gamma$. 
Let $0 \leq \gamma_0 \leq g$ be the maximum integer such that there exists a potential deletion set for $\gamma_0$;
note that $\gamma_0$ exists as $X = \emptyset$ is a potential deletion set for $\gamma = 0$. 
Let $\kappa_0 \leq \kappa(\gamma_0)$ be the minimum size of a potential deletion set for $\gamma_0$. 
A potential deletion set for $\gamma_0$ of size $\kappa_0$ shall be called a \emph{deletion set}.

Let $\mathcal{X}$ be the family of all deletion sets. 
Let $X_0 = \bigcap \mathcal{X}$ be the set of those vertices of $G$ that are contained in every deletion set; clearly $|X_0| \leq \kappa_0$.
We define also $Z \coloneqq \bigcup \mathcal{X}$ to be the set of all vertices contained in any deletion set. 
We remark that known algorithms for \textsc{Genus Vertex Deletion} (e.g.,~\cite{KociumakaP19}) can be modified to compute
$\gamma_0$, $\kappa_0$, $X_0$, $Z$, and an arbitrary element of $\mathcal{X}$ in FPT time when parameterized by $g$. 

In this overview we assume $\gamma_0 < g$; the case $\gamma_0 = g$ (i.e., we reach a planar graph in the end of the process described above) can be handled very similarly, using the fact that a $3$-connected plane graphs has a unique planar embedding. By the choice of $\gamma_0$, for every $X \in \mathcal{X}$ the graph $G^X$ admits an embedding $\embed^X$ of Euler genus $g-\gamma_0$ and face-width at least $f_{\mathrm{ue}}(g) + \bigC \cdot (\kappa_0 + g+ 1)^4$, i.e., much larger than both $\kappa_0$
and $f_{\mathrm{ue}}(g)$. 
In particular, $\embed^X$ has the minimum possible Euler genus, which is $g-\gamma_0$, and is the unique embedding of $G^X$ of this Euler genus.

Although it can be easily seen that the treewidth of $G-V(G^X)$ is $\Oh(q+\kappa_0)$ and the uniqueness of the embedding of $G^X$ makes its automorphism group simple, we cannot return $\Vgenus = V(G^X)$, because the choice of $X \in \mathcal{X}$ is not isomorphism-invariant. 
The crux of the approach lies in showing that the graph $G^Z$ can be defined similarly as $G^X$, while now $Z$ --- the union of all deletion sets --- is defined in an isomorphism-invariant way. Namely, $G^Z$ is the unique large 3-connected component of $G-Z$, which can be well defined despite the fact that the size of $Z$ may be much larger than~$k$. Then
$G^Z$ has similar embedding properties as $G^X$, while it turns out that
$G-V(G^Z)$ still has low treewidth. Thus we can return $\Vgenus = V(G^Z)$. 
Intuitively, the key to proving the viability of this strategy is to show that two elements $X,X' \in \mathcal{X}$ cannot differ too much from each other, and therefore $Z=\bigcup_{X\in \mathcal{X}} X$ is structurally not much different from every single $X\in \mathcal{X}$.

For $X \in \mathcal{X}$ and $w \in V(G)\setminus X$, we define the \emph{projection} $\projection{X}{w} \subseteq V(G^X)$
as follows. If $w \in V(G^X)$, then $\projection{X}{w} = \{w\}$. 
Otherwise, letting $C_w$ be the component of $G-X-V(G^X)$ containing $w$, we set $\projection{X}{w} = N_{G-X}(C_w)$. 
Note that $|N_{G-X}(C_w)| \in \{0,1,2\}$ and if $|N_{G-X}(C_w)| = 2$, then $G^X$ contains an edge connecting the two elements of $N_{G-X}(C_w)$.

For $X \in \mathcal{X}$, we define the set $\attall{X}$ of \emph{attachment points} as follows.
For every $v \in X \setminus X_0$, we define the set $\attach{X}{v}$ of attachment points of $v$
as the union of $\projection{X}{u}$ over all $u \in N_G(v) \setminus X$. 
Finally, we define $\attall{X} = \bigcup_{v \in X \setminus X_0} \attach{X}{v}$.

The first step in our analysis of $\mathcal{X}$ is 
to observe that the attachment points are local in the following sense.
\begin{lemma}[informal statement]\label{lem:overview:genus:local1}
For every $X \in \mathcal{X}$, there is a set $\cover{X} \subseteq \attall{X}$ of size $\Oh((g+\kappa_0+1)^2)$ 
such that $\attall{X}$ is covered by the union of radial balls in $\embed^X$ of radius $9$ centered at vertices of $\cover{X}$.
\end{lemma}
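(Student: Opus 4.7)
The plan is to establish the bound vertex by vertex: for each $v \in X \setminus X_0$, I would show that the attachment set $\attach{X}{v}$ can be covered by $\Oh(g + \kappa_0 + 1)$ radial balls of radius $9$ in $\embed^X$ centered at vertices of $\attach{X}{v}$ itself. Since $|X \setminus X_0| \leq \kappa_0 \leq g + \kappa_0 + 1$, taking the union of these per-vertex covers yields the desired overall bound of $\Oh((g+\kappa_0+1)^2)$ on $|\cover{X}|$.

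The first step exploits the very definition of $X_0$: because $v \notin X_0 = \bigcap \mathcal{X}$, there is an alternative deletion set $X' \in \mathcal{X}$ avoiding $v$. Then $v$ lives in $G - X'$ and hence lies either in $V(G^{X'})$ or in one of the small (size at most $q$) 3-connected components of $G - X'$ hanging off $G^{X'}$. Either way, $v$ is localized at a specific position on the surface in $\embed^{X'}$, and every path in $G - X'$ from $v$ to $G^{X'}$ reaches $G^{X'}$ within bounded radial distance of that position. In particular, the projection onto $V(G^{X'})$ of $N_G(v) \setminus (X \cup X')$ is covered by a single radial ball of constant radius in $\embed^{X'}$.

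The second step transfers this locality from $\embed^{X'}$ back to $\embed^X$. Both embeddings realize Euler genus $g - \gamma_0$ with face-width at least $f_{\mathrm{ue}}(g) + \bigC\cdot(\kappa_0+g+1)^4$, so by Theorem~\ref{thm:overview:fw-unique} each is the unique minimum-genus embedding of its underlying 3-connected graph. The graphs $G^X$ and $G^{X'}$ share a large ``common core'' obtained by discarding the symmetric difference $X \triangle X'$ together with the small components it spawns; this common core has all but $\Oh(\kappa_0)$ vertices, and a robust uniqueness argument (comparing the restrictions of $\embed^X$ and $\embed^{X'}$ to the common core) shows that radial distances in the two embeddings agree up to a distortion whose ``footprint'' is localized at the $\Oh(\kappa_0)$ missing vertices and at the at most $g$ handles of the surface. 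A single constant-radius radial ball in $\embed^{X'}$ therefore lifts to a union of $\Oh(g + \kappa_0 + 1)$ radial balls of radius $9$ in $\embed^X$: each handle threaded by the lifted region contributes a separate cluster, each vertex of $X \triangle X'$ whose removal cuts through the region contributes a bounded number of additional clusters, and the factor of $9$ absorbs the radius inflation incurred by rerouting around these obstructions.

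The hard part is the rigidity/transfer argument of the second step. Theorem~\ref{thm:overview:fw-unique} as stated is only a uniqueness statement for a single 3-connected graph, but here I need a quantitatively stable comparison between two distinct high-face-width embeddings of two graphs differing by $\Oh(\kappa_0)$ vertices. This requires a Whitney-style matching of the two embeddings along the common core together with a careful count, in terms of the Euler genus and the number of corrupted vertices, of how many disjoint radial balls can host the preimage of one topological disk under such a matching. This is precisely where the polynomial slack $\bigC\cdot(\kappa_0+g+1)^4$ in the face-width threshold is consumed, and where the specific radius $9$ is forced by bookkeeping the worst-case rerouting around a single handle or a single deleted vertex.
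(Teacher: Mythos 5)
Your proposal takes a genuinely different route from the paper's, and there is a serious gap. The paper argues locally inside $\embed^X$: if $\attach{X}{v}$ cannot be covered by a bounded number of radius-$9$ radial balls, one packs many pairwise disjoint radius-$3$ radial balls around attachment points of $v$; by the $3$-connectivity of $G^X$ (via Lemma~\ref{lem:rballs}), each such ball together with $v$ supplies a $K_5$ minor model, and these models pairwise intersect only in $v$. Additivity of Euler genus under identification at a single vertex (Corollary~\ref{cor:genus-manyK5s}) then shows that no deletion set of size at most $\kappa_0$ that avoids $v$ can drop the genus to $g-\gamma_0$, forcing $v\in X_0$ and contradicting $v\in X\setminus X_0$. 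Crucially this argument never compares two embeddings.

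Your route instead detours through an alternative deletion set $X'$ with $v\notin X'$ and a transfer claim, and this is where it breaks. The step asserting that a single constant-radius radial ball in $\embed^{X'}$ ``lifts to a union of $\Oh(g+\kappa_0+1)$ radial balls of radius $9$ in $\embed^X$'' is not established and does not obviously hold: since $v\in X$ we have $v\notin V(G^X)$, so the radial ball around $v$ in $\embed^{X'}$ has no natural topological image in $\embed^X$ at all --- the ``common core'' you appeal to excludes $v$, and Whitney/Seymour--Thomassen uniqueness (Theorem~\ref{thm:overview:fw-unique}) gives uniqueness of the embedding of one fixed $3$-connected graph, not a quantitative stability of radial distances across two different overlapping graphs with different optimal embeddings. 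Making such a ``Whitney-style matching'' precise is exactly the substance of Lemma~\ref{lem:overview:genus:local2}, which the paper proves \emph{using} the set $\cover{X}$ produced by Lemma~\ref{lem:overview:genus:local1}; so your strategy is circular with respect to the intended dependency order. Finally, a smaller omission: your first step covers $\projection{X}{u}$ only for $u\in N_G(v)\setminus(X\cup X')$, but $\attach{X}{v}$ also contains $\projection{X}{u}$ for neighbors $u\in X'\setminus X$, which are not localized by that step. That can be patched by adding their $\Oh(\kappa_0)$ projections to $\cover{X}$ directly, but as written it is missing.
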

The proof of Lemma~\ref{lem:overview:genus:local1} goes roughly as follows: if for some $v \in X$
the set $\attach{X}{v}$ cannot be covered by a small number of radial balls, 
then one can pack many vertex-disjoint radial balls of radius $3$ around the attachment points.
Using the 3-connectivity of $G^X$, one can argue that these radial balls give rise to many $K_5$
minor models intersecting only at $v$. This implies $v \in X_0$, a contradiction.

Using the above, we prove that any two elements of $\mathcal{X}$ need to be somewhat similar, in the sense that their attachment points are close to each other.
\begin{lemma}[informal statement]\label{lem:overview:genus:local2}
Consider any $X,X' \in \mathcal{X}$. Then
for every $w \in X' \setminus X$, the elements of $\projection{X}{w}$ are within radial distance
$\Oh((g+\kappa_0)^2)$ from $\cover{X}$ in $\embed^X$.
\end{lemma}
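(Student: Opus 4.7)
The plan is to argue by contradiction against the minimality of $\kappa_0$. Suppose that some $u \in \projection{X}{w}$, for a choice of $w \in X' \setminus X$, lies at radial distance larger than $R := \Theta((g+\kappa_0)^2)$ from every vertex of $\cover{X}$ in $\embed^X$; from this I will build an embedding of $G - (X' \setminus \{w\})$ of Euler genus at most $g - \gamma_0$, witnessing that $X' \setminus \{w\}$ is a potential deletion set for $\gamma_0$ of size $\kappa_0 - 1$, contrary to the choice of $\kappa_0$.

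The first step is to isolate a large clean planar disk around $u$ in $\embed^X$. By Lemma~\ref{lem:overview:genus:local1}, the set $\attall{X}$ is covered by $\Oh((g+\kappa_0+1)^2)$ radial balls of radius $9$ around $\cover{X}$, so the radial ball $B$ of radius $R-9$ around $u$ avoids all of them. Because the face-width of $\embed^X$ comfortably exceeds $R$ by the design of $p_\kappa$, the ball $B$ is the interior of a closed planar disk of the host surface. The only vertices of $X$ whose neighborhoods may still intrude into $B$ are the at most $\kappa_0$ elements of $X_0$; pigeonholing radii from $u$, I pass to a concentric sub-ball $B'$ of radius $\Theta(R/\kappa_0) \gg 1$ that is untouched by $X$, in the sense that no vertex of $X$ has a neighbor whose $\embed^X$-projection lies in $B'$. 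The preimage $H_0$ of $B'$ under $\projection{X}{\cdot}$ is then a subgraph of both $G - X$ and $G - X'$ containing $w$ together with all of its neighbors in $G - X'$, and $\embed^X$ embeds $H_0$ in a planar fashion inside $B'$.

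The second step is to transplant this planar patch into $\embed^{X'}$. Since $\embed^{X'}$ is the unique minimum-genus embedding of $G^{X'}$ and has equally large face-width, a Whitney-style uniqueness argument applied to a sufficiently rich 3-connected skeleton of $H_0 - \{w\}$ that already lives in $G^{X'}$ locates a planar disk $B''$ of $\embed^{X'}$ hosting $H_0 - \{w\}$ in the same combinatorial way as $B'$ does inside $\embed^X$. The vertex $w$, together with all of its edges in $G - X'$, can then be drawn inside $B''$ without crossings, yielding the desired embedding of $G - (X' \setminus \{w\})$ of Euler genus $g - \gamma_0$.

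I expect the principal obstacle to lie in the transplantation step: extracting, from $H_0$, a 3-connected planar skeleton that already sits in $G^{X'}$ and invoking the appropriate rigidity theorem to place it identically in $\embed^{X'}$. The few vertices of $X_0$ whose projections may fall into $B$, as well as any small 3-connected components that dangle from $G^X$ into $B$, have to be circumvented by passing to the sub-ball $B'$; this is precisely what forces the quadratic dependence $R = \Theta((g+\kappa_0)^2)$, since each of the $\Oh(\kappa_0)$ local perturbations costs at most a linear factor in the radius.
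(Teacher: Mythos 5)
Your high-level strategy (contradict the minimality of $\kappa_0$ by exhibiting $X'\setminus\{w\}$ as a smaller potential deletion set, via a cut-and-paste of a planar patch) is the same as the paper's, but there are three concrete gaps that prevent the argument from going through as written.

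First, the pigeonhole step to pass to a sub-ball $B'$ "untouched by $X$" is invalid. A single vertex of $X_0$ can have neighbors whose projections land at every radius from $u$; there is no reason that restricting to $\Theta(R/\kappa_0)$ concentric shells leaves one that avoids all of them. Lemma~\ref{lem:overview:genus:local1} only localizes the attachment points of $X\setminus X_0$, and the paper deliberately does not try to clear out $X_0$ by pigeonhole — its argument works in the presence of $X_0$'s edges.

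Second, the claim that the preimage $H_0$ of $B'$ is "a subgraph of both $G-X$ and $G-X'$" is unjustified. The lemma is being proved for a single $w\in X'\setminus X$; the other vertices of $X'\setminus X$ are not yet known to project close to $\cover{X}$, so their projections may well fall inside $B'$. Hence $H_0$ can contain vertices of $X'$ and need not survive in $G-X'$.

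Third — and this is the heart of the matter — the "transplantation step" is exactly where the real work lies, and the proposal leaves it as an acknowledged obstacle rather than addressing it. Identifying a disk $B''$ of $\embed^{X'}$ that hosts $H_0-\{w\}$ "in the same combinatorial way" requires showing that $\embed^X$ and $\embed^{X'}$ genuinely agree on this region, and this is not a consequence of $G^{X'}$ having a unique embedding alone: $H_0-\{w\}$ is not a priori a subgraph of $G^{X'}$, and even if it were, Whitney/Seymour--Thomas uniqueness constrains the global embedding of $G^{X'}$, not the induced embedding of an arbitrary patch that might interact with the rest of the graph differently in the two deletion sets. The paper's proof resolves this by introducing the common refinement $G^{X\cup X'}$ (well-defined since $2\kappa_0\ll k$) and its unique large-face-width embedding $\embed^{X\cup X'}$, which serves as a bridge: one argues that the region between $\projection{X}{w}$ and $\cover{X}$ is embedded the same way in $\embed^X$, $\embed^{X'}$, and $\embed^{X\cup X'}$. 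Without such an intermediary, the agreement of local embeddings across the two deletion sets is an unsupported assertion.
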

For the proof of Lemma~\ref{lem:overview:genus:local2},
assume there is $w \in X' \setminus X$ with $\projection{X}{w}$ radially far from $\cover{X}$ (and thus also far from
$\attall{X}$).
Since $\kappa_0$ is still significantly smaller than $k$, the graph $G^{X' \cup X}$ is well-defined and it has a unique embedding $\embed^{X\cup X'}$ with large face-width and of the same Euler genus $g-\gamma_0$. One can now argue that a big part of the graph ``between'' $\cover{X}$ and $\projection{X}{w}$ is embedded in the same way in embeddings $\embed^X$, $\embed^{X'}$, and $\embed^{X\cup X'}$. This gives ground for a replacement argument: if $\embed^X$ is able to embed $\projection{X}{w}$, and the area between $\projection{X}{w}$ and $\cover{X}$ is embedded in the same way in $\embed^X$ and in $\embed^{X'}$, then one can modify the embedding $\embed^{X'}$ using a part of the embedding $\embed^X$ to obtain an embedding of $G^{X' \setminus \{w\}}$. This embedding can be turned into an embedding of $G-(X'\setminus \{w\})$ of Euler genus $g-\gamma_0$,  contradicting the minimality of $X' \in \mathcal{X}$.

With Lemma~\ref{lem:overview:genus:local2} established, we can proceed to the analysis of $Z=\bigcup_{X\in {\mathcal{X}}} X$. 
Lemma~\ref{lem:overview:genus:local2} shows that for any fixed $X\in {\mathcal{X}}$, $Z\setminus X$ is contained in $\Oh((g+\kappa_0)^2)$ radial balls in $\embed^X$, each of radius $\Oh((g+\kappa_0)^2)$. First, since $|X|=\kappa_0$, this gives a bound on the treewidth of $G[Z]$ using the fact that graphs of bounded genus have bounded local treewidth --- the treewidth is bounded linearly in the radial radius. 
Second, it shows that $G^X \setminus Z$, with the embedding inherited from $G^X$, still has large face-width. This allows us to define $G^Z$ as the unique 3-connected component of $G-Z$ with Euler genus $g-\gamma_0$ and large face-width of the embedding.

The face-width lower bound makes the embedding of $G^Z$ unique, imposing a very rigid structure on the automorphism group of $G^Z$. With some technical care, the treewidth bound on $G[Z]$ can be lifted to a treewidth bound on $G-V(G^Z)$. Since the definition of $Z$ is isomorphism-invariant, our definition of $G^Z$ is also isomorphism invariant. So all requirements are satisfied to set $\Vgenus=V(G^Z)$ and $\Vtw=V(G-V(G^Z))$. This finishes the sketch of the proof of Theorem~\ref{thm:genus}.

\subsection{The general case}

We now discuss how the strategy presented above can be lifted to the general case of $H$-minor-free graphs. Here, the main tool will be the Structure Theorem of Robertson and Seymour. We first need to recall some terminology.

A {\em{tangle}} $\mathcal{T}$ of order $p$ in a graph
$G$ is a family of separations of order less than $p$ in $G$
such that (i) for every separation $(A,B)$ of order less than $p$, either $(A,B)$
or $(B,A)$ is in $\mathcal{T}$, (ii) for every three elements $(A_1,B_1),(A_2,B_2),(A_3,B_3) \in \mathcal{T}$, we have $A_1 \cup A_2 \cup A_3 \neq V(G)$. 
If $(A,B) \in \mathcal{T}$, then $A$ is the \emph{small side} and $B$ is the \emph{big side}
of the separation $(A,B)$.
For a vertex subset $X$ and a tangle~$\mathcal{T}$, by $\mathcal{T}-X$ we denote the set of all separations $(A,B)$ of $G-X$ such that $(A\cup X,B\cup X)\in \mathcal{T}$. It is straightforward to check that if $\mathcal{T}$ has order $p>|X|$, then $\mathcal{T}-X$ is a tangle of order $p-|X|$.

Note that the unbreakability assumption on $G$ in Theorem~\ref{thm:rigid}
allows us to define the {\em{unbreakability tangle}} of order $k_\zeta+1$: 
the tangle consists of all separations $(A,B)$ of order at most $k_\zeta$ with $|A| \leq q_\zeta)$. 
 This defines a tangle as long as $|G| > 3q_\zeta)$, and otherwise we can return  $\Vtw=V(G)$ and $\Vgenus=\emptyset$.

In the general case we will work with \emph{near-embeddings} of graphs; in this matter, we mostly follow the the notation from~\cite{DiestelKMW12}.
Given a graph $G$, a \emph{near-embedding} of $G$ consists of:
\begin{enumerate}[nosep]
\item a set $\Apices \subseteq V(G)$, called the \emph{apices};
\item a partition of the graph $G-\Apices$ into edge-disjoint subgraphs
$$G-\Apices = \MainA \cup \bigcup_{i=1}^{\nvortices} \VortexA{i} \cup \bigcup_{i=1}^{\ndongles} \DongleA{i};$$
\item an embedding $\embed$ of $\MainA$ into a (compact and connected) surface $\Sigma$.
\end{enumerate}
The following properties have to be satisfied:
\begin{enumerate}[nosep]
\item Every vertex of $G-\Apices$ that belongs to at least two subgraphs of the partition, belongs to $\MainA$. 
\item For every $1 \leq i \leq \ndongles$, $|V(\DongleA{i}) \cap V(\MainA)| \leq 3$. (The graphs $\DongleA{i}$ are henceforth called \emph{dongles}.)
\item The graphs $(\VortexA{i})_{i=1}^{\nvortices}$ are pairwise vertex-disjoint.
\item For every $1 \leq i \leq \nvortices$, the set $V(\VortexA{i}) \cap V(\MainA)$ can be enumerated as $\{\SocietyVtx{i}{1}, \ldots, \SocietyVtx{i}{\vortexlength{i}}\}$
so that $\VortexA{i}$ admits a path decomposition $(\VortexBag{i}{1}, \ldots, \VortexBag{i}{\vortexlength{i}})$ with $\SocietyVtx{i}{j} \in \VortexBag{i}{j}$ for every $1 \leq j \leq \vortexlength{i}$.
The graph $\VortexA{i}$ is called a \emph{vortex} and the vertices $\SocietyVtx{i}{j}$ are called the \emph{society vertices} of the vortex~$\VortexA{i}$. The number of society vertices, $\vortexlength{i}$, is the {\em{length}} of a vortex.
\item On the surface $\Sigma$ there exist closed discs with disjoint interiors $(\DongleDisc{i})_{i=1}^{\ndongles}$ and $(\VortexDisc{i})_{i=1}^{\nvortices}$
such that $\embed$ embeds $\MainA$ into the closure of $\Sigma \setminus \left(\bigcup_{i=1}^{\ndongles} \DongleDisc{i} \cup \bigcup_{i=1}^{\nvortices} \VortexDisc{i}\right)$ and the following conditions hold:
\begin{itemize}[nosep]
\item For every $1 \leq i \leq \ndongles$, the vertices of $\MainA$ that lie on the boundary of $\DongleDisc{i}$ are exactly the vertices of $V(\DongleA{i}) \cap V(\MainA)$.
\item For every $1 \leq i \leq \nvortices$, the vertices of $\MainA$ that lie on the boundary of $\VortexDisc{i}$ are exactly the vertices of $V(\VortexA{i}) \cap V(\MainA)$. Furthermore,
  these vertices lie around the boundary of $\VortexDisc{i}$ in the order $\SocietyVtx{i}{1}, \ldots, \SocietyVtx{i}{\vortexlength{i}}$.
\end{itemize}
\end{enumerate}

We say that a near-embedding $\nembed$ \emph{captures} a tangle $\mathcal{T}$ if for each separation of $\mathcal{T}-\Apices$, the big side of this separation
is not contained in any dongle $\DongleA{i}$ or in any vortex $\VortexA{i}$.

The following statement is the main structural result of the Graph Minors series and appears
as (3.1) in~\cite{GM16}.
\begin{theorem}\label{thm:overview:GM16}
For every nonplanar graph $H$ there exist integers $k_H,\alpha_H \geq 0$ such that
for every $H$-minor-free graph $G$ and every tangle $\mathcal{T}$ of $G$ of order at least $k_H$, 
$G$ admits a near-embedding capturing $\mathcal{T}$ where the number of apices, the Euler genus of the surface,
the number of vortices, and the maximum adhesion size of a vortex are all bounded by $\alpha_H$.
\end{theorem}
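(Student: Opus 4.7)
The plan is to recognize that this is the Graph Minors Structure Theorem of Robertson and Seymour (specifically, the tangle-based formulation from (3.1) of~\cite{GM16}). A self-contained proof from scratch is infeasible — the original argument occupies a substantial fraction of the Graph Minors series — so I would proceed by describing the high-level strategy and delegating the deepest combinatorial steps to the existing literature.

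First I would extract from the tangle $\mathcal{T}$ a large ``highly connected'' substructure. Concretely, having order at least $k_H$ should be enough to apply the Flat Wall Theorem to obtain a dichotomy: either $G$ contains a $K_h$-minor where $h$ is chosen large enough (depending on $H$) to force $H$ as a minor — which would contradict $H$-minor-freeness — or $G$ contains a large flat wall $W$ whose interior is ``on the big side'' of every separation of $\mathcal{T}-\ApicesA$ for a bounded apex set $\ApicesA$. Since the $K_h$-minor alternative is ruled out, the flat wall alternative holds, and by choosing $k_H$ sufficiently large one can make the wall as large as one wishes relative to $\alpha_H$.

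Next I would bootstrap the flat wall into a genuine near-embedding. The flatness guarantees that the interior of $W$ already admits an embedding into a disk; the goal is then to extend this embedding across the whole graph minus $\ApicesA$. Any local obstruction to extending the embedding is absorbed into one of three controlled defects: (i) an additional apex is added to $\ApicesA$, (ii) a vortex is attached along a short noose on the boundary of a subdisk, or (iii) a bounded number of crosscaps or handles is attached to the host surface. An inductive argument shows that if the number of apices, vortices, the adhesion of any vortex, or the Euler genus of the surface were to exceed a threshold $\alpha_H$, then one could route enough vertex-disjoint paths through the big side of the tangle to construct a $K_h$-minor, again contradicting $H$-minor-freeness.

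The main obstacle — and the reason the full proof spans several Graph Minors papers — is controlling the \emph{vortex} structure: one must show that any ``inextensible'' portion of the flat wall is forced to be a vortex of bounded adhesion attached along a single noose, rather than some wilder object. This step rests crucially on the Two Paths Theorem and on a delicate analysis of societies and linkages. I would take this vortex-taming step as a black box. With it in hand, the remainder is a clean reduction: bounds on $\ApicesA$, on the number and adhesion of vortices, and on the Euler genus all propagate from an excluded-minor Ramsey argument, producing $k_H$ and $\alpha_H$ as explicit (tower-type) functions of $|V(H)|$.
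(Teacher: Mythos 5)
Your proposal correctly identifies the statement as the tangle-based formulation of the Graph Minors Structure Theorem, namely (3.1) of~\cite{GM16}, and treats it — as the paper does — as a black-box import from the literature rather than something to reprove; the paper's own ``proof'' is precisely this citation (re-stated later as Theorem~\ref{thm:GM16}, with the remark that the concrete phrasing follows Theorem~1 of~\cite{DiestelKMW12} and computability of $k_H,\alpha_H$ follows from the simplified proofs in~\cite{KawarabayashiW11,KTW20}). Your high-level sketch of the Robertson--Seymour argument (flat wall dichotomy, vortex taming via societies/linkages and the Two Paths Theorem, excluded-clique Ramsey bootstrapping) is a reasonable summary and entirely consistent with the paper's approach.
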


Thus, if we set the starting value $k_0$ to be at least $k_H + \alpha_H+1$, then Theorem~\ref{thm:overview:GM16} asserts that there exists a near-embedding $\nembed$ of $G$ that captures the unbreakability tangle where the number of apices and the maximum adhesion size of a vortex is bounded by $k_0$. This in particular implies that every dongle
and every bag of a vortex has size bounded by $q_0)$. 

To imitate the role of $\kappa$ from the bounded genus case, consider the following 
refinement process.
Recall that $\embed$ is the embedding of $\MainA$ in the near-embedding $\nembed$.
If the face-width of $\embed$ is small (i.e., bounded by a function of $q_0$),
or if two vortices are close in the radial distance in $\embed$ (again, meaning that the radial
distance is bounded as a function of $q_0$), we can move all vertices appearing
on the corresponding curve on the surface
to the apex set, decreasing either the Euler genus or the number of vortices.
If such a curve passes through a vortex or a disc corresponding to a dongle, we move also to the apex set the entire dongle or
two bags of a vortex --- the ones at society vertices where the curve entered and left the disc of a vortex.
In this way we have obtained a new near-embedding $\nembedB$, which uses fewer vortices or a simpler surface, but where the number of apices has grown to at most some function of $q_0$ --- call it $k_1$. 
Assume now the graph is $(q_1,k_1)$-unbreakable and iterate this process, as in the bounded genus case. The number of iterations is bounded by $\Oh(\alpha_H)$, since every iteration either decreases the genus or the number of vortices. So eventually the process stops after some $\iota\leq \Oh(\alpha_H)$ iterations, reaching a near-embedding that has at most $k_\iota$ apices, but has face-width
and the radial distance between vortices larger than any function of $q_\iota$ fixed in advance. 
Here we stop; we will analyze the family of all such near-embeddings, and call them henceforth \emph{optimal}.

Note that the argument above uses the unbreakability assumption only for a bounded --- in terms of $k_H$, $\alpha_H$, and the function $q$ --- initial values $q$. These bounds are form exactly the unbreakability chain of Theorem~\ref{thm:rigid}. 

At this moment, most of the reasoning for bounded genus can be adjusted. 
First, there is a well-defined notion of \emph{universal apices}: these are vertices of $G$ that 
are apices in all optimal-near embeddings. Then we can prove an analog of
Lemma~\ref{lem:overview:genus:local1}, which shows 
that in every optimal near-embedding, the attachment points of non-universal apices appear locally on the surface: up to technical details, they can be covered by a bounded number of bounded-radius balls.
Second, leveraging on that, we prove an analog of Lemma~\ref{lem:overview:genus:local2}: for any two optimal near-embeddings $\nembedA$ and~$\nembedB$, if we additionally
assume that $\nembedB$ has inclusion-wise minimal set of vertices contained in vortices, then every
vertex that is an apex or is in a vortex in $\nembedB$, is also either an apex of $\nembedA$
or is close (in radial distance) to a vortex or an attachment point of a non-universal apex of $\nembedA$. 

Finally, this allows us to argue the following. Define $Z$ to be the set of all vertices of $G$ that can be apices or be contained in vortices in an optimal near-embedding with an inclusion-wise minimal set of vertices
contained in vortices. Then  $G-Z$ contains a unique 3-connected component $H$ of large treewidth.
Further, any optimal near-embedding of $G$ as above projects to a near-embedding of $H$ without vortices
or apices, but into the same surface and with still large face-width. 
This allows us to use Theorem~\ref{thm:overview:fw-unique} to be able to proclaim $V(H)$ to be the $\Vgenus$
part. As before, we are left with arguing that the graph $G-\Vgenus = G[\Vtw]$ has bounded treewidth. Again, this follows from the fact that surface-embedded graphs have locally bounded treewidth, which we use in conjunction with the radial bounds provided by the analog
of Lemma~\ref{lem:overview:genus:local2}. 

Note that in the plan sketched above, we cannot directly apply Theorem~\ref{thm:overview:fw-unique} to $H$, as $H$ is not a completely embeddable graph: the near-embedding inherited from a near-embedding of $G$ has no apices or vortices, but still may contain dongles. To circumvent this issue, we prove that in $3$-connected graphs, the dongles can be chosen in a canonical way so that we may essentially speak about the \emph{unique} near-embedding of $H$. We remark that the canonical choice of dongles is also crucially used in the replacement argument underlying the proof of the analog of Lemma~\ref{lem:overview:genus:local2}.

\section{Preliminaries}\label{sec:prelims}
In most cases, we use standard graph notation, see e.g.~\cite{diestel}. For a graph $G$ we use $V(G)$ and $E(G)$ to denote the set of vertices and edges respectively. We use $n$ to denote $|V(G)|$ and $m$ to denote $|E(G)|$. For an undirected graph $G$ the neighborhood of a vertex $v$ is denoted by $N(v)$ and defined as $N(v) = \{u \colon uv \in E(G)\}$. The {\em closed neighborhood} of v$ $ is $N[v] = N(v) \cup \{v\}$. For a vertex set $S$, $N[S] = \bigcup_{v \in S} N[v]$ while $N(S) = N[S] \setminus S$. For a vertex set $S$ the {\em subgraph of $G$ induced by $S$} is the graph with vertex set $S$ and edge set $\{uv \in E(G) ~:~ \{u,v\} \subseteq S \}$. {\em Deleting} a vertex set $S$ from $G$ results in the graph $G - S$ which is defined as $G[V(G) \setminus S]$. Deleting an edge set $S$ from $G$ results in the graph $G - S $ with vertex set $V(G)$ and edge set $E(G) \setminus S$. Deleting a single vertex or edge from a graph $G$ is the same as deleting the set containing only the vertex or edge from $G$. A {\em subgraph} of $G$ is a graph $H$ which can be obtained by deleting a vertex set and an edge set from $G$. {\em Contracting} an edge $uv$ in a graph $G$ results in the graph $G / uv$ obtained from $G - \{u, v\}$ by adding a new vertex $w$ and adding edges from $w$ to all vertices in $N(\{u, v\})$. Contracting a set $S$ of edges results in the graph $G / S$, obtained from $G$ by contracting all edges in $S$. It is easy to see that the order of contraction does not matter and results in the same graph, and therefore the graph $G / S$ is well defined. If $H  = G / S$ for some edge set $S$ we say that $H$ is a {\em contraction} of $G$. $H$ is a {\em minor} of $G$ if $H$ is a contraction of a subgraph of $G$. 

In this paper all graphs can be multigraphs, that is, we allow parallel edges and loops.

A \emph{tree decomposition} of a graph $G$ is a pair $(T,\beta)$ where $T$
is a tree and $\beta : V(T) \to 2^{V(G)}$ is a function satisfying:
(i) for every $v \in V(G)$, the set $\{t \in V(T)~|~v \in \beta(t)\}$ is a nonempty connected
subgraph of $T$;
(ii) for every $uv \in V(G)$ there exists $t \in V(T)$ with $u,v \in \beta(t)$.
The \emph{width} of a tree decomposition $(T,\beta)$ is 
$\max_{t \in V(T)} |\beta(t)|-1$ and the \emph{treewidth} of a graph $G$
is the minimum possible width of its tree decomposition.
The sets $\beta(t)$ are called \emph{bags} while sets $\beta(t) \cap \beta(s)$ for $st \in E(T)$ are called \emph{adhesions}.
A \emph{path decomposition} is a tree decomposition $(T,\beta)$ where $T$ is a required
to be a path; \emph{pathwidth} is defined analogously.

For a tree decomposition $(T,\beta)$ of a graph $G$,  
the \emph{torso} of the bag $\beta(t)$ at $t$ is the graph created from $G[\beta(t)]$
by turning $\sigma(st)$ into a clique, for every edge $st$ of $T$ incident with $t$.

\embedsection{3-connected components}\label{sec:3conn}

We will need the following classic result about decomposing a graph into 3-connected components.

\begin{theorem}[\cite{CunninghamE80,HopcroftT73b}]\label{thm:3conn}
Given a graph $G$, one can in linear time compute a tree decomposition $(T,\beta)$ of $G$
such that every adhesion of $(T,\beta)$ is of size at most $2$, and every torso of a bag of $(T,\beta)$ is either $3$-connected, a cycle, or of size at most~$2$.
Furthermore, the family of bags of $(T,\beta)$ is isomorphism-invariant (but the tree $T$ may not be). 
\end{theorem}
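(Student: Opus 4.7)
The plan is to build the decomposition in two stages: first cut along vertex separators of size at most $1$, then refine each piece further along separators of size exactly $2$.

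Stage one: compute the block-cut tree of $G$. The nodes of $T$ are of two kinds: one for each block (maximal $2$-connected subgraph or bridge) and one for each cut vertex; $\beta$ of a block-node is its vertex set and $\beta$ of a cut-vertex-node is the singleton containing that cut vertex. Adhesions are singletons, so they are cliques of size at most $2$ automatically, and each block-bag's torso is just the block. Blocks of size at most $2$ and bridges already satisfy the torso requirement, so we only need to refine $2$-connected blocks $B$ of size at least $3$.

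Stage two: inside such a block $B$, proceed by Tutte's recursion. Find a $2$-separation $\{u,v\}$ of $B$, split $B$ along $\{u,v\}$ into several pieces (one per component of $B-\{u,v\}$ together with $\{u,v\}$), add a \emph{virtual edge} $uv$ in each piece so that the resulting adhesion $\{u,v\}$ becomes a clique in each torso, and recurse in every piece. The recursion terminates when the current piece has no nontrivial $2$-separation, meaning its torso is $3$-connected, is a cycle, or is a bond (a pair of vertices with parallel edges); in the last case the bag has size $2$. Finally I glue the per-block trees into the block-cut tree of stage one by identifying, for each block $B$ and each cut-vertex node $v$ lying in $B$, an edge between $v$ and the bag of $B$'s decomposition containing $v$. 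For the $O(n+m)$-time guarantee I invoke the Hopcroft--Tarjan triconnected-components algorithm~\cite{HopcroftT73b}, which produces exactly this decomposition; Cunningham and Edmonds~\cite{CunninghamE80} provide the theoretical underpinning justifying that this decomposition is well defined.

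The main obstacle is the isomorphism-invariance of the family of bags. The tree $T$ is not canonical: adjacent cycle-torsos joined by a shared virtual edge can be merged into one larger cycle or left separate, and similarly for bonds, so the raw recursion produces a family of trees rather than a single one. I would address this by imposing the standard normalization: greedily merge any two adjacent bags whose torsos are both cycles (resp.\ both bonds) into a single cycle (resp.\ bond); after normalization no two adjacent torsos share a type from $\{\text{cycle},\text{bond}\}$. The resulting bags correspond to maximal unions of cut-pairs of a single combinatorial type (rigid, polygonal, or series), and this classification is a purely structural invariant of $G$: an isomorphism $\pi\colon G_1\to G_2$ sends $2$-separations to $2$-separations and preserves the merging relation, hence induces a bijection between the normalized bag families. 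The tree $T$ itself may still depend on arbitrary choices of how to orient or lay out the pieces, but the \emph{set} $\{\beta(t):t\in V(T)\}$ is uniquely determined, which is precisely what the theorem asserts. The subtlety in the linear-time implementation is to carry out the cycle/bond merging step without re-traversing the decomposition, which Hopcroft--Tarjan already do implicitly via their split-components representation.
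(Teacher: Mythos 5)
The paper states this theorem as a citation to Cunningham--Edmonds and Hopcroft--Tarjan and offers no proof of its own, so there is no in-text argument to compare against. Your reconstruction follows the standard route: block--cut tree for the size-$\leq 1$ separators, Tutte/SPQR recursion with virtual edges inside each $2$-connected block, and the cycle/bond merging normalization to obtain the canonical $3$-block tree whose bag family is invariant. This is exactly what Cunningham--Edmonds establishes (uniqueness of the decomposition with no two adjacent cycle torsos and no two adjacent bond torsos), and Hopcroft--Tarjan gives the linear-time implementation. The argument is correct.

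Two small points worth tightening. First, the merging step should be argued confluent: merging a chain of adjacent cycle (resp.\ bond) bags yields the same final bag regardless of order, since sharing a size-$2$ adhesion is a transitive-closure operation on a tree; this is what makes the normalized bag family well-defined before one appeals to invariance. Second, when you split a block $B$ along a cut pair $\{u,v\}$ that is also an edge of $G$, the real edge $uv$ must be allocated as its own trivial piece and absorbed into the resulting bond node, otherwise a "cycle" torso could end up with a chord and fail to be a cycle; your sentence on splitting glosses over this, though it is standard in the split-components formalism you cite. Neither issue affects the validity of the overall approach.
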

The torsos of bags of the decomposition of $G$ provided by Theorem~\ref{thm:3conn} that are $3$-connected are called the \emph{3-connected components} of $G$.
Note that the family of 3-connected components of $G$ is isomorphism-invariant.

We will need the following simple observation.
\begin{lemma}\label{lem:tutte-tw}
Let $G$ be a graph, $A \subseteq V(G)$, and assume that for every 3-connected component $C$ of $G$, the treewidth of $C[A]$ is at most $w$.
Then the treewidth of $G[A]$ is at most $\max(w,2)$. 
\end{lemma}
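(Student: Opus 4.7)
My plan is to build a tree decomposition of $G[A]$ by combining local tree decompositions on each bag of a Tutte-type decomposition of $G$, glued together along that decomposition.

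First, I would apply Theorem~\ref{thm:3conn} to $G$ to obtain a tree decomposition $(T,\beta)$ in which every adhesion has size at most $2$ and every torso is either $3$-connected, a cycle, or of size at most~$2$. For each $t \in V(T)$, let $C_t$ denote the torso at $t$. The next step is to observe that for every $t$ the graph $C_t[A]$ has treewidth at most $\max(w,2)$: this holds by assumption when $C_t$ is $3$-connected; it holds because $C_t[A]$ is then a subgraph of a cycle when $C_t$ is a cycle; and trivially when $|V(C_t)|\leq 2$. Fix such a tree decomposition $(T_t, \beta_t)$ of $C_t[A]$ of width at most $\max(w,2)$ for each $t$.

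I would then glue the family $\{(T_t,\beta_t)\}_{t \in V(T)}$ into a single tree decomposition along the tree $T$. For each edge $st \in E(T)$, the adhesion $\sigma_{st} = \beta(s) \cap \beta(t)$ has size at most $2$, and because it forms a (possibly trivial) clique in both $C_s$ and $C_t$, the set $\sigma_{st} \cap A$ is contained in some bag $\beta_s(r_{st})$ of $(T_s,\beta_s)$ and in some bag $\beta_t(r_{ts})$ of $(T_t,\beta_t)$. I would form $T^*$ from the disjoint union $\bigsqcup_{t \in V(T)} T_t$ by adding the edge $r_{st}r_{ts}$ for every $st \in E(T)$, and let $\beta^*$ agree with each $\beta_t$ on $V(T_t)$. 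By construction, $T^*$ is a tree and every bag of $\beta^*$ has size at most $\max(w,2)+1$.

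Finally, I would verify that $(T^*,\beta^*)$ is a tree decomposition of $G[A]$. Vertex coverage is immediate: every $v \in A$ lies in some $\beta(t)$, hence in some bag of $(T_t,\beta_t)$. Edge coverage is also straightforward, because every edge $uv$ of $G[A]$ satisfies $u,v \in \beta(t)$ for some $t$, and then $uv \in E(C_t[A])$ is present in some bag of $(T_t,\beta_t)$. The only nontrivial property is connectivity: for a fixed $v \in A$, the set $\{t \in V(T) : v \in \beta(t)\}$ is connected in $T$, and for any edge $st$ inside this subtree one has $v \in \sigma_{st} \cap A$, so $v$ lies in both $\beta_s(r_{st})$ and $\beta_t(r_{ts})$; combined with connectivity of the fiber of $v$ within each individual $(T_t,\beta_t)$, the fiber of $v$ in $(T^*,\beta^*)$ is connected. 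I do not foresee a serious obstacle; the argument is a routine glueing along a tree decomposition of bounded adhesion, the only mild subtlety being that virtual edges in the torsos may cause the local decompositions to cover more edges than actually lie in $G[A]$, which is harmless.
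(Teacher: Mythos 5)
Your proposal is correct and takes essentially the same approach as the paper's proof: both apply Theorem~\ref{thm:3conn} to obtain the bounded-adhesion tree decomposition, build local tree decompositions of the torsos restricted to $A$, and glue them along adhesions using the fact that each adhesion is a clique of size at most $2$ in the adjacent torsos.
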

\begin{proof}
Let $(T,\beta)$ be the decomposition of $G$ provided by Theorem~\ref{thm:3conn}. 
For every $t \in V(T)$, let $C_t$ be the torso of $\beta(t)$ and let $(T_t,\beta_t)$ be a tree decomposition of $C_t[A]$ of width at most $\max(w,2)$
(note that the treewidth of a cycle equals $2$). 
Connect the trees $T_t$ into a tree $T'$, obtaining a tree decomposition $(T', \bigcup_{t \in V(T)} \beta_t)$ of $G[A]$ of width at most $\max(w,2)$, as follows:
For every $st \in E(T)$, connect by an edge a node of $T_s$ and a node of $T_t$ that both contain $A \cap \beta(s) \cap \beta(t)$ in their bags;
this is possible as $A \cap \beta(s) \cap \beta(t)$ is a clique of size at most $2$ in both the torso of $\beta(s)$ and the torso of $\beta(t)$.
\end{proof}

\embedsection{Embeddings and surfaces}

We use the standard notions of combinatorial embeddings for connected graphs. The notions of their equivalence,
   the face-vertex graph, one- and two-sided cycles, edge-width, face-width, and Euler genus, are
as in the textbook of Mohar and Thomassen~\cite{MTbook}.
For reviewer's comfort, we recall them briefly here. 

\paragraph{Combinatorial embeddings.}
Let $G$ be a connected (multi)graph.
A \emph{rotation system} is a family $\pi = (\pi_v)_{v \in V(G)}$ where every $\pi_v$
is a cyclic permutation of the edges incident with $v$. 
A \emph{signature mapping} is a function $\lambda : E(G) \to \{-1,1\}$. 
A \emph{combinatorial embedding} of $G$ is a pair $(\pi,\lambda)$ consisting of a rotation
system and a signature mapping of $G$. 
Intuitively, the cyclic permutation $\pi_v$ corresponds to the clockwise order of edges around $v$
in the embedding, and $\lambda(e)$ indicates whether the edge $e$ is ``twisted'', that is,
if the clockwise/counter-clockwise orders swaps when going along $e$. 
The \emph{face transversal procedure} is defined as follows. Start from an edge $e_1$ and its endpoint $v_1$. Continue to the other endpoint $v_2$ of $e_1$ and next edge $e_2 = \pi_{v_2}^{\lambda(e_1)}(e_1)$. That is, go to the next edge at $v_2$ if $\lambda(e_1) = 1$ and the previous edge
if $\lambda(e_1) = -1$. Continue to the other endpoint $v_3$ of $e_2$ and next edge 
$e_3 = \pi_{v_3}^{\lambda(e_1)\lambda(e_2)}(e_2)$. 
That is, whenever traversing an edge $e_i$ from $v_i$ to $v_{i+1}$, set
$e_{i+1} = \pi_{v_{i+1}}^{\prod_{j=1}^{i} \lambda(e_j)}$. 
Finish at step $\ell$ when $v_{\ell+1} = v_1$, $e_{\ell+1} = e_1$, and $\prod_{j=1}^{\ell} \lambda(e_j) = 1$. 
The cyclic sequence $v_1,e_1,v_2,e_2,\ldots,v_\ell,e_\ell$ is a \emph{facial walk} and defines
a face.
It can be shown that every edge appears on exactly two facial walks.

If an embedding $\embed = (\pi,\lambda)$ of $G$ has $f$ distinct facial walks, then
its \emph{Euler genus} is $\eulerg(\embed) = 2-|V(G)|-f+|E(G)|$. 

A \emph{local change} at a vertex $v$ changes $\pi_v$ to its inverse and
changes $\lambda(e) := -\lambda(e)$ for every edge $e$ incident with $v$. 
Two embeddings are equivalent if one can be transformed into another by a sequence
of local changes. 
An embedding $(\pi,\lambda)$ is \emph{orientable} if there exists an equivalent
embedding $(\pi',\lambda')$ with $\lambda(e) = 1$ for every edge $e$.
(Equivalently, $(\pi,\lambda)$ is orientable if $\prod_{e \in C} \lambda(e) = 1$ for every
 cycle $C$ in $G$.)

A cycle $C$ is \emph{two-sided} in the embedding $\embed=(\pi,\lambda)$ if $\prod_{e \in C} \lambda(e) = 1$ and \emph{one-sided} otherwise. For a cycle $C$, there is a natural notion of \emph{cutting along $C$}: we duplicate vertices and edges of $C$ and, for every $v \in V(C)$, distribute its incident edges between the copies into the part ``left of $C$'' and ``right of $C$''. 
If $C$ is one-sided, the result is a connected graph with an embedding of strictly smaller Euler genus.
If $C$ is two-sided, the result is either a connected graph with an embedding of strictly
smaller Euler genus or two graphs with embeddings of Euler genera whose sum is the 
Euler genus of $\embed$. In the latter case, if one of the parts has zero Euler genus,
we say that $C$ is \emph{contractible}, otherwise it is \emph{noncontractible}. 
A one-sided cycle is always noncontractible.

\paragraph{Surfaces.}
Unless explicitly stated, we consider only connected surfaces.
As in~\cite{RobertsonS88}, for integers $a,b,c \geq 0$, by $\Sigma(a,b,c)$ we denote the surface created from a (2-dimensional) sphere by adding $a$ handles and $b$ crosscaps, and then removing the interiors of $c$ pairwise disjoint closed discs. Following~\cite{RobertsonS88}, we call the boundaries of these $c$ discs \emph{cuffs}. 
Every compact connected surface is homeomorphic to $\Sigma(a,b,c)$ for some $a,b,c\geq 0$. 
For a surface $\Sigma = \Sigma(a,b,c)$, the \emph{capped surface} $\widehat{\Sigma}$ is created from $\Sigma$ by glueing back the $c$ discs, i.e., $\widehat{\Sigma}$ is homeomorphic to $\Sigma(a,b,0)$.

If $c=0$, then $\Sigma(a,b,c)$ is a surface without a boundary;
we will use $\Sigma(a,b)$ as a shorthand for $\Sigma(a,b,0)$. 
Note that the Euler genus of $\Sigma(a,b)$ is $\eulerg(\Sigma(a,b)) = 2a+b$. 
If $\Sigma = \Sigma(a,b,c)$, then we denote $\epsilon(\Sigma) = 4a+2b+c = 2\eulerg(\widehat{\Sigma}) + c$.
The invariant $\epsilon(\Sigma)$ can be understood as a measure of the complexity of the surface $\Sigma$.
For example, all the following operations strictly decrease $\epsilon(\Sigma)$: cutting along a curve in order to merge two cuffs, cutting a handle into two new cuffs, or
cutting along a one-sided closed curve to turn a crosscap into a cuff.

Unless otherwise stated, whenever we speak about a surface, we mean a connected surface without boundary, that is, homeomorphic to $\Sigma(a,b)$ for some $a,b\geq 0$. All surfaces (with boundaries or not) considered in this paper are compact.

Recall that (cf. Theorem~3.1.1 of~\cite{MTbook}) every surface (possibly with boundary) 
can be created (up to homeomorphisms) from a finite even set of triangles by 
(1) orienting every triangle, (2) grouping (not necessarily all) sides of triangles into pairs, and (3) within every pair, identifying the two sides of the pair in the way that respects the orientation. The non-paired sides of the triangles form the cuffs.
Consequently, every connected surface (possibly with boundary) can be created (up to homeomorphisms) from a disc by 
(0) selecting on the boundary of the disc an even number of equal-length arcs, (1) orienting every arc, (2) grouping all arcs into pairs, and (3) within every pair, identifying the two
arcs of the pair in the way that respects the orientation.

Furthermore, it follows immediately from the proof of Theorem~3.1.1 of~\cite{MTbook} that the number of arcs required in the above construction for a surface $\Sigma$ is $\Oh(\epsilon(\Sigma))$.

\embedsection{Additivity of Euler genus}

The (minimum) Euler genus of a graph $G$ is the minimum Euler genus over all (combinatorial) embeddings of $G$.
We need the following two results on the additivity of Euler genus.
\begin{lemma}[\cite{diestel}, Lemma~B.6]\label{lem:diestelB6}
Let $\Gamma$ be a surface of Euler genus $g$ and let $\mathcal{C}$ be a set of $g+1$ disjoint circles (homeomorphic images of $S^1$) in~$\Gamma$. 
If $\Gamma \setminus \bigcup \mathcal{C}$ has a connected component $D_0$
whose closure in $\Gamma$ meets every circle in $\mathcal{C}$,
then at least one of the circles in $\mathcal{C}$ bounds a disc in $\Gamma$ that is disjoint from $D_0$.
\end{lemma}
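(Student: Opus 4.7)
My plan is to prove Lemma~\ref{lem:diestelB6} by an Euler characteristic counting argument. First, I would cut $\Gamma$ along every circle in $\mathcal{C}$, obtaining a compact surface with boundary $\Gamma'$. Using the standard decomposition into a closed tubular neighborhood of each circle (which is an annulus if the circle is two-sided and a Möbius band if it is one-sided, both having Euler characteristic $0$, and with boundary also having Euler characteristic $0$) and its complement, inclusion--exclusion yields $\chi(\Gamma') = \chi(\Gamma) = 2-g$.

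The connected components of $\Gamma'$ correspond bijectively to the connected components of $\Gamma \setminus \bigcup \mathcal{C}$. Let $\Gamma'_0$ be the one corresponding to $D_0$, and let $\Gamma'_1, \dots, \Gamma'_{c-1}$ be the others. Since $\Gamma$ is connected and $\mathcal{C}$ is nonempty, every component must meet $\bigcup \mathcal{C}$ in its closure (otherwise it would be clopen in $\Gamma$), so each $\Gamma'_i$ is a compact connected surface with $b_i \ge 1$ boundary circles and some Euler genus $g_i \ge 0$. Consequently $\chi(\Gamma'_i) = 2 - g_i - b_i \le 1$, with equality precisely when $\Gamma'_i$ is a closed disc. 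The crucial use of the hypothesis is that the closure of $D_0$ meets every circle $C \in \mathcal{C}$, so each such $C$ contributes at least one boundary circle to $\Gamma'_0$, giving $b_0 \ge g+1$ and therefore $\chi(\Gamma'_0) \le 1 - g$.

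Summing over all components and letting $d$ denote the number of disc components among $\Gamma'_1, \dots, \Gamma'_{c-1}$, I obtain
\[ 2 - g \;=\; \chi(\Gamma') \;=\; \chi(\Gamma'_0) + \sum_{i \ge 1} \chi(\Gamma'_i) \;\le\; (1-g) + d, \]
so $d \ge 1$. Any disc component $\Gamma'_j$ with $j \ge 1$ is bounded by a single circle, which must be one of the circles of $\mathcal{C}$; this circle bounds a disc in $\Gamma$, and since $\Gamma'_j \ne \Gamma'_0$ this disc is disjoint from $D_0$.

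The most delicate step is verifying that cutting preserves $\chi$, which requires separate (but routine) neighborhood checks for one-sided versus two-sided circles; after that, the argument reduces to the classical identity $\chi = 2 - g - b$ for a compact connected surface with boundary. One should also note that, even if $D_0$ happens to meet both sides of some two-sided non-separating circle and thus contributes two boundary circles to $\Gamma'_0$ from that $C$, the lower bound $b_0 \ge g+1$ still holds, so the bookkeeping is unaffected. The boundary case $g=0$ (a single circle on the sphere) is already covered by the same inequality and reproduces the Jordan curve statement.
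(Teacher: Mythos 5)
Your Euler-characteristic counting argument is essentially the standard route; the paper itself does not prove this statement but cites it as Lemma~B.6 of Diestel's book, whose proof is of the same flavor, so the structure of your argument is fine. The bookkeeping is correct: cutting preserves $\chi$, each piece $\Gamma'_i$ has $\chi(\Gamma'_i)=2-g_i-b_i\le 1$ with equality exactly for discs, the hypothesis forces $b_0\ge g+1$ and hence $\chi(\Gamma'_0)\le 1-g$, and summing yields a disc component $\Gamma'_j$ with $j\ge 1$.

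There is, however, a small but genuine gap at the very end, and interestingly it concerns exactly the one-sided/two-sided distinction that you flag as relevant only for the step ``cutting preserves $\chi$.'' It also matters in the conclusion. A disc component $\Gamma'_j$ has a single boundary circle, and that boundary circle is not literally a circle of $\mathcal{C}$; it maps onto some $C\in\mathcal{C}$ under the gluing. If $C$ is two-sided, this map is a homeomorphism and the image of $\Gamma'_j$ in $\Gamma$ is a disc bounded by $C$, as you claim. But if $C$ is one-sided, the boundary circle of $\Gamma'_j$ double-covers $C$, gluing $\Gamma'_j$ back produces a projective plane rather than a disc bounded by $C$, and the conclusion ``this circle bounds a disc in $\Gamma$'' would be false. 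You need to rule this case out. One clean way: cutting along a one-sided circle $C$ creates a \emph{single} new boundary circle, which lies in exactly one component of $\Gamma'$; since the closure of $D_0$ meets $C$, that component is $\Gamma'_0$, so it cannot be $\Gamma'_j$. (Alternatively: if $\Gamma'_j$ with $j\ge 1$ were a disc whose only boundary double-covers a one-sided $C$, then regluing would make its image a closed projective plane, forcing $\Gamma\cong\mathbb{RP}^2$ and $\mathcal{C}=\{C\}$, contradicting $|\mathcal{C}|=g+1=2$.) With either remark inserted, the proof is complete.
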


\begin{lemma}[Stahl and Beineke~\cite{BS77}]\label{lem:genus-blocks}
The Euler genus of a graph $G$ is equal to the sum of the Euler genera of the blocks (the 2-connected components) of $G$.
\end{lemma}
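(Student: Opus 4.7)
My plan is to prove Lemma~\ref{lem:genus-blocks} by induction on the number of blocks of $G$; without loss of generality $G$ is connected, and the base case of a single block is immediate. For the inductive step, pick a leaf $B$ of the block-cut tree of $G$, let $v$ be the unique cut vertex of $G$ that lies in $B$, and set $G' \coloneqq G - (V(B)\setminus\{v\})$. Since the multiset of blocks of $G$ is the disjoint union of $\{B\}$ and the blocks of $G'$, invoking the inductive hypothesis on $G'$ reduces the task to proving the single equality $\eulerg(G)=\eulerg(B)+\eulerg(G')$.

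For the direction $\eulerg(G)\leq \eulerg(B)+\eulerg(G')$, the plan is to fix combinatorial embeddings $\Pi_B$ and $\Pi_{G'}$ of $B$ and $G'$ realizing their Euler genera and splice them at $v$: concatenate the two cyclic rotations $\pi^B_v$ and $\pi^{G'}_v$ at arbitrary splice points and union the two signature mappings on their disjoint edge sets. Running the face-transversal procedure, exactly one face of $\Pi_B$ incident to $v$ merges with exactly one face of $\Pi_{G'}$ incident to $v$ at the splice point, while all other faces are preserved; hence the resulting embedding $\Pi$ of $G$ has $f_B+f_{G'}-1$ faces, where $f_B$ and $f_{G'}$ are the face counts of $\Pi_B$ and $\Pi_{G'}$. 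Together with $|V(G)|=|V(B)|+|V(G')|-1$ and $|E(G)|=|E(B)|+|E(G')|$, the definition $\eulerg=2-|V|-f+|E|$ then yields $\eulerg(\Pi)=\eulerg(\Pi_B)+\eulerg(\Pi_{G'})$, as required.

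For the reverse inequality, the plan is to fix a minimum-Euler-genus embedding $\Pi=(\pi,\lambda)$ of $G$ and call $\pi_v$ \emph{separable} if the edges of $B$ at $v$ and the edges of $G'$ at $v$ appear as two consecutive arcs in the cyclic order $\pi_v$. Granted a separable minimum-genus $\Pi$, the splice construction above runs in reverse: split $v$ into copies $v_B\in B$ and $v_{G'}\in G'$ and restrict $(\pi,\lambda)$ to the two arcs, producing honest combinatorial embeddings of $B$ and $G'$ whose total face count is the face count of $\Pi$ plus one; the Euler-formula calculation then gives $\eulerg(B)+\eulerg(G')\leq \eulerg(\Pi)=\eulerg(G)$, which is the desired lower bound.

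The main obstacle is thus to exhibit a minimum-genus embedding of $G$ with separable rotation at $v$. My plan is to start from any minimum-genus embedding $\Pi$ that additionally minimizes the number $k$ of maximal $B$-arcs in $\pi_v$, and to argue $k=1$. Suppose $k\geq 2$; then $\pi_v$ contains at least four cross-block transitions (from a $B$-edge to a $G'$-edge or vice versa). Because $B-v$ and $G'-v$ lie in different connected components of $G-v$, one observes that each face walk of $\Pi$ makes an even number of cross-block transitions at $v$, so these transitions pair up within common face walks. I would then execute a local ``rotation swap'' at $v$ on such a matched pair---cyclically rearranging $\pi_v$ to move one $B$-arc past an adjacent $G'$-arc, possibly adjusting $\lambda$ on a single involved edge to keep the face structure consistent---and verify by face-transversal bookkeeping that this move strictly reduces $k$ while preserving the vertex, edge, and face counts, and hence the Euler genus, contradicting the choice of $\Pi$. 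Should this combinatorial move prove delicate to pin down, Lemma~\ref{lem:diestelB6} offers an alternative topological route: for $k\geq 2$, the interleaving at $v$ yields more than $\eulerg(\Pi)+1$ pairwise disjoint noncontractible circles on the embedding surface, forcing one of them to bound a disc along which the embedding can be simplified to strictly reduce $k$. Iterating reaches $k=1$ and finishes the proof.
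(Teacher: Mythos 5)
Lemma~\ref{lem:genus-blocks} is cited by the paper from Stahl and Beineke~\cite{BS77} and not proved there, so there is no in-paper proof to compare against; your argument has to be judged on its own. The skeleton — induction over the block--cut tree reducing to the single identity $\eulerg(G)=\eulerg(B)+\eulerg(G')$ at one cut vertex $v$ — is the right one, and the splicing argument for $\eulerg(G)\le\eulerg(B)+\eulerg(G')$ is essentially complete (the merged facial walk closes with signature product $1\cdot 1=1$, so the bookkeeping works out and the face count drops by exactly one, as you claim).

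The reverse inequality, which is the actual content of the lemma, is not established. You correctly reduce it to producing a minimum-genus embedding whose rotation at $v$ is separable, and the observation that every facial walk makes an even number of $B$/$G'$ transitions at $v$ is correct. But the ``rotation swap'' is never shown to preserve the face count, and in general it does not. Sliding one arc past an adjacent one is a cyclic $3$-change of the successor function at $v$: it replaces three corners by three new corners, and the number of facial walks changes by $-2$, $0$, or $+2$ according to whether the three affected corners lie on one, two, or three facial walks and in which cyclic order. Your ``matched pair within a common face walk'' constrains only two of the three affected corners and, more importantly, a matched pair in a facial walk need not be the two ends of the same $G'$-arc of $\pi_v$, so it does not even pin down which arc to slide; nothing rules out the $\Delta f=-2$ outcome, which would increase the genus rather than preserve it. The escape hatch ``adjusting $\lambda$ on a single involved edge'' is also not a legitimate move: flipping $\lambda(e)$ without the compensating reversal of a rotation at an endpoint of $e$ is not an equivalence of embeddings and can itself change $f$ and hence $\eulerg$ unpredictably. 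Finally, the fallback via Lemma~\ref{lem:diestelB6} is asserted without content — you neither exhibit the $\eulerg(\Pi)+1$ disjoint circles, nor argue they are noncontractible, nor verify the hypothesis about a component $D_0$ meeting all of them, nor explain how ``bounds a disc'' yields a $k$-reducing surgery. To close the gap one has to carry out a genuine surface surgery: for each mixed face, cut along a non-crossing system of arcs joining crossing corners at $v$ in consecutive pairs along the facial walk, splitting each mixed face into pure-$B$ and pure-$G'$ sub-faces, and then reassemble these pieces (together with the pure faces of $\Pi$) into separate embeddings of $B$ and of $G'$, verifying that the resulting Euler genera sum to at most $\eulerg(\Pi)$. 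Your sketch stops short of this, and the step it replaces it with is not sound as written.
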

\begin{corollary}\label{cor:genus-manyK5s}
Suppose a graph $G$ is constructed by taking $k$ connected nonplanar graphs, selecting one vertex from each of them, and identifying the selected vertices into a single vertex. Then the Euler genus of $G$ is at least $k$.
\end{corollary}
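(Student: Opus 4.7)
The plan is to apply Lemma~\ref{lem:genus-blocks} twice: once to the composite graph $G$ and once to each of the input graphs $G_1,\ldots,G_k$. Let $v$ denote the single identified vertex in $G$, coming from the chosen vertex of each $G_i$. The structural key is the observation that the blocks of $G$ are exactly the disjoint union, over $i\in\{1,\ldots,k\}$, of the blocks of $G_i$ (each $G_i$ being regarded as a subgraph of $G$ in the obvious way).

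To verify this block-structure claim, recall that two edges belong to the same block iff they lie on a common cycle (or coincide). Since $G$ is built by identifying one vertex from each $G_i$, any cycle $C$ in $G$ is entirely contained in some $G_i$: otherwise $C$ would have to enter and leave some $G_i$, and since the only vertex shared between different $G_i$'s is $v$, this would force $C$ to visit $v$ twice, which is impossible. Hence no block of $G$ can contain edges from two different $G_i$'s, and within a single $G_i$ the block decomposition is unaffected by the identification. Thus the multiset of blocks of $G$ is the disjoint union of the multisets of blocks of the $G_i$'s.

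Given this, Lemma~\ref{lem:genus-blocks} applied to $G$ yields $\eulerg(G) = \sum_B \eulerg(B)$, summed over the blocks $B$ of $G$. Regrouping the sum according to which $G_i$ each block belongs to and reapplying Lemma~\ref{lem:genus-blocks} to each $G_i$, this equals $\sum_{i=1}^{k}\eulerg(G_i)$. Since every $G_i$ is nonplanar, we have $\eulerg(G_i)\geq 1$ for each $i$, and therefore $\eulerg(G)\geq k$, as required.

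There is no real obstacle here; the only point requiring care is the block-structure observation, which is a direct consequence of the fact that the $G_i$'s pairwise intersect in a single vertex $v$ and so no cycle can traverse more than one $G_i$. Everything else is a bookkeeping application of the Stahl--Beineke additivity lemma and the definition of nonplanarity in terms of Euler genus.
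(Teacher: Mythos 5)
Your proof is correct and is essentially the argument the paper intends: the corollary is stated immediately after the Stahl--Beineke additivity lemma with no written proof, and the one-point (wedge-sum) block decomposition you describe is precisely what makes the deduction immediate. The key observation you highlight --- that identifying a single vertex from each $G_i$ makes the blocks of $G$ exactly the disjoint union of the blocks of the $G_i$'s, since no cycle can traverse two of the $G_i$'s without passing through the identified vertex twice --- is the right justification, and combining it with $\eulerg(G_i)\geq 1$ for each nonplanar $G_i$ gives $\eulerg(G)\geq k$.
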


The next lemma is a simple corollary of Euler's formula.
\begin{lemma}\label{lem:genus:delcc}
Let $H$ be a 3-connected graph of Euler genus $h$.
Then, for every non-empty $A \subseteq V(H)$, there are at most $\max(2|A|+2h-4,1)$ connected components of $H-A$.
\end{lemma}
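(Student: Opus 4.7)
The plan is to bound $c$, the number of connected components of $H-A$, by a standard Euler's-formula count applied to a suitable bipartite contraction of $H$. If $c=1$, then $c \leq 1 \leq \max(2|A|+2h-4,1)$ and we are done, so I will assume $c \geq 2$. In that case $A$ is a vertex separator of $H$, and 3-connectivity forces $|A|\geq 3$.

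First I would fix any embedding of $H$ in a surface $\Sigma$ of Euler genus $h$, enumerate the components of $H-A$ as $C_1,\dots,C_c$, and contract each $C_i$ to a single vertex $v_i$; contracting a connected subgraph of an embedded graph preserves the surface, so we obtain an embedding of the quotient graph in $\Sigma$. Inside this quotient I would isolate the bipartite graph $B$ with bipartition $(A,\{v_1,\dots,v_c\})$ in which $av_i$ is an edge precisely when some vertex of $C_i$ is adjacent in $H$ to $a$; discarding all other edges and parallel copies gives a \emph{simple} bipartite graph embedded in $\Sigma$. The 3-connectivity of $H$ now gives the lower bound: since $c\geq 2$, for every $i$ there is a component $C_j$ with $j\neq i$, and $N_H(C_i)\subseteq A$ separates $C_i$ from $C_j$ in $H$, so $\deg_B(v_i)=|N_H(C_i)|\geq 3$. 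Summing over $i$ gives $|E(B)| \geq 3c$.

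For the matching upper bound I would invoke the classical Euler-formula inequality for simple bipartite graphs on a surface of Euler genus $h$ with at least three vertices:
\[
|E(B)| \,\leq\, 2\,|V(B)| - 4 + 2h,
\]
which is obtained from $|V|-|E|+|F|\geq 2-h$ and the observation that every face of a simple bipartite embedding has at least $4$ edge-incidences (so $|F|\leq |E|/2$). Substituting $|V(B)|=|A|+c$ and combining with $|E(B)|\geq 3c$ yields $3c\leq 2(|A|+c)+2h-4$, i.e.\ $c\leq 2|A|+2h-4$, as required.

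The only minor subtlety I foresee is that the embedding of $B$ inherited from $\Sigma$ need not be cellular, and $B$ itself might be disconnected; however, only the \emph{inequality} form $|V|-|E|+|F|\geq 2-h$ is needed, which holds unconditionally (cellularity is only required for the equality version). No further obstacles are expected: the argument is essentially a one-shot Euler count, with the role of $3$-connectivity being to guarantee $\deg_B(v_i)\geq 3$ and $|A|\geq 3$.
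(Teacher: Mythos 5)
Your proof is correct and follows essentially the same route as the paper's: both contract the components of $H-A$ to single vertices to obtain a simple bipartite graph on $A$ and the component-vertices, observe that $3$-connectivity gives minimum degree $3$ on the component side, and then apply the Euler-formula count $|E|\leq 2|V|-4+2h$ coming from the girth-$\geq 4$ (bipartite) face-length bound. The only cosmetic differences are that you contract inside a fixed embedding of $H$ and explicitly flag the cellular/connectedness subtlety, while the paper takes an abstract minor $H'$ and a minimum-genus embedding of it; these amount to the same thing.
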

\begin{proof}
The claim is trivial when $|A|\leq 2$, so assume otherwise.
Consider a minor $H'$ of $H$ constructed as follows: delete all edges of $H[A]$ and 
contract every connected component of $H-A$ into a single vertex. Let $B$ be the set of vertices obtained through these contractions.
Clearly, $H'$ is of Euler genus at most $h$ and is a bipartite graph with sides $A$ and $B$.
Since $H$ is 3-connected and $|A|\geq 3$, every vertex of $B$ has degree at least $3$ in $H'$.

Fix an embedding of $H'$ of Euler genus at most $h$ and let $f'$ be the number of faces. Denote $m' = |E(H')|$ and $n' = |V(H')| = |A| + |B|$.
Since every face of this embedding is of length at least $4$, $f' \leq m'/2$. Since every vertex in $B$ is of degree at least $3$, $m' \geq 3|B|$. 
By Euler's formula
$$2-h \leq n' + f' - m' \leq n' - m'/2 \leq |A|+|B|-3|B|/2 = |A|-|B|/2.$$
Hence, $|B| \leq 2|A|+2h-4$, as desired.
\end{proof}

We will need the following theorem of Gallai.
\begin{theorem}[Gallai's theorem~\cite{Gallai61}]\label{thm:gallai}
Let $G$ be a graph, $S \subseteq V(G)$, and $\ell$ be an integer. 
Then $G$ contains at least one of the following: a family of $\ell$ vertex-disjoint paths with both endpoints in $S$,
or a set $A \subseteq V(G)$ of size at most $2\ell$ such that every connected component of $G-A$ contains at most one vertex of $S$.
\end{theorem}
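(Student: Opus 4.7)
The plan is to prove Gallai's classical min-max statement by induction on $\ell$. The base case $\ell = 0$ is trivially handled by $A = \emptyset$.

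For the inductive step, I would first invoke the induction hypothesis with parameter $\ell - 1$. If $G$ contains fewer than $\ell - 1$ vertex-disjoint $S$-paths, the induction hypothesis supplies a set $A$ with $|A| \leq 2(\ell - 1) \leq 2\ell$ separating $S$, and we are done. Otherwise, fix a maximum family $\mathcal{P}$ of $\ell - 1$ vertex-disjoint $S$-paths in $G$, and let $A_{\mathrm{end}}$ be the set of its $2(\ell - 1)$ endpoints, all of which lie in $S$. If $A_{\mathrm{end}}$ already separates $S$ in $G$ (that is, every component of $G - A_{\mathrm{end}}$ contains at most one vertex of $S$), we take $A = A_{\mathrm{end}}$ and are done with room to spare.

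Otherwise, there are two ``free'' $S$-vertices $u, v \in S \setminus A_{\mathrm{end}}$ lying in the same component of $G - A_{\mathrm{end}}$, and hence there exists a $u$-$v$ path $Q$ in $G - A_{\mathrm{end}}$. The key remaining step is an \emph{augmenting-path analysis}: the path $Q$ may pass through internal vertices of paths of $\mathcal{P}$, and one has to decide whether it can be used --- together with suitable rerouting of $\mathcal{P}$ --- to build $\ell$ vertex-disjoint $S$-paths in $G$. Either such a rerouting produces a family of $\ell$ disjoint $S$-paths (supplying the first alternative of the conclusion), or the obstruction to rerouting identifies at most two additional vertices whose inclusion in $A_{\mathrm{end}}$ yields a separator of $S$ of total size at most $2\ell$.

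An alternative, more direct, route is to reduce to Menger's vertex-connectivity theorem in the auxiliary graph $G^+$ obtained from $G$ by adding a fresh vertex $s$ adjacent to every vertex of $S$: families of vertex-disjoint $S$-paths in $G$ correspond to families of cycles in $G^+$ pairwise sharing only $s$, and sets $A$ separating $S$ in $G$ correspond to sets whose removal makes $s$ cut-equivalent to its neighbors. The main obstacle in either approach is showing the augmentation/obstruction dichotomy is exhaustive: unlike in bipartite matching, $S$-path packings do not admit a completely straightforward alternating-path augmentation, and one must carefully trace how $Q$ enters and exits the paths of $\mathcal{P}$ to reroute them without creating new conflicts. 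This is the core content of Gallai's original proof and can alternatively be extracted from Mader's stronger $S$-path theorem; in both routes, the factor of $2$ in the bound on $|A|$ reflects the fact that each $S$-path has two endpoints in $S$.
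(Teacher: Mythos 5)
Theorem~\ref{thm:gallai} is cited in the paper without proof, directly from Gallai~\cite{Gallai61}, so there is no in-paper argument to compare against; your proposal must stand on its own, and as written it is a road map rather than a proof.

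Both routes you sketch stop precisely at the hard step. In the inductive route, the claimed dichotomy --- either reroute to obtain $\ell$ disjoint $S$-paths, or add at most two extra vertices to $A_{\mathrm{end}}$ to get a separator --- \emph{is} the theorem and is not justified. It is also not obvious, because the optimum separator may be entirely unrelated to the endpoints of a maximum family: take $G$ a star with centre $x$ and $S$ equal to the leaves; every $S$-path passes through $x$, a maximum family has a single path whose endpoints do not separate $S$, and the correct separator $\{x\}$ is disjoint from $A_{\mathrm{end}}$. The star happens to stay within your budget after adjoining $x$, but whether a small obstruction always suffices is exactly the content of Gallai's argument and cannot simply be asserted. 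The Menger route is the usual way to obtain the factor-$2$ bound cleanly, but the detail is missing there as well: one splits each vertex of $G$ into an in-copy and an out-copy, attaches a source and a sink to the copies arising from $S$, excludes the degenerate one-vertex walks, and reads the minimum vertex cut back as an $S$-separator of size at most $2\ell$. The phrase ``cut-equivalent to its neighbours'' is not a theorem one can invoke; the translation between the Menger cut and the $S$-separator is precisely the bookkeeping in which the factor $2$ appears, and it is omitted.
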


By combining Lemma~\ref{lem:genus:delcc} and Theorem~\ref{thm:gallai}, we obtain the following.

\begin{corollary}\label{cor:gallai}
Let $k\geq 2$ be an integer, $H$ be a 3-connected graph of Euler genus $h$
and let $S \subseteq V(G)$ be a subset of vertices of size larger than $6k+2h-4$. 
Then $H$ contains $k$ vertex-disjoint paths with endpoints in $S$.
\end{corollary}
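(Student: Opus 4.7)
The plan is to apply Gallai's theorem (Theorem~\ref{thm:gallai}) to the pair $(H,S)$ with parameter $\ell \coloneqq k$ and derive the conclusion by ruling out the second outcome using the Euler genus bound from Lemma~\ref{lem:genus:delcc}.

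First I would observe that if the first alternative of Theorem~\ref{thm:gallai} holds, we directly obtain the desired family of $k$ vertex-disjoint paths with both endpoints in $S$, and there is nothing more to do. Hence assume the second alternative: there exists $A \subseteq V(H)$ with $|A| \leq 2k$ such that each connected component of $H-A$ contains at most one vertex of $S$. Consequently,
\[
|S| \;\leq\; |A| + \bigl(\text{number of connected components of } H-A\bigr).
\]

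Next I would bound the number of components of $H-A$ using Lemma~\ref{lem:genus:delcc}. If $A \neq \emptyset$, the lemma gives at most $\max(2|A|+2h-4,\,1)$ components, and since $|A|\leq 2k$ with $k\geq 2$ and $h\geq 0$, this is at most $4k+2h-4$ (and $4k+2h-4\geq 4\geq 1$, so the maximum is attained by the first term). If instead $A=\emptyset$, then $H-A=H$ is connected (as $H$ is $3$-connected), so there is a single component; this case is even more favorable. In either situation we conclude
\[
|S| \;\leq\; 2k + (4k+2h-4) \;=\; 6k+2h-4,
\]
which contradicts the assumption $|S| > 6k+2h-4$. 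Therefore the second alternative of Gallai's theorem cannot occur, and the first alternative produces the required $k$ vertex-disjoint paths with endpoints in $S$.

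I do not anticipate a real obstacle here: the proof is a two-line combination of Gallai's theorem with the genus-based component bound of Lemma~\ref{lem:genus:delcc}. The only mildly delicate point is checking that the bound $\max(2|A|+2h-4,1)$ simplifies cleanly under the hypotheses $k\geq 2$ and $h\geq 0$, and handling the degenerate case $A=\emptyset$ separately (which is immediate from $3$-connectivity implying connectivity).
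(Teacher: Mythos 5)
Your proof is correct and follows essentially the same route as the paper: apply Gallai's theorem with $\ell = k$ and rule out the second alternative via the component bound of Lemma~\ref{lem:genus:delcc}. The only difference is cosmetic — you count $|S| \leq |A| + (\text{number of components})$ whereas the paper lower-bounds the number of components by $|S|-|A|$ — and you also explicitly dispatch the degenerate case $A = \emptyset$, which the paper leaves implicit.
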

\begin{proof}
Assume the contrary. By Theorem~\ref{thm:gallai}, there is a set $A$ of at most $2k$ vertices
such that $H-A$ contains no path with both endpoints in $S$. But then
$H-A$ contains at least $|S \setminus A| \geq |S| - |A| > 4k+2h-4 \geq \max(2|A|+2h-4,1)$ connected
components, a contradiction to Lemma~\ref{lem:genus:delcc}. 
\end{proof}

\embedsection{Face-width, face-vertex curves, nooses}

Given a graph $G$ with an embedding $\embed$, the \emph{face-vertex graph} is the bipartite multigraph
$\widehat{G}$
with vertex set consisting of vertices and faces of $(G,\embed)$, where for every incidence between a vertex and a face in $(G,\embed)$ we add one edge between the vertex and the face in question.
The embedded graph $(G,\embed)$ naturally gives an embedding $\widehat{\embed}$ of $\widehat{G}$ into the same surface. 

The \emph{edge-width} of an embedding $\embed$ of $G$ is the minimum length of a noncontractible cycle in $G$ and $\embed$.
The \emph{face-width} of an embedded graph $(G,\embed)$ is the minimum number of facial walks
whose union contains a noncontractible cycle. Equivalently, it is half
of the edge-width of the face-vertex graph (cf.~\cite[Proposition 5.5.4]{MTbook}). 
By convention, the face-width of any planar embedding is $+\infty$. 

The \emph{radial distance} between two vertices in an embedding $\embed$ is half
of the distance between them in the face-vertex graph.

The above concepts can be also defined via \emph{face-vertex curves} and \emph{nooses}.

A \emph{face-vertex curve} is a sequence $\gamma = (v_1,f_1,v_2,f_2,\ldots,f_{\ell-1},v_\ell)$
where $v_i$ are vertices, $f_i$ are faces, and $v_i,v_{i+1}$ is incident with $f_i$
for $1 \leq i < \ell$. A \emph{self-intersection} of $\gamma$ is either a vertex appearing
at least twice on $\gamma$, except for the case $v_1 = v_\ell$ (i.e, indices $1 \leq i \neq j < \ell$, $\{i,j\} \neq \{1,\ell\}$, $v_i = v_j$),
or a face occurring twice such that their preceding and succeding
vertices interlace, that is, $f_i = f_j$ for some $1 \leq i \neq j < \ell$
with $v_i$, $v_{i+1}$, $v_j$, $v_{j+1}$ pairwise distinct and appearing in the walk around $f$ 
in the order $v_i$, $v_j$, $v_{i+1}$, $v_{j+1}$ or this order reversed.
The \emph{lenght} of a face-vertex curve is its number of faces, $\ell-1$.

A face-vertex curve has a natural topological representation as a Jordan curve in the surface
that passes through vertices $v_i$ and faces $f_i$ in the order as in $\gamma$. 
If $\gamma$ is without self-intersections, then the topological representation can be chosen
to be injective. 
A \emph{noose} is a face-vertex curve without self-intersections in which the first and 
the last vertex coincide, i.e., $v_1 = v_\ell$.

It is easy to see that the radial distance between two vertices is the length of the shortest
face-vertex curve with endpoints in the said vertices. 
Furthermore, the face-width of an embedded graph $(G,\embed)$ is the length of the shortest
noose whose topological representation is noncontractible in the surface. 

\embedsection{Walls}

An \emph{elementary wall} of height $h$ and width $w$ consists of $h$ disjoint paths $P_1,P_2,\ldots,P_h$, where each
path $P_i$ contains $2w$ vertices $(v_{i,j})_{j=1}^{2w}$ in this order.
Furthermore, for odd $1 \leq i < h$ there are edges
$v_{i,2j-1}v_{i+1,2j-1}$ for $1 \leq j \leq w$ and for even $1 \leq i < h$ there are edges $v_{i,2j}v_{i+1,2j}$ for $1 \leq j \leq w$.
If $h,w \geq 3$, then the \emph{suppressed elementary wall} of height $h$ and width $w$
is the elementary wall of height $h$ and width $w$, with first both degree-1 vertices 
deleted, and then all maximal paths with internal vertices of degree $2$ replaced with single
edges. Note that a suppressed elementary wall is 3-regular and 3-connected.

An \emph{elementary cylindrical wall} of height $h$ and width $w$ is obtained from an
elementary wall of height $h$ and width $d$ by adding edges
$v_{i,2w}v_{i,1}$ for $1 \leq i \leq h$. For $1 \leq i \leq h$, by $C_i$ we denote the cycle
consisting of the path $P_i$, closed with the edge $v_{i,2w}v_{i,1}$. 
Similarly, for $h,w \geq 2$ 
a \emph{suppressed elementary cylindrical wall} is an elementary cylindrical wall
of the same dimensions with all maximal paths with internal vertices of degree $2$ replaced with single edges. 

A \emph{wall} is any subdivision of a suppresed elementary wall and, similarly, 
a \emph{cylindrical wall} is any subdivision of a suppressed elementary cylindrical wall.

The (images under subdvision of) 
paths $P_i$ and cycles $C_i$ are called the \emph{rows} of a (cylindrical) wall.
For $1 \leq j \leq w$, the \emph{$j$-th column} of the cylindrical elementary wall
is the path 
$$v_{1,2j-1}-v_{2,2j-1}-v_{2,2j}-v_{3,2j}-v_{3,2j-1}-v_{4,2j-1}-\ldots.$$
The columns of a (cylindrical) wall are the images under subdivision of the above.

For a surface $\Sigma$ (possibly with a boundary),
we define the \emph{$\Sigma$-wall} of \emph{order $h$} as follows; see the left panel of Figure~\ref{fig:Sigma-wall} for an illustration.
Recall that $\Sigma$ can be created from a disc by selecting on the boundary of the disc $2\ell$ arcs of equal length, for some $\ell = \Oh(\epsilon(\Sigma))$,
orienting the arcs, grouping the arcs into pairs, and then identifying
the arcs within the pairs while respecting the orientation.
Let these arcs be $A_1,\ldots,A_{2\ell}$ in the counter-clockwise order around the disc, and let the pairing be $L_1,\ldots,L_\ell$ where each $L_j$ is a 2-element subset of $\{1,2,\ldots,2\ell\}$.

We start constructing an elementary $\Sigma$-wall of order $h$ from an elementary wall of height and width $2\ell h$.
For $1 \leq i \leq \ell$ and $1 \leq j \leq h$ we denote $u_{i,j} = v_{ 2\ell h - 2(i-1)h - (j-1),1}$ (these are vertices in the first column).
For $\ell < i \leq 2\ell$ and $1 \leq j \leq h$, we denote $u_{i,j} = v_{ (2i-2\ell-1)h + j,(4\ell+2) h}$ (these are vertices in the last column). 
For every pair $L_i = \{a,b\}$, we proceed as follows.
If the arcs $A_a$ and $A_b$ are oriented in the same direction (both clockwise or both counter-clockwise), then we add edges $u_{a,j} u_{b,j}$ for $1 \leq j \leq h$. 
Otherwise, if the arcs $A_a$ and $A_b$ are oriented in the opposite direction (one clockwise and one counter-clockwise), then we add edges $u_{a,j} u_{b,h+1-j}$ for $1 \leq j \leq h$.
Finally, a $\Sigma$-wall of order $h$ is any subdivision of an elementary $\Sigma$-wall of height $h$.

\begin{figure}[tb]
\begin{center}
\includegraphics{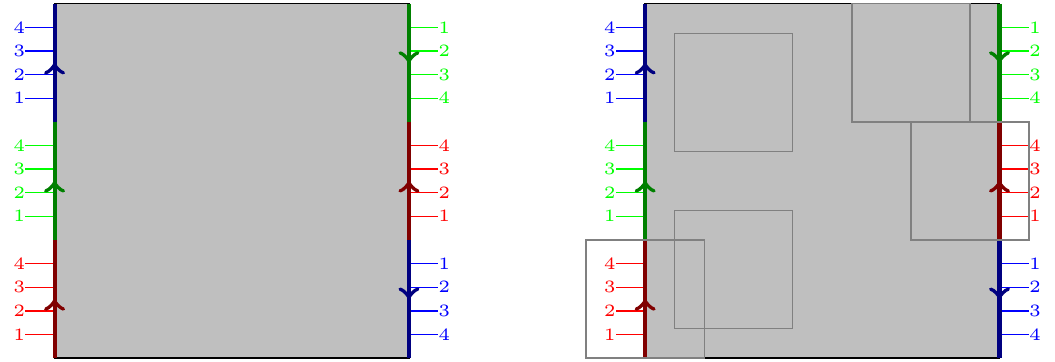}
\caption{Left panel: Construction of a $\Sigma$-wall. Colorful arrows represent arcs $A_i$, colors represent the pairs $L_j$. Two occurences of edges of the same color and number are actually two halves of the same edge. Right panel: gray squares are a few example subwalls used to cover a $\Sigma$-wall in the proof of Lemma~\ref{lem:sigma-wall-cover}.}\label{fig:Sigma-wall}
\end{center}
\end{figure}

By embedding a $\Sigma$-wall into $\Sigma$ in the natural way, following the construction of $\Sigma$ via arcs $A_1,\ldots,A_{2\ell}$, we have the following statement.
\begin{lemma}\label{lem:sigma-wall-embed}
A $\Sigma$-wall of order $h$ admits an embedding into $\Sigma$ with face-width at least $\lfloor h/2 \rfloor$. 
Furthermore, in the embedding there are no vertices on the cuffs and 
the radial distance between distinct cuffs is at least $h$.
\end{lemma}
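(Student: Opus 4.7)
The plan is to exhibit an explicit embedding of the $\Sigma$-wall into $\Sigma$ induced by the disc-with-identifications description of $\Sigma$ used in the construction, and then verify each of the three properties by inspection. A subdivided $\Sigma$-wall is handled by placing subdivision vertices along the embedded edges of the elementary $\Sigma$-wall, so it suffices to treat the elementary case.

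Fix the disc $D$ with arcs $A_1,\ldots,A_{2\ell}$ and pairing $L_1,\ldots,L_\ell$ giving $\Sigma$, and embed the underlying elementary wall of dimensions $2\ell h\times 2\ell h$ inside $D$ in the standard grid fashion, arranging the boundary vertices so that the $h$ vertices $u_{i,1},\ldots,u_{i,h}$ assigned to arc $A_i$ lie on $A_i$ in the prescribed order matching the chosen orientation. Every identification edge $u_{a,j}u_{b,\cdot}$ added by the construction is drawn along the seam created by identifying $A_a$ with $A_b$; after the quotient these edges collapse to loops at the identified vertex, so the topological picture defines a combinatorial embedding of the $\Sigma$-wall in $\Sigma$ unambiguously. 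By construction every wall vertex lies either strictly inside $D$ or on an arc $A_i$, both of which map into the interior of $\Sigma$, so the no-vertex-on-cuff claim follows from the observation that the cuffs of $\Sigma$ are exactly the connected components of $\partial D$ obtained after deleting the arcs.

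For the radial-distance bound between cuffs, I plan to use the placement formulas: the rows (resp.\ columns) of the wall containing vertices of any arc $A_i$ are flanked on both sides by at least $h$ consecutive rows (resp.\ columns) that contain no $u_{i',j'}$ at all. Hence, for any face-vertex walk in the wall that starts at a face incident with a gap between two arcs (i.e., a face touching one cuff) and ends at a face incident with a different gap, the walk has to cross at least $h$ such buffer rows or columns; since each traversed row or column contributes at least one vertex-face alternation and the radial distance is by definition half the face-vertex graph distance, halving yields the desired lower bound of~$h$.

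The main obstacle is the face-width bound, which I plan to establish by case analysis on the noose $\gamma$, according to whether its topological realization bounds a disc in the capped surface $\widehat{\Sigma}$ obtained from $\Sigma$ by gluing a disc onto every cuff. In the first case $\gamma$ is noncontractible in $\widehat{\Sigma}$: lifting $\gamma$ to $D$ by cutting along each seam it crosses yields a collection of simple face-vertex curves in $D$ whose endpoints lie on arcs, and noncontractibility in $\widehat{\Sigma}$ forces at least one lifted component $\gamma_0$ to join two arcs paired to each other. Since the $u_{i,j}$ are arranged into $h$ consecutive rows or columns along each arc, $\gamma_0$ must span at least $h$ rows or columns of the wall between its endpoints, and the faces of the wall being bounded in size then forces $\gamma_0$ to pass through at least $\lfloor h/2\rfloor$ distinct wall vertices. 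In the remaining case $\gamma$ bounds a disc in $\widehat{\Sigma}$ but is noncontractible in $\Sigma$, so it encloses at least one cuff; encircling a cuff requires a closed face-vertex cycle to wind around the corresponding gap on $\partial D$, and the $h$-row/column buffer between the two arcs adjacent to that gap again forces the cycle to contain at least $\lfloor h/2\rfloor$ wall vertices. In both cases, the desired face-width lower bound $\lfloor h/2\rfloor$ follows.
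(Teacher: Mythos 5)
Your proof takes up a task the paper declines to undertake (the paper simply asserts that ``embedding a $\Sigma$-wall into $\Sigma$ in the natural way'' yields the claimed properties), but the embedding you describe is not a valid embedding of the $\Sigma$-wall. You place $u_{a,j}$ on arc $A_a$ and $u_{b,\cdot}$ on the paired arc $A_b$ so that under the identification the two points coincide, and you then observe that the edge $u_{a,j}u_{b,\cdot}$ ``collapses to a loop at the identified vertex.'' But $u_{a,j}$ and $u_{b,\cdot}$ are \emph{distinct} vertices of the $\Sigma$-wall graph (one sits in the first column and the other in the last column of the underlying elementary wall), so an embedding must send them to distinct points of $\Sigma$. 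What you have described instead is an embedding of a quotient of the $\Sigma$-wall in which pairs of vertices are merged; this is not the $\Sigma$-wall. The correct natural embedding keeps $u_{a,j}$ and $u_{b,\cdot}$ slightly inside the disc, near (but not on) their respective arcs, with the edge $u_{a,j}u_{b,\cdot}$ crossing the seam once. This is the picture implicit in the paper, and it is what Figure~\ref{fig:Sigma-wall} depicts.

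Your face-width argument also has a gap in the noncontractible case. You claim that cutting $\gamma$ along the seams produces at least one piece ``joining two arcs paired to each other,'' and you derive the $\lfloor h/2 \rfloor$ bound from that. But if $\gamma$ crosses two different seams, the resulting pieces have their endpoints on arcs of \emph{different} pairs, not on the two arcs of one pair, so the claimed structure need not arise. What one actually gets is that each piece of $\gamma$ must traverse the roughly $2\ell h \times 2\ell h$ grid between two arc regions of the boundary of the disc, and it is this grid-traversal length that gives the $\Omega(h)$ lower bound; this should be argued directly rather than via the ``paired arcs'' reduction. Finally, the radial-distance step is off by the same ambiguity: if each crossed row contributes one vertex--face alternation (i.e., length $1$ in the face-vertex graph), then $h$ rows give radial distance $h/2$, not $h$ --- you need to argue that each crossed row contributes two edges of the face-vertex path, or that $2h$ such rows are crossed, and the buffer of $h$ empty rows alone does not obviously provide this.
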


We will also need the following two simple observations.
\begin{lemma}\label{lem:sigma-wall-cover}
Let $h \geq 4$.
In an elementary $\Sigma$-wall $W$ of order $h$ one can find a family ${\cal C}(W)$ consisting of $\Oh(\epsilon(\Sigma)^2)$ elementary walls of height and width $h$, all contained in $W$ as subgraphs, so that the following condition holds. For every edge and every vertex of $W$, either the edge or vertex in question lies within radial distance at most $\lceil h/4 \rceil$ from one of the endpoints of the arcs $A_a$ (vertices $u_{i,1}$ or $u_{i,h}$ for $i \in [2\ell]$), or 
there exists a wall $U\in {\cal C}(W)$ such that the edge or vertex in question is contained strictly between the $(\lfloor h/4 \rfloor-1)$-th row and $(h-\lfloor h/4 \rfloor+1)$-th row of $U$.
\end{lemma}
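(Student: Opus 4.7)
The plan is to define ${\cal C}(W)$ as a grid of $h\times h$ elementary subwalls of the underlying plain elementary wall $W_0$ (which has height $2\ell h$ with $\ell = \Oh(\epsilon(\Sigma))$), placed at vertical starting positions spaced by $\lfloor h/4 \rfloor$ rows and at horizontal starting positions spaced by $h$ wall-columns. A direct count yields $|{\cal C}(W)| = \Oh(\ell^2) = \Oh(\epsilon(\Sigma)^2)$ subwalls, each a subgraph of $W_0 \subseteq W$.

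For coverage, I would consider an arbitrary edge or vertex $x$ of $W$. Any edge of $W$ not contained in $W_0$ is one of the edges added during the arc identification step, and both of its endpoints are arc endpoints, so $x$ is at radial distance at most $1$ from an arc endpoint. Hence assume $x$ lies in $W_0$, and let $r$ denote the row coordinate of $x$ (taking the smaller endpoint-row if $x$ is an edge). If $r \in [\lfloor h/4 \rfloor, 2\ell h - \lfloor h/4 \rfloor]$, then by the choice of vertical spacing there exists a subwall $U \in {\cal C}(W)$ such that $r$ falls strictly between the $(\lfloor h/4 \rfloor - 1)$-th and $(h - \lfloor h/4 \rfloor + 1)$-th rows of $U$; and by the choice of horizontal spacing we can further arrange $U$ to contain $x$ horizontally. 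The second alternative of the lemma then holds.

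The remaining case is when $r$ lies in the top band $[1, \lfloor h/4 \rfloor - 1]$ (the bottom band is symmetric). By the construction of $W$, the arc patterns lie only in rows $h+1$ through $2\ell h$ of columns $1$ and $(4\ell+2)h$; in particular, the top $h$ rows of $W_0$ lie entirely above the arcs, so the corresponding portion of the disc boundary (the top row of $W_0$ together with the top $h$ rows of the first and last columns) forms a non-arc region and becomes part of a cuff of $\Sigma$. In the embedding of $W$ into the capped surface $\widehat{\Sigma}$ provided by Lemma~\ref{lem:sigma-wall-embed}, this non-arc region borders a single face $F$ whose boundary contains all top-boundary vertices of $W_0$ together with the adjacent arc endpoints $u_{\ell, h}$ and $u_{\ell+1, 1}$. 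Consequently every top-boundary vertex of $W_0$ is at radial distance at most $1$ from $u_{\ell, h}$ or $u_{\ell+1, 1}$. For a general $x$ at row $r$ and column $c$ in the top band, $x$ is at radial distance at most $r - 1$ from the top-boundary vertex at row $1$ and column $c$ via wall edges of $W_0$; combining, the total radial distance from $x$ to an arc endpoint is at most $r \leq \lfloor h/4 \rfloor - 1 \leq \lceil h/4 \rceil$, as required by the first alternative.

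The hard part will be carefully justifying that the face $F$ exists with the claimed boundary. Concretely, one needs to show that the identification edges of $W$, which all connect arc endpoints lying at row $\geq h+1$, cannot subdivide the portion of the outer face of the planar $W_0$ that lies above the arcs, so that after capping in $\widehat{\Sigma}$ the region in question becomes a single face bordered by all top-boundary vertices of $W_0$ and the two topmost arc endpoints $u_{\ell,h}$ and $u_{\ell+1,1}$. This follows from the fact that the added edges all lie in portions of the outer face separated (by the arcs) from the top non-arc region, together with the control over distances between cuffs guaranteed by Lemma~\ref{lem:sigma-wall-embed}.
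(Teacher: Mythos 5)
Your grid of $h \times h$ subwalls of $W_0$ is essentially the first batch of subwalls in the paper's construction, and the cuff-face argument you sketch for the top and bottom bands is the kind of reasoning the paper is implicitly invoking when it calls the verification straightforward. But there is a concrete gap that breaks the argument for all $h \geq 7$: the identification edges are not covered, and your justification for them is wrong.

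You write that any edge of $W$ not in $W_0$ has ``both of its endpoints [being] arc endpoints,'' hence is at radial distance at most $1$ from an arc endpoint. That is false. Such an edge has the form $u_{a,j}u_{b,j}$ (or $u_{a,j}u_{b,h+1-j}$) for some $j\in[h]$. Its endpoints are arc \emph{vertices}, but for $1<j<h$ they are not arc \emph{endpoints}: the lemma explicitly reserves that term for $u_{i,1}$ and $u_{i,h}$. For $j$ near $h/2$, the vertex $u_{a,j}$ is at radial distance roughly $h/2$ from both $u_{a,1}$ and $u_{a,h}$ in the natural embedding of $W$; the faces created by the gluing (each bounded by two consecutive identification edges) have the same ladder-like local structure and give no shortcut. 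So for $h\geq 7$, a middle identification edge satisfies neither alternative of the lemma: it is farther than $\lceil h/4\rceil$ from every arc endpoint, and it lies in no subwall of $W_0$ simply because it is not an edge of $W_0$. This is not a detail you can defer; as stated, your family $\mathcal{C}(W)$ does not satisfy the lemma.

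This is precisely what the paper's second batch of subwalls exists for, and it is the part you are missing. For each pair $L_i = \{a,b\}$, the paper adds three $h\times h$ subwalls of $W$ (not of $W_0$), positioned so that the identified column of vertices $\{u_{a,j} : j\in[h]\}$ sits in the $\lfloor h/4\rfloor$-th, $2\lfloor h/4\rfloor$-th, and $3\lfloor h/4\rfloor$-th columns of the respective subwall. These subwalls straddle the identification and therefore contain the identification edges and the nearby material on both sides. An identification edge at height $j$ with $\lfloor h/4\rfloor \leq j\leq h-\lfloor h/4\rfloor$ then lies strictly between the prescribed rows of such a subwall, while an identification edge with $j$ near $1$ or $h$ is within radial distance $\lceil h/4\rceil$ of $u_{a,1}$ or $u_{a,h}$ and is handled by the lemma's first alternative. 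You need to add these cross-arc subwalls to $\mathcal{C}(W)$; your proposal cannot be repaired without them, whereas with them the cuff-face discussion you flag as ``the hard part'' is only needed for the top and bottom bands, not for anything near the arcs.
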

\begin{proof}
Consider the construction of $W$ presented above and let $W_0$ be the initial elementary wall of height and width $2\ell h$.
First, add to ${\cal C}(W)$ every $h \times h$ subwall of $W_0$ whose top-left corner vertex (i.e. the one with the smallest indices) is $v_{hi+\iota \lfloor h/4 \rfloor,2hj+\zeta \lfloor h/4 \rfloor}$, where $0 \leq i,j < \ell$ and $\iota,\zeta \in \{0,1,2,3\}$. 
Then, for every pair $L_i = \{a,b\}$, add to ${\cal C}(W)$ three $h \times h$ subwalls
that have the vertices $u_{a,j}$ for $j \in [h]$ contained in their $\iota \lfloor h/4 \rfloor$-th column,
for $\iota \in \{1,2,3\}$. 
It is straightforward to verify the asserted properties of ${\cal C}(W)$, see the right panel of Figure~\ref{fig:Sigma-wall}.
\end{proof}
\begin{lemma}\label{lem:sigma-wall-holes}
Let $W$ be an elementary $\Sigma$-wall of order $h$. 
Let $Z$ be a set of vertices and edges of $W$ such that there exists a
set $A \subseteq V(W)$ and an integer $r$ such that $2r+1 < h$ and every element of $Z$ is within radial
distance at most $r$ from a vertex of $A$ in the natural embedding of $W$ into $\Sigma$. 
Then $W-Z$ contains a $\Sigma$-wall of order $h-|A| \cdot (2r+1)$. 
\end{lemma}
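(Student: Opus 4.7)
I would prove Lemma~\ref{lem:sigma-wall-holes} by a shift-and-pigeonhole argument on the elementary structure of $W$, where the shift ranges over attachment-slot positions of the arcs used in the construction of a $\Sigma$-wall.

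\textbf{Step 1 (shifted sub-$\Sigma$-walls).} Set $h' := h - |A|(2r+1)$, which we may assume is positive (otherwise the claim is vacuous). Recall the construction of $W$: it is obtained from an elementary flat wall $W_0$ of height and width $2\ell h$ (where $2\ell = \Oh(\epsilon(\Sigma))$), with each of the $2\ell$ arcs carrying $h$ attachment vertices $u_{i,1},\ldots,u_{i,h}$, and with pairings $L_j=\{a,b\}$ identifying attachments either directly ($u_{a,j}\sim u_{b,j}$) or with reversed orientation ($u_{a,j}\sim u_{b,h+1-j}$).  For each shift $t \in \{0, 1, \ldots, h-h'\}$, I would construct a subdivision of an elementary $\Sigma$-wall of order $h'$ sitting inside $W$ by keeping only the attachment indices $j \in \{t+1, \ldots, t+h'\}$ on each arc, using the complementary reversed range on the partner of any reverse-orientation pair, and taking the corresponding rows and columns (together with connecting subpaths of $W_0$) to form the interior of the wall. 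One checks that all arc identifications still pair up consistently after the shift, so the resulting subgraph is indeed a subdivision of an elementary $\Sigma$-wall of order $h'$.

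\textbf{Step 2 (ball coverage).} I would then show that each vertex $a \in A$ invalidates at most $2r+1$ of the $h-h'+1$ shifted subwalls from Step 1. Because $2r+1 < h$, the ball of radius $r$ around $a$ in the natural embedding of $W$ into $\Sigma$ is contained in a region of $W_0$ that spans at most $2r+1$ consecutive attachment indices on any arc and at most $2r+1$ consecutive rows/columns elsewhere; in particular, it cannot wrap around to a disjoint slot on the same or another arc. Hence the ball meets the $t$-th shifted subwall for at most $2r+1$ values of $t$.

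\textbf{Step 3 (pigeonhole).} Since $Z \subseteq \bigcup_{a \in A} B_r(a)$, the total number of shifts invalidated by $Z$ is at most $|A|(2r+1) = h-h'$. As there are $h-h'+1$ available shifts, some shift $t^*$ survives, and its associated subdivision of an elementary $\Sigma$-wall of order $h'$ is entirely contained in $W-Z$, as required.

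\textbf{Main obstacle.} The main technical hurdle is the rigorous verification of Step 1: showing that the orientation-respecting shifted selection really defines a subdivision of an elementary $\Sigma$-wall of order $h'$, in particular for reversed arc pairings where a shift by $t$ on one arc must be matched by the mirrored shift $h-h'-t$ on its partner. Once this is in place, Step 2 is a local geometric computation using the hypothesis $2r+1<h$ to avoid wrap-around, and Step 3 is one line of pigeonhole.
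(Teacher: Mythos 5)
There is a genuine gap in Step~2, and I do not think the shift-and-pigeonhole strategy can be repaired without essentially rewriting it into the paper's deletion argument.

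Your Step~2 claims that each $a\in A$ invalidates at most $2r+1$ of the $h-h'+1$ shifted subwalls, which is exactly what you need for the pigeonhole in Step~3. But that claim does not follow from ``$B_r(a)$ spans at most $2r+1$ consecutive attachment indices.'' The shift-$t$ subwall uses the \emph{contiguous} slot window $\{t+1,\ldots,t+h'\}$, and since $h'=h-|A|(2r+1)$ is typically close to $h$, these windows overlap enormously. Concretely, if $a$ lies at a slot $j\approx h/2$ (or, more generally, at an interior vertex of $W_0$ far from the first and last slots), then $B_r(a)$ occupies slots $\{j-r,\ldots,j+r\}$, and the window $\{t+1,\ldots,t+h'\}$ meets this range for every $t$ with $\max(0,j-r-h')\le t\le\min(h-h',j+r-1)$. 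Taking $|A|=1$ and $h'=h-(2r+1)$, the lower bound is $0$ and the upper bound is $h-h'$, so this \emph{single} vertex invalidates \emph{all} $h-h'+1$ shifts, not at most $2r+1$ of them. The pigeonhole in Step~3 therefore fails, and the conclusion of the lemma is not reached even in this toy case.

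The paper sidesteps this by deleting the at most $2r+1$ bad slots per vertex of $A$ instead of sliding a contiguous window. After deletion the surviving slot set is a possibly \emph{scattered} subset of $[h]$ of size at least $h-(2r+1)|A|$, and the $\Sigma$-wall of that order is built on the surviving slots together with the surviving rows and columns of $W_0$ (discarding the degree-$2$ vertices and edges on the marked rows and the affected columns). Deletion can ``skip around'' a ball sitting in the middle of the slot range, which a sliding window cannot, and that is precisely what your counterexample exploits. The radial-distance lower bound~\eqref{eq:mark-sigma-wall-holes} in the paper is the tool that shows only $2r+1$ slots per $v\in A$ need be marked; your Step~2 correctly identifies that local bound, but the downstream logic of a single shift parameter does not convert it into the desired conclusion.
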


\begin{proof}
Again, consider the construction of $W$ presented above and let $W_0$ be the initial elementary wall of height and width $2\ell h$.
(All radial distances in the proof below are with respect to the $\Sigma$-wall $W$.)

The crucial observation is as follows. Consider two vertices $v$ and $v'$ in rows $(i-1)h+j$ and $(i'-1)h+j'$, respectively, for $1 \leq i,i' \leq \ell$, $1 \leq j,j' \leq h$.
Consider a shortest radial path between them. It either traverses all rows in between or goes via one of the vertices in the row $v_{(i''-1)h,1}$, $v_{(i''-1)h+1,1}$, $v_{(i''-1)h,2\ell h}$, $v_{(i''-1)h+1,2\ell h}$
for $1 \leq i'' \leq \ell$, or via a vertex in the first or last row. 
We infer that the radial distance between $v$ and $v'$ is at least
\begin{equation}\label{eq:mark-sigma-wall-holes}
\min(|j-j'|, (j-1) + 1 + (j'-1), (h-j) + 1 + (j'-1), (j-1) + 1 + (h-j'), (h-j) + 1 + (h-j'). 
\end{equation}
We infer that for every $1 \leq i'' \leq \ell$, there is a vertex $v'$ within radial distance at most $r$ from $v$ in row $(i''-1)h+j''$ for at most $2r+1$ indices $1 \leq j'' \leq h$. 

For every $v \in A$, if $v$ lies in row $(i-1)h+j$, then we mark all rows $(i'-1)h+j'$ that satisfy~\eqref{eq:mark-sigma-wall-holes}. Observe that 
if a vertex $u$ is within radial distance $r$ from $v \in A$ and $u$ is not on a marked row that is one of the first $r$ or last $r$ rows, then 
the column index of $v$ and the column index of $u$ differ by at most $r$, if we assume cyclic order of columns. 
We infer that by discarding the edges and vertices of degree $2$ of the $(2r+1)|A|$ marked rows and all columns containing vertices $u$ as in the previous sentences, we obtain
the desired $\Sigma$-wall.
\end{proof}

\embedsection{Large face-width implies surface minor}

The following theorem is the main result of~\cite{RobertsonS88}; cf. (18.1) in~\cite{GM8}. We remark that the statement in~\cite{GM8} is more general, as it also considers the case when $H$ has vertices on cuffs. We will not need this level of generality here, hence the statement is simplified.

\begin{theorem}[\cite{RobertsonS88}]\label{thm:GM7}
For every surface $\Sigma$ (possibly with a boundary) and a graph $H$ embedded in $\Sigma$ without any vertex on a cuff, there exists a constant $f_{\mathrm{fwf}}(\Sigma,H)$ such that every graph $G$
embedded in $\Sigma$ without a vertex on a cuff, with facewidth at least $f_{\mathrm{fwf}}(\Sigma,H)$ and with radial distance between distinct cuffs at least
$f_{\mathrm{fwf}}(\Sigma, H)$, admits $H$ as a surface minor. 

Furthermore, function $f_{\mathrm{fwf}}$ is computable.
\end{theorem}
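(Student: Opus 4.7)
The plan is to reduce the theorem to two statements: (i) a graph $G$ embedded in $\Sigma$ with sufficiently large face-width and sufficiently large radial distance between cuffs contains, as a subgraph embedded consistently with its surface embedding, a $\Sigma$-wall of large order $h$; and (ii) a $\Sigma$-wall of sufficiently large order (depending on $H$) contains $H$ as a surface minor. Combining the two, the constant $f_{\mathrm{fwf}}(\Sigma,H)$ is chosen large enough to drive both steps: the cutoff from (i) produces a $\Sigma$-wall of the resolution required in (ii).

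For (ii), I would argue that the $\Sigma$-wall of order $h$ naturally embeds into $\Sigma$ (Lemma~\ref{lem:sigma-wall-embed}) in a way that can be viewed as a canonical mesh of $\Sigma$ at resolution $h$. Given $H$ embedded in $\Sigma$ without any vertex on a cuff, one perturbs the embedding of $H$ so that it lies at positive distance from every cuff, and then, for $h$ sufficiently large in terms of $|V(H)|+|E(H)|$ and $\epsilon(\Sigma)$, each vertex of $H$ can be placed inside a face of the $\Sigma$-wall while each edge is routed along a sequence of faces of the $\Sigma$-wall in a way consistent with $H$'s cyclic order around each vertex. This produces a subdivision of $H$ as a subgraph of the $\Sigma$-wall whose induced embedding into $\Sigma$ is equivalent to the original embedding of $H$, witnessing $H$ as a surface minor.

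The harder step is (i), which is a surface analogue of the excluded-grid theorem. The strategy is: large face-width first guarantees that for each handle, crosscap, and cuff of $\Sigma$ one can locate a noncontractible simple closed curve in $G$ going ``around'' that feature, such that cutting $\Sigma$ along all these curves flattens it into a disc. Large face-width together with large cuff-to-cuff radial distance then yield that each such curve is surrounded in $G$ by a wide concentric cylindrical wall, and that the planar remainder of $G$ after cutting contains a large planar elementary wall linking the boundaries of those cylindrical walls. Gluing the cylindrical walls to the planar bridging wall following the pairing pattern of the arcs $A_1,\dots,A_{2\ell}$ used in the standard construction of $\Sigma$ produces a $\Sigma$-wall of the required order embedded consistently with the surface structure of $G$.

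The main obstacle is precisely this gluing step inside (i): one must verify that, after cutting, the two copies of each cut curve receive boundary-compatible cylindrical walls whose orientations agree with the prescribed pairing $L_1,\dots,L_\ell$, which is where the subtleties of working with non-orientable surfaces and twisted signatures enter. Once this orientation-tracking is done, computability of $f_{\mathrm{fwf}}$ is routine: the face-width threshold for extracting short noncontractible cycles, the width of the concentric cylindrical walls needed around each feature, and the resolution of the $\Sigma$-wall needed to accommodate $H$ in step (ii) are all explicit computable functions of $\epsilon(\Sigma)$ and the size of $H$.
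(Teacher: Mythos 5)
Your proposal attempts a from-scratch reproof of (18.1) of Graph Minors~VII, whereas the paper does not reprove the theorem at all: it cites~\cite{RobertsonS88} as a black box for the existential statement and only shores up the \emph{computability} of $f_{\mathrm{fwf}}$. The concrete issue, as the paper explains, is that a single step in the GM7 proof --- statement (3.5), which bounds how a forest in a surface can be homotoped to minimize crossings with a given matching --- was originally proved non-constructively. The last paragraph of~\cite{GM13} argues informally that the rest of GM7 is effective, and~\cite{GeelenHR18} later proved an explicit version of (3.5) under an additional connectivity assumption ($\Sigma - M$ connected). All the paper actually supplies (Appendix~\ref{app:GM7fix}) is a reduction from the general (3.5) to the Geelen--Huynh--Richter special case: it builds an auxiliary matching $M_C$ (via Lemma~\ref{lem:GM7-cut} and Lemma~\ref{lem:features-additive}) that chops $\Sigma - M$ into discs, then re-glues a subset $M_B \subseteq M$ so that the remaining matching $M_A \cup M_C$ has connected complement, applies Theorem~\ref{thm:GM7-GHR}, and finally controls the crossings with $M_B$ inside a single disc. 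That yields the explicit bound in Theorem~\ref{thm:GM7-3-5-fixed}, which is the only new ingredient.

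By contrast, your step (i) --- extracting a $\Sigma$-wall from any graph with large facewidth and cuff separation --- is essentially the entire content of GM7 (and its precursors). The sketch you give (cut along noncontractible curves, find concentric cylindrical walls, find a planar bridging wall in the cut surface, glue) does track the shape of the argument at a very coarse level, but the sentence ``Once this orientation-tracking is done, computability is routine'' buries precisely the part that is \emph{not} routine: controlling how curve systems cross while being rerouted to respect the arc pairing is exactly where statement (3.5) is invoked in the original proof, and that is the non-constructive step that the appendix exists to fix. Step (ii) is fine (it is the easy direction; cf.\ Lemma~\ref{lem:sigma-wall-embed}), but (i) as written is a restatement of the theorem with a plausibility argument rather than a proof. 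If you do want to take a direct route, you would still have to prove a quantitative version of (3.5) --- at which point you would be doing what the appendix does; there is no avoiding it.

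So: your proposal is not wrong in spirit, but it proposes to redo a long cited paper and then waves away the one genuinely hard and non-constructive lemma, while the paper's actual contribution for this theorem is precisely an explicit bound for that lemma, obtained via a short reduction to~\cite{GeelenHR18} rather than a from-scratch argument.
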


A discussion of the computability statement of Theorem~\ref{thm:GM7} is in order.
The proof of~\cite{RobertsonS88} provides only the existential part of Theorem~\ref{thm:GM7},
as one step in the proof --- statement (3.5) in~\cite{GM8} --- is proven only
in purely existential way. 
This issue has been addressed first in the last paragraph of~\cite{GM13}, where the authors
argue (albeit not rigorously)
that statement (3.5) in~\cite{GM8} admits a proof that gives a computable bound, which directly translates to the computability of function $f_{\mathrm{fwf}}$ provided by Theorem~\ref{thm:GM7}.
Then, Geelen, Huynh, and Richter in~\cite{GeelenHR18} addressed the issue rigorously and provided
an explicit bound for the statement (3.5) of~\cite{GM8} with an additional
assumption that was sufficient for their needs. 
For completeness, in Appendix~\ref{app:GM7fix} we provide 
a proof of an explicit bound for the said statement (3.5) of~\cite{GM8} without the assumption
made by~\cite{GeelenHR18}. In fact, we show how to deduce the general case from their result,
 circumventing the additional assumption. 

We will mainly use Theorem~\ref{thm:GM7} for $\Sigma$-walls. 
For convenience, by $f_{\mathrm{fwf}}(\Sigma,h)$ we denote $f_{\mathrm{fwf}}(\Sigma,H)$ where $H$ is an elementary $\Sigma$-wall of order $h$.

\embedsection{Uniqueness of an embedding}
Recall that two embeddings are considered equivalent if they have the same sets of facial walks. A connected graph $G$ has a {\em{unique embedding}} in a surface $\Sigma$ if all its embeddings are pairwise equivalent in this sense.
An important property is that large face-width makes the embedding unique.

\begin{theorem}[Whitney's theorem]\label{thm:whitney}
A 3-connected planar graph has a unique plane embedding.
\end{theorem}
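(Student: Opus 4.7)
My approach is via Tutte's classical characterization: in a 3-connected planar graph $G$, the facial cycles in any plane embedding are exactly the \emph{peripheral} cycles, i.e., induced cycles $C$ for which $G - V(C)$ is connected. The crucial point is that peripherality depends only on the abstract graph $G$ and not on any embedding. Hence, once the characterization is proved, all plane embeddings of $G$ share the same family of facial cycles and are therefore equivalent in the sense defined in Section~\ref{sec:prelims}: the rotation system and signature, up to local changes, are recovered from the set of facial walks.

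I prove the characterization in two directions. For the direction ``facial $\Rightarrow$ peripheral,'' let $C$ be a facial cycle bounding a face $F$ in some plane embedding, and let $R$ denote the complementary sphere region, which by the Jordan curve theorem contains all vertices of $V(G) \setminus V(C)$. To show $C$ is induced, suppose towards contradiction that $C$ has a chord $uv$. Then $uv$ must be drawn in $R$, and together with the two arcs $P_1, P_2$ of $C$ it forms a theta configuration splitting $R$ into two subregions. Picking internal vertices $x \in P_1$ and $y \in P_2$, 3-connectivity gives an $x$-$y$ path in $G - \{u,v\}$, which however would have to either cross the chord $uv$ or enter the face $F$ --- topologically impossible. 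For the non-separating property, assume $G - V(C)$ has at least two components and let $H_2$ be one of them, embedded inside a subregion of $R$ bounded by arcs of $C$ and paths through the remaining components of $G-V(C)$. A case analysis on the number of boundary ``attachment points'' of this subregion on $C$, combined with $|N_G(V(H_2))| \geq 3$ forced by 3-connectivity, then exhibits a 2-cut of $G$, yielding a contradiction.

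For the direction ``peripheral $\Rightarrow$ facial,'' I observe that $C$, being an induced cycle, is drawn as a Jordan curve in any plane embedding, bounding two open discs on the sphere. If $C$ were not facial, then each of these discs would contain at least one vertex of $V(G) \setminus V(C)$, and such vertices from different discs would necessarily belong to different components of $G - V(C)$, contradicting the hypothesis that $G-V(C)$ is connected.

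The main obstacle is the forward direction, specifically the topological care required to convert the theta-like and ``nested subregion'' configurations into genuine graph-theoretic contradictions via 3-connectivity and Menger's theorem. Once Tutte's characterization of facial cycles is in hand, uniqueness of the plane embedding is essentially immediate: the facial walks are determined by the cyclic orderings along each peripheral cycle, and these orderings in turn pin down both the rotation system and the signature mapping up to local changes, i.e., up to equivalence.
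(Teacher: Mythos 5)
The paper states Whitney's theorem as a classical result without giving a proof, so there is no ``paper's own proof'' to compare against. Your route via Tutte's characterization of facial cycles as the non-separating induced (peripheral) cycles is the standard textbook argument (see e.g.\ Diestel, Prop.~4.2.7, or Mohar--Thomassen), and your sketch is sound. Two details worth noting. First, your final step becomes essentially immediate given the paper's definition of equivalence of embeddings (equal sets of facial walks): once you know the peripheral cycles are an abstract graph invariant and coincide with the facial cycles, the set of facial walks is forced, and there is nothing more to argue. Second, in the ``facial $\Rightarrow$ non-separating'' step, the $2$-cut you allude to can be named explicitly: if $H_1,H_2$ are distinct components of $G-V(C)$, the attachment points of $H_1$ on $C$ (at least three of them, by $3$-connectivity) split the region opposite the face into sub-regions whose boundaries meet $C$ in arcs between consecutive attachment points $a_i,a_{i+1}$; the component $H_2$ lies in one such sub-region, and then $\{a_i,a_{i+1}\}$ separates $H_2$ (together with the interior of that arc) from the rest of $G$, contradicting $3$-connectivity. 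Spelling this out would remove the only vagueness in your plan.
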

\begin{theorem}[\cite{SeymourT96}]\label{thm:fw-unique}
There is a function $f_{\mathrm{ue}}(g) \in \Oh(\log g / \log \log g)$
such that if $G$ is $3$-connected and 
$\embed$ is an embedding of $G$ of Euler genus $g$
 and face-width at least $f_{\mathrm{ue}}(g)$,
 then $g$ is the mininum Euler genus of $G$ and $\embed$ is the unique embedding of 
 Euler genus $g$.
\end{theorem}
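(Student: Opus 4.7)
The plan is to proceed by induction on the Euler genus $g$. The base case $g = 0$ is exactly Whitney's theorem (Theorem~\ref{thm:whitney}). For the inductive step, fix a $3$-connected graph $G$ and an embedding $\embed$ of $G$ in a surface $\Sigma$ of Euler genus $g$ with face-width at least $f_{\mathrm{ue}}(g)$.

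I would first show that $g$ equals the minimum Euler genus of $G$. By Theorem~\ref{thm:GM7}, the high face-width of $\embed$ forces $G$ to contain a $\Sigma$-wall of any prescribed order as a surface minor; since such a $\Sigma$-wall has a natural embedding in $\Sigma$ realizing Euler genus exactly $g$, a hypothetical embedding of $G$ of smaller Euler genus would pull back to an embedding of a large $\Sigma$-wall in a simpler surface, contradicting the noncontractibility structure of its rows and columns via Lemma~\ref{lem:diestelB6}. Hence the minimum Euler genus of $G$ is $g$.

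For uniqueness, suppose toward a contradiction that $\embed$ and $\embed'$ are two non-equivalent embeddings of $G$ of Euler genus $g$, both of face-width at least $f_{\mathrm{ue}}(g)$. The plan is to locate a short noncontractible cycle $C$ in $\embed$ (its length is controlled by the face-width since for $3$-connected graphs the edge-width is tied to the face-width) that is also noncontractible in $\embed'$: if $C$ were contractible in $\embed'$, it would bound a disc with short boundary, and a local analysis on the two sides of $C$ using $3$-connectivity together with the large face-width of $\embed'$ would force the contradiction. Cut $G$ along $C$ in both embeddings, yielding a (possibly disconnected) graph $\widehat{G}$ embedded in a simpler surface of strictly smaller $\epsilon(\cdot)$. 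A careful choice of $C$ (e.g.\ a shortest noncontractible cycle) together with $3$-connectivity ensures that $\widehat{G}$ remains essentially $3$-connected after suppressing the cut boundaries, and that the induced embeddings retain face-width at least $f_{\mathrm{ue}}(g')$ for the new smaller Euler genus $g'$. The inductive hypothesis then gives equivalence of the two induced embeddings on $\widehat{G}$; stitching back along $C$ upgrades this to equivalence of $\embed$ and $\embed'$, the desired contradiction.

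The main obstacle is realizing the subpolynomial face-width bound $\Oh(\log g / \log \log g)$. A direct one-cycle-at-a-time induction pays a multiplicative factor in face-width per unit of genus reduction and inflates $f_{\mathrm{ue}}$ to exponential in $g$. To achieve the claimed bound, one exploits the large face-width to extract a whole \emph{system} of $\Theta(\log g / \log \log g)$ pairwise disjoint short noncontractible cycles (arranged along a suitable radial structure in $\embed$) that can be cut simultaneously, reducing $\epsilon(\Sigma)$ by the same order in a single induction step, while paying only polylogarithmic face-width for each cut. Balancing the per-step face-width loss against the accelerated rate of Euler-genus reduction yields the target bound. The second delicate point, on which the induction hinges, is the stitching step: once the induced embeddings on $\widehat{G}$ agree, one must argue that the rotation systems along the cut cycles also agree, which is where $3$-connectivity is used via a Whitney-flip exclusion applied locally around each cycle of the cut system.
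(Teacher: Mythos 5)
This theorem is imported verbatim from Seymour and Thomassen \cite{SeymourT96}; the paper gives no proof of it, so there is no in-paper argument to compare against. Evaluated on its own merits, your proposal has a fatal gap in the inductive step.

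The central problem is the claim that after cutting $G$ along a noncontractible cycle $C$, the resulting graph $\widehat{G}$ ``remains essentially $3$-connected after suppressing the cut boundaries.'' This is false in general, and no amount of careful choice of $C$ rescues it. Cutting along $C$ duplicates every vertex on $C$ and distributes the incident edges between the two copies; the duplicated copies typically have very low degree, and the two boundary cycles are separating structures that create many small separations. In particular, $\widehat{G}$ is not $3$-connected, so the inductive hypothesis does not apply to it. ``Suppressing the cut boundaries'' is not a well-defined operation that restores $3$-connectivity while also preserving the two embeddings $\embed$, $\embed'$ in a way that lets you compare them afterwards; and once you start modifying $\widehat{G}$ to regain connectivity, the stitching step at the end — where you need to glue back along $C$ and argue that agreeing induced embeddings on $\widehat{G}$ force agreeing rotation systems on $G$ itself — loses its footing. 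The stitching is, in fact, where most of the real content of the Seymour--Thomassen theorem lives, and the one-line appeal to a ``Whitney-flip exclusion applied locally'' does not carry it.

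Two further weaknesses are worth naming. First, the assertion that a shortest noncontractible cycle $C$ in $\embed$ must be noncontractible in $\embed'$ is treated as a routine local argument, but this is a genuine lemma that requires work even in the base cases; embeddings on different surfaces of the same Euler genus can assign different homotopy types to the same cycle. Second, the sketch for achieving the $\Oh(\log g/\log\log g)$ bound — cutting $\Theta(\log g/\log\log g)$ disjoint short cycles in a single step — is not how the Seymour--Thomassen proof obtains its bound, and with the other gaps unresolved there is no way to tell whether the balance of ``face-width lost per cut'' versus ``genus decrease per step'' actually closes; each simultaneous cut compounds the $3$-connectivity damage rather than mitigating it. The upshot is that the induction you propose cannot be closed as written: the inductive hypothesis is invoked on a graph that need not satisfy its own premises.
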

An important corollary of the uniqueness of the embedding is the following.
\begin{corollary}\label{cor:small-auto}
Assume we are given a 3-connected graph $G$ together with an embedding $\embed$
of Euler genus $g$ and face-width at least $f_{\mathrm{ue}}(g)$. 
Then the automorphism group of $G$ is of size at most $4|E(G)|$ and can be computed in polynomial time.
Furthermore, one can in polynomial time compute an isomorhism-invariant family of size at most $4|E(G)|$
of bijections $V(G) \to [|V(G)|]$.
\end{corollary}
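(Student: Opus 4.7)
The plan is to exploit the uniqueness of $\embed$ provided by Theorem~\ref{thm:fw-unique}. Since $G$ is $3$-connected and $\embed$ has face-width at least $f_{\mathrm{ue}}(g)$, $\embed$ is (up to the equivalence from Section~\ref{sec:prelims}) the unique embedding of $G$ of minimum Euler genus. Hence every automorphism $\sigma$ of $G$ must carry $\embed$ to an equivalent embedding, and is therefore simultaneously an automorphism of the embedded graph $(G,\embed)$; in particular $\mathrm{Aut}(G) = \mathrm{Aut}(G,\embed)$.

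Define a \emph{flag} of $(G,\embed)$ as a triple $(v,e,s)$ where $e$ is an edge incident with $v$ and $s\in\{L,R\}$ selects one of the two facial walks containing $e$. There are exactly $4|E(G)|$ flags. The face-traversal procedure of Section~\ref{sec:prelims} shows that, once the image of a single flag is fixed, the rotation system $\pi$ and the signature $\lambda$ force the image of every subsequent vertex, edge and face reached by the traversal; since $G$ is connected, an automorphism of $(G,\embed)$ that fixes a flag must be the identity. Therefore the action of $\mathrm{Aut}(G,\embed)$ on flags is free and $|\mathrm{Aut}(G)| \le 4|E(G)|$, which proves the cardinality bound.

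For the algorithmic part, fix any flag $f_0$ of $(G,\embed)$. For each of the $4|E(G)|$ candidate target flags $f$, construct a tentative bijection $\sigma_f \colon V(G) \to V(G)$ by performing two synchronized traversals of $G$: one starting from $f_0$ and one from $f$, at each step using $(\pi,\lambda)$ to deterministically order incident edges and thus produce a canonical discovery order. Whether $\sigma_f$ is an automorphism can be verified in linear time by checking that it preserves all edges. Returning those $\sigma_f$ that pass the test yields $\mathrm{Aut}(G)$ in time $\Oh(|E(G)|^2)$. To obtain the isomorphism-invariant family of labelings, for every flag $f$ of $(G,\embed)$ output the bijection $\tau_f \colon V(G) \to [|V(G)|]$ that numbers each vertex by its position in the canonical traversal initiated at $f$; the resulting family has at most $4|E(G)|$ elements.

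Isomorphism invariance is then a direct consequence of the uniqueness of $\embed$: given an isomorphism $\pi \colon G_1 \to G_2$ between two graphs satisfying the hypotheses, Theorem~\ref{thm:fw-unique} implies that $\pi$ transports the unique minimum-genus embedding of $G_1$ to that of $G_2$ (modulo equivalence), hence induces a bijection between their flag sets; because the canonical traversal is defined solely in terms of $(\pi,\lambda)$ and the starting flag, it commutes with $\pi$, so $\tau_f^{G_1} = \tau_{\pi(f)}^{G_2} \circ \pi$. The only technical point worth checking is that the canonical traversal is well defined on equivalence classes of embeddings rather than on a particular representative; the choice of side $s$ in the starting flag fixes an orientation that is preserved and propagated consistently under local changes, so this is routine. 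I do not anticipate a serious obstacle: all the heavy lifting is done by Theorem~\ref{thm:fw-unique}, and the flag-based argument is standard for embedded graphs.
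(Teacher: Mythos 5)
Your argument is correct and follows essentially the same route as the paper: both deduce from Theorem~\ref{thm:fw-unique} that $\mathrm{Aut}(G)=\mathrm{Aut}(G,\embed)$, both observe that an automorphism of the embedded graph is determined by where it sends one edge together with a choice of endpoint and side/direction (your ``flag'' is exactly the paper's triple), giving the $4|E(G)|$ bound, and both obtain the isomorphism-invariant labelings by a canonical traversal seeded at each such flag. The only cosmetic difference is that the paper spells out the traversal as a depth-first search guided by the rotation system, whereas you leave the canonical traversal slightly less explicit, but this does not change the substance.
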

\begin{proof}
The uniqueness of the embedding implies that any automorphism of $G$ needs to extend to an automorphism of the embedded graph $(G,\embed)$.
For any fixed pair $(e,v)$ with $e \in E(G)$ and $v \in e$ there are $2|E(G)|$ choices of the image of $(e,v)$ under an automorphism,
and additionally a choice if the automorphisms reverses the order of the incident edges at $v$ in the embedding.
This one-in-$4|E(G)|$ choice determines the entire automorphism of $(G,\embed)$.

Similarly, starting from the above one-in-$4|E(G)|$ choice one can define a bijection $V(G) \mapsto [|V(G)|]$ that maps, say, $v$ to $1$, the other endpoint of $e$ to $2$,
and then propagates in some deterministic manner across the embedding.
For example, we can traverse $G$ by a depth-first search tree, rooted in $v$, with the other endpoint of $e$ as the first explored neighbor of $v$, and then, upon entering a vertex $v'$
with an edge $e'$, explore the neighbors of $v'$ in the cyclic order from the embedding around $v'$, starting from $e'$.
Note that the initial one-in-$4|E(G)|$ includes the choice of $e \in E(G)$, the choice of $v \in e$, and
the direction of the exploration of the cyclic order in the embedding around $v$ starting from $e$.
The vertices are enumerated in the preorder of this depth-first search tree.
The set of all bijections $V(G)\mapsto [|V(G)|]$ constructed in this manner has size $4|E(G)|$ and is isomorphism-invariant.
\end{proof}

\embedsection{Radial balls}

Let $\embed$ be an embedding of a graph $G$. 
Following~\cite{MTbook},
for a vertex $v\in V(G)$,
we define \emph{radial balls} $\rball{0}{v}, \rball{1}{v}, \ldots$
as follows: $\rball{0}{v}$ consists only of vertex $v$ and for $i > 0$, $\rball{i}{v}$ consists of $\rball{i-1}{v}$ and the union
of all facial walks of $(G,\embed)$ that contain at least one vertex of $\rball{i-1}{v}$. 
Furthermore, for $i>0$, $\partial \rball{i}{v}$ is the set of edges of $\rball{i}{v}$
that are not incident with a vertex of $\rball{i-1}{v}$.
That is, the radial balls are formally subgraphs of $G$.

A vertex $u$ is \emph{within radial distance at most $r$} from $v$
if $u \in V(\rball{r}{v})$. 
A face $f$ is \emph{within radial distance at most $r$} from $v$
if $f$ is incident with a vertex $u \in V(\rball{r-1}{v})$
(i.e., the facial walk of $f$ is part of the union in the definition of $\rball{r}{v}$). 

We will need the following result:
\begin{lemma}[\cite{RobertsonS88}, cf.~Proposition 5.5.10 of~\cite{MTbook}]\label{lem:rballs}
Let $\embed$ be an embedding of a 3-connected graph $G$ of positive Euler genus and 
face-width at least $2a+1$ for an integer $a$ and let $v \in V(G)$. 
Then there exist vertex-disjoint contractible cycles $C_1,C_2,\ldots,C_a$
such that for every $1 \leq i\leq a$, $C_i$ is contained in $\partial \rball{i}{v}$
and the closed disc bounded by $C_i$ contains the whole $\rball{i}{v}$.
\end{lemma}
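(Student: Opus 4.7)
The plan is to argue by induction on $i$ that, for every $0\leq i\leq a$, the radial ball $\rball{i}{v}$, viewed together with the closures of all faces of $(G,\embed)$ at radial distance at most $i$ from $v$, forms a closed topological disc $\Delta_i\subseteq \Sigma$. Once this is established, the required cycle $C_i$ is obtained by tracing the topological boundary of $\Delta_i$ along edges of $G$.

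For the base case $i=1$, the region $\Delta_1$ is the union of the closed faces incident to $v$; the $3$-connectivity of $G$ together with face-width at least $3$ prevents any such face from wrapping around $v$ or pinching the wheel-like neighborhood of $v$, so $\Delta_1$ is a closed disc. For the inductive step, assume $\Delta_i$ is a closed disc bounded by a cycle $C_i$ for some $1\leq i<a$. Then $\Delta_{i+1}$ is obtained from $\Delta_i$ by attaching, in order along $C_i$, the closed faces of $(G,\embed)$ incident to vertices of $C_i$ but not already in $\Delta_i$. The central topological claim is that this attachment cannot create a handle, a crosscap, or an extra boundary component. Indeed, any such obstruction would yield a simple closed curve in $\Sigma$ that is noncontractible in $\Sigma$ but lies inside $\Delta_{i+1}$, hence passes only through graph vertices at radial distance at most $i+1$ from $v$. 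Re-routing this curve along face--vertex incidences inside $\Delta_{i+1}$ would produce a noncontractible noose of length at most $2(i+1)\leq 2a$, contradicting the assumed face-width of at least $2a+1$. The detailed topological bookkeeping is essentially that of~\cite[Proposition~5.5.10]{MTbook}.

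Once each $\Delta_i$ is known to be a closed disc, its topological boundary is a simple closed curve in $\Sigma$ whose intersection with $G$ is a sequence of edges alternating with vertices. A quick check (using that any face incident to a vertex $u$ has radial distance at most one more than that of $u$) shows that every vertex on this boundary lies in $V(\rball{i}{v})\setminus V(\rball{i-1}{v})$, and hence every edge on the boundary has both endpoints in $V(\rball{i}{v})\setminus V(\rball{i-1}{v})$, i.e., belongs to $\partial\rball{i}{v}$. Collecting these edges and vertices gives the required cycle $C_i\subseteq\partial\rball{i}{v}$; the $3$-connectivity of $G$ guarantees this is an honest simple cycle rather than a closed walk with repeats. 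Since the layers $V(\rball{i}{v})\setminus V(\rball{i-1}{v})$ for different $i$ are pairwise disjoint, the cycles $C_1,\ldots,C_a$ are automatically vertex-disjoint. Each $C_i$ bounds the disc $\Delta_i$ in $\Sigma$ (so is contractible), and $\Delta_i$ contains the entire $\rball{i}{v}$ by construction.

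I expect the main obstacle to be the inductive topological argument, specifically producing a short noncontractible noose from any non-disc configuration of $\Delta_{i+1}$. The face-width threshold $2a+1$ is exactly tailored so that the worst-case such noose, which uses vertices at radial distance at most $i+1\leq a$ from $v$, has length at most $2a$ and is thus ruled out. A secondary but delicate point is verifying that the boundary traversal of $\Delta_i$ in the embedded graph yields a simple cycle of $G$; here the $3$-connectivity of $G$ is essential, as it prevents cut vertices that could otherwise cause the boundary to revisit a vertex.
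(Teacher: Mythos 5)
The paper does not actually prove Lemma~\ref{lem:rballs}: it is stated as a black-box citation to Robertson and Seymour~\cite{RobertsonS88} (cf.~Proposition~5.5.10 of Mohar and Thomassen~\cite{MTbook}), so there is no in-paper proof to compare against. Your sketch follows the standard argument behind that cited result---build the regions $\Delta_i$ as unions of closed faces at radial distance at most $i$, show inductively that each is a closed disc, and read off $C_i$ as the boundary cycle, with disjointness coming from the level sets and $C_i\subseteq\partial\rball{i}{v}$ following from the observation that a boundary edge of $\Delta_i$ separates a face at radial distance $\leq i$ from one at radial distance $>i$, forcing both endpoints to lie at radial distance exactly $i$. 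That much is correct and is the right skeleton. The one place you should be careful is the inductive step: the assertion that any failure of $\Delta_{i+1}$ to be a disc can be converted by ``re-routing along face--vertex incidences'' into a noncontractible noose of length at most $2(i+1)$ is precisely the content of the proposition, not merely bookkeeping, and the length bound depends on a genuinely local analysis of how a single bad face attaches to $C_i$ (giving a noose built from two radial geodesics to $v$ plus one extra face), rather than on an arbitrary global re-routing of a curve in $\Delta_{i+1}$. As a proof plan that defers to~\cite{MTbook} for that step, yours is sound and identifies the right threshold $2a+1$; as a self-contained proof, the inductive step as written is a handwave in exactly the spot you flagged.
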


For planar embeddings, we can have the following analog of Lemma~\ref{lem:rballs}.
\begin{lemma}\label{lem:rballs:planar}
Let $\embed$ be a plane embedding of a 2-connected graph $G$,
let $a$ be an integer, and let $u,v \in V(G)$ be two vertices at radial distance larger than $a$.
Then there exist vertex-disjoint cycles $C_1,C_2,\ldots,C_a$
such that for every $1 \leq i \leq a$, the cycle $C_i$
separates $v$ and all cycles $C_{i'}$, $i' < i$, from 
$u$ and all cycles $C_{i'}$, $i' > i$.
\end{lemma}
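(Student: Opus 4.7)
The plan is to construct each $C_i$ as the Jordan-curve boundary of a suitable region of the sphere enclosing $v$ but not $u$. Extending $\embed$ to an embedding on the sphere $S^2$, define $D_i\subseteq S^2$ as the union of the closures of all faces of $(G,\embed)$ whose radial distance from $v$ is at most $i$. Every face incident with $v$ has radial distance $1$, so $v$ lies in the interior of $D_i$. Every face $f$ incident with $u$ has all of its vertices at radial distance at least $a$ from $v$ (since any vertex on $f$ is within radial distance $1$ of $u$, which is itself at radial distance $>a$), hence $f$ has radial distance at least $a+1>i$; consequently $u\notin D_i$. Moreover $D_i$ is connected: any $f\in D_i$ is linked to a face incident with $v$ through a chain of pairwise-intersecting face closures built from a shortest face-vertex walk from $v$ to $f$.

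Let $U_i$ be the connected component of $S^2\setminus D_i$ containing $u$. I would argue that $U_i$ is an open topological disc. For this, observe that $S^2\setminus U_i$ equals $D_i$ together with all other components of $S^2\setminus D_i$; since the closure of each such other component meets $\partial D_i\subseteq D_i$ and $D_i$ itself is connected, $S^2\setminus U_i$ is connected. On the sphere, a connected open set with connected complement is simply connected, so $U_i$ is a disc and $\partial U_i$ is a Jordan curve drawn inside $G$. Here is where $2$-connectivity of $G$ is essential: it prevents $U_i$ from ``pinching'' at a vertex (a repeated vertex on the boundary would force $S^2\setminus U_i$ to be disconnected, contradicting the above). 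Consequently $\partial U_i$ traces a genuine simple cycle of $G$, which I define to be $C_i$.

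Next, I would verify that every vertex $w\in V(C_i)$ has radial distance exactly $i$ from $v$. If the radial distance of $w$ from $v$ were strictly less than $i$, then every face incident with $w$ would have radial distance at most $i$ and hence lie in $D_i$, placing $w$ in the interior of $D_i$ and contradicting $w\in\partial D_i\supseteq\partial U_i$. Conversely, $w\in\partial D_i\subseteq D_i$ sits on some face $f\in D_i$, and any two vertices on a common face are within radial distance $1$ of each other, so the radial distance of $w$ from $v$ is at most $i$. Hence the vertex sets $V(C_i)$ lie at pairwise distinct radial strata, so $C_1,\ldots,C_a$ are pairwise vertex-disjoint. Each of them is nonempty, since a shortest face-vertex walk from $v$ to $u$ exhibits a face at every radial distance between $1$ and $a+1$, making the inclusions $D_1\subsetneq D_2\subsetneq\cdots\subsetneq D_a$ strict.

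Finally, the nesting $D_1\subseteq D_2\subseteq\cdots\subseteq D_a$ yields the corresponding nesting $U_1\supseteq U_2\supseteq\cdots\supseteq U_a\ni u$. For $i'<i$, the cycle $C_{i'}\subseteq D_{i'}\subseteq D_{i-1}$ lies on the $v$-side of the Jordan curve $C_i$; for $i'>i$, $C_{i'}\subseteq\overline{U_{i'}}\subseteq\overline{U_i}$ shares no vertex with $C_i$ (by the strata argument), so $V(C_{i'})\subseteq U_i$ lies on the $u$-side. Since any $G$-path between the two open regions of $S^2\setminus C_i$ must traverse a vertex of $C_i$ (edges of $G-V(C_i)$ avoid $C_i$ as a topological set of the plane embedding), the cycle $C_i$ separates $v,C_1,\ldots,C_{i-1}$ from $u,C_{i+1},\ldots,C_a$ in the graph-theoretic sense, as required. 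The only genuinely nontrivial step in this plan is the topological claim that $\partial U_i$ is a simple cycle, which rests on the simple connectivity of $U_i$ and ultimately on the $2$-connectivity of $G$.
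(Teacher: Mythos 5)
Your overall construction is the same one the paper uses: $C_i$ is defined as the boundary of the ``far'' region containing $u$, and the disjointness and separation properties are read off the radial strata. In fact your region $U_i$ is exactly the face of the subgraph $\rball{i}{v}$ that contains $u$, and $\partial U_i$ is its facial walk, so the two proofs are not genuinely different routes.

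There is, however, a gap in the key step, namely that $\partial U_i$ is a simple cycle. You correctly show $U_i$ is a disc, but being a disc does not imply its boundary walk is simple, and the parenthetical justification you give — ``a repeated vertex on the boundary would force $S^2\setminus U_i$ to be disconnected'' — does not hold. If the facial walk of $U_i$ visited a vertex $w$ twice, take a Jordan arc $\gamma$ through $w$ whose two arms lie in the two $U_i$-sectors at $w$ and is otherwise contained in $U_i$; since $\gamma\cap(S^2\setminus U_i)=\{w\}$, removing $\gamma$ splits $S^2\setminus U_i$ into pieces that are only joined through $w$. That makes $w$ a \emph{cut point} of $S^2\setminus U_i$, which is entirely compatible with $S^2\setminus U_i$ being connected, so there is no contradiction with your earlier paragraph. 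Moreover, the $2$-connectivity of $G$ is announced as the ingredient that ``prevents pinching,'' but the argument as written never actually invokes it. The missing content is to rule out $w$ being a cut point of $S^2\setminus U_i$; the paper does this by establishing that the subgraph $\rball{i}{v}$ (equivalently, the graph carried by $\partial D_i$ together with its interior) is $2$-connected, and then appealing to the standard fact that faces of a $2$-connected plane graph are bounded by simple cycles. You would need a comparable argument at this point — for instance, showing directly that $D_i\setminus\{w\}$ remains connected, or reproducing the $2$-connectivity of the radial ball — to close the gap.
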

\begin{proof}
Since $G$ is $2$-connected, every facial walk in $G$ is a simple cycle.
We infer that $\rball{i}{v}$ is $2$-connected for every $i \geq 1$. 
Define $C_i$ as the facial walk around the face of $\rball{i}{v}$
that contains $u$ in its interior. 
That $C_i$ is a simple cycle follows from the 2-connectivity of $\rball{i}{v}$. The separation property for cycles $C_1,\ldots,C_a$ follows from that $C_i$ consists of vertices at radial distance exactly $i$ from $v$.
\end{proof}

\embedsection{Locally bounded treewidth of embedded graphs}

We will also need the well-known observation that graphs embedded in surfaces have locally bounded treewidth.
\begin{theorem}[\cite{DemaineH04}]
There exists a function $f_\mathrm{ltw}$ such that 
for every connected graph $G$ of Euler genus $g$ and radius $r$,
    the treewidth of $G$ is bounded by $f_\mathrm{ltw}(g) \cdot r$.
For planar graphs, one can take $f_\mathrm{ltw}(0) = 3$.
\end{theorem}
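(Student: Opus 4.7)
The plan is to prove this by induction on the Euler genus $g$, with the planar case serving as the base. In the planar case, I would fix a vertex $v$ realizing the radius and grow a BFS layering $L_0,L_1,\ldots,L_r$ from $v$, where $L_i$ is the set of vertices at distance exactly $i$ from $v$. For any plane embedding, the union of any range of consecutive layers $L_i\cup L_{i+1}\cup\cdots\cup L_j$ induces a subgraph that is $(j-i+1)$-outerplanar, and by the classical fact that $k$-outerplanar graphs have treewidth at most $3k-1$ (Baker/Bodlaender), the whole graph is $(r+1)$-outerplanar, giving treewidth at most $3r+2$; a more careful layering argument tightens the constant so that $f_\mathrm{ltw}(0)=3$ suffices as stated.

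For the inductive step with genus $g\geq 1$, the plan is to reduce the Euler genus by cutting along a short noncontractible cycle. Fix any embedding $\embed$ of $G$ realizing Euler genus at most $g$ and let $v$ be a center. I would argue that there is a noncontractible cycle $C$ in $(G,\embed)$ of length $\Oh(r)$: starting from $v$, the BFS ball of radius $r$ covers all of $V(G)$, so if every cycle of length at most $2r+1$ were contractible, the embedding restricted to this ball would behave like a planar one, contradicting positivity of the genus (this can be made precise via noose/face-width arguments analogous to Lemma~\ref{lem:rballs}). Now cut $\embed$ along $C$; the resulting graph $G'$ inherits an embedding in a simpler surface of Euler genus strictly less than $g$, its radius in the new metric is still at most $r + |V(C)|/2 = \Oh(r)$ (distances from $v$ can only grow by paths around the cut), and $V(C)$ has been duplicated into at most two copies of size $\Oh(r)$.

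By the inductive hypothesis, $G'$ has treewidth at most $f_\mathrm{ltw}(g') \cdot \Oh(r)$ for $g' < g$. Taking a tree decomposition of $G'$ and adding the (at most $\Oh(r)$) vertices of $V(C)$ and its duplicate to every bag yields a tree decomposition of $G$ itself: after identifying the two copies of each vertex of $C$, every edge of $G$ is still covered and the running intersection property is preserved because $V(C)$ sits in every bag. The resulting width is at most $f_\mathrm{ltw}(g') \cdot \Oh(r) + \Oh(r)$, which gives the recurrence $f_\mathrm{ltw}(g) \leq f_\mathrm{ltw}(g-1) + c$ for a universal constant $c$, so $f_\mathrm{ltw}(g) = \Oh(g)$ (in fact a more careful accounting yields $\Oh(g)$ as the multiplicative factor on $r$).

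The main obstacle is the cutting step: one has to guarantee that a short noncontractible cycle exists and that cutting along it strictly decreases a well-defined complexity measure (Euler genus, or the invariant $\epsilon$) without blowing up the radius. The cleanest way to handle this is to work with the face-vertex graph and nooses of length $\Oh(r)$, using the observation that large face-width combined with small radius is incompatible with positive genus; then the cutting operation either reduces the number of handles/crosscaps by at least one, or splits the surface into two simpler pieces which can be handled independently and whose treewidth bounds combine additively.
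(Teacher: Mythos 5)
The paper does not prove this statement; it imports it as a black box from Demaine and Hajiaghayi [DemaineH04] (the result is originally due to Eppstein). So there is no in-paper proof to compare against, and your proposal should be judged on its own merits.

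Your strategy — Baker layering for the planar base case, then induction on Euler genus by cutting along a short noncontractible cycle — is the classical approach and in spirit matches the published proofs. The planar base case is fine: Baker's lemma really does say that any $k$ consecutive BFS layers induce a $k$-outerplanar subgraph, so a radius-$r$ planar graph has treewidth at most $3r+\Oh(1)$, and a careful instantiation gets $3r$. The existence of a noncontractible cycle of length $\Oh(r)$ is also fine via the tree–cotree argument: some fundamental cycle of a BFS tree of depth $r$ must be noncontractible, and every fundamental cycle has length at most $2r+1$.

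The genuine gap is the radius claim after cutting. You assert that the cut graph $G'$ still has radius $r+|V(C)|/2 = \Oh(r)$ with the justification "distances from $v$ can only grow by paths around the cut." This is not a proof, and as stated it is not even clear it is true: after the cut, the two copies of a vertex $w\in V(C)$ may no longer be joined by a short path (there is no a priori reason one copy of $C$ is close to $v$ in $G'$), and for a separating but noncontractible $C$ the cut graph is disconnected and $v$ lives in only one piece, so "radius of $G'$" is not even well-defined as you use it. The fix requires an extra idea: track a spanning tree of depth $\Oh(r)$ rather than the metric radius, cut along a \emph{fundamental cycle} $C_e$ of that tree, and argue explicitly how $T$ decomposes into a bounded-depth spanning forest of $G'$ whose components can be reconnected through the (doubled) cycle; or, alternatively, bypass the induction entirely by cutting simultaneously along the fundamental cycles of all $2g$ "genus-relevant" cotree edges furnished by the tree–cotree theorem, deleting the resulting $\Oh(gr)$ vertices to obtain a planar graph whose BFS tree depth is still $\Oh(r)$, and then adding the deleted set to every bag. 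The second route is what Eppstein and Demaine–Hajiaghayi actually do, and it avoids the radius-preservation issue you correctly flagged as the main obstacle; your sketch names the obstacle but does not supply the mechanism that overcomes it.
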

Applying the above to the union of an embedded graph and its face-vertex graph, we obtain:
\begin{corollary}\label{cor:ltw}
There exists a function $f_\mathrm{ltw}$ such that 
for every connected graph $G$, an embedding $\embed$ of $G$ of Euler genus $g$,
    a vertex $v \in V(G)$,
    and radius $r$, the treewidth of $\rball{r}{v}$ is bounded
    by $f_\mathrm{ltw}(g) \cdot r$. 
For planar graphs, the treewidth of $\rball{r}{v}$ is bounded by $6r$.
\end{corollary}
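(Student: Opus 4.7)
The plan is to deduce the statement from the Demaine--Hajiaghayi theorem applied not to $G$ itself, but to the union $G' := G \cup \widehat{G}$ of $G$ with its face-vertex graph. First, I would observe that the given embedding $\embed$ of $G$ in a surface $\Sigma$ of Euler genus $g$ extends naturally to an embedding of $G'$ in the same surface $\Sigma$: place each face-vertex in the interior of the corresponding face and route its incident edges through that face to the vertices of $G$ lying on the facial walk. Consequently $G'$ also has Euler genus at most $g$.

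Next, I would translate radial balls of $(G,\embed)$ into distance balls of $G'$. Since radial distance in $\embed$ equals half of the distance in $\widehat{G}$, every vertex $u \in V(\rball{r}{v})$ lies at distance at most $2r$ from $v$ in $\widehat{G}$, and therefore at distance at most $2r$ from $v$ in $G'$. Moreover, every edge of $\rball{r}{v}$ added at the $i$-th iteration of the inductive definition has one endpoint in $\rball{i-1}{v}$ and the other in $\rball{i}{v}$, so both endpoints are within distance $2r$ from $v$ in $G'$. Hence, if $H$ denotes the subgraph of $G'$ induced by the vertices at $G'$-distance at most $2r$ from $v$, then $\rball{r}{v}$ is a subgraph of $H$; in particular it is a subgraph of the connected component of $H$ containing $v$, which is a connected embedded graph of Euler genus at most $g$ and radius at most $2r$.

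Finally, I would invoke the Demaine--Hajiaghayi theorem on that connected component: its treewidth is at most $f_\mathrm{ltw}(g) \cdot 2r$, where $f_\mathrm{ltw}$ is the function provided by the theorem. By subgraph-monotonicity of treewidth, the same bound transfers to $\rball{r}{v}$, and absorbing the factor $2$ into a redefined $f_\mathrm{ltw}$ yields the general statement. For the planar case, the theorem gives $f_\mathrm{ltw}(0) = 3$, so the bound becomes $3 \cdot 2r = 6r$, which matches the claim exactly.

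There is no serious obstacle; the only points requiring a moment of care are that the passage from $G$ to $G \cup \widehat{G}$ does not increase Euler genus (handled by exhibiting the extended embedding) and that treewidth is monotone under subgraphs (which is standard). Everything else is a routine translation between radial distance in $\embed$ and graph distance in~$G'$.
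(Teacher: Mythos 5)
Your proposal is correct and follows the paper's intended route exactly: the paper's one-line justification is precisely to apply the Demaine--Hajiaghayi bound to the union of the embedded graph with its face-vertex graph. One small imprecision worth noting: an edge added at iteration $i$ of the radial-ball construction need not have an endpoint in $\rball{i-1}{v}$ --- it may have both endpoints at radial distance exactly $i$ --- but since every vertex of a facial walk touching $\rball{i-1}{v}$ lies within $\widehat{G}$-distance $2i$ of $v$, your distance bound on both endpoints still holds and the rest of the argument goes through unchanged.
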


\embedsection{Unbreakability tangle}

A \emph{tangle of order $k$} in a graph $G$ is a set $\mathcal{T}$ of separations of $G$, all of order less than $k$, such that:
\begin{itemize}
\item for every separation $(A,B)$ of order less than $k$, either $(A,B)$ or $(B,A)$ belongs to $\mathcal{T}$;
\item for all $(A_1,B_1),(A_2,B_2),(A_3,B_3) \in \mathcal{T}$, we have $A_1 \cup A_2 \cup A_3 \neq V(G)$. 
\end{itemize}
Note that the above implies that for every separation $(A,B)$ of order less than $k$, exactly one of $(A,B)$ and $(B,A)$ belongs to $\mathcal{T}$.
Thus, a tangle of order $k$ is in fact an orientation of every separation of order less than $k$. If $(A,B) \in \mathcal{T}$, then we call $A$ the \emph{small side} of the separation
$(A,B)$ and $B$ the \emph{big side}. 

Assume that $G$ is $(q,k)$-unbreakable and $|V(G)| > 3q$.
Consider a separation $(A,B)$ of order at most $k$. By unbreakability, $|A| \leq q$ or $|B| \leq q$.
Since $|V(G)| > 3q$, we have either $|A| \leq q$ and $|B| > 2q$ or $|B| \leq q$ and $|A| > 2q$. 
This naturally defines a tangle $\mathcal{T}$ in $G$ of order $k+1$ that contains all separations $(A,B)$ of order at most $k$ with $|A| \leq q$;
note that the assumption $|V(G)| > 3q$ implies the second property of a tangle (no three small sides sum up to the entire graph). 
Henceforth we call this tangle the \emph{unbreakability tangle} of $G$ (if the parameters $(q,k)$ are clear from the context). 

For a set $\Apices \subseteq V(G)$ and a tangle $\mathcal{T}$ of order $k$, if $k > |\Apices|$ then there is a natural notion
of the tangle $\mathcal{T}-\Apices$ in the graph $G-\Apices$: $(A,B) \in \mathcal{T}-\Apices$ if and only if $(A \cup \Apices,B \cup \Apices) \in \mathcal{T}$.
A direct check shows that $\mathcal{T}-\Apices$ is indeed a tangle of order $k-|\Apices|$.

\embedsection{Miscellaneous}

We will use the following observation; the easy proof is left to the reader.

\begin{lemma}\label{lem:brk-wall}
A 1-subdivision of an elementary $k \times k$ cylindrical wall is $(100\ell^2,\ell)$-unbreakable for every $\ell \leq k/10$.
\end{lemma}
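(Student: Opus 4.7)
The plan is to exploit a clean-versus-dirty dichotomy on the rows and columns of the 1-subdivided cylindrical wall $G'$: a small separator can damage only few rows and columns, and the many remaining intact rows and columns rigidly determine the side of the separation that contains almost all vertices.

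Fix a separation $(A,B)$ of order at most $\ell$ and set $S = A\cap B$, so $|S|\leq \ell$. I would call a row cycle $C_i$ or a column path $P_j$ of $G'$ \emph{clean} if it is disjoint from $S$, and \emph{dirty} otherwise. Since $\ell \leq k/10$, both at least $k-\ell$ rows and at least $k-\ell$ columns are clean. Each clean row (respectively column) is a connected subgraph of $G'-S$, hence lies entirely in $A\setminus B$ or entirely in $B\setminus A$. Because every column visits every row, any clean column meets any clean row, which forces all clean rows and columns onto the same side of the separation; without loss of generality, that side is $A\setminus B$. This already accounts for almost all vertices of~$G'$.

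It remains to bound $|B\setminus A|$, and here the key observation is that every vertex of $G'$ lies on at least one row or one column. Consequently, a vertex $v\in B\setminus A$ must avoid every clean row and every clean column, and falls into one of three types: (a) $v$ lies on a dirty row and a dirty column; (b) $v$ lies on a dirty row but on no column; (c) $v$ lies on a dirty column but on no row. Type~(a) contributes at most $3\ell^2$ vertices, because any dirty row and dirty column share at most three vertices (the column spot together with the horizontal-edge subdivision inside it). For types~(b) and~(c), the vertex $v$ is a subdivision vertex of degree~$2$; by the separation property its two neighbors must both lie in $B$. Each such neighbor lies on a column (respectively row), which therefore cannot be clean, so the two columns (respectively rows) flanking $v$ are both dirty. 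Counting pairs of consecutive dirty columns or consecutive dirty rows gives $O(\ell^2)$ vertices in each of types (b) and (c). Summing, one obtains $|B| \leq |B\setminus A| + |S| \leq O(\ell^2) + \ell \leq 100\ell^2$ for $\ell\geq 1$, as required.

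The entire argument rests on the clean/dirty dichotomy, which is elementary; the only mild obstacle is the two-neighbor case analysis for types~(b) and~(c), where one must take some care about cyclic horizontal wrap-around edges and the slight asymmetry of the first and last rows (in which certain horizontal edges lie on the row but on no column). These finicky details only affect constants, so the explicit $100\ell^2$ bound comfortably absorbs them.
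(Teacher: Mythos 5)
The paper does not actually supply a proof of this lemma — it says ``the easy proof is left to the reader'' — so there is no official argument to compare your proposal against; I can only assess it on its own merits.

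Your strategy (clean-versus-dirty rows and columns, rigidity of the side holding the clean skeleton, then a type-wise count of what can fall on the other side) is sound, and the explicit constant $100\ell^2$ does absorb the details you flagged. A few small remarks. First, your phrasing ``the vertex $v$ is a subdivision vertex of degree~$2$'' in types (b) and (c) misses the original vertices of degree $2$ on the two boundary rows (the vertices $v_{1,2j}$ and their analogues on the last row), which are not subdivision vertices but still lie on a row and no column. Second, for the boundary rows the two-neighbor argument you use for internal rows does not close in one step, because the neighbors of such a vertex are the row-edge subdivision vertices, which on the boundary rows are themselves on no column. Both points can be patched either by extending your argument one more step along the chain of forced neighbors until it reaches a column top $v_{1,2j-1}$, or more cleanly by bounding the boundary-row contribution directly: a boundary row minus $S$ splits into at most $\ell$ paths, and along any such path in $B\setminus A$ the column tops $v_{1,2j-1}$ occur every four vertices and each forces a distinct dirty column, giving at most $4\ell + 3\ell = 7\ell$ vertices per boundary row. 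With that correction the total is comfortably below $100\ell^2$ (in fact below $20\ell^2$ by my count), so your dismissal of these edge cases as affecting only constants is justified. One more minor check you could make explicit: there is at least one clean row and one clean column because $\ell\leq k/10 < k$, which is what licenses the ``without loss of generality all clean rows and columns lie in $A\setminus B$'' step; the case $\ell=0$ is degenerate and trivial.
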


\section{Near-embeddings}
We start the proof of Theorem~\ref{thm:rigid} with an observation that
if the treewidth of $G$ is bounded by $\funtw(H,\sum_{i=0}^\zeta q_i)$
(with $\funtw$ emerging from the proof)
then the algorithm can return $\Vgenus = \emptyset$, $\Vtw = V(G)$, and a singleton family with an empty labelling as $\famgenus$.
Since the treewidth of a graph can be computed in FPT time parameterized by the value of the treewidth~\cite{Bodlaender96},
and $\funtw$ is computable, we can discover whether this case applies and conclude. 

In a number of places in the proof we will assume that the treewidth of $G$ is sufficiently large (larger than any computable function of $H$ and $\sum_{i=0}^\zeta q_i$ fixed in the context), increasing the function $\funtw$ if necessary. For simplicity, we refrain from providing an explicit formula
on $\funtw$, and just define it as any sufficiently quickly-growing computable function 
so that in all places the assumed treewidth bound holds.

\medskip

We will heavily rely on the structure theorem for graphs with an excluded minor of Robertson and Seymour~\cite{GM16}. The central notion in this theorem is that of a \emph{near-embedding}, hence let us recall the setting.

\embedsection{Basic definitions and notation}

In notation, we mostly follow~\cite{DiestelKMW12}. 

\paragraph{Near-embeddings.}
Given a graph $G$, a \emph{near-embedding} of $G$ consists of:
\begin{enumerate}
\item A set $\Apices \subseteq V(G)$, called the \emph{apices}.
\item A partition of the graph $G-\Apices$ into edge-disjoint subgraphs
$$G-\Apices = \MainA \cup \bigcup_{i=1}^{\nvortices} \VortexA{i} \cup \bigcup_{i=1}^{\ndongles} \DongleA{i}.$$
\item An embedding $\embed$ of $\MainA$ into a (compact) surface $\Sigma$.
\item For every $1 \leq i \leq \nvortices$, the set $V(\VortexA{i}) \cap V(\MainA)$ is enumerated as $\{\SocietyVtx{i}{1}, \ldots, \SocietyVtx{i}{\vortexlength{i}}\}$
so that $\VortexA{i}$ admits a path decomposition $(\VortexBag{i}{1}, \ldots, \VortexBag{i}{\vortexlength{i}})$ with $\SocietyVtx{i}{j} \in \VortexBag{i}{j}$ for every $1 \leq j \leq \vortexlength{i}$.
The graph $\VortexA{i}$ is called a \emph{vortex} and the vertices $\SocietyVtx{i}{j}$ are called the \emph{society vertices} of the vortex $\VortexA{i}$. The number of society vertices, $\vortexlength{i}$, is the {\em{length}} of a vortex.
\end{enumerate}
The following properties have to be satisfied:
\begin{enumerate}
\item Every vertex of $G-\Apices$ that belongs to at least two subgraphs of the partition, belongs to $\MainA$. 
\item For every $1 \leq i \leq \ndongles$, $|V(\DongleA{i}) \cap V(\MainA)| \leq 3$. (The graphs $\DongleA{i}$ are henceforth called \emph{dongles}.)
\item The graphs $(\VortexA{i})_{i=1}^{\nvortices}$ are pairwise vertex-disjoint.
\item On the surface $\Sigma$ there exist closed discs with disjoint interiors $(\DongleDisc{i})_{i=1}^{\ndongles}$ and $(\VortexDisc{i})_{i=1}^{\nvortices}$
such that $\embed$ embeds $\MainA$ into $\Sigma \setminus \left(\bigcup_{i=1}^{\ndongles} \DongleDisc{i} \cup \bigcup_{i=1}^{\nvortices} \VortexDisc{i}\right)$ and the following conditions hold:
\begin{itemize}
\item For every $1 \leq i \leq \ndongles$, the vertices of $\MainA$ that lie on the boundary of $\DongleDisc{i}$ are exactly the vertices of $V(\DongleA{i}) \cap V(\MainA)$.
\item For every $1 \leq i \leq \nvortices$, the vertices of $\MainA$ that lie on the boundary of $\VortexDisc{i}$ are exactly the vertices of $V(\VortexA{i}) \cap V(\MainA)$ and, furthermore,
  these vertices lie around the boundary of $\VortexDisc{i}$ in the order $\SocietyVtx{i}{1}, \ldots, \SocietyVtx{i}{\vortexlength{i}}$.
\end{itemize}
\end{enumerate}

A few remarks are in place. We do not mandate any size bound for a dongle; however, since the studied graph will have always strong unbreakability properties,
a bound on the size of the dongle will follow from unbreakability, as $(V(\DongleA{i}) \cup \Apices, V(G) \setminus (V(\DongleA{i}) \setminus V(\MainA)))$ is a separation of bounded order.
Furthermore, the dongles may share vertices between themselves and with vortices, but all such shared vertices must belong to $\MainA$ as well.

In what follows, we usually consider one fixed near-embedding of a graph.
In that case, we denote the near-embedding by $\nembed$ and the corresponding elements
of the near-embedding as listed above: $\Apices$, $\MainA$, $\VortexA{i}$, etc. 
However, in some arguments we consider one or two near-embeddings at the same time
and try to modify or combine them into another near-embedding. 
In these cases, the source near-embeddings will be denoted by $\nembedA$ and $\nembedB$
and the constructed near-embedding as $\nembedC$; the corresponding elements
of the near-embedding $\nembedB$ will be denoted as $\MainB$, $\VortexB{i}$, etc., and
the corresponding elements of the near-embedding $\nembedC$ will be denoted as $\MainC$, $\VortexC{i}$, etc.
In other places, given a near-embedding $\nembed$ of $G$,
we will reason about a (properly defined) induced near-embedding of the main (largest) 3-connected
component of $G-\Apices$. In these places, the near-embedding in question is denoted
by $\nembedT$ with corresponding pieces $\MainT$, $\VortexT{i}$, etc.

Fix a near-embedding $\nembed$ of $G$.
With $\nembed$ we can associate a number of ``quality measures'':
\begin{itemize}
\item the Euler genus of $\Sigma$, denoted $\eulerg$;
\item the number of vortices, $\nvortices$;
\item the maximum adhesion size of a vortex, $\vortexadhwidth \coloneqq \max_{1 \leq i \leq \nvortices} \max_{1 \leq j < \vortexlength{i}} |\VortexBag{i}{j} \cap \VortexBag{i}{j+1}|$;
\item the maximum bag size of a vortex, $\vortexbagsize \coloneqq \max_{1 \leq i \leq \nvortices} \max_{1 \leq j \leq \vortexlength{i}} |\VortexBag{i}{j}|$;
\item the number of apices, denoted $\napices \coloneqq |\Apices|$.
\end{itemize}

\paragraph{Extended version.}
The assumptions on the embedding $\embed$ in a near-embedding $\nembed$ motivate us to define the \emph{extended version} $\MainPlusA$ of $\MainA$
as follows. The graph $\MainPlusA$ and its embedding is obtained from the graph $\MainA$ with the embedding $\embed$ by adding the following additional edges and vertices and their embeddings in $\Sigma$ (see also Figure~\ref{fig:mainplus}):
\begin{itemize}
\item For every $1 \leq i \leq \ndongles$, we do the following depending on the size of $V(\DongleA{i}) \cap V(G_0)$:
\begin{itemize}
\item If $|V(\DongleA{i}) \cap V(G_0)| = 3$, 
we add a new vertex $\PlusDongleVtx{i}$ in the disc $\DongleDisc{i}$ and turn $\{\PlusDongleVtx{i}\} \cup (V(\DongleA{i}) \cap V(G_0))$ into a clique, embedding all its edges in $\DongleDisc{i}$.
\item If $|V(\DongleA{i}) \cap V(G_0)| = 2$, we add a new edge between the vertices of $V(\DongleA{i}) \cap V(G_0)$ embedded in the disc $\DongleDisc{i}$, 
  and proclaim one of the vertices of $V(\DongleA{i}) \cap V(G_0)$ as $\PlusDongleVtx{i}$. 
\item If $|V(\DongleA{i}) \cap V(G_0)| = 1$, we denote the unique vertex of $V(\DongleA{i}) \cap V(G_0)$ as $\PlusDongleVtx{i}$. 
\item If $V(\DongleA{i}) \cap V(G_0) = \emptyset$ we do nothing; in particular, we do not define $\PlusDongleVtx{i}$. 
\end{itemize}
\item For every $1 \leq i \leq \nvortices$, 
we turn $V(\VortexA{i}) \cap V(G_0)$ into a cycle $\SocietyVtx{i}{1}, \ldots, \SocietyVtx{i}{\vortexlength{i}}$, embedding all its edges into $\VortexDisc{i}$, and 
we add a vertex $\PlusVortexVtx{i}$ into $\VortexDisc{i}$ inside the face surrounded
by the said cycle and add all edges $\PlusVortexVtx{i}\SocietyVtx{i}{j}$, $1 \leq j \leq \vortexlength{i}$, embedded into the said face.
\end{itemize}
\begin{figure}[tb]
\begin{center}
\includegraphics{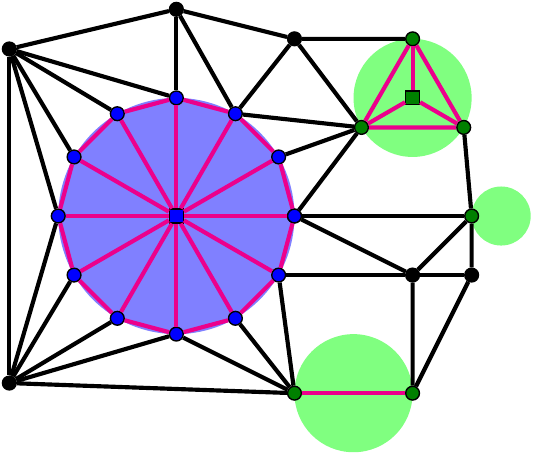}
\caption{Exemplary extended version $\MainPlusA$ in a near-embedding with one vortex
(blue) and three dongles (green) with 1, 2, and 3 vertices common with $\MainA$, respectively.
The blue circular vertices are society vertices of the vortex and the blue square vertex
is the virtual vortex vertex. 
The green circular vertices are vertices in common with a dongle and $\MainA$
and the green square vertex is the virtual dongle vertex in case of a dongle
with three vertices in common with $\MainA$. 
The pink edges are virtual edges added to $\MainPlusA$.}\label{fig:mainplus}
\end{center}
\end{figure}
The vertex $\PlusDongleVtx{i}$ if newly created and the edges of $E(\MainPlusA) \setminus E(\MainA)$ that are
added because of $\DongleA{i}$ are called the \emph{virtual dongle vertex} and the \emph{virtual dongle edges} for the dongle $\DongleA{i}$. 
Similarly, 
the vertex $\PlusVortexVtx{i}$ and the edges of $E(\MainPlusA) \setminus E(\MainA)$ that are
added because of $\VortexA{i}$ are called the \emph{virtual vortex vertex} and the \emph{virtual vortex edges} for the vortex $\VortexA{i}$. 
Both virtual dongle vertices/edges and virtual vortex vertices/edges are called jointly
\emph{virtual vertices/edges}. 

Note that we treat $\MainPlusA$ as a graph embedded in $\Sigma$ with the above embedding
extending $\embed$.

\paragraph{Removing vertices from parts of a near-embedding.}
Consider a near-embedding $\nembed$ of $G$.
Let $\VortexA{i}$ be a vortex of $\nembed$ and let $v \in V(\VortexA{i})$.
The operation of \emph{removing $v$ from $\VortexA{i}$} consists of removing $v$ from $\VortexA{i}$ 
(and all its incident edges in $\VortexA{i}$) and, furthermore, if $v$ is a society vertex
of $\VortexA{i}$,
say $v = \SocietyVtx{i}{j}$, then we do the following.
\begin{itemize}[nosep]
\item We shrink a bit the disc associated with $\VortexA{i}$ in a close neighborhood of $v$ so that $v$ is no longer
embedded in its boundary.
\item If $\vortexlength{i} > 1$, we pick any $j' \in \{j-1,j+1\} \cap [\vortexlength{i}]$
and merge $\VortexBag{i}{j}$ and $\VortexBag{i}{j'}$ into a new bag $\VortexBag{i}{j} \cup \VortexBag{i}{j'} \setminus \{v\}$ with society vertex $\SocietyVtx{i}{j'}$.
\item If $\vortexlength{i} = 1$, we change the vortex $\VortexA{i}$ into a dongle $\VortexA{i} \setminus \{v\}$ (keeping the same disc associated with it).
\end{itemize}

For $v \in V(\DongleA{i})$ for some dongle $\DongleA{i}$,
the operation of \emph{removing $v$ from $\DongleA{i}$} is defined analogously:
we remove $v$ from $\DongleA{i}$ (together with all incident edges in $\DongleA{i}$) and, furthermore,
   if $v \in V(G_0)$, we shrink a bit the disc associated with $\DongleA{i}$ so that $v$
   is no longer embedded in its boundary. 

The operation of removing $v$ from a vortex or a dongle 
will usually be conducted together with another
operation that places $v$ (and removed incident edges) somewhere else in the near-embedding.
In particular, 
the operation of moving a vertex $v \in V(G) \setminus \Apices$ to the apex set is defined
as follows: we add $v$ to $\Apices$, remove it from all the parts --- first vortices and
dongles in arbitrary order, and then $\MainA$ --- to which it belongs (triggering the processes described in the previous two paragraphs),
and remove it from $\embed$ if it belongs to $\MainA$. 
This operation results in a new near-embedding with one more apex,
where none of the following parameters increase: number of vortices, Euler genus, and the maximum adhesion size of a vortex.

\paragraph{Taking minors and near-embeddings.}
In the previous paragraphs we have presented how a near-embedding changes under vertex deletion.
Observe that in case of edge-deletion, there is a simple natural way of keeping the near-embedding:
just delete the edge from the corresponding part of the near embedding (unless it is incident
with an apex, in which case we do not do anything). 

However, if one wants to contract an edge $e = uv$ of $G$, a near-embedding $\nembed$ of 
$G$ can change in an unpredictable way if both $u$ and $v$ are society vertices
(of the same or distinct vortices). Therefore, we only define how contracting an edge $e=uv$
of $G$ affects the near-embedding $\nembed$ of $G$ if $u$ or $v$ is not a society vertex
of $\nembed$. More formally, if $\widetilde{G}$ is constructed from $G$ by contracting an edge
$e=uv$ into a new vertex $x_e$,
and $\nembed$ is a near-embedding of $G$ where at least one of $u$ and $v$ is not a society vertex,
then the induced near-embedding $\nembedB$ of $\widetilde{G}$ is defined as $\nembed$ with 
the following modifications:
\begin{itemize}
\item If either $u$ or $v$ is an apex of $\nembed$, $x_e$ is an apex of $\nembedB$;
that is, if the second vertex of $\{u,v\}$ is not an apex, we remove it from the near embedding,
     and set $\ApicesB \coloneqq \ApicesA \setminus \{u,v\} \cup \{x_e\}$. 
\item Otherwise, if $uv$ is an edge of a vortex $\VortexA{i}$, we contract this edge
in the graph $\VortexA{i}$, replace all occurences of $u$ and $v$ with $x_e$ in the path decomposition of $\VortexA{i}$ and, if $u$ or $v$ is a society vertex, we proclaim $x_e$ also a society
vertex with the same bag and placement on $\Sigma$ as $u$ or~$v$. (Recall that we exclude the
    case when both $u$ and $v$ are society vertices.)
\item Otherwise, if $uv$ is an edge of a dongle $\DongleA{j}$ and
$u \notin V(\MainA)$ or $v \notin V(\MainA)$, 
we contract this edge in the graph
$\DongleA{j}$ and put $x_e$ to $\MainB$ if and only if either $u$ or $v$ is in $\MainA$.
(Note that the exclusion of the case $u,v \in V(\MainA)$ 
 allows us to keep the same disc for $\DongleA{j}$.)
\item In the remaining case, $uv$ is an edge of $\MainA$ or $uv$ is an edge of a dongle $\DongleA{j_0}$
with $u,v \in V(\MainA) \cap V(\DongleA{j_0})$. Note that in the latter case, $\MainPlusA$ contains an edge $uv$
embedded in the disc of $\DongleA{j_0}$. In the graph $\MainPlusA$, we contract $v$ onto $u$ along this edge $uv$, obtaining a graph $\MainPlusB$ embedded in the same surface as $\MainPlusA$.
(The choice of $u$ and $v$ is arbitrary in this contraction.)
We remark that $\MainPlusA$ may contain multiple copies of the edge $uv$, possibly being virtual edges of other dongles
    or just edges of $\MainA$; in this process we contract the edge corresponding to the edge $uv$ we contract in $G$.

Starting with $\MainPlusB$ described as above, we complete the construction of the near-embedding $\nembedB$ from $\nembedA$ as follows:
\begin{itemize}
\item If $u$ or $v$ is a society vertex of a vortex $\VortexA{i}$, then $u$ becomes a society vertex of $\VortexB{i}$ instead.
In the case $u$ is a society vertex of $\VortexA{i}$, the disc of $\VortexB{i}$ equals the disc of $\VortexA{i}$. 
If $v$ is a society vertex of $\VortexA{i}$, modify the disc of $\VortexA{i}$ by extending it along the embedding of the edge $uv$ so catch $u$ on its boundary.
\item If exactly one of $u$ or $v$ is a vertex of $V(\MainA) \cap V(\DongleA{j})$ for a dongle $\DongleA{j}$, similarly $u$ becomes a vertex of $V(\MainB) \cap V(\DongleB{j})$.
If $u \in V(\MainA) \cap V(\DongleA{j})$, then the disc of $\DongleB{j}$ is the disc of $\DongleA{j}$, and 
if $v \in V(\MainA) \cap V(\DongleA{j})$, then the disc of $\DongleB{j}$ is the disc of $\DongleA{j}$, extended along the edge $uv$ to catch $u$ on its boundary.

There can be multiple dongles with $\DongleA{j}$ with $v \in V(\MainA) \cap V(\DongleA{j})$ and, additionally, one vortex $\VortexA{i}$ with $v$ as a society vertex. 
In $\nembed$, the edge $uv$ and the discs of these dongles and vortices are ordered cyclically around $v$; this cyclic order is also an order in which we extend the said discs to catch $u$
on their boundaries, so that these discs are pairwise disjoint. 
\item If $u,v \in V(\MainA) \cap V(\DongleA{j})$ for a dongle $\DongleA{j}$, we distinguish two cases.
\begin{itemize}
\item If $|V(\MainA) \cap V(\DongleA{j}| = 2$, then after the contraction of $uv$ in $\MainPlusA$ the virtual edge of $\DongleA{j}$ may still be present as a loop at $u$.
    If this is the case, delete it. Define the new disc of $\DongleA{j}$ to be a small disc with $u$ on its boundary and proclaim $u$ to be the new $\PlusDongleVtx{j}$. 
\item If $|V(\MainA) \cap V(\DongleA{j}| = 3$, then after the contraction of $uv$ in $\MainPlusA$ there is a double edge $\PlusDongleVtx{j} u$ and a single edge $\PlusDongleVtx{j} w$
  where $w$ is the third vertex of $V(\MainA) \cap V(\DongleA{j})$. Replace the virtual edges and vertices of $\DongleA{j}$ with an edge $wu$, drawn in the place
  of the edge $\PlusDongleVtx{j} w$ and one of the copies of $\PlusDongleVtx{j} u$, define the new disc of $\DongleA{j}$ to be a small open neighborhood of the interior of the new edge $wu$, 
  with $u$ and $w$ on its boundary, 
  and proclaim $u$ to be $\PlusDongleVtx{j}$. 
\end{itemize}
\end{itemize}
\end{itemize}

In particular, if $\widetilde{G}$ is a minor of $G$, and $\nembed$ is a near-embedding of $G$
without vortices, the above process describes a near-embedding $\nembedB$ of $\widetilde{G}$
that does not have vortices, its number of apices is not greater than the number of apices in $\nembedA$, and its Euler genus is not greater than the one of $\nembedA$.

\embedsection{Projecting a subgraph to $\MainPlusA$ and lifting a subgraph of $\MainPlusA$}
Consider a near-embedding $\nembed$ of $G$ and a subgraph $J$ of $G-\Apices$.
Observe that $J$ naturally projects to a subgraph $\SubProj_{\nembed}(J)$ of $\MainPlusA$
(called henceforth \emph{the projection of $J$ onto $\MainPlusA$}) as follows:
\begin{itemize}
\item $\SubProj_{\nembed}(J)$ contains all vertices and edges of $J$ that are present in $\MainA$;
\item for every dongle $\DongleA{i}$ that contains an edge of $J$, we add $\PlusDongleVtx{i}$ to $\SubProj_{\nembed}(J)$
together with all the edges between $\PlusDongleVtx{i}$ and the vertices of $V(\MainA) \cap V(\DongleA{i}) \cap V(J)$;
\item for every vortex $\VortexA{i}$ that contains an edge of $J$, we add $\PlusVortexVtx{i}$ to $\SubProj_{\nembed}(J)$ together with all the edges between $\PlusVortexVtx{i}$ and the vertices of $V(\MainA) \cap V(\VortexA{i}) \cap V(J)$.
\end{itemize}
We note that if two vertices of $V(J) \cap V(\MainA)$ are in the same connected component of $J$, then they are also in the same connected
component of $\SubProj_{\nembed}(J)$. However, the converse implication may not hold:
for example, two connected components of $J$ that both contain edges of the same vortex are projected to the same connected component of $\SubProj_{\nembed}(J)$. 

Slightly abusing the notation, we also extend the domain of the projection $\SubProj_{\nembed}$
to single vertices $v \in V(G) \setminus \Apices$. If $v \in V(\MainA)$, then
$\SubProj_\nembed(v) = v$.
If $v \in V(\DongleA{j}) \setminus V(\MainA)$, then $\SubProj_\nembed(v) = \PlusDongleVtx{j}$. 
If $v \in V(\VortexA{i}) \setminus V(\MainA)$, then $\SubProj_\nembed(v) = \PlusVortexVtx{i}$.

Observe that if $J$ is a path in $G-\Apices$ with two endpoints in $\MainA$, then $\SubProj_{\nembed}(J)$ may not be a path, or even a walk between the same two endpoints.
For this reason, we will also need another variant of projection, applicable only to paths with fixed orientation. 
For a path $J$ with a fixed orientation (i.e., indication which endpoint is the starting point and which endpoint is the ending point of the path $J$) and with both endpoints in $\MainA$, 
$J$ projects to a path $\PathProj_{\nembed}(J)$ in $\MainPlusA$
(called henceforth \emph{the path projection of $J$ onto $\MainPlusA$}) as follows:
\begin{itemize}
\item For every dongle $\DongleA{i}$ that contains an edge of $J$, we replace 
the maximal subpath of $J$ contained in $\DongleA{i}$
with a one- or two-edge path with the same endpoints and $\PlusDongleVtx{i}$ as one of its vertices.
(Note that as $|V(\MainA) \cap V(\DongleA{i})| \leq 3$, there may be at most one such path on $J$.)
\item We iteratively pick the first edge on $J$ that belongs to a vortex, say vortex $\VortexA{i}$, then we pick
the first and the last vertex of $\VortexA{i}$ on $J$, say
$\SocietyVtx{i}{a}$ and $\SocietyVtx{i}{b}$ (since the endpoints of $J$ are in $\MainA$,
    these vertices are society vertices of $\VortexA{i}$),
  and we replace the subpath of $J$ between $\SocietyVtx{i}{a}$ and $\SocietyVtx{i}{b}$
  with the two-edge path $\SocietyVtx{i}{a} - \PlusVortexVtx{i} - \SocietyVtx{i}{b}$. 
\end{itemize}
Note that $J$ may contain multiple subpaths that are maximal subpaths contained
in $\VortexA{i}$; the projection defined as above shortcuts them all through a single two-edge path
via $\PlusVortexVtx{i}$. In particular, we fix the orientation of $J$ and process
vortices in the order of their appearance on $J$ only to make the process deterministic
(that is, the path projection of an oriented path is defined uniquely). 
Note that the orientation of $J$ does not matter if $J$ contains edges of at most one vortex.

We also extend the notion of a path projection to cycles in $G-\Apices$ that are edge-disjoint
with vortices in the natural manner.

We will need the following observation.
\begin{lemma}\label{lem:proj-2-paths}
Let $\nembed$ be a near-embedding of $G$.
Let $P$ and $Q$ be two paths in $G-\Apices$ that have all endpoints in $\MainA$
and do not contain any edges of vortices.
If $P$ and $Q$ are vertex-disjoint,
then $\PathProj_\nembed(P)$ and $\PathProj_\nembed(Q)$ are vertex-disjoint, too.
\end{lemma}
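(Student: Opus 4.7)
My plan is to argue by contradiction: suppose some vertex $w$ lies in both $V(\PathProj_\nembed(P))$ and $V(\PathProj_\nembed(Q))$, and show this forces $V(P)\cap V(Q)\neq\emptyset$. Because $P$ has no vortex edges, the definition of $\PathProj_\nembed$ performs only dongle replacements on $P$, so the projection is built by collapsing the maximal subpaths of $P$ whose edges all lie in a single dongle $\DongleA{i}$, and substituting a one- or two-edge path through $\PlusDongleVtx{i}$.

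First I would pin down the vertex set of the projection. Using the near-embedding axiom that any vertex contained in two parts of the partition must belong to $\MainA$, the endpoints of every maximal dongle-subpath of $P$ inside $\DongleA{i}$ lie in $V(\MainA)\cap V(\DongleA{i})$. I would then go through the three definitional cases of $\PlusDongleVtx{i}$ according to $|V(\MainA)\cap V(\DongleA{i})|\in\{1,2,3\}$: when this cardinality is at most $1$, $P$ cannot have any edge in $\DongleA{i}$, since both endpoints of $P$ lie in $\MainA$ and there is no room for a maximal subpath whose two endpoints are distinct boundary vertices; when it equals $2$, the designated $\PlusDongleVtx{i}$ is one of the two boundary vertices, which is already on $P$ whenever $P$ actually traverses the dongle; and only when it equals $3$ is $\PlusDongleVtx{i}$ a genuinely new vertex. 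The upshot is $V(\PathProj_\nembed(P))\cap V(\MainA)\subseteq V(P)$, and analogously for $Q$.

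The second step is a short case split on $w$. If $w\in V(\MainA)$ then $w\in V(P)\cap V(Q)$ directly, which is the desired contradiction. Otherwise $w=\PlusDongleVtx{i}$ is a fresh virtual vertex, so $|V(\MainA)\cap V(\DongleA{i})|=3$ and both $P$ and $Q$ contain edges of $\DongleA{i}$. Their maximal dongle subpaths each contribute two endpoints in the three-element boundary $V(\MainA)\cap V(\DongleA{i})$; vertex-disjointness of $P$ and $Q$ would demand four pairwise distinct such endpoints, contradicting that only three exist.

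The main subtlety will be the first step: the definition of $\PlusDongleVtx{i}$ is non-uniform with $|V(\MainA)\cap V(\DongleA{i})|$, so one must verify case-by-case that no virtual vertex secretly introduces a $\MainA$-vertex outside $V(P)$. Once that inclusion is written down carefully, the final pigeonhole on a single dongle's three-element boundary closes the argument without further machinery.
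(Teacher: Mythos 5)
Your proof is correct and follows essentially the same approach as the paper's (shorter) argument: both reduce to the pigeonhole on a single dongle's boundary, observing that if a dongle contained edges of both $P$ and $Q$, the endpoints of the two maximal dongle-subpaths would require four distinct vertices among the at-most-three vertices of $V(\DongleA{j})\cap V(\MainA)$. Your version simply makes explicit the preliminary inclusion $V(\PathProj_\nembed(P))\cap V(\MainA)\subseteq V(P)$ and the case split on $|V(\MainA)\cap V(\DongleA{i})|$, which the paper leaves implicit.
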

\begin{proof}
Note that if a vertex $\PlusDongleVtx{j}$ lies on $\PathProj_\nembed(P)$, then
$P$ contains at least two vertices of $V(\DongleA{j}) \cap V(\MainA)$, and similarly for $Q$.
Since every dongle has at most three vertices in common with $\MainA$, we infer that
no dongle of $\nembed$ contains both an edge of $P$ and an edge of $Q$. 
The claim follows.
\end{proof}

In some restricted cases, one can reverse the operation of a (path) projection as follows.
Assume that $J'$ is a subgraph of $\MainPlusA$ such that (a) $J'$ does not contain any virtual vortex edge and (b) if $J'$ contains a virtual dongle edge added due to a dongle $\DongleA{i}$, then $\DongleA{i}$ contains a connected subgraph $F$
with $V(F) \cap V(\MainA) = V(J') \cap V(\DongleA{i}) \cap V(\MainA)$. 
Then $G-\Apices$ contains a \emph{lift} of $J'$: a subgraph $J$ consisting of the vertices and edges of $J'$ that are in $\MainA$
plus, for every dongle $\DongleA{i}$ that contains an edge of $J'$, a minimal connected subgraph $F$ with $V(F) \cap V(\MainA) = V(J') \cap V(\DongleA{i}) \cap V(\MainA)$. 
Observe that $\SubProj_{\nembed}(J) = J'$. Furthermore, 
if $J'$ is a path with both endpoints in $V(\MainA)$, then any lift $J$ of $J'$ is a path between the same endpoints in $G-\Apices$ and $\PathProj_{\nembed}(J) = J'$. 

Apart from paths, we will be often projecting walls. 
Observe that if $W$ is a $w \times h$ wall for $w,h \geq 4$, then every separation $(A,B)$
of $W$ of order at most $3$ satisfies the following property: either $A \setminus B$
or $B \setminus A$ contains at most $1$ vertex of degree $3$ of $W$. 
With this observation, a direct check shows the following:
\begin{lemma}\label{lem:project-wall}
Let $\nembed$ be a near-embedding of a graph $G$ into a surface $\Sigma$.
Let $W$ be a $w \times h$ (cylindrical) 
wall or a $\Sigma$-wall of order $h$ in $G-\Apices$, where $w,h \geq 4$, that does not contain
any edge of a vortex. 
Then, either all but at most one degree-3 vertices of $W$ are contained in a single dongle
of $\nembed$, or the projection $\SubProj_{\nembed}(W)$ is also a $w \times h$ (cylindrical)
  wall or a $\Sigma$-wall of order $h$ in $\MainPlusA$, respectively.
\end{lemma}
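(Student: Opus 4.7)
The plan is to use the separation observation recalled immediately before the lemma to control how each dongle can interact with $W$. For every dongle $\DongleA{i}$ with $E(W)\cap E(\DongleA{i})\neq \emptyset$, I let $W_i$ denote the subgraph of $W$ formed by the edges of $W$ lying in $\DongleA{i}$. Since every $W$-edge incident to a vertex of $V(\DongleA{i})\setminus V(\MainA)$ must itself lie in $\DongleA{i}$ (the dongle is the only part of the near-embedding touching such vertices), the pair
\[
\bigl(V(W_i),\; (V(W)\setminus V(W_i))\cup (V(W_i)\cap V(\MainA))\bigr)
\]
is a separation of $W$ of order at most $|V(\MainA)\cap V(\DongleA{i})|\leq 3$. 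Applying the observation, one side of this separation contains at most one degree-$3$ vertex of $W$. If for some dongle the ``outside'' side is the small one, then $V(\DongleA{i})$ absorbs all but at most one degree-$3$ vertex of $W$ and we are in the first alternative of the lemma. Otherwise, for every contributing dongle, $V(W_i)\setminus V(\MainA)$ contains at most one degree-$3$ vertex of $W$.

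Assuming the latter, I would argue that $\SubProj_\nembed(W)$ is a subdivision of the same underlying suppressed elementary (or cylindrical, or $\Sigma$-) wall as $W$. Viewing $W$ as a subdivision of its suppressed wall $W^{\mathrm{sup}}$ (whose vertices are exactly the degree-$3$ vertices of $W$ and whose edges correspond to internally degree-$2$ paths of $W$), each maximal subpath of $W$ inside $\DongleA{i}$ enters and leaves the dongle through interface vertices and is, under the projection, replaced by a length-$2$ path through $\PlusDongleVtx{i}$. If $W_i$ has no interior degree-$3$ vertex of $W$, this rewriting merely further subdivides edges of $W^{\mathrm{sup}}$; if $W_i$ has a unique interior degree-$3$ vertex $v_i$, then $\PlusDongleVtx{i}$ inherits $v_i$'s role as a branching vertex, with the three relevant paths rerouted through $\PlusDongleVtx{i}$ to the three interface vertices through which the $v_i$-branches of $W^{\mathrm{sup}}$-edges leave the dongle. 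Either way, the resulting graph is a subdivision of the same $W^{\mathrm{sup}}$, hence a wall of the claimed type.

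The main obstacle will be a careful degree bookkeeping at $\PlusDongleVtx{i}$, whose degree in the projection equals $|V(\MainA)\cap V(\DongleA{i})\cap V(W)|$ and thus depends on which interface vertices happen to lie in $V(W)$. The delicate corner cases are those where an interface vertex is incident in $W$ only to dongle edges (making it effectively internal to $W_i$ despite lying in $V(\MainA)$), or where the dongle has three interface vertices in $V(W)$ but $W_i$ touches only two of them as endpoints of its subpaths, potentially creating a ``spurious'' extra edge at $\PlusDongleVtx{i}$. I expect that each such corner case either preserves the subdivided-wall structure (the concerned vertex ends up with the correct degree for a subdivision point of $W^{\mathrm{sup}}$) or, via a slightly refined separation in $W$, forces enough degree-$3$ vertices of $W$ inside $V(\DongleA{i})$ to place us back into the first alternative. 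Carrying out this combinatorial check across the possible shapes of $W_i$ (a subdivided path between two interfaces, a disjoint union of such paths, or a subdivided ``Y'' around one interior branching vertex), together with the parallel verification for cylindrical and $\Sigma$-walls --- where the additional boundary identifications behave just like ordinary wall edges under the projection --- constitutes the bulk of the argument.
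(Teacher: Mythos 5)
The separation argument is exactly the paper's intended starting point (the observation recalled immediately before the lemma), and your verification that $(V(W_i),(V(W)\setminus V(W_i))\cup(V(W_i)\cap V(\MainA)))$ is a separation of order at most $3$ is correct. However, you explicitly defer what you yourself call ``the bulk of the argument,'' and that deferral hides a genuine gap. The two corner cases you raise are real and they do not obviously resolve: (a) an interface vertex $u_3\in V(\MainA)\cap V(\DongleA{i})\cap V(W)$ with no $W$-edge inside the dongle produces the spurious edge $\PlusDongleVtx{i}u_3$ in the projection, increasing the degree of $u_3$; (b) an interface vertex $u_1$ with all its $W$-edges inside the dongle becomes a pendant (degree-$1$) vertex adjacent only to $\PlusDongleVtx{i}$. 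In either case the projection fails to be a subdivision of $W^{\mathrm{sup}}$. Your proposed rescue --- ``a slightly refined separation in $W$ forces enough degree-$3$ vertices of $W$ inside $V(\DongleA{i})$'' --- does not go through as stated: taking $A'=V(W_i)\cup\{u_3\}$ still has $A'\setminus B'=V(W_i)\setminus V(\MainA)$, so the same ``small inside'' branch of the dichotomy is available, and in that branch $W_i$ can be a single subdivided path while most of $W$ lies outside $V(\DongleA{i})$, so the first alternative is not triggered.

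What is missing is a reason why these corner cases cannot coexist with the ``small inside'' branch. The argument you need is not purely combinatorial: you have to use the geometric constraint on dongles, namely that $V(\DongleA{i})\cap V(\MainA)$ lies on the boundary of a single disc $\DongleDisc{i}$ and hence on a common face of $\MainA$ (and of $\SubProj_\nembed(W)$). This forces $u_3$ to be topologically adjacent to the segment of $W$ that the dongle absorbs, which (combined with $w,h\geq 4$, so that faces of $W$ are ``short'') rules out the offending configurations or pushes them into the first alternative. A similar topological observation handles the degree-$1$ case. You should either supply this argument or acknowledge that the ``direct check'' needs the embedding, not just the separation property of $W$; as written, the proof identifies the right dichotomy but leaves the critical verification unproven and your sketch of how it would go is flawed.
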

\begin{lemma}\label{lem:lift-wall}
Let $\nembed$ be a near-embedding of $G$ into a surface $\Sigma$.
Let $W'$ be a $w \times h$ (cylindrical) wall 
in $\MainPlusA$ for some $w,h \geq 2$ 
or a $\Sigma$-wall of order $h$ for some $h \geq 2$. Suppose further that 
$W'$ does not contain any virtual vortex edge
and, for every dongle $\DongleA{i}$ such that $W'$ contains a virtual dongle edge added due to $\DongleA{i}$, $\DongleA{i}$ contains a connected subgraph $F$
with $V(F) \cap V(\MainA) = V(W') \cap V(\DongleA{i}) \cap V(\MainA)$. 
Then, any lift $W$ of $W'$ is a $w \times h$ (cylindrical) wall 
or a $\Sigma$-wall of order $h$, respectively, in $G-\Apices$. Also,
$\SubProj_{\nembed}(W) = W'$. 
\end{lemma}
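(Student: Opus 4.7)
The plan is to verify that the lift $W$ is a subdivision of the same suppressed elementary wall (respectively cylindrical wall or $\Sigma$-wall) that $W'$ subdivides, by analyzing the lift operation dongle-by-dongle. First I would argue that no virtual vortex vertex $\PlusVortexVtx{i}$ appears in $V(W')$: since $W'$ contains no virtual vortex edge by hypothesis, and every vertex of a wall has positive degree, any such $\PlusVortexVtx{i}$ would need an incident edge in $W'$; but in $\MainPlusA$ every edge incident to $\PlusVortexVtx{i}$ is a virtual vortex edge. Therefore $V(W') \subseteq V(\MainA) \cup \{\PlusDongleVtx{i} : |V(\DongleA{i}) \cap V(\MainA)| = 3\}$, so the only ``non-real'' vertices of $W'$ are virtual dongle vertices of $3$-attachment dongles.

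Next, for each dongle $\DongleA{i}$ that contributes an edge to $W'$, let $S_i = V(W') \cap V(\DongleA{i}) \cap V(\MainA)$ be the attachment set used by $W'$. The lift replaces the ``virtual local structure'' of $W'$ at $\DongleA{i}$ (a subgraph of the local $K_4$ on $\{\PlusDongleVtx{i}\} \cup (V(\DongleA{i}) \cap V(\MainA))$ for $3$-attachment dongles, or the single virtual edge between the two attachments for $2$-attachment dongles) by a minimal connected subgraph $F_i \subseteq \DongleA{i}$ with $V(F_i) \cap V(\MainA) = S_i$. By minimality $F_i$ is a tree. In the $2$-attachment case $F_i$ is a path between the two attachments, so the replacement subdivides the virtual edge. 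In the $3$-attachment case a counting argument on tree degrees shows that $F_i$ is a subdivided $K_{1,|S_i|}$ whose leaves are exactly the elements of $S_i$, with an internal Steiner point playing the role previously held by $\PlusDongleVtx{i}$. In all cases the local replacement is a subdivision of the previous local structure of $W'$.

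Piecing the local replacements together, $W$ is globally a subdivision of the same underlying suppressed elementary wall (respectively cylindrical or $\Sigma$-wall) that $W'$ subdivides, hence a wall of the same parameters. The projection identity $\SubProj_\nembed(W) = W'$ then follows from the definitions: by construction $V(W) \cap V(\MainA) = V(W') \cap V(\MainA)$ and $E(W) \cap E(\MainA) = E(W') \cap E(\MainA)$, and for each dongle $\DongleA{i}$ with an edge in $W$ the projection adds back $\PlusDongleVtx{i}$ together with edges to exactly $V(F_i) \cap V(\MainA) = S_i$, which recovers the virtual dongle edges present in $W'$.

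The main obstacle is the $3$-attachment case: one must ensure that the minimal Steiner subgraph $F_i$ cannot be a path which passes through an attachment vertex as an internal point (which could create a vertex of degree $4$ in $W$). This requires ruling out such degenerate minimizers by exploiting both the minimality of $F_i$ and the wall structure of $W'$, or by appealing to a suitable convention on how $F_i$ is chosen. Once this case analysis is settled, the same local argument applies verbatim to cylindrical walls and $\Sigma$-walls — the only change is the underlying suppressed skeleton being subdivided — so all three cases of the statement are handled uniformly.
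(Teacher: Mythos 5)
The paper provides no written proof of this lemma (it is presented as ``a direct check''), and your dongle-by-dongle local-replacement strategy is certainly the intended argument. Your treatment of the vortex-free observation, the 2-attachment case (the lift subdivides the single virtual edge, or the two-edge path through $\PlusDongleVtx{i}$), and the projection identity are all fine.

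However, the gap you flag at the end is genuine, and it also invalidates the earlier sentence claiming that ``a counting argument on tree degrees shows that $F_i$ is a subdivided $K_{1,|S_i|}$.'' That counting argument is simply false: a minimal connected subgraph $F_i$ of $\DongleA{i}$ with $V(F_i)\cap V(\MainA)=\{a,b,c\}$ need not have all of $a,b,c$ as leaves. For example, if $\DongleA{i}$ is the path $a-x-b-y-c$ with $x,y\notin V(\MainA)$, then the unique minimal $F_i$ is this path, with $b$ an internal vertex of degree $2$. Now suppose in $W'$ the vertex $\PlusDongleVtx{i}$ is a branch vertex (degree $3$) with neighbors $a,b,c$, and that $b$ is \emph{also} a branch vertex of $W'$ (so the branch $\PlusDongleVtx{i} b$ of the underlying suppressed wall is unsubdivided). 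In $W'$ the vertex $b$ has one edge to $\PlusDongleVtx{i}$ and two other edges; in the lift these two edges survive and the $\PlusDongleVtx{i}$-edge is replaced by the two path-edges of $F_i$ at $b$, so $b$ acquires degree $4$. The resulting $W$ is then not a wall, contradicting the conclusion for ``any lift.'' So the statement as written is too strong. The fix is to pick $F_i$ more carefully: if all of $a,b,c$ are chosen to be leaves of $F_i$, the replacement is indeed a subdivision of the local $K_{1,3}$ and the argument goes through. Such a choice always exists under the ``properly connected dongles'' hypothesis, which does hold in every place the paper actually invokes this lemma (tidy dongles imply it); but neither that hypothesis nor a constraint on how $F_i$ is selected appears in the lemma statement or in the definition of ``lift,'' so this needs to be made explicit rather than waved off as an option.
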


We will also need the following observation.
\begin{lemma}\label{lem:small-subgraph-is-local}
Let $\nembed$ be a near-embedding of a graph $G$ and let $H$ be a connected subgraph
of $G-\Apices$. 
Furthermore, assume that $H$ contains edges of at most $\ell^\circ$ vortices of $\nembed$.
Then, for every $u,v \in V(H)$, the distance (and thus also the radial distance) in $\MainPlusA$ between $\SubProj_{\nembed}(u)$ and $\SubProj_{\nembed}(v)$ 
is at most $|V(H)| + \ell^\circ$.
\end{lemma}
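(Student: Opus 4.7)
The plan is to construct a walk $\mathcal{W}$ in $\MainPlusA$ from $\SubProj_{\nembed}(u)$ to $\SubProj_{\nembed}(v)$ whose vertex set has size at most $|V(H)|+\ell^\circ$, and then invoke the trivial bound that in a connected subgraph on $N$ vertices the distance between any two of them is at most $N-1$. Because $H$ is connected, I fix a path $P=w_0 w_1 \cdots w_s$ in $H$ with $w_0=u$, $w_s=v$, and $s\leq |V(H)|-1$. For every edge $w_i w_{i+1}$ I insert a short walk in $\MainPlusA$ joining $\SubProj_{\nembed}(w_i)$ to $\SubProj_{\nembed}(w_{i+1})$: an edge of $\MainA$ is kept unchanged; for a dongle edge $w_i w_{i+1}\in E(\DongleA{j})$ with both endpoints in $V(\MainA)$ I use the direct virtual dongle edge between them (which is present in $\MainPlusA$, since having a dongle edge with both endpoints in $\MainA$ forces $|V(\MainA)\cap V(\DongleA{j})|\geq 2$ and the construction then inserts pairwise virtual dongle edges); for a dongle edge with exactly one main endpoint I take the single virtual dongle edge from that endpoint to $\PlusDongleVtx{j}$; for a dongle edge with neither endpoint in $V(\MainA)$ both endpoints project to $\PlusDongleVtx{j}$, so the segment has length zero; and for a vortex edge in $E(\VortexA{\iota})$ I route via $\PlusVortexVtx{\iota}$, yielding a segment of length at most two.

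The heart of the proof is bounding $|V(\mathcal{W})|$. Every main vertex appearing in $\mathcal{W}$ is either a vertex of $V(P)\cap V(\MainA)\subseteq V(H)\cap V(\MainA)$, or coincides with $\PlusDongleVtx{j}$ for a dongle with $|V(\MainA)\cap V(\DongleA{j})|\in\{1,2\}$; every non-main vertex of $\mathcal{W}$ is either $\PlusDongleVtx{j}$ for a dongle with three main vertices or $\PlusVortexVtx{\iota}$ for some vortex. The construction introduces $\PlusDongleVtx{j}$ into $\mathcal{W}$ only when $P$ traverses a dongle edge of $\DongleA{j}$ with at least one endpoint outside $V(\MainA)$; since non-main vertices of $G-\Apices$ belong to exactly one dongle or vortex, such an endpoint is a vertex of $V(H)\setminus V(\MainA)$ uniquely identifying the dongle. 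Consequently, the number of distinct $\PlusDongleVtx{j}$ appearing in $V(\mathcal{W})$, counted irrespective of whether they lie in $V(\MainA)$, is at most the number of vertices of $V(H)\setminus V(\MainA)$ that lie in some dongle. Likewise, $\PlusVortexVtx{\iota}$ enters $\mathcal{W}$ only when $H$ uses an edge of $\VortexA{\iota}$, and by hypothesis there are at most $\ell^\circ$ such vortices. Thus
\[
|V(\mathcal{W})| \leq |V(H)\cap V(\MainA)| + |V(H)\setminus V(\MainA)| + \ell^\circ = |V(H)| + \ell^\circ.
\]
Since $\mathcal{W}$ is a connected subgraph of $\MainPlusA$ containing both $\SubProj_{\nembed}(u)$ and $\SubProj_{\nembed}(v)$, the distance between them in $\MainPlusA$ is at most $|V(\mathcal{W})|-1\leq |V(H)|+\ell^\circ$, and the radial-distance bound is then immediate from the fact that radial distance is bounded by ordinary distance.

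The only subtlety I expect is the case of a dongle with $|V(\MainA)\cap V(\DongleA{j})|\in\{1,2\}$, where $\PlusDongleVtx{j}$ is itself a main vertex that need not belong to $V(H)$, so such a main vertex may slip into $\mathcal{W}$ without being a genuine vertex of $H$. The charging argument still absorbs it, because the walk uses $\PlusDongleVtx{j}$ only when $P$ has a non-main endpoint in $\DongleA{j}$, and that endpoint is a distinct vertex of $V(H)\setminus V(\MainA)$ to charge against --- or else $H$ is entirely contained in $V(\DongleA{j})\setminus V(\MainA)$, in which case $\SubProj_{\nembed}(u)=\SubProj_{\nembed}(v)$ and the statement holds trivially.
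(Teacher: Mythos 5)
Your proof is correct and takes essentially the same approach as the paper's: both fix a path in $H$ from $u$ to $v$, replace each piece (you do it edge by edge, the paper via maximal subpaths with non-main internal vertices) by a short segment in $\MainPlusA$ through the appropriate virtual vertex, and charge the resulting vertices to $|V(H)|$ plus one extra per vortex used. Your bookkeeping via $|V(\mathcal{W})|$ rather than walk length, and your handling of the corner case where $\PlusDongleVtx{j}$ is a main vertex not in $V(H)$, are both sound and in fact give the marginally sharper bound $|V(H)|+\ell^\circ-1$.
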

\begin{proof}
Without loss of generality, since $H$ is connected, we can assume that $H$ is a path with endpoints $u$ and $v$.
If the whole $H$ is contained in $\DongleA{j} \setminus V(\MainPlusA)$ for some dongle $\DongleA{j}$ or in 
$\VortexA{i} \setminus V(\MainPlusA)$, then $\SubProj_{\nembed}(u) = \SubProj_{\nembed}(v)$ and we are done.
Otherwise, let $Q$ be a maximal subpath of $H$ with internal vertices not in $\MainPlusA$ and let $u'$ and $v'$ be the endpoints of $Q$; we have $u' \in V(\MainPlusA) \cup \{u\}$
and $v' \in V(\MainPlusA) \cup \{v\}$. 
Observe that either $Q$ is a single edge of $\MainPlusA$, or a path contained in a single dongle, or a path contained in a single vortex.
We define a path $Q'$ between $\SubProj_{\nembed}(u')$ and $\SubProj_{\nembed}(v')$ as follows.
If $Q$ is a single edge of $\MainPlusA$, we take $Q' = Q$.
If $Q$ is contained in a single dongle, we take $Q'$ to be the single edge of $\MainPlusA$ (one of the virtual edges for the said dongle) connecting $\SubProj_{\nembed}(u')$ and $\SubProj_{\nembed}(v')$
If $Q$ is contained in a single vortex, we take $Q'$ to be a one- or two-edge path of $\MainPlusA$ connecting $\SubProj_{\nembed}(u')$ and $\SubProj_{\nembed}(v')$ via the virtual edges
and possibly the virtual vertex of the said vortex. Note that in this case, $Q'$ is a one-edge path if and only if $u' = u \notin V(\MainPlusA)$ or $v' = v \notin V(\MainPlusA)$. 
By replacing every such path $Q$ with $Q'$, we obtain a walk in $\MainPlusA$ between $\SubProj_{\nembed}(u)$ and $\SubProj_{\nembed}(v)$. If we shortcut the walk at virtual vortex vertices
visited more than once, the length of the resulting walk is at most $|V(H)| + \ell^\circ$, as desired.
\end{proof}

\embedsection{Preimages of parts of the surface}

In embedded graphs, vertices on a closed face-vertex curve form a separator between the parts
of the embedding drawn on the two sides of the curve (assuming cutting along it breaks the surface into two). 
We need some definitions to discuss a similar process in near-embeddings.

\begin{definition}[preimage]
Let $\nembed$ be a near-embedding of $G$ and let $X \subseteq V(\MainPlusA)$.
A \emph{preimage of $X$} is a vertex subset $\preimage^\nembed(X) \subseteq V(G)$ consisting of:
\begin{itemize}
\item all vertices of $\MainA$ that belong to $X$;
\item for every vertex $\PlusDongleVtx{i}$ in $X$, all vertices of $\DongleA{i}$;
\item for every society vertex $\SocietyVtx{i}{j}$ in $X$, all vertices of $\VortexBag{i}{j}$. 
\end{itemize}
The {\em{preimage}} of a subset $\Psi \subseteq \Sigma$ is defined as the preimage of all vertices
that are embedded in $\Psi$.
\end{definition}
We often drop the superscript $\nembed$ if the near-embedding is clear from the context.
We have the following two immediate observations.
\begin{lemma}\label{lem:preimage-size}
Let $\nembed$ be a near-embedding of $G$ and let $X \subseteq V(\MainPlusA)$.
Assume $\nembed$ has maximum dongle size $\donglesize$ and maximum vortex bag size $\vortexbagsize$. 
Then $|\preimage(X)| \leq |X|\cdot \max(1, \donglesize, \vortexbagsize)$. 
\end{lemma}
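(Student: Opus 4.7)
The plan is to establish the bound by a straightforward union bound, associating to each $x \in X$ the set of vertices it contributes to the preimage, and bounding the contribution of each single element.

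First, I would unfold the definition of $\preimage(X)$ and observe that it can be written as
$$\preimage^\nembed(X) = \bigcup_{x \in X} P(x),$$
where $P(x) \subseteq V(G)$ collects precisely the vertices contributed by $x$ under the three rules of the definition: namely, $x$ itself if $x \in V(\MainA)$, together with $V(\DongleA{i})$ in case $x = \PlusDongleVtx{i}$ for some $i$, and together with $\VortexBag{i}{j}$ in case $x = \SocietyVtx{i}{j}$ for some society vertex. This decomposition follows by simply grouping each membership statement in the definition according to the vertex $x \in X$ that triggered it.

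Next I would perform a short case analysis on which rules apply to $x$ to bound $|P(x)| \leq \max(1, \donglesize, \vortexbagsize)$. If $x \in V(\MainA)$ is neither a virtual dongle vertex nor a society vertex of any vortex, then $P(x) = \{x\}$, so $|P(x)| = 1$. If $x = \PlusDongleVtx{i}$ for some $i$, then $P(x) \subseteq V(\DongleA{i})$, because $\PlusDongleVtx{i} \in V(\DongleA{i})$ holds by construction of the extended version, and thus any self-contribution $\{x\}$ arising from $x \in V(\MainA)$ is already absorbed; hence $|P(x)| \leq \donglesize$. Analogously, if $x = \SocietyVtx{i}{j}$, then $x \in \VortexBag{i}{j}$, so $P(x) \subseteq \VortexBag{i}{j}$ and $|P(x)| \leq \vortexbagsize$.

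Summing over $x \in X$ gives
$$|\preimage^\nembed(X)| \leq \sum_{x \in X} |P(x)| \leq |X| \cdot \max(1, \donglesize, \vortexbagsize),$$
as claimed. There is no substantial obstacle: the lemma is essentially a bookkeeping observation, a direct consequence of unfolding the definition of preimage and applying the assumed bounds on the sizes of dongles and vortex bags.
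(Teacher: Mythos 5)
The paper states this lemma without proof (dismissing it as an ``immediate observation''), and your argument --- decompose $\preimage^\nembed(X)$ as $\bigcup_{x\in X} P(x)$ and bound each $|P(x)|$ --- is exactly the natural route. Two small remarks on the case analysis. First, your justification for $P(x)\subseteq V(\DongleA{i})$ when $x=\PlusDongleVtx{i}$ asserts $\PlusDongleVtx{i}\in V(\DongleA{i})$ ``by construction,'' which is false when $|V(\DongleA{i})\cap V(\MainA)|=3$: there $\PlusDongleVtx{i}$ is a freshly added virtual vertex belonging neither to $\DongleA{i}$ nor to $G$. The conclusion nevertheless holds, for the simpler reason that $\PlusDongleVtx{i}\notin V(\MainA)$ in that case, so the first rule of the preimage definition contributes nothing and $P(x)=V(\DongleA{i})$ outright; in the cases $|V(\DongleA{i})\cap V(\MainA)|\in\{1,2\}$ your absorption argument is the right one.

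Second, the three cases in your analysis are not a priori mutually exclusive: a vertex $x\in V(\MainA)$ could simultaneously be a society vertex $\SocietyVtx{i}{j}$ and the designated $\PlusDongleVtx{i'}$ of a dongle sharing $x$ (or be designated for two dongles), in which case $P(x)$ could be as large as $\donglesize+\vortexbagsize-1$, which can exceed $\max(1,\donglesize,\vortexbagsize)$. The paper itself glosses over this and uses the lemma only with ample slack (e.g.\ via factors like $q+1$), so this does not cause trouble downstream, but to be rigorous you should either note this overlap explicitly and accept a slightly larger constant, or phrase the cases as genuinely exhaustive and disjoint and argue why the overlap does not arise in the near-embeddings actually considered.
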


\begin{lemma}\label{lem:preimage-cut}
Let $\nembed$ be a near-embedding of $G$ and let $\gamma$ be a closed face-vertex curve
in $\MainPlusA$ without self-intersections
such that $\Sigma-\gamma$ consists of two components $\Delta_1$ and $\Delta_2$.
Then $\preimage(\gamma)$ separates $\preimage(\Delta_1)$ and $\preimage(\Delta_2)$ in $G-\Apices$,
     that is, the following pair is a separation of $G$:
     \[ \left(\preimage(\Delta_1) \cup \preimage(\gamma) \cup \Apices, \preimage(\Delta_2) \cup \preimage(\gamma) \cup \Apices\right). \]
\end{lemma}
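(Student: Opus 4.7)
The plan is to check the two defining properties of a separation for the claimed pair: together they cover $V(G)$, and no edge of $G$ has one endpoint strictly on the $\Delta_1$-side and the other strictly on the $\Delta_2$-side. Coverage is an immediate reading of the preimage definition. Every $v \notin \Apices$ lies in $\MainA$, in some dongle $\DongleA{j}$, or in some vortex bag $\VortexBag{i}{j}$. In the first case $v \in V(\MainPlusA)$ is embedded in exactly one of $\Delta_1$, $\gamma$, or $\Delta_2$; in the second case $v$ is drawn into the preimage of whichever region hosts the embedded point $\PlusDongleVtx{j}$; and in the third case $v$ is drawn in by the society vertex $\SocietyVtx{i}{j} \in \VortexBag{i}{j}$.

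For the edge condition, let $uv \in E(G)$ with neither endpoint in $\Apices$. Using the edge-disjoint decomposition $G - \Apices = \MainA \cup \bigcup_i \VortexA{i} \cup \bigcup_j \DongleA{j}$, I split into three cases. If $uv \in E(\DongleA{j})$, then $u, v \in V(\DongleA{j})$, and the preimage rule for $\PlusDongleVtx{j}$ places all of $V(\DongleA{j})$ into $\preimage(\Phi)$, where $\Phi \in \{\Delta_1, \gamma, \Delta_2\}$ is the region hosting the embedded point $\PlusDongleVtx{j}$. If $uv \in E(\VortexA{i})$, then the path-decomposition property gives a common bag $\VortexBag{i}{j}$ containing $u$, $v$, and the society vertex $\SocietyVtx{i}{j}$, and the preimage rule for society vertices places the whole bag in $\preimage(\Phi)$, where $\Phi$ now hosts $\SocietyVtx{i}{j}$. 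In both cases $u$ and $v$ land on a common side (possibly both in $\preimage(\gamma)$, which sits on both sides).

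The one topologically substantive case is $uv \in E(\MainA) \subseteq E(\MainPlusA)$. Since $\gamma$ is a face-vertex curve without self-intersections, it admits an injective realization in $\Sigma$ that meets $V(\MainPlusA)$ only at the listed vertices $v_i$ and otherwise passes only through the interiors of the listed faces $f_i$; in particular, its image is disjoint from every edge of $\MainPlusA$. Hence the open arc formed by the interior of $uv$ is a connected subset of $\Sigma \setminus \gamma = \Delta_1 \cup \Delta_2$ and must lie entirely in a single component $\Delta_k$, so each endpoint of $uv$ lies in $\overline{\Delta_k} \cap V(\MainPlusA) = (\Delta_k \cup \gamma) \cap V(\MainPlusA)$, and therefore in $\preimage(\Delta_k) \cup \preimage(\gamma)$. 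I expect this topological step to be the only nonroutine point; everything else is bookkeeping against the definitions of $\preimage$, of a near-embedding, and of a face-vertex curve.
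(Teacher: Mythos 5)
The paper introduces this lemma as one of ``two immediate observations'' and supplies no proof, so there is nothing in the text to compare your argument against. Your proof is correct and is the natural unpacking of the definitions. In particular, you correctly isolate the only nontrivial point: since $\gamma$ has no self-intersections, it admits an injective realization in $\Sigma$ that meets $\MainPlusA$ only at the listed vertices $v_i$ and otherwise runs through open faces, hence it is disjoint from the interiors of edges, so the interior of any edge of $\MainPlusA$ lies wholly in one of $\Delta_1,\Delta_2$ and its endpoints lie in the closure of that component. The dongle and vortex cases are indeed pure bookkeeping: a dongle edge sends both endpoints into the preimage of whatever region hosts $\PlusDongleVtx{j}$, and a vortex edge has a common bag whose society vertex pins both endpoints to one region (and if the curve passes through $\PlusVortexVtx{i}$ this is harmless, since $\PlusVortexVtx{i}$ contributes nothing to the preimage anyway).

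One small caveat, which is really a gap in the statement rather than in your argument: the definition of the near-embedding allows a dongle $\DongleA{j}$ with $V(\DongleA{j})\cap V(\MainA)=\emptyset$, and for such a dongle $\PlusDongleVtx{j}$ is undefined, so no vertex of $\DongleA{j}$ is picked up by any $\preimage(\Phi)$. In that degenerate case the coverage requirement $A\cup B=V(G)$ formally fails, even though the edge condition is unaffected (such a dongle is an isolated component of $G-\Apices$). Your coverage sentence tacitly assumes every dongle meets $\MainA$; the paper makes the same tacit assumption, so this is not a defect of your proof, but it is worth being aware that the statement needs either that assumption or a convention assigning such components to one side.
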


Let $\nembed$ be a near-embedding of $G$ and let $\gamma$ be a face-vertex curve in $\MainPlusA$ without self-intersections. 
Lemma~\ref{lem:preimage-cut} motivates the following surgery, which we call \emph{cutting along $\gamma$}.
\begin{enumerate}
\item Move $\preimage(\gamma)$ to the apex set. 
\item Cut $\Sigma$ along $\gamma$. If $\gamma$ is closed, glue one or two discs (depending on whether $\gamma$ is 1-sided or 2-sided) into the resulting holes in the surface. Otherwise, 
glue a disc into the resulting hole in the surface. 
If $\gamma$ is closed and $\Sigma-\gamma$ is connected (this mandates $\gamma$ noncontractible),
   the result is a connected surface of strictly smaller Euler genus than $\Sigma$.
If $\Sigma-\gamma$ is disconnected (this happens if $\gamma$ is contractible, but can also happen
    if $\gamma$ is 2-sided and noncontractible), then this results in two connected surfaces:
a sphere and a surface isomorphic to $\Sigma$ if $\gamma$ is contractible, or two surfaces
of strictly smaller Euler genus than $\Sigma$ if $\gamma$ is noncontractible. 
In the process of cutting along $\gamma$, we temporarily allow at this point for the embedding
to be in two surfaces, as in all applications (thanks to unbreakability) 
  the part embedded in one of the surfaces will be small and subsequently turned into a dongle.
\item For every vortex $\VortexA{i}$ such that $\PlusVortexVtx{i}$ lies on $\gamma$ and is 
not the first nor last vertex of $\MainPlusA$ on~$\gamma$, note that the two adjacent
vertices of $\MainPlusA$ on $\gamma$ are two society vertices $\SocietyVtx{i}{a}$ and $\SocietyVtx{i}{b}$ for some $1 \leq a < b \leq \vortexlength{i}$. Split $\VortexA{i}$ into two vortices, one containing bags of society vertices $\SocietyVtx{i}{a+1}, \ldots, \SocietyVtx{i}{b-1}$ and the other containing the remaining society vertices. The new vortices inherit as their discs the corresponding two parts of $\VortexDisc{i}\setminus \gamma$. The bags of the society vertices of the new vortices are inherited from the bags of the society vertices of $\VortexA{i}$, except all the vertices of $\preimage(\gamma)$ are removed. 
\end{enumerate}

The above surgery increases the number of vortices by at most the number of vertices $\PlusVortexVtx{i}$ on $\gamma$, and increases the size of the apex set by $|\preimage(\gamma)|$. 
We will not discuss here the result of cutting along $\gamma$ in details, as 
in what follows we will use this operation in a number of slightly different settings, such as:
\begin{itemize}
\item If $\gamma$ is closed and noncontractible,
it can be used to simplify the surface $\Sigma$ at the cost 
of a slight increase in the number of apices and vortices. 
\item If $\gamma$ is closed and separating,
it can be used in conjuction with the unbreakability assumption to argue that on one side
of $\gamma$ there is only a bounded number of vertices of $\MainPlusA$. 
\item If $\gamma$ connects two vertices $\PlusVortexVtx{i}$, it can be used to merge
two vortices at the cost of a slight increase in the number of apices.
\end{itemize}

\embedsection{Existence of a near-embedding}

We say that a near-embedding $\nembed$ \emph{captures} a tangle $\mathcal{T}$ if for each separation of $\mathcal{T}-\Apices$, the big side of this separation
is contained neither in any dongle $\DongleA{i}$ nor in any vortex $\VortexA{i}$.

The following statement is the main structural result of the Graph Minors series and appears
as (3.1) in~\cite{GM16}. The details of the statement follow Theorem~1 of~\cite{DiestelKMW12}.
The computability of $k_H$ and $\alpha_H$ follows from the simplified proof of~\cite{KawarabayashiW11,KTW20}.

\begin{theorem}\label{thm:GM16}
For every non-planar graph $H$ there exist integers $k_H,\alpha_H \geq 0$ such that
for every $H$-minor-free graph $G$ and every tangle $\mathcal{T}$ of $G$ of order at least $k_H$, 
$G$ admits a near-embedding capturing $\mathcal{T}$ where the number of apices, the Euler genus of the surface,
the number of vortices, and the maximum adhesion size of a vortex are all bounded by $\alpha_H$.

Furthermore, the values of $k_H$ and $\alpha_H$ are computable given $H$. 
\end{theorem}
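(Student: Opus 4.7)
The plan here is not to give a new proof, since Theorem~\ref{thm:GM16} is essentially the main output of Graph Minors~XVI, but rather to verify that the statement in the precise form we use can be extracted from the literature. First I would invoke statement (3.1) in Robertson and Seymour's paper~\cite{GM16}: it asserts, for every non-planar $H$, the existence of constants $k_H,\alpha_H$ such that any $H$-minor-free $G$ with a tangle $\mathcal{T}$ of order at least $k_H$ admits a decomposition with all four relevant parameters (apices, Euler genus, vortices, vortex adhesion) bounded by $\alpha_H$, and such that the decomposition is aligned with $\mathcal{T}$.

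The second step is to match the exact formalism used in~\cite{GM16} with the definition of near-embedding adopted in this paper. The original (3.1) is phrased in the language of tree-decompositions whose torsos are nearly-embeddable; a vertex-centred reformulation that packages the decomposition as a single near-embedding with apices, an embedded main part, vortices with linear path-decompositions whose bags contain the society vertices in the correct cyclic order, and dongles attached on closed discs, has been worked out in detail as Theorem~1 of~\cite{DiestelKMW12}. I would simply port that translation, checking that the ``captures the tangle'' clause corresponds to the alignment of the decomposition with $\mathcal{T}$, i.e.\ that the big side of any separation in $\mathcal{T}-\Apices$ is not swallowed by a single vortex or dongle. One small point to verify is that dongles in our definition share at most $3$ vertices with $\MainA$; this is the standard convention in~\cite{DiestelKMW12} and is obtained by further decomposing torsos at small separators, hence no new constant is needed.

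The third and final step is the computability of $k_H$ and $\alpha_H$. The original argument in~\cite{GM16} is existential in places because it passes through well-quasi-ordering. I would avoid these ingredients by citing the simplified (and constructive) proofs of the structure theorem due to Kawarabayashi and Wollan~\cite{KawarabayashiW11}, in their revised and streamlined form of Kawarabayashi, Thilikos and Wollan~\cite{KTW20}. These provide explicit, computable-in-$H$ bounds on both the tangle threshold and on the decomposition parameters, which is exactly the extra guarantee claimed in the theorem. Since the statement only requires existence of computable functions (no explicit bounds are asked for) and since all four parameters in~\cite{KTW20} are controlled simultaneously, taking $\alpha_H$ to be the maximum of the individual bounds finishes the argument.

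The main (and essentially only) obstacle is bookkeeping: making sure that the exact guarantees on vortices (a linear path-decomposition whose bags contain the society vertices placed cyclically around the corresponding disc) and on dongles (small intersection with $\MainA$, embedded in disjoint closed discs avoiding~$\embed$) can be read off unambiguously from~\cite{DiestelKMW12,KTW20}. No additional graph-theoretic work is anticipated.
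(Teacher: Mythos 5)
Your proposal matches the paper exactly: the paper likewise treats this as an imported result, citing statement (3.1) of~\cite{GM16} for the structure theorem, Theorem~1 of~\cite{DiestelKMW12} for the precise near-embedding formulation, and~\cite{KawarabayashiW11,KTW20} for the computability of $k_H$ and $\alpha_H$. Nothing further is required.
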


\section{Optimal near-embeddings}

For a graph $H$, we define $\funchain(H) = \funcut(H) = 100(k_H + \alpha_H + 1)$, where $k_H$ and $\alpha_H$ are the constants provided by Theorem~\ref{thm:GM16}. 
For the sake of readability, we will not explicitly define $\funstep(x)$, but resort to stating that it is a sufficiently quickly growing computable function emerging from the proof.
That is, in a number of places we will say that a particular expression is bounded by some value of $\funstep$ if $\funstep$ is growing quickly enough; in all such places, it will be evident that the assumed lower bound is computable.

A typical example of such a usage is as follows. We are focusing on an index $\iota \leq \zeta$ and analysing an expression that is a computable function of some $q_i$s and $k_i$s for $i < \iota$.
Then, we say that --- by choosing $\funstep$ to be sufficiently quickly growing --- the expression in question is bounded by $k_\iota$ as well.

With the above in mind, fix a graph $H$, and fix $\zeta \coloneqq \funchain(H)$ and
$k_0 \coloneqq \funcut(H)$.
Let $G$ be an $H$-minor-free graph with an unbreakability chain $((q_i,k_i))_{i=0}^\zeta$ of length $\zeta$ and step $\funstep$, starting at $k_0$. 
If $|V(G)| \leq 3q_\zeta$, then the algorithm can return $\Vtw = V(G)$, $\Vgenus = \emptyset$, and $\famgenus$ to consist only of the empty function. Hence, we shall assume that $|V(G)| > 3q_\zeta$.
Thus, the unbreakability tangle $\mathcal{T}$ for the $(q_\zeta,k_\zeta)$-unbreakable graph $G$ is well-defined.

Fix $0 \leq \iota \leq \zeta/10$. 
A near-embedding $\nembed$ that captures $\mathcal{T}$ is \emph{of level $\iota$} if $\nembed$ has at most $k_{3\iota}/100$ apices, the maximum adhesion size of a vortex is at most $k_{3\iota}/100$,
and $k^\circ(\nembed) + 9\eulerg(\nembed) \leq 10\alpha_H - \iota$. 
Note that Theorem~\ref{thm:GM16} asserts that $G$ admits a near-embedding of level $0$, and there are no near-embeddings of level $\iota>10\alpha_H$.
Let $\iota_\ast$ be the maximum level for which there exists a near-embedding of $G$ of level $\iota_\ast$.
Furthermore:
\begin{itemize}
\item let $\eulerg_\ast$ be the minimum possible Euler genus of a near-embedding of $G$ of level $\iota_\ast$;
\item let $\napices_\ast$ be the minimum possible number of apices of a near-embedding of $G$ of level $\iota_\ast$ with Euler genus $\eulerg_\ast$;
\item let $\nvortices_\ast$ be the minimum possible number of vortices of a near-embedding of $G$ of level $\iota_\ast$ with $\napices_\ast$ apices and of Euler genus $\eulerg_\ast$;
\end{itemize}
A near-embedding of $G$ that captures $\mathcal{T}$, is of level $\iota_\ast$, has $\napices_\ast$ apices, $\nvortices_\ast$ vortices, and Euler genus $\eulerg_\ast$, is
called an \emph{optimal near-embedding of $G$}.
We denote $(q,k) \coloneqq (q_{3\iota_\ast}, k_{3\iota_\ast})$.
Note that $\iota_\ast \leq 10\alpha_H < \zeta/10$, so we can also denote $(q',k') \coloneqq (q_{3\iota_\ast+1}, k_{3\iota_\ast+1})$, $(q'',k'') \coloneqq (q_{3\iota_\ast+2}, k_{3\iota_\ast+2})$, 
     and $(q''',k''') \coloneqq (q_{3\iota_\ast+3}, k_{3\iota_\ast+3})$.
See Figure~\ref{tb:parameters} for the scale of these parameters and some selected quantities
from the proof on the scale.

\begin{figure}[tb]
\begin{center}
\begin{tabular}{c|l}
$q'''$ & lower bound on the treewidth of $G$ and $\MainPlusA$ \\\hline
\multirow{3}{*}{$k'''$} & lower bound on the facewidth of $\MainPlusA$ \\
& lower bound on the radial distance between two vortices \\
& minimum vortex length \\\hline
\multirow{2}{*}{$q''$} & upper bound on the treewidth of radial discs (\S\ref{ss:radial-discs})\\
& upper bound on the side of a cut along a radial path in a radial disc (\S\ref{ss:radial-discs})\\\hline
\multirow{2}{*}{$k''$} & $\rmax$: maximum allowed radius of a radial disc (\S\ref{ss:radial-discs})\\& upper bound on a cut size if one cuts along a $\mathrm{poly}(q')$-sized curve in $\MainPlusA$ \\\hline
\multirow{2}{*}{$\mathrm{poly}(q')$} & $\ProxCoverRadius$: radius of discs in a cover of local apices and vortices\\
& \qquad\quad in other optimal near-embeddings with minimal vortices (\S\ref{ss:proximity}) \\\hline
\multirow{1}{*}{$q'$} & $\AttCoverRadius$: radius of discs in a cover of attachment points of local apices (\S\ref{ss:universal-apex}) \\\hline
$k'$ & upper bound on a cut size if one cuts along a $\mathrm{poly}(q)$-sized curve in $\MainPlusA$ \\\hline
\multirow{2}{*}{$\mathrm{poly}(q)$} & $\WallHitSize$ and $\WallHitRadius$: upper bounds on locality of vortices and apices in a wall (\S\ref{ss:large-grids}) \\
& size of a wall in an apex forcer forcing a universal apex (\S\ref{ss:universal-apex}) \\\hline
\multirow{2}{*}{$q$} & maximum size of a dongle \\
& maximum size of a vortex bag \\\hline
\multirow{4}{*}{$\mathrm{poly}(k)$} & number of walls in an apex forcer forcing a universal apex (\S\ref{ss:universal-apex}) \\
& $\AttCoverSize$: number of discs in a cover of attachment points of local apices (\S\ref{ss:universal-apex}) \\
& $\ProxCoverSize$: number of discs in a cover of local apices and vortices\\
& \qquad\quad of other optimal near-embeddings with minimal vortices (\S\ref{ss:proximity}) \\\hline
\multirow{3}{*}{$k$} & maximum number of apices \\
& maximum number of vortices \\
& maximum adhesion size of a vortex 
\end{tabular}
\caption{The scale of parameters and selected quantities at the corresponding level.}\label{tb:parameters}
\end{center}
\end{figure}

We have the following observation.

\begin{lemma}\label{lem:opt-ne-rep}
Let $\nembed$ be an optimal near-embedding of $G$. Then, the following holds:
\begin{itemize}
\item Every dongle and every bag of every vortex has size at most $q$.
\item Every vortex has length at least $k'''/(100(k+1)) - k$.
\item In $\MainPlusA$, the radial distance between two distinct vertices $\PlusVortexVtx{i}$ is at least $k'''/100 - 2q - k$. 
\item The face-width of the embedding of $\MainPlusA$ is at least $k'''/100-2q-k$. 
\end{itemize}
\end{lemma}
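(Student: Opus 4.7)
The plan is to establish~(1) directly from the capture property of $\mathcal{T}$ combined with $(q,k)$-unbreakability of $G$, and to obtain each of~(2),~(3),~(4) by contradicting the maximality of $\iota_\ast$: in each case I would assume the claimed inequality fails, perform a local surgery on $\nembed$ producing a new near-embedding $\nembedC$ of $G$ whose value of $\nvortices + 9\eulerg$ is strictly smaller, and then verify that the new apex count still fits inside the level-$(\iota_\ast+1)$ budget $k'''/100$. This forces $\nembedC$ to reach level at least $\iota_\ast + 1$, contradicting the choice of $\iota_\ast$.

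For~(1), to bound $|V(\DongleA{i})|$ I would consider the separation $(A,B)$ with $A = V(\DongleA{i}) \cup \Apices$ and $B$ the rest of $V(G)$ together with $(V(\DongleA{i}) \cap V(\MainA)) \cup \Apices$. Its order is at most $|\Apices| + 3 \leq k$, so $(q,k)$-unbreakability gives that one of $|A|,|B|$ is at most $q$; capture forbids the big side in $\mathcal{T}-\Apices$ to sit inside a dongle, so $A$ is the small side and $|V(\DongleA{i})| \leq q$. The same argument bounds $|\VortexBag{i}{j}|$ using the separation of order at most $|\Apices| + 2\vortexadhwidth + 1 \leq k$ that isolates a single bag.

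For~(2), if some $\VortexA{i}$ has length $\ell < k'''/(100(k+1)) - k$, I would move all of $V(\VortexA{i})$ to the apex set and delete the vortex from the decomposition. By~(1) every bag has size at most $q$, so the apex count grows by at most $\ell\cdot q$, and with $\funstep$ chosen sufficiently fast-growing the assumed bound on $\ell$ makes this fit into $k'''/100$; Euler genus and adhesion do not grow, $\nvortices$ drops by one, and capture is preserved since absorbing vertices into $\ApicesC$ can only shrink big sides of separations. For~(3), I would pick a shortest face-vertex curve $\gamma$ in $\MainPlusA$ between $\PlusVortexVtx{i}$ and $\PlusVortexVtx{j}$, apply cut-along-$\gamma$ to move $\preimage(\gamma)$ to the apex set, and additionally merge $\VortexDisc{i}$ and $\VortexDisc{j}$ into a single disc by a thin tube running along $\gamma$, fusing $\VortexA{i}$ and $\VortexA{j}$ into one vortex whose path decomposition is the concatenation of the two existing ones (with the cyclic order of society vertices adjusted along the boundary of the new disc). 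Using Lemma~\ref{lem:preimage-size}, the preimages of the two endpoints of $\gamma$ are empty and internal vertices of $\gamma$ contribute at most $q$ each, which together with the slacks $2q$ and $k$ built into the statement keep the apex count below $k'''/100$. For~(4), I would take a shortest noncontractible noose $\gamma$ in $\MainPlusA$ and apply the plain cut-along-$\gamma$ surgery; the preimage accounting is identical, and noncontractibility forces $\eulerg$ to decrease strictly, so $\nvortices + 9\eulerg$ drops by at least nine.

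The hardest step is the vortex-merging surgery in~(3): I need the merged vortex to admit a valid path decomposition with adhesion still bounded by $k/100$, its society vertices to appear in a consistent cyclic order on the boundary of the merged disc, and capture of $\mathcal{T}$ to survive (in particular, no byproduct dongle should contain a big side of $\mathcal{T}-\ApicesC$). The remaining bookkeeping in all four items is straightforward preimage accounting, and the slacks $2q$ and $k$ in the statements are precisely what is needed to absorb the boundary contributions once $\funstep$ is fast-growing enough that $k'''$ dominates $(k+1)\cdot k$.
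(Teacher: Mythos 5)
Your plan for item~(1) is correct and matches the paper. For items~(2)--(4), the surgery strategy is right in spirit, but the preimage accounting overshoots by a factor of roughly $q$ in each case, and the slack in the lemma statements is not nearly enough to absorb this.

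For~(2), moving \emph{all} of $V(\VortexA{i})$ to the apex set costs up to $\ell\cdot q$ new apices. With $\ell$ barely below $k'''/(100(k+1))-k$ this is on the order of $q\cdot k'''/(100(k+1))$, which exceeds the level-$(\iota_\ast+1)$ budget of $k'''/100$ whenever $q>k+1$; no choice of $\funstep$ helps, since $q$ and $k'''$ scale together. The paper instead moves only the society vertices and the adhesion vertices --- at most $\ell(1+\vortexadhwidth)\leq\ell(k+1)$ of them --- and turns the stranded interior of the vortex into a \emph{dongle} rather than apices. This is what makes the $k+1$ (rather than $q$) denominator in the claimed bound come out.

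For~(3) and~(4), bounding each internal vertex of $\gamma$ by $q$ via Lemma~\ref{lem:preimage-size} gives $|\preimage(\gamma)|\lesssim q\cdot(k'''/100-2q-k)$, which again is a factor of $q$ too large; the slacks $2q$ and $k$ absorb an \emph{additive} overhead, not a multiplicative one. The missing observation is that a \emph{shortest} face-vertex curve between two virtual vortex vertices passes through at most two society vertices (the penultimate ones) and can be rerouted to avoid virtual dongle vertices entirely (each $\PlusDongleVtx{i}$ is incident only to the $\leq 3$ triangular faces inside $\DongleDisc{i}$, so a detour around it never lengthens the curve). Hence all but two internal vertices of $\gamma$ are genuine vertices of $\MainA$ with singleton preimage, giving $|\preimage(\gamma)|\leq (k'''/100-2q-k)+2q$, which fits. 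Finally, for~(4) you should also note (as the paper does) that cutting along a two-sided separating noncontractible noose produces \emph{two} surfaces, and one must invoke $(q''',k''')$-unbreakability to show that one side is small and can be folded into a dongle, possibly splitting one vortex into two; the weight $9$ on $\eulerg$ in the level potential is what guarantees the potential still drops despite the possible gain of one vortex.
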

\begin{proof}
For the first claim, note that for every bag $\VortexBag{i}{j}$, the separation 
$(\VortexBag{i}{j} \cup \Apices, N_{G}[V(G) \setminus (\VortexBag{i}{j} \cup \Apices)])$ is 
of order at most $3 \cdot k_{3\iota_\ast}/100 \leq k$ and thus $|\VortexBag{i}{j}| + |\Apices| \leq q$ by the $(q,k)$-unbreakability of $G$ and the assumption that $\nembed$ captures $\mathcal{T}$. 
Similarly, for every dongle $\DongleA{i}$, the separation $(V(\DongleA{i}) \cup \Apices, V(G) \setminus (V(\DongleA{i}) \setminus V(\MainA)))$ is of order at most $k_{3\iota_\ast}/100 + 3 \leq k$, 
  and thus $|V(\DongleA{i})| + |\Apices| \leq q$.

For the second claim, consider a vortex $\VortexA{i}$. If we move all society vertices and the vertices of the adhesions of $\VortexA{i}$ to the apex set,
the remainder of $\VortexA{i}$ is disconnected from the remainder of $\MainA$ and can be changed into a single dongle (decreasing the number of vortices of the near-embedding).
This increases the size of the apex set by at most $\vortexlength{i} \cdot (k+1)$. 
By the choice of $\iota_\ast$, the length of $\VortexA{i}$ is lower bounded as claimed, for otherwise the near-embedding obtained as above would have level larger than $\iota_\ast$.

For the third claim, let $\gamma$ be a face-vertex curve in $\MainPlusA$ that is a shortest curve connecting two vertices from the set $\{\PlusVortexVtx{i}~|~1 \leq i \leq \nvortices_\ast\}$;
say the endpoints are $\PlusVortexVtx{i}$ and $\PlusVortexVtx{j}$. For contradiction suppose $\gamma$ has length smaller than $k'''/100-2q-k$.
Note that the penultimate vertices on $\gamma$ are society vertices of $\VortexA{i}$ and $\VortexA{j}$, respectively, and the minimality of $\gamma$ ensures that no other vertex
of $\gamma$ belongs to a vortex.
Consider the operation of cutting along $\gamma$. 
By the first claim, this increases the size of the apex
set by at most $(k'''/100-2q-k) + 2q$.
Then, it is straightforward to merge $\VortexA{i}$ and $\VortexA{j}$ into a single vortex without increasing its maximum adhesion size. 
This contradicts the assumption that $\nembed$ is optimum, as the obtained near-embedding would be of higher level.

For the last claim, assume that $\gamma$ is a noncontractible noose in the embedding of $\MainA$
that visits at most $k'''/100-2q-k$ vertices of $\MainA$.
By the third claim, $\gamma$ visits society vertices and the vertex $\PlusVortexVtx{i}$ 
of at most one vortex. Furthermore, 
without loss of generality we can assume that if $\gamma$ visits some society vertices of a vortex, say $\VortexA{i}$, then it visits at most two of them, as all of them are incident with $\PlusVortexVtx{i}$. 
Consequently, we can cut along $\gamma$.
This moves at most $(k'''/100-2q-k) + 2q$ vertices to the apex set.
Since $\gamma$ is noncontractible, the result is one or two surfaces of strictly smaller
Euler genera than $\Sigma$. 
Furthermore, if there are two surfaces $\Sigma_1$ and $\Sigma_2$, then, due to $(q''',k''')$-unbreakability, for one $j \in \{1,2\}$ 
  the set of vertices of $G$ with their projection in $\Sigma_j$ is of size at most $q'''$;
  we put all of them into a new dongle and delete them from other parts
  of the embedding. 
  We end up with a near-embedding into a connected surface with a strictly smaller Euler
  genus and with the number of vortices increased by at most one, a contradiction
to the optimality of $\nembed$.
\end{proof}

Recall that we have assumed that $G$ has large treewidth (larger than any function of $\sum_{i=0}^\zeta q_i$ and $H$ fixed in the context). 
We now study the implications of this assumption on the treewidth of the 3-connected components of $G-\Apices$ for a small set $\Apices$,
and the treewidth of $\MainPlusA$ in a near-embedding of $G$.

We start with analyzing the 3-connected components of $G-\Apices$.
\begin{lemma}\label{lem:3conn-apex}
For every set $\Apices \subseteq V(G)$ of size at most $k-2$, there exists a unique 
3-connected component $G(\Apices)$ of $G-\Apices$ that has more than $\funtw(H,\sum_{i=0}^\zeta q_i)-|\Apices|$ vertices, while all the other $3$-connected components of $G-\Apices$ have less than $q-\Apices$ vertices.
Furthermore, the treewidth of $G(\Apices)$ is larger than $\funtw(H,\sum_{i=0}^\zeta q_i)$.
\end{lemma}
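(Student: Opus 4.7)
My plan is to apply Theorem~\ref{thm:3conn} to the graph $G - \Apices$ to obtain a tree decomposition $(T, \beta)$ whose adhesions have size at most $2$ and whose torsos are either $3$-connected, cycles, or of size at most~$2$. The key leverage will be that each edge $t_1 t_2$ of $T$ induces a separation $(V_1, V_2)$ of $G - \Apices$ of order at most~$2$, where $V_i$ is the union of bags in the component of $T - t_1 t_2$ containing $t_i$. Consequently $(V_1 \cup \Apices, V_2 \cup \Apices)$ is a separation of $G$ of order at most $|\Apices| + 2 \leq k$, and $(q,k)$-unbreakability forces at least one of $V_1, V_2$ to have size at most $q - |\Apices|$.

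From here the uniqueness of a large bag is immediate: if two distinct nodes $t_1, t_2 \in V(T)$ satisfied $|\beta(t_i)| > q - |\Apices|$, pick any edge $s_1 s_2$ on the $t_1$–$t_2$ path in $T$, with $t_i$ on the $s_i$ side; then both $V_{s_1}$ and $V_{s_2}$ would contain the respective bag, contradicting the previous observation. Since every $3$-connected component of $G - \Apices$ is a $3$-connected torso $C_t$ with vertex set $\beta(t)$, this yields that at most one $3$-connected component has more than $q - |\Apices|$ vertices, and all others have fewer than $q - |\Apices|$ vertices, as required for the second conclusion of the lemma.

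For the existence of a large $3$-connected component and the treewidth lower bound I plan to invoke Lemma~\ref{lem:tutte-tw} contrapositively. Relying on the convention stated in the beginning of this section, we may assume $G$ has treewidth exceeding any prescribed computable bound; in particular we will assume it exceeds $\funtw(H,\sum_{i=0}^\zeta q_i) + k + 2$. If every $3$-connected component of $G - \Apices$ had treewidth at most $\funtw(H,\sum_{i=0}^\zeta q_i)$, then Lemma~\ref{lem:tutte-tw} (applied with $A = V(G - \Apices)$) would bound the treewidth of $G - \Apices$ by $\max(\funtw(H,\sum_{i=0}^\zeta q_i), 2)$ and hence the treewidth of $G$ by $\funtw(H,\sum_{i=0}^\zeta q_i) + |\Apices| + 2 \leq \funtw(H,\sum_{i=0}^\zeta q_i) + k + 2$, a contradiction. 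Therefore some $3$-connected component $C$ has treewidth strictly greater than $\funtw(H,\sum_{i=0}^\zeta q_i)$, which in turn bounds $|V(C)| > \funtw(H,\sum_{i=0}^\zeta q_i) > \funtw(H,\sum_{i=0}^\zeta q_i) - |\Apices|$, and (after ensuring $\funtw$ dominates $q$) also $|V(C)| > q - |\Apices|$. By the uniqueness established in the previous paragraph, this $C$ is unique, so we set $G(\Apices) := C$.

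The only subtlety is the bookkeeping of constants: we must choose $\funtw$ so that the assumed treewidth of $G$ absorbs both the ``$+|\Apices|+2$'' slack from Lemma~\ref{lem:tutte-tw} and dominates~$q$. This is exactly the kind of adjustment the paper's preamble permits (``in a number of places in the proof we will assume that the treewidth of $G$ is sufficiently large, increasing the function $\funtw$ if necessary''), so no genuine obstacle arises.
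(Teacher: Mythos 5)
Your proof is correct and follows essentially the same route as the paper's: apply the Tutte decomposition (Theorem~\ref{thm:3conn}) together with $(q,k)$-unbreakability to see that at most one torso can be large, and invoke Lemma~\ref{lem:tutte-tw} contrapositively to produce a torso of treewidth exceeding $\funtw(H,\sum_{i=0}^\zeta q_i)$. The only cosmetic difference is that you transfer the order-$\leq 2$ tree-decomposition separations of $G-\Apices$ back to order-$\leq k$ separations of $G$ before invoking unbreakability, whereas the paper phrases this step by first noting that $G-\Apices$ is $(q-|\Apices|,k-|\Apices|)$-unbreakable; these are the same argument.
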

\begin{proof}
Recall that the unbreakability tangle $\mathcal{T}$ of order $k+1$ in $G$ is well-defined.
Hence, $G-\Apices$ is $(q-|\Apices|, k-|\Apices|)$-unbreakable and $\mathcal{T}-\Apices$ is a tangle of order $3$ in $G-\Apices$. 
Since $G-\Apices$ is $(q-|\Apices|, k-|\Apices|)$-unbreakable and $k-|\Apices| > 3$, there is at most one $3$-connected component of $G-\Apices$
that has more than $q-|\Apices|$ vertices. 
Since since the treewidth of $G$ is larger than $\funtw(H,\sum_{i=0}^\zeta q_i)$, Lemma~\ref{lem:tutte-tw} implies that there 
exists a $3$-connected component of $G-\Apices$ that has treewidth at least $\funtw(H,\sum_{i=0}^\zeta q_i)-|\Apices|$;
in particular, it has more than $\funtw(H,\sum_{i=0}^\zeta q_i)-|\Apices|$ vertices.
We infer that $G-\Apices$ contains a unique $3$-connected component
with more than $\funtw(H,\sum_{i=0}^\zeta q_i)-|\Apices|$ vertices, which we denote $G(\Apices)$,
while all the other $3$-connected components have less than $q-|\Apices|$ vertices.
As discussed, the treewidth bound of $G(\Apices)$ follows from Lemma~\ref{lem:tutte-tw}.
\end{proof}

Lemma~\ref{lem:3conn-apex} will be used predominantly in the setting where $\Apices$ is the apex set of an optimal near-embedding of $G$. In particular, it implies that then, every $3$-connected component of $G-\Apices$ except $G(\Apices)$ is very small: has at most $q-|\Apices|$ vertices. As this observation will be used multiple times in the sequel, we will often apply it implicitly, without an explicit reference to Lemma~\ref{lem:3conn-apex}.

To relate the treewidth of $\MainPlusA$ with the treewidth of $G$, we essentially
replicate Lemma~22 of~\cite{DiestelKMW12}.
Since a few side details are different in our setting, we repeat the adjusted proof.
We rely on the following lemma proved by Demaine and Hajiaghayi~\cite{DemaineH08}.
\begin{lemma}[Lemma 1 of~\cite{DemaineH08}]\label{lem:DH08}
Let $G$ be a graph embedded in a surface of Euler genus $g$. 
Let $G'$ be constructed from $G$ by contracting one face of $G$ into a single vertex.
Then, provided $\tw(G) \geq 56(g+1)^2$, we have $\tw(G') \geq \tw(G) / (g+1)$. 
\end{lemma}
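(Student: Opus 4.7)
The plan is to argue the contrapositive: starting from a tree decomposition of $G'$ of width $w' = \tw(G')$, I would construct a tree decomposition of $G$ of width at most $(g+1)(w'+1) - 1$. The hypothesis $\tw(G) \ge 56(g+1)^2$ is only used to absorb the additive $+1$ terms, so that the resulting inequality rearranges to $\tw(G') \ge \tw(G)/(g+1)$.

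Let $W$ denote the facial walk bounding the contracted face $f$, and let $v^{\star}$ be the vertex of $G'$ obtained by identifying $V(W)$. Fix a tree decomposition $(T, \beta')$ of $G'$ of width $w'$. The naive lift, which replaces $v^{\star}$ in every bag by the full set $V(W)$, is useless because $|V(W)|$ is unbounded in terms of $\tw(G')$ and $g$. Instead, the idea is to exploit the fact that $f$ is a topological disc in $\Sigma$, so that $V(W)$ inherits a natural cyclic ordering and a planar-like separator structure along $\partial f$.

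The central step is, for each node $t \in V(T)$ with $v^{\star} \in \beta'(t)$, to define a proxy set $X_t \subseteq V(W)$ of size at most $(g+1)(w'+1)$, and set $\beta(t) = (\beta'(t) \setminus \{v^{\star}\}) \cup X_t$ (and $\beta(t) = \beta'(t)$ otherwise). For each neighbour $s$ of $t$ in $T$ whose subtree still uses $v^{\star}$, the vertices of $V(W)$ reachable on the $s$-side trace out a union of arcs in the cyclic order on $\partial f$; the bag $\beta'(t)\setminus\{v^{\star}\}$ cuts $V(W)$ into at most $w'+1$ such arcs in total. In the planar setting one representative per arc would suffice. On a surface of Euler genus $g$, one invokes Lemma~\ref{lem:diestelB6} on a family of $g+1$ candidate noncontractible cycles (coming from how $\partial f$ can wrap around handles and crosscaps) to conclude that including up to $g+1$ representatives per arc is enough to cover every edge of $G$ between two vertices of $V(W)$. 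Hence $|X_t| \le (g+1)(w'+1)$, giving the claimed width.

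The main obstacle, and the technical heart of the argument, is ensuring that this local construction glues consistently: every $v \in V(W)$ must appear in a connected subtree, and every edge of $G$ must be covered. To enforce this, I would root $T$ arbitrarily and define the $X_t$'s by a top-down recursion, so that $X_t$ always extends the proxies already chosen on the path from the root to $t$; the correctness of this recursion reduces to a planar routing problem within the disc $f$, which can be solved arc-by-arc using the cyclic structure. The threshold $\tw(G) \ge 56(g+1)^2$ guarantees enough room in $G$ to carry out this propagation without degenerate cases (and makes the $+1$ absorption work); below the threshold the conclusion is vacuous.
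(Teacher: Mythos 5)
First, note that the paper itself gives no proof of Lemma~\ref{lem:DH08}; it imports the statement verbatim from Demaine and Hajiaghayi~\cite{DemaineH08}, so there is no in-paper argument for your sketch to match. With that caveat, your proposal leaves genuine gaps rather than merely unwritten routine details. The pivotal claim --- that $\beta'(t)\setminus\{v^{\star}\}$ ``cuts $V(W)$ into at most $w'+1$ arcs,'' with $g+1$ representatives per arc sufficing --- is not established. Since $V(W)$ has been identified to $v^{\star}$ in $G'$, the set $\beta'(t)\setminus\{v^{\star}\}$ lives in $V(G')\setminus\{v^{\star}\}=V(G)\setminus V(W)$ and contains no boundary vertex of $f$; whatever ``cutting'' occurs must come from a topological separation argument about how the components of $G-V(W)-\beta'(t)$ attach to $\partial f$, and this is precisely what would need to be proved, not asserted (nor is it clear the number of pieces is controlled by $w'$). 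The invocation of Lemma~\ref{lem:diestelB6} does not do the work you assign it: that lemma concerns $g+1$ pairwise disjoint circles and a region meeting all of them, not a bound on representatives per arc, and --- more importantly --- $\partial f$ is the boundary of a face, hence bounds a disc and is contractible; it does not ``wrap around handles and crosscaps,'' so the family of noncontractible cycles you want to extract from it is simply not available. The genus enters through the topology of $\Sigma$ minus the disc $f$, not through $\partial f$ itself. Finally, the gluing step (connectedness of the subtree for each $u\in V(W)$, coverage of every edge incident to $V(W)$) is where essentially all of the work is, and it is deferred to an unspecified ``planar routing problem.''

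One more signal worth noticing: a decomposition of width $(g+1)(w'+1)-1$ rearranges only to $\tw(G')\ge(\tw(G)-g)/(g+1)$, and the threshold $\tw(G)\ge 56(g+1)^2$ does not close that gap --- it is on the wrong side of the inequality, and it is in any case far larger than what absorbing a $\pm 1$ would require. A threshold of order $(g+1)^2$ indicates that the underlying argument tolerates an additive error of that magnitude, which, together with the fact that Demaine and Hajiaghayi's proof proceeds via the locally bounded treewidth of surface-embedded graphs (essentially Corollary~\ref{cor:ltw}), suggests a proof of a genuinely different shape from your bag-by-bag proxy lift.
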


\begin{lemma}[see also Lemma 22 of~\cite{DiestelKMW12}]\label{lem:DKMW12}
Let $\nembed$ be a near-embedding of $G$.
Let $\MainA'$  be the subgraph of $\MainPlusA$ consisting of
$\MainA$ and additionally,
for every dongle $\DongleA{i}$, the set $V(\MainA) \cap V(\DongleA{i})$ turned into a clique.
Then,
\[ \tw(G) \leq (56 \cdot (\tw(\MainA') + \nvortices) \cdot (\eulerg+1)^{\nvortices+2} + \nvortices) \cdot (1 + \vortexadhwidth) + \vortexbagsize + \donglesize + \napices. \]
\end{lemma}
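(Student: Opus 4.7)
The plan is to construct a tree decomposition of $G$ of the claimed width by successive modifications, beginning from a tree decomposition of $\MainA'$ of optimal width and then incorporating, in three stages, the vortices, the dongles, and finally the apices. The only use of the surface-topology bound provided by Lemma~\ref{lem:DH08} occurs in the vortex stage; the other two stages are entirely elementary.

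For the vortex stage, introduce the auxiliary graph $H^\bullet$ obtained from $\MainA'$ by placing inside each vortex disc $\VortexDisc{i}$ a new cover vertex $u_i$ joined by edges to all society vertices $\SocietyVtx{i}{1},\ldots,\SocietyVtx{i}{\vortexlength{i}}$; all these edges are drawn inside $\VortexDisc{i}$, so $H^\bullet$ remains embedded in the same surface $\Sigma$ of Euler genus $\eulerg$. Let $H^\star$ be the graph obtained from $H^\bullet$ by contracting, for each vortex, the entire star at $u_i$ into a single vertex. Equivalently, $H^\star$ is $\MainA'$ with each vortex face contracted; this is a minor of $\MainA'$, hence $\tw(H^\star) \leq \tw(\MainA')$. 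Reversing these contractions one face at a time and applying Lemma~\ref{lem:DH08} at each step gives a tree decomposition of $H^\bullet$ of width at most
\[
  56\cdot(\tw(\MainA')+\nvortices)\cdot(\eulerg+1)^{\nvortices+2}.
\]
Moreover, the construction underlying Lemma~\ref{lem:DH08} can be arranged so that after all $\nvortices$ uncontractions, for each vortex $\VortexA{i}$ there is a single bag $\beta(t_i)$ containing the cover vertex $u_i$ together with \emph{all} society vertices $\SocietyVtx{i}{1},\ldots,\SocietyVtx{i}{\vortexlength{i}}$ of that vortex.

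Having $\{u_i,\SocietyVtx{i}{1},\ldots,\SocietyVtx{i}{\vortexlength{i}}\} \subseteq \beta(t_i)$ lets us attach the path decomposition $(\VortexBag{i}{1},\ldots,\VortexBag{i}{\vortexlength{i}})$ of $\VortexA{i}$ as a path of new bags dangling from $t_i$. Each attached vortex bag is enlarged by the shared adhesion with its neighbour (of size at most $\vortexadhwidth$) in order to cover the edges between consecutive vortex bags, which accounts for the multiplicative factor $(1+\vortexadhwidth)$ and the additive term $\vortexbagsize$ in the claimed bound. For each dongle $\DongleA{j}$, the set $V(\MainA)\cap V(\DongleA{j})$ has size at most three and forms a clique in $\MainA'$, so it already lies inside some bag; we glue a new leaf bag consisting of that clique together with the rest of $V(\DongleA{j})$, raising the width by at most $\donglesize$. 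Finally, adding $\Apices$ to every bag contributes an additive $\napices$. Summing the three contributions yields the claimed bound.

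The main obstacle is the vortex stage, specifically the requirement that after all the face-uncontractions, all society vertices of each vortex sit in one common bag. This needs a constructive version of Lemma~\ref{lem:DH08} in which, after uncontracting a face back to its boundary, all newly revealed boundary vertices can be placed together in the bag that previously contained the contracted vertex; one can verify that the standard proof of Lemma~\ref{lem:DH08} delivers exactly such a construction, and iterating it face by face yields the claimed width.
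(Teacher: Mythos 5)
There is a fatal gap in the vortex stage of your argument, and it is precisely the step you flagged as needing verification. You assert that after reversing the contractions, one can arrange for a single bag $\beta(t_i)$ containing $u_i$ together with \emph{all} society vertices $\SocietyVtx{i}{1},\ldots,\SocietyVtx{i}{\vortexlength{i}}$. But then $|\beta(t_i)| \geq \vortexlength{i}+1$, and $\vortexlength{i}$ is \emph{not} bounded by any quantity appearing in the target bound: the vortex may be arbitrarily long even when $\tw(\MainA')$, $\eulerg$, $\nvortices$, $\vortexadhwidth$, $\vortexbagsize$ are all small. So the width of the decomposition you construct would be at least $\vortexlength{i}$, which can vastly exceed the claimed bound. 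Consequently, the claim that ``the standard proof of Lemma~\ref{lem:DH08} delivers exactly such a construction'' cannot be right: Lemma~\ref{lem:DH08} only asserts that the treewidth drops by at most a factor $(\eulerg+1)$ under face contraction, which is incompatible with the uncontracted face boundary (of unbounded length) lying in a single bag of the new decomposition.

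The paper avoids exactly this pitfall by not demanding a single ``anchor'' bag. It starts from $\MainPlusA-K$, where $\PlusVortexVtx{i}$ and each pair of consecutive society vertices $\SocietyVtx{i}{j},\SocietyVtx{i}{j+1}$ form a triangle, hence already cohabit some bag $t$ of any tree decomposition. Each vortex bag $\VortexBag{i}{j}$ is then attached as a new leaf to its own such node $t$, and the adhesions $\VortexBag{i}{j}\cap\VortexBag{i}{j+1}$ are added to the bags meeting both $\SocietyVtx{i}{j}$ and $\PlusVortexVtx{i}$. Thus the vortex path decomposition is distributed along the (connected) subtree of bags containing $\PlusVortexVtx{i}$, each contribution being of bounded size, instead of being forced onto one unbounded hub. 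Separately, your claim that $H^\star$ is a minor of $\MainA'$ is also shaky: the society vertices of a vortex need not induce a connected subgraph of $\MainA'$, and the vortex disc interior need not be a face of $\MainA'$ since $\MainA'$ lacks the society cycle edges. The paper sidesteps this by keeping the society cycle (it starts from $\MainPlusA-\{\PlusVortexVtx{i}\}-K$, which retains the cycle bounding the face), applying the face contraction \emph{and then deleting} the contracted vertex to land inside $\MainA'$ as a genuine subgraph, and adding the vertices $\PlusVortexVtx{i}$ only after the uncontraction step (this is the source of the extra ``$+\nvortices$'' inside the parenthesis that your bound is missing).
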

\begin{proof}
Let $K$ be the set of virtual vertices $\PlusDongleVtx{i}$ for all $i \in [\ndongles]$ such that $|V(\MainA) \cap V(\DongleA{i})| = 3$. 

Consider the following process. Start with $H := \MainPlusA - \{\PlusVortexVtx{i}~|~1 \leq i \leq \nvortices\} - K$.
Then, iteratively for each $i=1,2,\ldots,\nvortices$, contract the face containing $\PlusVortexVtx{i}$ into a single vertex, and then delete the resulting vertex. 
(Recall that the vortices are vertex-disjoint, thus those contractions affect disjoint sets of vertices and in fact can be executed in an arbitrary order.)

Observe that the resulting graph $H'$ is a subgraph of $\MainA'$. 
Consequently, by Lemma~\ref{lem:DH08}:
\[ \tw(\MainPlusA - K) \leq 56 \cdot (\eulerg+1)^{\nvortices + 2} \cdot (\tw(\MainA')+ \nvortices) + \nvortices. \]
To obtain the promised bound on the treewidth of $G$, observe that one can turn a tree decomposition $(T',\beta')$ of $\MainPlusA-K$ into a tree decomposition of $G$
using the following operations:
\begin{itemize}
\item For every $1 \leq i \leq \nvortices$, for every $1 \leq j < \vortexlength{i}$,
  for every node $t \in V(T')$ with $\SocietyVtx{i}{j}, \PlusVortexVtx{i} \in \beta'(t)$,
add the adhesion $\VortexBag{i}{j} \cap \VortexBag{i}{j+1}$ to the bag $\beta'(t)$. This increases the width of the decomposition by at most a multiplicative factor of $(1+\vortexadhwidth)$.
\item For every $1 \leq i \leq \nvortices$, for every $1 \leq j \leq \vortexlength{i}$,
  fix a node $t \in V(T')$ whose bag contains all three vertices $\PlusVortexVtx{i}, \SocietyVtx{i}{j}, \SocietyVtx{i}{j+1}$ (with indices modulo $\vortexlength{i}$)
and  create a new adjacent node $t'$ with bag $\beta'(t') = \VortexBag{i}{j}$. This increases the width of the decomposition by at most an additive factor of $\vortexbagsize$.
\item For every $1 \leq i \leq \ndongles$, fix a node $t \in V(T')$ whose bag contains $V(\MainA) \cap V(\DongleA{i})$ and create a new adjacent node $t'$ with bag $\beta'(t') = V(\DongleA{i})$. 
This increases the width of the decomposition by at most an additive factor of $\donglesize$.
\item Add all apices to every bag of the decomposition. This increases the width of the decomposition by at most an additive factor of $\napices$.
\end{itemize}
\end{proof}

Lemma~\ref{lem:DKMW12} and the assumption on the treewidth of $G$
allows us to assume that in every optimal near-embedding of $G$ the treewidth of $\MainPlusA$ is larger than any fixed computable function of $H$ and $\sum_{i=0}^\zeta q_i$.

\section{Canonizing dongles}\label{ss:tidy-dongles}

In this section we present a key statement that intuitively says the following: once the apices and vortices are selected, there is a canonical way to choose the dongles in ``the best possible manner''.

We start with a simple clean-up of the connectivity of dongles.
For a dongle $G_j^\triangle$, a \emph{component} of a dongle is subgraph of $G_j^\triangle$ that is either:
\begin{itemize}
\item an edge connecting two vertices of $V(G_0) \cap V(G_j^\triangle)$, or
\item a connected component $C$ of $G_j^\triangle -V(G_0)$, together with $N_{G-\Apices}(C)$ and all edges connecting $C$ with $N_{G-\Apices}(C)$. 
\end{itemize}
Let $\DongleComps(G_j^\triangle)$ be the family comprising all components of the dongle $G_j^\triangle$. Note that the components of $G_j^\triangle$
form a partition of $G_j^\triangle$ into edge-disjoint subgraphs that intersect only in subsets of $V(G_0) \cap V(G_j^\triangle)$.

\begin{definition}
Fix a near-embedding $\nembed$ of a graph $G$.
A dongle $G_j^\triangle$ is \emph{properly connected} if for every $D \in \DongleComps(G_j^\triangle)$ we have $V(D) \cap V(G_0) = V(G_j^\triangle) \cap V(G_0)$. 
A near-embedding has \emph{properly connected dongles} if every dongle is properly connected.
\end{definition}
\begin{lemma}\label{lem:split-dongle}
Given a near-embedding $\nembed$ of a graph $G$ and a dongle $G_j^\triangle$ of $\nembed$,
one can in polynomial time compute a near-embedding $\nembed'$ of $G$ that differs from $\nembed$ only in replacing $G_j^\triangle$ with possibly more than one new dongles, 
so that every newly created dongle is properly connected.
Furthermore, if $\nembed$ is optimal, so is $\nembed'$.
\end{lemma}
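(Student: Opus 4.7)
The plan is to split $G_j^\triangle$ one group of components at a time, grouped by their intersection with $V(G_0)$. Set $S_0 := V(G_j^\triangle) \cap V(G_0)$, which has $|S_0| \leq 3$. For each $S \subseteq S_0$, define $\mathcal{G}_S := \{D \in \DongleComps(G_j^\triangle) : V(D) \cap V(G_0) = S\}$, and for each $S$ with $\mathcal{G}_S \neq \emptyset$ form a new dongle $G_{j,S}^\triangle := \bigcup_{D \in \mathcal{G}_S} D$. By the definition of dongle components, every $D \in \mathcal{G}_S$ meets $V(G_0)$ exactly in $S$, so $G_{j,S}^\triangle$ is properly connected by construction; moreover $|V(G_{j,S}^\triangle) \cap V(G_0)| = |S| \leq 3$, as required for a dongle. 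Distinct components of $G_j^\triangle$ are edge-disjoint and share vertices only inside $S_0 \subseteq V(G_0)$, so the new dongles are edge-disjoint and their only pairwise overlaps lie in $V(G_0)$.

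The second step is to replace $\DongleDisc{j}$ by a family of subdiscs $\{\Delta_S\}_S$ of $\DongleDisc{j}$, indexed by those $S$ with $\mathcal{G}_S \neq \emptyset$, with pairwise disjoint interiors, such that $\partial\Delta_S \cap V(G_0) = S$ for each such $S$; then $\Delta_S$ serves as the dongle disc of $G_{j,S}^\triangle$. Because $|S_0| \leq 3$, this is a constant-size topological construction with at most $8$ subdiscs, each of one of finitely many boundary types. One can realise the full-$S_0$ type as a ``Y''-shaped region with three thin arms reaching $u$, $v$, $w$, each two-element type as a lens-shaped region nestled along the corresponding boundary arc of $\DongleDisc{j}$, each singleton type as a small tangent disc, and the empty type as a purely interior disc. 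All these regions can be placed simultaneously with disjoint interiors; this is the only place where a little topological care is required, but it is routine since no two subdiscs need to share more than isolated points on their closures.

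Write $\nembed'$ for the resulting structure. Verifying $\nembed'$ is a valid near-embedding reduces to a direct check of the axioms with the new dongles and sub-discs; in particular, the condition that any vertex in two parts of the partition must lie in $V(G_0)$ follows from the fact that two distinct components of $G_j^\triangle$ share only vertices in $S_0 \subseteq V(G_0)$. By construction each new dongle is properly connected, so if one then applies this procedure to every original dongle of $\nembed$ in turn one obtains a near-embedding with properly connected dongles. For optimality, note that the operation does not alter $\Apices$, $G_0$, $\Sigma$, the embedding $\embed$, the maximum vortex adhesion, or the vortex collection; thus the Euler genus, the number of apices, the number of vortices, and the maximum vortex adhesion size are unchanged, so $\nembed'$ has the same level as $\nembed$ and remains optimal whenever $\nembed$ was. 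Tangle capture is inherited because each new dongle is a subgraph of $G_j^\triangle$, so if the big side of a separation in the relevant tangle is not contained in $G_j^\triangle$, it is not contained in any sub-dongle either. The procedure is plainly polynomial time: one linear-time connected-components computation yields $\DongleComps(G_j^\triangle)$, and the remaining disc surgery is of constant size. The main ``obstacle'', insofar as there is one, is the bookkeeping for the subdisc placement, but the bound $|S_0| \leq 3$ keeps this entirely finite.
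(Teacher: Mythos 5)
Your proof is correct and follows essentially the same approach as the paper's: group the dongle components by their intersection with $V(G_0)$, form one new dongle per nonempty group, and place the new discs with disjoint interiors inside $\DongleDisc{j}$, noting that the relevant optimality invariants (apex count, Euler genus, vortex count and adhesion width, tangle capture) are untouched. You supply a bit more topological detail on disc placement and explicitly note tangle capture, both of which the paper leaves implicit, but the argument is the same.
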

\begin{proof}
Group components $D \in \DongleComps(G_j^\triangle)$ by $V(D) \cap V(G_0)$ (which are subsets of $V(G_0) \cap V(G_j^\triangle)$).
For every $A \subseteq V(G_0) \cap V(G_j^\triangle)$, if there exists a component $D \in \DongleComps(G_j^\triangle)$ with $V(D) \cap V(G_0) = A$,
group all such components into a new dongle.
Observe that it is straightforward to draw the discs for the new dongles inside the former disc of $G_j^\triangle$ so that those discs have disjoint interiors.

Finally, observe that the above operation does not change any of the invariants taken into account in the definition of an optimal near-embedding.
\end{proof}
Note that the procedure presented in the proof of Lemma~\ref{lem:split-dongle} does not modify the embedding if the dongle in question is already properly connected.

By applying Lemma~\ref{lem:split-dongle} to every dongle of a near-embedding, we obtain a near-embedding with properly connected dongles that differs from the original one
only on dongles and every dongle of the new near-embedding is a subgraph of a dongle in the old near-embedding.
Thus, we may henceforth study optimal near-embeddings that additionally have properly connected dongles.

Consider now a set $\Apices \subseteq V(G)$ of size exactly $\napices_\ast$ and consider all optimal near-embeddings of $G$ 
with apex set $\Apices$.
Intuitively, Lemma~\ref{lem:3conn-apex} says that the majority of complexity of $G$ stays in one 3-connected component of $G-\Apices$, namely $G(\Apices)$. 

We start by aligning an optimal near-embedding with 3-connected components of the graph. 
\begin{definition}
Consider a near-embedding $\nembed$ of $G$ with apex set $\Apices$.
For a connected component $C$ of $G-\Apices-V(G(\Apices))$, let the subgraph of $G-\Apices$ consisting of $G[C]$, $N_{G-\Apices}(C)$ and all edges between $C$ and $N_{G-\Apices}(C)$ be denoted by $C^\ast$.
We say that $C$ is \emph{aligned in $\nembed$} if either:
\begin{itemize}
\item $C^\ast$ is contained in a single dongle $G_i^\triangle$ and, furthermore, $C \subseteq V(G_i^\triangle) \setminus V(G_0)$; or
\item $C^\ast$ is contained in a single vortex $G_i^\circ$ and, furthermore, $C \subseteq V(G_i^\circ) \setminus V(G_0)$. 
\end{itemize}
We say that $\nembed$ is \emph{aligned with 3-connected components} if every connected component $C$ of $G-\Apices-V(G(\Apices))$ is aligned in $\nembed$.
\end{definition}

\begin{lemma}\label{lem:align-3conn}
For every set $\Apices$, if there exists an optimal near-embedding with apex set $\Apices$, then there exists an optimal near-embedding with apex set $\Apices$
that is aligned with 3-connected components and has properly connected dongles. 
\end{lemma}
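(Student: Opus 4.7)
The plan is to apply local modifications to an optimal near-embedding $\nembed$ with apex set $\Apices$, preserving $\Apices$, the Euler genus, the number of vortices, and the maximum vortex adhesion, so that optimality is maintained throughout. First, I would apply Lemma~\ref{lem:split-dongle} to every dongle of $\nembed$ to obtain an optimal near-embedding (still with apex set $\Apices$) whose dongles are all properly connected.

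Next, I consider any connected component $C$ of $G - \Apices - V(G(\Apices))$ that is unaligned. By Theorem~\ref{thm:3conn}, the Tutte decomposition of $G-\Apices$ has adhesions of size at most $2$, so $S := N_{G-\Apices}(C)$ satisfies $S \subseteq V(G(\Apices))$ and $|S| \leq 2$. In particular, $(V(C^\ast) \cup \Apices, (V(G)\setminus V(C)) \cup \Apices)$ is a separation of $G$ of order at most $|\Apices|+2 \leq k$, so the tangle-capture property of $\nembed$ forces $|V(C)| \leq q - |\Apices|$, while the big side of the restricted separation in $\mathcal{T}-\Apices$ always lies outside $C^\ast$.

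I then construct a new optimal near-embedding $\nembed'$ with apex set $\Apices$ that agrees with $\nembed$ outside of $C^\ast$ and realizes $C^\ast$ as a single fresh dongle attached at $S$. The construction proceeds in three stages: (i) for each $s \in S$ not currently in $V(\MainA)$, $s$ belongs to a unique subgraph of the partition (a single dongle $\DongleA{j}$ or a single vortex $\VortexA{i}$), and I place $s$ onto $V(\MainA)$ by a small local deformation of the corresponding disc boundary, subdividing that dongle or vortex (using the proper-connectedness guaranteed by the first step) whenever necessary to respect the size bound $|V(\DongleA{j}) \cap V(\MainA)| \leq 3$ or the adhesion bound; (ii) all vertices of $V(C) \setminus V(\MainA)$ are removed from their vortices and dongles, every edge of $E(C^\ast)$ is deleted from $\MainA$, dongles, and vortices alike, and each newly isolated vertex of $V(C) \cap V(\MainA)$ is removed from $\MainA$, which by Lemma~\ref{lem:genus-blocks} can only decrease the Euler genus of the embedding of $\MainA$; (iii) I select a face $f$ of the updated embedding of $\MainA$ whose closure contains $S$, insert a small closed disc $\DongleDisc{\mathrm{new}} \subseteq \overline{f}$ that meets $\MainA$ exactly at $S$, and proclaim $C^\ast$ to be a new dongle $\DongleC{\mathrm{new}}$ drawn inside this disc. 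This is a valid dongle since $|V(\DongleC{\mathrm{new}}) \cap V(\MainC)| = |S| \leq 2 \leq 3$.

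The main obstacle lies in realizing stages (i) and (iii) carefully: one must verify that the local deformations in (i) can always be carried out without damaging the optimality parameters, and that after stage (ii) the vertices of $S$ genuinely lie on the closure of a single common face, which follows because the vertices and edges of $C^\ast$ just removed from $\MainA$ were drawn in a connected open region of $\Sigma$ whose boundary in $\MainA$ is spanned by $S$. Iterating this process across all unaligned components yields the desired near-embedding, with the proper-connectedness of dongles preserved throughout (and, if needed, reinstated by a final application of Lemma~\ref{lem:split-dongle}).
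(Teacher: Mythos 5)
Your setup — iterating over unaligned components while preserving the optimality parameters, and first making dongles properly connected — matches the paper's strategy. But the core of your construction fails in the hardest case, which is exactly where the paper's proof spends most of its effort.

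The problem is stage (iii). You assert that after deleting $C^\ast$ from $\MainA$, the two vertices of $S=\{v_1,v_2\}$ lie on the closure of a single common face, ``because the vertices and edges of $C^\ast$ just removed from $\MainA$ were drawn in a connected open region of $\Sigma$ whose boundary in $\MainA$ is spanned by $S$.'' This is false when $C^\ast$ contains edges of a vortex $\VortexA{i}$: in that case $C^\ast$ is \emph{not} drawn in a connected region of $\Sigma$ avoiding vortex discs, and after removing $C^\ast$ there may be no face of the updated $\MainA$ whose closure contains both $v_1$ and $v_2$ (for instance, $v_1$ and $v_2$ could be society vertices of $\VortexA{i}$ far apart along $\VortexDisc{i}$, connected inside $C^\ast$ only through the vortex). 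You therefore cannot simply make $C^\ast$ a new dongle. The paper's proof handles this case separately: it first shows that if some path $Q$ in $C^\ast$ from $v_1$ to $v_2$ avoids vortex edges, then one can make the dongle disc a thin neighborhood of $\PathProj_\nembed(Q)$ (this is the case your construction captures, though it requires the Claim~\ref{cl:Cast-dongles}-style control over how existing dongles interact with $C^\ast$); and if no such $Q$ exists, one must instead absorb $C^\ast$ into the vortex $\VortexA{i}$, which is the long and delicate argument involving the cyclic intervals $I_P$, Claim~\ref{cl:union-intervals}, and the face-vertex walk. Your proposal has no analogue of this second branch.

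There is also a secondary gap in stage (i): you want to move each $s\in S$ into $\MainA$ ``by a small local deformation of the corresponding disc boundary,'' but if $s$ lies in the interior of a dongle, you first need to argue that the dongle can be split so that $s$ appears on the boundary of a disc, and that this does not violate the $3$-vertex bound on dongle-$\MainA$ intersections. The paper does this by a case analysis on $|V(G_j^\triangle)\cap V(\MainA)|$ using the $3$-connectivity of $G(\Apices)$ and the proper connectedness of dongles; it is not automatic. (And if $s$ is buried inside a vortex but is not a society vertex, bringing it onto a disc boundary is again nontrivial.) Finally, the reference to Lemma~\ref{lem:genus-blocks} in stage (ii) is unnecessary and slightly off-point — the Euler genus of the \emph{near-embedding} is just the genus of $\Sigma$ and does not change when you delete material from $\MainA$; what you actually need is that the surface stays the same, not a block-additivity argument.
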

\begin{proof}
Let $\nembed$ be an optimal near-embedding with apex set $\Apices$ that minimizes the number of connected components $C$ of $G-\Apices-V(G(\Apices))$ that are not aligned in $\nembed$.

Consider applying Lemma~\ref{lem:split-dongle} to a dongle $G_j^\triangle$ in $\nembed$, obtaining a near-embedding $\nembed'$. 
Every connected component $C$ of $G-\Apices-V(G(\Apices))$ that is aligned in $G$ because $C^\ast$ is contained in $G_j^\triangle$ and $C \subseteq V(G_j^\triangle) \setminus V(G_0)$
remains aligned in $\nembed'$, because every connected component of $G_j^\triangle - V(G_0)$ remains within one dongle of $\nembed'$ and $C$ is connected. 
Clearly, for every other connected component $C$ of $G-\Apices-V(G(\Apices))$, the application of Lemma~\ref{lem:split-dongle} does not affect whether $C$ is aligned, that is, $C$ is aligned in $\nembed$
if and only if $C$ is aligned in $\nembed'$. 
Consequently, by applying Lemma~\ref{lem:split-dongle} to every dongle of $\nembed$, we can assume that $\nembed$ has properly connected dongles.

By contradiction, assume that there is a connected component $C$ of $G-\Apices-V(G(\Apices))$ that is not aligned in $\nembed$.

If $N_{G-\Apices}(C) = \emptyset$, then we can modify $\nembed$ by creating a new dongle equal to $G[C]$ (and removing vertices and edges of $G[C]$ from other parts of the near-embedding), contradicting
the choice of $\nembed$. The disc of the new dongle can be placed arbitrarily, provided it is disjoint with the embedding of $\MainA$ and other discs for vortices and dongles. 

If $N_{G-\Apices}(C) = \{v\}$, then we consider where the vertex $v$ lies. If $v \in V(G_0)$, then we modify $\nembed$ by creating a new dongle equal to $C^\ast$ and its disc defined so that $v$
is embedded on its boundary. 
If $v \in V(G_i^\circ)\setminus V(G_0)$ or $v \in V(G_j^\triangle)\setminus V(G_0)$, we can move the whole $G[N_G[C]]$ to $G_i^\circ$ or $G_j^\triangle$, respectively (removing also all vertices of $C$ and edges of $C^\ast$
    from other parts of the near-embedding). In the case $v \in V(G_i^\circ)$, $C$ is also added to one of the bags of the path decomposition of $G_i^\circ$ that contains $v$;
note that this step does not increase the maximum size of a vortex adhesion, as after it the vertices of $C$ are present only in one bag of the path decomposition of $G_i^\circ$.
In all cases, the modification contradicts the choice of $\nembed$.

In the final case, $N_{G-\Apices}(C) = \{v_1,v_2\}$.
Similarly as in the previous case, if $v_1,v_2$ are contained in a single dongle $G_j^\triangle$, we can move $C^\ast$ to $G_j^\triangle$, removing all its edges and vertices except for $\{v_1,v_2\}$
from other parts of the embedding. Henceforth we assume that this is not the case.

We would like to ensure now that $v_1$ and $v_2$ are not ``hidden'' in the interior of any dongle. 
Assume then that $v_1 \in V(G_j^\triangle) \setminus V(G_0)$ for some dongle $G_j^\triangle$. By the previous paragraph, $v_2 \notin V(G_j^\triangle)$. 
Since $C^\ast$ is connected, at least one vertex of $V(G_j^\triangle) \cap V(G_0)$ is contained in~$C^\ast$. 
Furthermore, not all vertices of $V(G_j^\triangle) \cap V(G_0)$ are contained in~$C^\ast$, as otherwise the 3-connectivity of $G(\Apices)$ would imply
$V(G(\Apices)) \subseteq V(G_j^\triangle)$, but this is a contradiction with $|V(G(\Apices))|>q>|V(G_j^\triangle)|$.
If exactly one vertex, say $u$, of $V(G_j^\triangle) \cap V(G_0)$ does not belong to $C^\ast$, then $(V(C^\ast)\cup V(G_j^\triangle),V(G)\setminus (\Apices\cup V(C)\cup (V(G_j^\triangle)\setminus V(G_0))))$ is a separation of order $2$ in $G-\Apices$ with $\{u,v_2\}$ being the separator. Since $v_1\in V(G(\Apices))$, it would follow that $V(G(\Apices))\subseteq V(G_j^\triangle)$, a contradiction to the fact that $|V(G(\Apices))|>q>|V(G_j^\triangle)|$.

Hence $V(G_j^\triangle) \cap V(G_0)$ consists of exactly three vertices: two of them, say $u_1$ and $u_2$, lie outside of $C^\ast$
and one, say $u_3$, lies in $C$.
Since $u_3 \in C$ and $v_2 \notin V(G_j^\triangle)$ and $\nembed$ has properly connected dongles,
 we infer that there exists exactly one connected component of $G_j^\triangle \setminus V(G_0)$ --- the one containing
$v_1$ --- for otherwise $N_{G-\Apices}(C)$ would necessarily contain more vertices than just $v_1$ and $v_2$. We modify the embedding $\nembed$ as follows: we add $v_1$ to $G_0$ and replace $G_j^\triangle$ with two dongles, one containing $G_j^\triangle \cap C^\ast$ and the other $G_j^\triangle \setminus (V(C^\ast) \setminus \{v_1\})$. Note that the first dongle contains three vertices of $G_0$ ($u_1$, $u_2$, and $v_1$) and the second contains two vertices of $G_0$ ($u_3$ and $v_1$) and it is straightforward
to split the disc for $G_j^\triangle$ into two discs for the two new dongles, placing $v_1$ inside the disc for $G_j^\triangle$. 

This concludes the description of the first step; we perform it also for $v_2$ if necessary, and then apply Lemma~\ref{lem:split-dongle} to every dongle that is not properly connected.
Hence, we assume that $\nembed$ has properly connected dongles and that each of $v_1$ and $v_2$ is either in $G_0$ or in a vortex. 
The original choice of $\nembed$ implies that $C$ is still not aligned in $\nembed$.

The first step implies the following property.
\begin{claim}\label{cl:Cast-dongles}
Every dongle $G_j^\triangle$ is either edge-disjoint with $C^\ast$ or contained in $C^\ast$.
\end{claim}
\begin{proof}
Assume $G_j^\triangle$ is not edge-disjoint with $C^\ast$ and let $D \in \DongleComps(G_j^\triangle)$ be such that $D$ contains an edge of $C^\ast$.
Then, as neither of $v_1$ and $v_2$ can belong to $V(D) \setminus V(G_0)$, we infer that $D$ is a subgraph of $C^\ast$. 
Thus, every component of $G_j^\triangle$ is either edge-disjoint with $C^\ast$ or contained in $C^\ast$.

Assume additionally that, apart from $D \in \DongleComps(G_j^\triangle)$ that is contained in $C^\ast$, 
there is another $D' \in \DongleComps(G_j^\triangle)$ that is edge-disjoint with $C^\ast$. 
Then, in $G-\Apices$, the $D'$ and $D$ are separated by $\{v_1,v_2\}$. Since $G_j^\triangle$ is properly connected, $V(D') \cap V(D) = V(G_j^\triangle) \cap V(G_0) \subseteq \{v_1,v_2\}$.
As $C$ is connected, we infer that $D = C^\ast$; in particular $C$ is aligned with $\nembed$, a contradiction.
\cqed\end{proof}

Assume now that there exists a path $Q$ in $C^\ast$ that connects $v_1$ and $v_2$ and does not contain any edge of a vortex (but may contain vertices of a vortex that are society vertices). 
Then we proceed as follows: create a new dongle containing $C^\ast$ and remove all edges of $C^\ast$ and vertices of $C$ from other parts of the near-embedding.
The disc associated with the newly created dongle is a small open neighborhood of the interior of the drawing of $Q$. Observe here that $Q$ might traverse dongles contained in $C^\ast$; then we draw the corresponding part of $Q$ within the the interior of the disc of the said dongle, and observe that this dongle is then anyway removed from the near-embedding. Recall here that, by Claim~\ref{cl:Cast-dongles}, every dongle is either contained in $C^\ast$ and removed by the process above, or edge-disjoint with $C^\ast$ and left intact. We have thus created a new near-embedding in which $C$ is aligned, and every component of $G-\Apices-V(G(\Apices))$ that was previously aligned stays aligned; this contradicts the choice of $\nembed$.

We are left with the case when no such path $Q$ exists. In particular, there exists a vortex $G_i^\circ$ that contains an edge of $C^\ast$. Since $C^\ast$ is connected and of size at most $q$,
by Lemma~\ref{lem:opt-ne-rep} $C^\ast$ does not contain a vertex or an edge of any other vortex. Intuitively, we would like now $G_i^\circ$ to ``swallow''~$C^\ast$, but the precise details of
this operation are delicate. 

Consider a path $P$ in $C^\ast$ that contains no edge of $G_i^\circ$, but whose endpoints are two society vertices of $G_i^\circ$, say $v_{i,a}^\circ$ and $v_{i,b}^\circ$, $a < b$.  
Let $\gamma_P$ be a closed curve defined as the drawing of $\PathProj_\nembed(P)$, 
    closed via a curve inside the disc for $\VortexA{i}$. 
If $\gamma_P$ is not contractible, then by adding $V(C^\ast) \cup \VortexBag{i}{a} \cup \VortexBag{i}{b}$ to apices and splitting $G_i^\circ$ into two vortices
(one with bags $\VortexBag{i}{a+1},\ldots,\VortexBag{i}{b-1}$ and the second with the remaining bags) one obtains a near-embedding of level larger than $\iota_\ast$,
a contradiction to the maximality of $\iota_\ast$.
Thus, $\gamma_P$ is contractible. In particular, $Y := \Apices \cup V(C^\ast) \cup \VortexBag{i}{a} \cup \VortexBag{i}{b}$ separates in $G$
parts embedded in different sides of $\gamma_P$.
Here, we include a vortex bag on the side where its society vertex is embedded and a dongle on the side where its disc lies; note that Claim~\ref{cl:Cast-dongles} and the fact
that dongles are properly connected
implies that the only dongles whose discs do not lie on one side of $\gamma_P$ are dongles contained in $C^\ast$. 
By unbreakability, as $|Y| < k'$, one of the sides has at most $q'$ vertices; let us call it \emph{the small side of $\gamma_P$} and let $Z_P$ be the set of vertices that are not in $Y$
but are embedded in the small side (again, including the vortex bags of all society vertices lying the small side, as well as all dongles whose discs are contained in the small side).
By the optimality of $\nembed$, moving $Y$ and $Z_P$ to $\Apices$ cannot decrease the Euler genus, hence
the small side is embedded in a disc side of $\gamma_P$; call this disc side $\Delta_P$. With $P$ we associate the cyclic interval $I_P = [a,b]$ if the society vertices $\SocietyVtx{i}{a}, \SocietyVtx{i}{a+1}, \ldots, \SocietyVtx{i}{b}$
are embedded in $\Delta_P$, and $I_P = [b,a]$ otherwise. Note that $|I_P|\leq |Z_P\cup Y|\leq q'$. Since the length of vortex $\VortexA{i}$ is larger than $q'$ (see Lemma~\ref{lem:opt-ne-rep}), it follows that $I_P$ is a strict subset of all the indices of the society vertices of $\VortexA{i}$. 

Let $\mathcal{P}$ be the family of all paths $P$ as above. We can choose the way curves $\gamma_P$ are realized within the disc of vortex $\VortexA{i}$ so that $I_P\subseteq I_{P'}$ 
 entails $\Delta_P \cap \VortexDisc{i} \subseteq \Delta_{P'} \cap \VortexDisc{i}$.
We need the following observation about the interaction between different intervals $I_P$ and corresponding discs $\Delta_P$.

\begin{claim}\label{cl:union-intervals}
 Suppose ${\cal Q}\subseteq {\cal P}$ is such that $\bigcup_{P\in {\cal Q}} I_P$ is a cyclic interval $J=[a,b]$, and $\cal Q$ contains all paths $P\in \cal P$ for which $I_P\subseteq J$. Then there exists $Q\in \cal Q$ such that $I_Q=J$ and $\Delta_Q\supseteq \Delta_{P}$ for each $P\in \cal Q$.
\end{claim}
\begin{proof}
We first show that there exists $P \in \mathcal{Q}$ with $I_P = J$. Assume the contrary; let $P \in \mathcal{Q}$ be such that $I_P$ is inclusion-wise and assume $I_P \neq J$.
By symmetry, let $I_P = [a',b']$ where $b' \neq b$. 
By the maximality of $I_P$, there exists $Q \in \mathcal{Q}$ such that $I_Q = [a'',b'']$ where $a'' \in [a',b']\setminus \{a'\}$ and $b'' \in [b',b]\setminus \{b'\}$. Now both $\gamma_P$ and $\gamma_Q$ bound disks --- $\Delta_P$ and $\Delta_Q$ respectively --- and they meet once within the disc of vortex $\VortexA{i}$. It follows that $\gamma_P$ and $\gamma_Q$ must intersect again outside of the disc of $\VortexA{i}$, so in particular $P$ and $Q$ intersect. As $P$ and $Q$ are edge-disjoint with vortices, we conclude that  $P\cup Q$ contains a path $R$ that is edge-disjoint with vortices and connects $\SocietyVtx{i}{a'}$ with $\SocietyVtx{i}{b''}$.
We have $I_R = [a',b'']$ and $R \in \mathcal{Q}$, so this is a contradiction to the choice of $P$.

Let $\mathcal{Q}' \subseteq \mathcal{Q}$ be the family of those paths $P$ for which $I_P = J$; we have already proven $\mathcal{Q}' \neq \emptyset$. 
By our assumption on the paths inside $\VortexDisc{i}$, all $P \in \mathcal{Q}'$ share the same set $\Delta_P \cap \VortexDisc{i}$. 
Our goal is to show that $\{\Delta_P~\colon~P \in \mathcal{Q}'\}$ contains only one inclusion-wise element (it may appear for a number of distinct paths $P$). 
Assume this is not the case, so let $P,Q \in \mathcal{Q}'$ be two elements with distinct and inclusion-wise maximal $\Delta_P$ and $\Delta_Q$. 

Since both $\Delta_P$ and $\Delta_Q$ are discs with common intersection with $\VortexDisc{i}$, we have that $\gamma_P$ and $\gamma_Q$ intersect. 
Let $\gamma_Q'$ be a maximal subcurve of $\gamma_Q$ that is outside $\Delta_P$ and let $\gamma_P'$ be the subcurve of $\gamma_P$ between the endpoints
of $\gamma_Q'$. By the definition of $\gamma_P$ and $\gamma_Q$, 
$\gamma_P'$ and $\gamma_Q'$ corresponds to a subpaths $P''$ of $\PathProj_\nembed(P)$ and $Q''$ of $\PathProj_{\nembed}(Q)$, respectively, between the same endpoints
in the intersection of $\PathProj_\nembed(P)$ and $\PathProj_\nembed(Q)$. 
Since both $P$ and $Q$ do not contain edges of vortices and have endpoints in $V(\MainA)$, $P''$ and $Q''$ correspond to subpath $P'$ and $Q'$ of $P$ and $Q$ respectively, between the same endpoints,
with the projections of endpoints being the endpoints of $P''$ and $Q''$, respectively. 

Consider now a path $R$ created from $P$ by replacing $P'$ with $Q'$. Clearly, we have $R \in \mathcal{Q}$ and $I_R = J$, so $R \in \mathcal{Q}'$.
Also, by construction $\gamma_R$ equals $\gamma_P$ with $\gamma_P'$ replaced with $\gamma_Q'$. Since $\gamma_Q'$ is disjoint with $\Delta_P$ except for the endpoints, we have
$\Delta_P \subseteq \Delta_R$. Since the closure of $\Delta_R$ contains internal points of $\gamma_Q'$, while the closure of $\Delta_P$ does not contain them, we
have $\Delta_P \neq \Delta_R$. This contradiction with the choice of $P$ finishes the proof of the claim.
\cqed\end{proof}

Let $J_1,\ldots,J_\ell$ be the sequence of consecutive disjoint intervals of $\bigcup_{P \in \mathcal{P}} I_P$
and intervals of the form $[a,a]$ for vertices $\SocietyVtx{i}{a}$ that are in $C^\ast$, but $a$ is in none of the segments $I_P$. By Claim~\ref{cl:union-intervals}, for each interval $J_j$ of size larger than $1$ there exists $Q_j\in {\cal P}$ such that $J_j=I_{Q_j}$ and the disc $\Delta_{Q_j}$ contains all discs $\Delta_P$, for $P\in \cal P$ such that $I_P\subseteq I_j$. Then $|J_j|\leq |I_{Q_j}|\leq q'$, hence each of the intervals $J_1,\ldots,J_\ell$ contains at most $q'$ indices. In particular, each $J_j$ is a strict subset of all the indices of all the society vertices of $\VortexA{i}$.

Consider one interval $J_j = [a,b]$, $1 \leq j \leq \ell$. Let $C_j$ be the subgraph of $C^\ast$ consisting of all edges and vertices that can be reached by a path from a vertex $\SocietyVtx{i}{\alpha}$, for some $\alpha \in J_j$, consisting only of edges in $E(C^\ast)\setminus E(\VortexA{i})$.

We perform the following face-vertex walk in $\MainPlusA$. We start from $w_1 := \SocietyVtx{i}{a}$ and then scan the edges of $\MainPlusA$ incident with $w_1$, starting
from the virtual edge $\SocietyVtx{i}{a}\SocietyVtx{i}{a-1}$, in the direction going outside of the disc of $\VortexA{i}$ until an edge of the projection of $C^\ast$ (i.e., of $C^\ast$ or a virtual edge of a dongle contained in $C^\ast$) is found. We move to $f_1$ being the face just before the found edge $e_1$ of the projection of $C^\ast$. Then, we start from $w_1$ and $e_1$ and continue along the facial
walk of $f_1$ until an edge $e_1''$ that is not in the projection of $C^\ast$ is encountered. Let $e_1'$ be the preceding edge on the said facial walk and $w_2$ be the vertex between $e_1'$ and $e_1''$.
We move to $w_2$ and scan the edges incident with $w_2$, starting from $f_1$ to $e_1''$, until an edge $e_2$ of the projection of $C^\ast$ is found. We move to the face $f_2$ that is just before $e_2$
in this scan and continue along the facial walk of $f_2$, starting from $w_2$ and $e_2$, in the same manner as we did with $w_1$, $f_1$, and $e_1$. 
The walk terminates when, while scanning faces along a vertex $w_\iota$, we enter a disc of a vortex (i.e., we cross a virtual edge of a vortex). 

Let us discuss a few properties of this process. First, the next steps are always defined: while scanning edges incident with a vertex $w_\iota$, the edge $e_{\iota-1}'$ is one of the candidates for $e_\iota$, so $e_\iota$ will eventually be found. Similarly, while going along the facial walk of $f_\iota$, eventually we will reach an edge not in the projection of $C^\ast$, as
the edge preceting $e_\iota$ in the scan of $w_\iota$ is one such edge (note that this is also true for $\iota=1$). 
Second, this process is reversible: knowing $e_\iota$, $w_\iota$, and $f_\iota$, the tuple $(e_{\iota-1},w_{\iota-1},f_{\iota-1})$ is uniquely defined (except for the case $\iota=1$).
In particular, this implies that the process terminates (does not loop). 
Third, all edges of the projection of $C^\ast$ seen in the process while traversing facial walks of faces $f_\iota$ are actually in projections of $C_j$, as the concatenation of the lifts
of the said traversed facial walks witness their membership in $C_j$. 
Fourth, since the process visits only vertices of the projection of $C^\ast$, it cannot reach a society vertex of a vortex different than $\VortexA{i}$. 

Fifth, for every $\iota > 1$, the edges incident with $w_\iota$ scanned between $f_{\iota-1}$ and $f_\iota$ are not in the projection of $C^\ast$.
This has a few consequences. The process cannot cross
any path $P \in \mathcal{P}$ and hence cannot go inside the disc $\Delta_{Q_j}$, hence it does not visit vertices $\SocietyVtx{i}{\alpha}$ for $a < \alpha < b$. 
Since every visited vertex $w_\iota$ has an incident edge in the projection of $C_j$ and not in the projection of $C_j$, we infer that every $w_\iota$ is actually one
of the vertices $\SocietyVtx{i}{a}$, $\SocietyVtx{i}{b}$, $v_1$, and $v_2$. 
Furthermore, while the defined face-vertex walk can visit the same vertex multiple times, it does not transversally cross itself.
That is, if $w := w_{\iota_1} = w_{\iota_2} = \ldots = w_{\iota_\eta}$
for some $\iota_1 < \iota_2 < \ldots \iota_\eta$, then the face-vertex incidences
where we entered from $f_{\iota_1-1}$ to $w_{\iota_1}$, left to $f_{\iota_1}$, entered from $f_{\iota_2-1}$ to $w_{\iota_2}$,
\ldots, left $w_{\iota_\eta}$ to $f_{\iota_\eta}$ lie in this order along $w$.
Similarly, since we follow a facial walk along projection of $C$ until we encounter an edge not in this projection, 
if $f : f_{\iota_1} = f_{\iota_2} = \ldots = f_{\iota_\eta}$ for some $\iota_1 < \ldots < \iota_\eta$, then the face-vertex incidences
where we entered $f$ from $w_{\iota_1}$, left to $w_{\iota_1+1}$, \ldots, left $f$ to $w_{\iota_\eta+1}$ lie in this order in the facial walk along $f$. 

We now argue that the connectivity of $C$ implies that this process actually does not visit the same vertex twice, except for the special case $a=b$
and $\SocietyVtx{i}{a} = \SocietyVtx{i}{b}$ is visited twice, as the first and last vertex of the walk.
Assume the contrary, let $w := w_\iota = w_{\iota'}$ for some $\iota < \iota'$ and pick such that $\iota'-\iota$ is minimum possible. 
We have $w \in \{\SocietyVtx{i}{a}, \SocietyVtx{i}{b}, v_1,v_2\}$. 
By the excluded special case, we have that either $w_\iota$ is not the first vertex of the walk or $w_{\iota'}$ is not the last vertex of the walk. 
In both cases, the process visited $w$ as neither the first nor the last vertex of the walk and hence there is an edge of $\MainPlusA$ incident with $w$
that is neither in $C^\ast$ nor a virtual edge of $\VortexA{i}$. We infer that actually $w \in \{v_1,v_2\}$. 
By minimality of $\iota'-\iota$, the walk between $w_\iota$ and $w_{\iota'}$ defines a closed face-vertex curve in $\MainPlusA$ without self-intersection $\gamma'$
that visits only a subset of $\{\SocietyVtx{i}{a},\SocietyVtx{i}{b},v_1,v_2\}$. By the same argumentation as for discs $\Delta_P$, $P \in \mathcal{P}$, 
$\gamma'$ is contractible and encloses a disc with projections of at most $q'$ vertices of $G$. This disc cannot contain $\VortexA{j}$, so it is in the same side
of $\gamma'$ as edges $e_{\iota}$ and $e_{\iota'-1}'$. However, that implies that $w$ is a cutvertex of $C^\ast$, a contradiction to the definition of $C^\ast$. 

We infer that the defined walk defines a face-vertex curve in $\MainPlusA$ from $\SocietyVtx{i}{a}$ to $\SocietyVtx{i}{b}$ that may visit $v_1$ or $v_2$ along the way.
This curve is closed if $a=b$; if $a \neq b$, we close this curve inside the disc of $\VortexA{i}$. Let $\gamma_j$ be the final closed curve. 
Using the same argumentation as for discs $\Delta_P$ for $P \in \mathcal{P}$, we infer that $\gamma_j$ is contractibe and bounds a disc $\Delta_j$ that contains the projections
of $C_j$ and in total at most $q'$ vertices of $G$. Furthermore, we can assume that $\gamma_j$ is closed via the disc of $\VortexA{i}$ in such a manner that $\Delta_j$ contains $\Delta_{Q_j}$.

Let $Z_j$ be the vertices of $G-\Apices$ whose projection lies in $\Delta_j$. Recall that $|Z_j| \leq q'$. Let $H_j = G[Z_j]$. 
Let $\mathbf{Z} = \bigcup_{j} Z_j$. 
We would like to move the subgraph induced by $\mathbf{Z} \cup V(C^\ast)$ to the vortex $\VortexA{i}$, but there are two issues that we need to take care of: (i) if any of $v_1$ or $v_2$ is not currently
in $\VortexA{i}$, it needs to become a society vertex, and (ii) we need to adjust the path decomposition to accommodate the new vertices without increasing the maximum adhesion size. 

Summarizing the construction, the disc $\Delta_j$ contains:
\begin{itemize}[nosep]
 \item all society vertices $\SocietyVtx{i}{\alpha}$, $\alpha \in J_j$;
 \item all edges and vertices of $H_j$ that are in $\MainA$ (possibly with $v_1$, $v_2$, $\SocietyVtx{i}{a}$, and $\SocietyVtx{i}{b}$ on the boundary of $\Delta_j$), and
 \item all discs of all dongles contained in $H_j$.
\end{itemize}
Except for the above, $\Delta_j$ is disjoint with $\MainA$ and with all discs of dongles and vortices of $\nembed$. 

Since the intervals $J_j$ are pairwise disjoint, the same argument as the one used in the proof of Claim~\ref{cl:union-intervals} shows that the discs $\Delta_j$ are also pairwise disjoint. This implies that the union of the disc of vortex $\VortexA{i}$ and all the discs $\Delta_j$, for $1 \leq j \leq \ell$, is a disc as well. Call this disc $\Delta$.

Fix an index $j$, $1 \leq j \leq \ell$. Recall that $|V(H_j)|\leq q'$. 
Moreover, if $v_1$ is in $C_j$ and is incident to an edge not in $H_j$, then $v_1$ is necessarily embedded on the boundary of $\Delta_j$. Same applies to $v_2$. 
Finally, we observe that it is impossible for both $v_1$ and $v_2$ to be on the boundary of $\Delta_j$, as this would imply there are connected by a path consisting only of edges in $E(C_j)$, contradicting the assumption of the current case. 

We modify the near-embedding $\nembed$ as follows.
For the disc of the modified vortex $\VortexA{i}$ we take~$\Delta$. 
Perform the operation iteratively for each $j = 1,2,\ldots,\ell$. 
We add the whole graph $H_j$ to $\VortexA{i}$ and remove it, except for possibly $v_1$ or $v_2$, from $\MainA$ (and delete all dongles contained in $H_j$). 
In particular, all society vertices contained in $H_j$ stop to be society vertices.
If $C_j$ contains $v_1$ or $v_2$ that is incident to an edge not in $\VortexA{i}$ nor $H_j$, (w.l.o.g., let it be $v_1$) or $H_j$ contains the whole $C^\ast$, add $v_1$ to 
$\MainA$, embedding it on the boundary of $\Delta_j$ (back in its previous place if it was there before) and make the vortex bag of $v_1$ be $V(H_j)$. 
Otherwise, since $C^\ast$ is connected, there exists a society vertex $\SocietyVtx{i}{\alpha}$ immediately preceeding or succeeding $J_j$ (in the current near-embedding, i.e., after intervals $J_1,J_2,\ldots,J_{j-1}$ are processed) whose vortex bag contains a vertex of $C^\ast$;
add the whole $H_j$ to the vortex bag $\VortexBag{i}{\alpha}$. 
(Note that as $|J_j| \leq q'$, the number of society vertices of $\VortexA{i}$ is much higher than $|J_j|$.)

It is straightforward to verify that the operation presented above constructs a correctly defined path decomposition of the modified $\VortexA{i}$; note here that that $H_j$ includes vertices
of all vortex bags $\VortexBag{i}{\alpha}$, $\alpha \in J_j$. The less trivial claim is that the adhesion width of this path decomosition does not increase during the construction. To this end, consider the modification applied for one interval $J_j$. 
If in the process of this modification, $v_1$ or $v_2$ was designated as a new society vertex, say $v_1$, then if $v_{i,\alpha}^\circ$ is the society vertices immediately preceeding $J_j$,
   then $V(H_j) \cap \VortexBag{i}{\alpha} = \VortexBag{i}{\alpha+1} \cap \VortexBag{i}{\alpha}$ and similarly for the society vertex immediately succeeding $J_j$. 
 Otherwise, if $V(H_j)$ has been added to a vortex bag $\VortexBag{i}{\alpha}$ immediately preceeding $J_j$ (the argument for succeeding vortex bag $\VortexBag{i}{\beta}$ is analogous),
 then $(\VortexBag{i}{\alpha} \cup V(H_j)) \cap \VortexBag{i}{\alpha-1} = \VortexBag{i}{\alpha} \cap \VortexBag{i}{\alpha-1}$ 
 and $(\VortexBag{i}{\alpha} \cup V(H_j)) \cap \VortexBag{i}{\beta} = \VortexBag{i}{\beta-1} \cap \VortexBag{i}{\beta}$. 

We infer that in all cases $\nembed$ can be modified so that the number of components $C$ that are not aligned in $\nembed$ decreases, while keeping $\nembed$ optimal and with properly connected
dongles. This finishes the proof of the lemma.
\end{proof}

Hence, we may focus on optimal near-embeddings that are additionally aligned with 3-connected components and have properly connected dongles. 
Observe that a near-embedding $\nembed$ with apex set $\Apices$ that is aligned with 3-connected components naturally induces a near-embedding
$\nembed(G(\Apices))$ of $G(\Apices)$ that has no apices:
for every connected component $C$ of $G-\Apices-V(G(\Apices))$ with $|N_{G-\Apices}(C)| = 2$, either:
\begin{itemize}
\item if $C^\ast = \DongleA{j}$ for a dongle $\DongleA{j}$, replace it with an edge connecting the vertices of $N_{G-\Apices}(C)$, added to $\MainA$
and drawn in the disc for $\DongleA{j}$;
\item otherwise, if $C^\ast$ is contained in a dongle $\DongleA{j}$ or a vortex $\VortexA{i}$, replace it with an edge connecting the vertices of $N_{G-\Apices}(C)$,
  adjusting the path decomposition in the vortex case by replacing all vertices of $C$ with one fixed element of $N_{G - \Apices}(C)$.
\end{itemize}
The near-embedding $\nembed(G(\Apices))$ has no apices, and the same Euler genus and number of vortices as $\nembed$, and no larger maximum adhesion size than $\nembed$. 

In the other direction, given $\Apices \subseteq V(G)$ and a near-embedding $\nembedT$ of $G(\Apices)$, one can turn $\nembedT$ into a near-embedding of $G$ by, for every connected
component $C$ of $G-\Apices-V(G(\Apices))$, perform the following:
\begin{itemize}
\item If $N_{G-\Apices}(C)$ is contained in a dongle or a vortex, add $C^\ast$ to the said dongle or vortex and, in the vortex case,
  add all vertices of $C^\ast$ to one bag of the path decomposition of the vortex that contains $N_{G-\Apices}(C)$.
\item Otherwise, all vertices of $N_{G-\Apices}(C)$ are in $\MainT$. Turn $N_{G-\Apices}[C]$ into a dongle with a disc that contains all elements of $N_{G-\Apices}(C)$
on its boundary (which is possible even if there are two elements of $N_{G-\Apices}(C)$, as the edge connecting them in $G(\Apices)$ is in the $\MainT$ part of $\nembedT$).
\end{itemize}
Note that the resulting near-embedding $\nembed$ has apex set $\Apices$, the same Euler genus, number of vortices, and maximum adhesion size as $\nembedT$, and
is aligned with 3-connected components. Thus, we may interchangeably think of an optimal near-embedding of $G$ with apex set $\Apices$ that is aligned with 3-connected components
as a near-embedding of $G(\Apices)$ with no apices. 

In particular, in a near-embedding that is aligned with 3-connected components, the graph $\MainPlusA$ is 3-connected.
\begin{lemma}\label{lem:aligned-is-3conn}
Let $\nembed$ be a near-embedding of $G$ that is aligned with 3-connected components.
Then, the graph $\MainPlusA$ is 3-connected. 
\end{lemma}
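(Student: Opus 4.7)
The plan is to argue by contradiction. Suppose $\MainPlusA$ admits a separation $(A,B)$ of order $|A\cap B|\le 2$ with both $A\setminus B$ and $B\setminus A$ nonempty; I will translate this into a separation of $G(\Apices)$ of order at most $2$ with nonempty sides, contradicting the $3$-connectivity of $G(\Apices)$ guaranteed by Lemma~\ref{lem:3conn-apex}.

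First, alignment forces $V(\MainA)\subseteq V(G(\Apices))$: any vertex of $V(G-\Apices)\setminus V(G(\Apices))$ lies in some component $C$ of $G-\Apices-V(G(\Apices))$, which alignment places inside $V(\DongleA{j})\setminus V(\MainA)$ or $V(\VortexA{i})\setminus V(\MainA)$, and is hence disjoint from $V(\MainA)$. Second, I observe that the virtual gadgets of $\MainPlusA$ are too richly connected to be split by a $2$-cut: for a dongle $\DongleA{j}$ with $|V(\DongleA{j})\cap V(\MainA)|\in\{2,3\}$, the set $\{\PlusDongleVtx{j}\}\cup (V(\DongleA{j})\cap V(\MainA))$ induces a clique in $\MainPlusA$, and for a vortex $\VortexA{i}$, the set $\{\PlusVortexVtx{i}\}\cup\{\SocietyVtx{i}{\ell}:1\le\ell\le\vortexlength{i}\}$ induces a wheel, which remains connected after removing any two vertices. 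Hence, writing $S:=A\cap B$, each such gadget sits entirely on a single side of $\MainPlusA\setminus S$.

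Next I translate the separation to $G(\Apices)$. Set $\hat S:=S\cap V(\MainA)\subseteq V(G(\Apices))$, of size at most $2$; place every $v\in V(\MainA)\cap V(G(\Apices))$ on the side of $(A,B)$ to which it belongs, and every $v\in V(G(\Apices))\setminus V(\MainA)$ --- which by alignment lies in the interior of a unique dongle or vortex --- on the side hosting the corresponding gadget. I verify the separation property edge by edge: edges of $G-\Apices$ with both endpoints in $V(G(\Apices))$ are either in $\MainA$ (respected by $S$) or confined to a single dongle/vortex (hence to one gadget), while every torso edge introduced from a $2$-adhesion has both endpoints among the attachment vertices of a single dongle or the society vertices of a single vortex, again within one gadget. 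For nonemptiness, take any $x\in A\setminus B$: if $x\in V(\MainA)$ then $x$ itself witnesses $\hat A\setminus\hat S\ne\emptyset$; while if $x$ is a virtual vertex, the gadget containing $x$ minus $S$ still retains some attachment vertex or society vertex lying in $V(\MainA)\cap V(G(\Apices))$ on the $A$-side, and the symmetric argument yields $\hat B\setminus\hat S\ne\emptyset$.

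The main obstacle is ruling out the degenerate configuration where $x\in A\setminus B$ is the virtual vortex vertex $\PlusVortexVtx{i}$ of some vortex $\VortexA{i}$ of length $\vortexlength{i}\le 2$ all of whose society vertices land in $S$, so that no $V(G(\Apices))$-witness remains on the $A$-side. In that situation the single (or two) society vertex already forms a $1$- or $2$-cut of $\MainPlusA$ leaving $\PlusVortexVtx{i}$ pendant, while any interior $V(G(\Apices))$-vertex of such a short vortex would be separated from the rest of $V(G(\Apices))$ by that same society vertex, contradicting the $3$-connectivity of $G(\Apices)$; hence this sub-case cannot arise, and the contradiction is complete.
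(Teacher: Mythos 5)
Your proof is essentially correct but takes a genuinely different route from the paper's. You translate the hypothetical $2$-separation $(A,B)$ of $\MainPlusA$ directly into a $2$-separation of the torso $G(\Apices)$ and contradict its $3$-connectivity. The paper instead projects $(A,B)$ to a $2$-separation $(A',B')$ of $G-\Apices$ (by absorbing whole dongles and vortices according to where their clique/wheel lies), then observes that one side, say $A'\setminus B'$, must be disjoint from $V(G(\Apices))\supseteq V(\MainA)$ (since $G(\Apices)$ is $3$-connected, the separator lies in one bag of Tutte's decomposition), and alignment then forces $A'\setminus B'$ to be a union of components that live inside $V(\DongleA{j})\setminus V(\MainA)$ or $V(\VortexA{i})\setminus V(\MainA)$ --- hence disjoint from $\MainA$. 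Consequently $A\setminus B$ contains only virtual vertices, each of degree at least $3$ with all neighbours in $A\cap B$, which is impossible. Targeting $G-\Apices$ rather than the torso buys the paper a cleaner argument: it never needs to reason about torso edges nor about vertices of $V(G(\Apices))\setminus V(\MainA)$, both of which your route has to handle carefully. Conversely, your route goes straight to the graph whose $3$-connectivity is the intended invariant.

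Two observations on your write-up. First, the fact that every $v\in V(G(\Apices))\setminus V(\MainA)$ lies in a unique dongle or vortex follows from the near-embedding's partition property (any vertex in two parts belongs to $\MainA$), not from alignment; alignment only governs components of $G-\Apices-V(G(\Apices))$. This is a minor misattribution but does not affect correctness. Second, your degenerate-case analysis does not actually close the gap you identify. If $\vortexlength{i}\le 2$ and the short vortex has no interior $V(G(\Apices))$-vertex, your argument produces no contradiction --- and indeed none can be produced, because then $\PlusVortexVtx{i}$ has degree at most $2$ in $\MainPlusA$ and the conclusion of the lemma genuinely fails. Note that the paper's proof silently makes the same assumption (it invokes $3$-connectivity of the wheel $\{\PlusVortexVtx{i}\}\cup\{\SocietyVtx{i}{\ell}\}$, which requires $\vortexlength{i}\ge 3$); in all intended applications this holds by Lemma~\ref{lem:opt-ne-rep}. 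So the honest fix is to state the assumption $\vortexlength{i}\ge 3$ rather than to claim the sub-case ``cannot arise.''
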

\begin{proof}
Let $(A,B)$ be a separation of $\MainPlusA$ of order at most $2$; our goal is to show that $A \setminus B = \emptyset$ or $B \setminus A = \emptyset$. 

To this end, we construct a separation $(A',B')$ of $G-\Apices$ as follows. Start with $(A',B') = (A,B)$.
For every dongle $\DongleA{i}$, the vertices of $V(\DongleA{i}) \cap V(\MainA)$ form a clique in $\MainPlusA$, so they are contained either in $A$ or in $B$;
add $V(\DongleA{i})$ to $A'$ or $B'$, respectively (and remove $\PlusDongleVtx{i}$).
Similarly, for every vortex $\VortexA{i}$, its society vertices together with $\PlusVortexVtx{i}$ form a wheel that is 3-connected; hence, all of them lie in $A$ or all of them lie in $B$;
add $V(\VortexA{i})$ to $A'$ or $B'$, respectively (and remove $\PlusVortexVtx{i}$). 

We obtain a separation $(A',B')$ of $G-\Apices$ of order at most $2$. Hence, either $A' \setminus B'$ or $B' \setminus A'$ consists only of connected components
of $G-V(G(\Apices))$; without loss of generality, assume that it is $A' \setminus B'$. 
Since $\nembed$ is aligned with 3-connected components, we infer that $A \setminus B = \emptyset$, as desired.
\end{proof}

We now proceed to defining a canonical choice of dongles. Let us make the following definition.

\begin{definition}[predongle]
Let $G$ be a graph and let $\mathcal{T}$ be a tangle of order at least $4$ in $G$.
A \emph{$\mathcal{T}$-predongle} is a set $D \subseteq V(G)$ such that 
$|N(D)| \leq 3$ and $(N[D], V(G) \setminus D) \in \mathcal{T}$, that is,
 $(N[D], V(G) \setminus D)$ is a separation of order at most $3$ with $N[D]$ being the small side.
\end{definition}

A $\mathcal{T}$-predongle is \emph{maximal} if it is inclusion-wise maximal.
We often drop the parameter $\mathcal{T}$ if the tangle is clear from the context.
We need the following observation:
\begin{lemma}\label{lem:max-dongles}
Assume $G$ is a $3$-connected graph and $\mathcal{T}$ is a tangle in $G$ of order at least $4$.
Then, for every two distinct maximal $\mathcal{T}$-predongles $D_1$ and $D_2$,
we have $D_1 \cap D_2 = \emptyset$ and, furthermore, $|E(D_1,D_2)| \leq 1$.
\end{lemma}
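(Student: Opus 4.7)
The argument is driven by the submodular inequality
\[
|N(A \cup B)| + |N(A \cap B)| \leq |N(A)| + |N(B)|,
\]
valid for all $A, B \subseteq V(G)$ and verified vertex-by-vertex, combined with the tangle axiom that three small sides cannot cover $V(G)$. Since $\mathcal{T}$ has order at least $4$, every separation of order at most $3$ is oriented by $\mathcal{T}$, and the three-separations axiom will repeatedly pin down which orientation belongs to $\mathcal{T}$.

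For disjointness, I suppose for contradiction that $D_1 \cap D_2 \neq \emptyset$. Maximality and distinctness of the $D_i$ preclude one being contained in the other, so $D_1 \cup D_2$ properly contains each. If $|N(D_1 \cup D_2)| \leq 3$, then the tangle axiom forces $(N[D_1 \cup D_2], V(G) \setminus (D_1 \cup D_2)) \in \mathcal{T}$---the reverse orientation would yield, together with the two predongle separations, three small sides whose union equals $V(G)$---and then $D_1 \cup D_2$ is a predongle strictly larger than both $D_i$, contradicting maximality. Hence $|N(D_1 \cup D_2)| \geq 4$, and submodularity yields $|N(D_1 \cap D_2)| \leq 2$. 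But $V(G) \setminus N[D_1]$ is nonempty (being the big side of a tangle separation) and is contained in $V(G) \setminus N[D_1 \cap D_2]$, so $N(D_1 \cap D_2)$ is a vertex cut of order at most $2$, contradicting $3$-connectivity of $G$.

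For the edge bound, assuming $D_1 \cap D_2 = \emptyset$ and using $N(D_i) \cap D_i = \emptyset$, I expand
\[
|N(D_1 \cup D_2)| = |N(D_1)| + |N(D_2)| - |N(D_1) \cap N(D_2)| - |N(D_1) \cap D_2| - |N(D_2) \cap D_1|.
\]
The same maximality-via-tangle argument gives $|N(D_1 \cup D_2)| \geq 4$, so the sum of the three subtracted nonnegative quantities is at most $2$. Every edge between $D_1$ and $D_2$ has its $D_1$-endpoint in $N(D_2) \cap D_1$ and its $D_2$-endpoint in $N(D_1) \cap D_2$; two edges with distinct endpoint pairs would force $|N(D_1) \cap D_2| + |N(D_2) \cap D_1| \geq 3$, which is impossible. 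So all edges between $D_1$ and $D_2$ share a single endpoint pair $(u, v)$ with $u \in D_1$, $v \in D_2$, $|N(D_1) \cap D_2| = |N(D_2) \cap D_1| = 1$, and $|N(D_1) \cap N(D_2)| = 0$; in a simple graph this already yields $|E(D_1, D_2)| \leq 1$.

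The main obstacle I anticipate is excluding genuine parallel edges between $u$ and $v$ in the multigraph setting: the submodular count bounds the number of endpoint pairs, not their multiplicity. To handle this, I plan to inspect the candidate enlargement $D_1 \cup \{v\}$: the hypotheses $|N(D_1) \cap N(D_2)| = 0$ and uniqueness of $v$ in $D_2 \cap N(D_1)$ imply $N(D_1) \cap N(v) = \emptyset$, so $|N(D_1 \cup \{v\})| = |N(D_1)| + |N(v)| - 2$. Combining the resulting tight cut-structure with a Menger-style argument on three internally vertex-disjoint $u$-$v$ paths (two of which would be parallel edges, forcing the third to route through a tightly constrained region of the cuts $N(D_1)$ and $N(D_2)$) should yield the final contradiction and complete the proof.
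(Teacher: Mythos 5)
Your argument tracks the paper's own proof closely: both derive $|N(D_1 \cup D_2)| \geq 4$ from maximality plus the tangle axiom, combine this with $|N(D_i)| = 3$ (from $3$-connectivity and the predongle bound) to cap the overlap quantities, and then conclude disjointness via $3$-connectivity and the edge bound via an endpoint count. Your submodularity phrasing for disjointness and your explicit inclusion-exclusion for the edge step are cosmetic variants of the paper's single identity $|N(D_1)| + |N(D_2)| = |N(D_1 \cup D_2)| + |N(D_1) \cap N(D_2)| + |N(D_1) \cap D_2| + |N(D_2) \cap D_1|$; the content is the same, and you usefully spell out a step the paper leaves implicit, namely why the tangle must orient $(N[D_1\cup D_2],V(G)\setminus(D_1\cup D_2))$ with the first side small.

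The multigraph tail of your proposal, however, is a dead end: the statement is simply false for multigraphs, so no Menger-style repair can work. Take a large $3$-connected simple graph $H$ with distinguished vertices $a,b,c,d$; add two new vertices $u,v$, a double edge between $u$ and $v$, and single edges $ua$, $ub$, $vc$, $vd$. The resulting multigraph $G$ is $3$-connected, and for the natural tangle of order $4$ living in $H$ (orient each low-order separation towards the side carrying most of $H$), one checks that $D_1=\{u\}$ and $D_2=\{v\}$ are maximal $\mathcal{T}$-predongles --- $N(\{u\})=\{v,a,b\}$, $N(\{v\})=\{u,c,d\}$, while any proper superset of either already has a neighborhood of size exceeding $3$. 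Yet $|E(D_1,D_2)|=2$. The paper's own derivation is also implicitly confined to simple graphs at exactly the step you flagged, where it equates $|N(D_1)\cap D_2|=|N(D_2)\cap D_1|=1$ with $|E(D_1,D_2)|=1$; this is harmless in context because the lemma is only ever invoked on torsos of Tutte decompositions of the simple input graph (e.g.\ $G(\Apices)$ and $H$ in Section~10), which are themselves simple. So the right move is to record a simplicity hypothesis and stop, not to chase the parallel-edge case.
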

\begin{proof}
Let $D = D_1 \cup D_2$. We consider the separation
$(N[D], V(G) \setminus D)$.
If $(N[D], V(G) \setminus D)$ is of order at most $3$, then it belongs to $\mathcal{T}$
and $D$ is a potential $\mathcal{T}$-dongle, a contradiction to the maximality of $D_1$ and $D_2$.
Thus, $|N(D)| \geq 4$.

Note that by $3$-connectivity, $|N(D_1)|=|N(D_2)|=3$.
Therefore,
   \[ 6 = |N(D_1)| + |N(D_2)| = |N(D)| + |N(D_1) \cap N(D_2)| + |N(D_1) \cap D_2| + |N(D_2) \cap D_1|. \]
Hence,
  \begin{equation}\label{eq:nd1d2} 2 \geq |N(D_1) \cap N(D_2)| + |N(D_1) \cap D_2| + |N(D_2) \cap D_1|\geq |N(D_1\cap D_2)|.
  \end{equation}
However, by 3-connectivity of $G$, $|N(D_1 \cap D_2)| \geq 3$ unless $D_1 \cap D_2 = \emptyset$. 
So $D_1 \cap D_2 = \emptyset$. 

Note that $N(D_1) \cap D_2 = \emptyset$ if and only if $N(D_2) \cap D_1 = \emptyset$.
By~\eqref{eq:nd1d2}, we infer that either both these sets are empty (which is equivalent to $E(D_1,D_2) = \emptyset$)
or both are of size exactly one (which is equivalent to $|E(D_1,D_2)| = 1$). 
\end{proof}

Consider now a set $\Apices \subseteq V(G)$ of size exactly $\napices_\ast$ and consider all optimal near-embeddings of $G$ 
with apex set $\Apices$ that are aligned with 3-connected components and have properly
connected dongles.
As we discussed, being aligned with 3-connected components means that we can equivalently think of a near-embedding of $G(\Apices)$ (with no further apices). 
Informally, we would like now to say the following.
In $G(\Apices)$, whenever we want to make a dongle, we can safely use a {\em{tidy dongle}}, which roughly is a maximal predongle with respect to a properly projected unbreakability tangle. There are two caveats, however.
First, if we want to use two maximal predongles that have an edge between
them, we need to make a minor surgery around that edge. Second, a maximal predongle can be partially in a nearby vortex. While this case can be tediously resolved using a reasoning similar to that in the proof of Lemma~\ref{lem:align-3conn}, we omit it, because later we will not need tidy dongles close to vortices. 

However, in our proof we will need the process of tidying dongles applied not only to $G(\Apices)$, but also to other 3-connected graphs embedded into $\Sigma$. Thus, we need to state
it in a bit more general form. Let us now proceed with formal arguments. 

For a 3-connected graph $H$ and a tangle $\mathcal{T}$ of order at least 4 in $H$, 
by $\Predongles_0(H,\mathcal{T})$ we denote the family of all maximal $\mathcal{T}$-predongles in $H$. 
Further, let
\[ \Predongles(H,\mathcal{T}) = \left \{D \setminus \bigcup_{D' \in \Predongles_0(H,\mathcal{T}) \setminus \{D\}} N_{H}[D']~\colon~D \in \Predongles_0(H,\mathcal{T}) \right \} \setminus \{\emptyset\}. \]

The next lemma summarizes the main properties of elements of $\Predongles(H,\mathcal{T})$.
\begin{lemma}\label{lem:predongles-properties}
Let $H$ be a 3-connected graph and let $\mathcal{T}$ be a tangle of order at least 4 in $H$. 
The elements of $\Predongles(H,\mathcal{T})$ are pairwise disjoint, nonadjacent, and for every $D \in \Predongles(H,\mathcal{T})$ the separation $(N_{H}[D], V(H) \setminus D)$
is of order $3$ in $H$. 
\end{lemma}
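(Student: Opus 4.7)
The plan is to deduce the three claims from Lemma~\ref{lem:max-dongles} together with the $3$-connectivity of $H$.

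For disjointness and nonadjacency, write each element of $\Predongles(H,\mathcal{T})$ as $D_i^\ast = D_i \setminus \bigcup_{D'\neq D_i} N_H[D']$ for some $D_i \in \Predongles_0(H,\mathcal{T})$. Disjointness of two distinct $D_1^\ast, D_2^\ast$ is inherited from the disjointness $D_1 \cap D_2 = \emptyset$ supplied by Lemma~\ref{lem:max-dongles}. For nonadjacency, an edge $uv$ with $u \in D_1^\ast \subseteq D_1$ and $v \in D_2^\ast \subseteq D_2$ would place $u \in N_H(D_2) \subseteq N_H[D_2]$; but by construction $D_1^\ast$ excludes $N_H[D_2]$, a contradiction.

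The substantive claim is that the separation $(N_H[D], V(H)\setminus D)$ has order $3$ for each $D \in \Predongles(H,\mathcal{T})$. Fix $D_0 \in \Predongles_0(H,\mathcal{T})$ with $D := D_0 \setminus \bigcup_{D'\neq D_0} N_H[D']$ nonempty. I would first observe that $|N_H(D_0)| = 3$: maximality yields $\leq 3$, while the tangle axioms rule out $N_H[D_0] = V(H)$ (three copies of the corresponding separation would cover $V(H)$), and $3$-connectivity then forces $\geq 3$. Since $D \subseteq D_0$, every neighbor of $D$ lies in $N_H[D_0]$, so $N_H[D] \subseteq N_H[D_0] \subsetneq V(H)$; $3$-connectivity therefore again yields $|N_H(D)| \geq 3$. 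Everything reduces to proving $|N_H(D)| \leq 3$, which I would do by constructing an injection $\phi \colon N_H(D)\setminus N_H(D_0) \to N_H(D_0)\setminus N_H(D)$, after which $|N_H(D)| \leq |N_H(D_0)| = 3$ follows by adding $|N_H(D)\cap N_H(D_0)|$ to both sides.

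To build $\phi$, take $v \in N_H(D)\setminus N_H(D_0)$; the inclusion $N_H(D) \subseteq N_H[D_0] \setminus D$ places $v \in D_0 \setminus D$, so $v \in D_0 \cap N_H(D')$ for some other maximal predongle $D'$. The scalar inequality~\eqref{eq:nd1d2} from Lemma~\ref{lem:max-dongles}, combined with the observation that $D_0 \cap N_H(D') \neq \emptyset$ also entails $D' \cap N_H(D_0) \neq \emptyset$ (any witnessing edge has its $D'$-endpoint in $N_H(D_0)$), pins down $|D_0 \cap N_H(D')| = |D' \cap N_H(D_0)| = 1$ and $N_H(D_0) \cap N_H(D') = \emptyset$. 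Define $\phi(v) := w$, the unique vertex of $D' \cap N_H(D_0)$. Any neighbor of $w$ in $D_0$ must lie in $D_0 \cap N_H(D') = \{v\}$, and since $v \notin D$, $w$ has no neighbor in $D$; hence $\phi(v) \in N_H(D_0)\setminus N_H(D)$. Injectivity is clear: if $\phi(v)=\phi(v')=w$, then both associated predongles contain $w$ and therefore coincide by disjointness of maximal predongles, and then the singleton constraint forces $v=v'$.

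The main place requiring care is the construction of $\phi$: one must extract from the scalar inequality~\eqref{eq:nd1d2} the structural rigidity that every interaction between $D_0$ and another maximal predongle is mediated by a unique pair of adjacent vertices, one on each side. Once this rigidity is pinned down, both the membership of $\phi(v)$ in $N_H(D_0)\setminus N_H(D)$ and the injectivity of $\phi$ become routine verifications.
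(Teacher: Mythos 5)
Your proof is correct and follows essentially the same approach as the paper's: both establish disjointness and nonadjacency directly from Lemma~\ref{lem:max-dongles}, and both prove the order bound by constructing the same injection from $N_H(D)\setminus N_H(D_0)$ into $N_H(D_0)\setminus N_H(D)$ (the paper names the image vertex $u_v$, the unique $D_v$-endpoint of the unique edge between $D_0$ and $D_v$). One small slip worth noting: $|N_H(D_0)|\leq 3$ follows from the definition of a $\mathcal{T}$-predongle, not from maximality of $D_0$; the rest of your derivation of $|N_H(D_0)|=3$ and of the injection matches the paper's reasoning.
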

\begin{proof}
The fact that the elements of $\Predongles(H,\mathcal{T})$ are pairwise disjoint and nonadjacent follows from the definition and Lemma~\ref{lem:max-dongles}. 
For the last claim, fix $D \in \Predongles(H,\mathcal{T})$ and let
$D_0 \in \Predongles_0(H,\mathcal{T})$ be such that
\[ D = D_0 \setminus \bigcup_{D' \in \Predongles_0(H,\mathcal{T}) \setminus \{D_0\}} N_{H}[D']. \]
Since $H$ is $3$-connected and $D \neq V(H)$ (because it is contained in $D_0$ and $N_H[D_0]$ is a small side of a separation of $\mathcal{T}$), $|N(D)| \geq 3$.
It suffices to show that $|N(D)| \leq 3$.

To this end, recall that $|N(D_0)| = 3$. Fix $v \in N(D) \setminus N(D_0)$. By Lemma~\ref{lem:max-dongles} and the definition of $D$,
there exists $D_v \in \Predongles_0(H,\mathcal{T}) \setminus \{D_0\}$
with $v \in N(D_v)$. Thus, there is $u_v \in D_v$ with $u_v \in N(D_0)$.
Lemma~\ref{lem:max-dongles} implies that there is exactly one edge between $D_0$ and $D_v$, and this is the edge $vu_v$. 
Hence, $u_v \notin N(D)$.
Furthermore, for distinct $v \in N(D) \setminus N(D_0)$ the sets $D_v$ are pairwise distinct, and hence the vertices $u_v$ are pairwise distinct.
We infer that $|N(D)| \leq |N(D_0)| = 3$, as desired.
\end{proof}

\begin{definition}[canonical dongles]\label{def:canonical-dongles}
Let $H$ be a 3-connected graph and let $\mathcal{T}$ be a tangle of order at least 4 in $H$. 
Denote $h \coloneqq \max \{|A|~\colon~(A,B) \in \mathcal{T}\}$.
A near-embedding $\nembed$ of $H$ with no apices \emph{has canonical dongles (w.r.t. $\mathcal{T}$)}
if for every dongle $\DongleA{j}$ at least one of the following holds:
\begin{itemize}
\item for some vortex $\VortexA{i}$, the radial distance in $\MainPlusA$ between $\PlusDongleVtx{j}$ and $\PlusVortexVtx{i}$
is at most $h+1$; or
\item $V(\DongleA{j}) \setminus V(\MainA)$ belongs to $\Predongles(H,\mathcal{T})$.
\end{itemize}
\end{definition}

\begin{lemma}\label{lem:canonize-dongles}
Let $H$ be a 3-connected graph and let $\mathcal{T}$ be a tangle of order at least 4 in $H$. 
For every near-embedding $\nembed$ of $H$ with no apices such that for every dongle $\DongleA{j}$ 
the separation $(V(\DongleA{j}), V(H) \setminus (V(\DongleA{j}) \setminus V(\MainA)))$ is in $\mathcal{T}$, 
there exists a near-embedding $\nembedB$ of $H$ that differs from $\nembed$ only in the main embedded part and dongles (i.e., with the same vortices and into the same surface) 
and that has canonical dongles.
\end{lemma}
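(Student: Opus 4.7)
The plan is to transform $\nembed$ into $\nembedB$ by surgery on dongles and the main embedded part, while keeping the vortices and the surface $\Sigma$ fixed. First, we classify each dongle $\DongleA{j}$ of $\nembed$ as \emph{close} if $\PlusDongleVtx{j}$ lies within radial distance $h+1$ of some $\PlusVortexVtx{i}$ in $\MainPlusA$, and as \emph{far} otherwise. Close dongles already satisfy the first condition of Definition~\ref{def:canonical-dongles} and will be preserved verbatim in $\nembedB$. All surgery is performed around far dongles, with the aim of making their non-main parts lie in $\Predongles(H,\mathcal{T})$.

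Fix a far dongle $\DongleA{j}$ and set $D_j = V(\DongleA{j}) \setminus V(\MainA)$. By hypothesis, $D_j$ is a $\mathcal{T}$-predongle with $|N_H(D_j)| \leq 3$. If $D_j = \emptyset$, the dongle carries no internal vertices, so we dissolve it: its disc is removed and its (planar) edges are absorbed into the embedding of $\MainA$. Otherwise, Lemma~\ref{lem:max-dongles} furnishes a unique maximal predongle $D_j^{\max} \in \Predongles_0(H,\mathcal{T})$ containing $D_j$; we then let $D_j^{\mathrm{can}} = D_j^{\max} \setminus \bigcup_{D' \in \Predongles_0(H,\mathcal{T}) \setminus \{D_j^{\max}\}} N_H[D']$, which is either empty (in which case we dissolve again) or an element of $\Predongles(H,\mathcal{T})$. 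We group far dongles that share the same $D^{\max}$ into orbits, and for each orbit $\mathcal{G}$ we replace all of its dongles by a single new dongle whose non-main part is exactly $D^{\mathrm{can}}$, whose boundary is $N_H(D^{\mathrm{can}})$ (a three-vertex set by Lemma~\ref{lem:predongles-properties}), and whose disc is built by enlarging $\bigcup_{j \in \mathcal{G}} \DongleDisc{j}$ so as to enclose also the drawings of $D^{\mathrm{can}} \cap V(\MainA)$.

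The main obstacle is to show that this surface surgery is realizable, i.e., that the new disc can be drawn in $\Sigma$ without overlapping vortex discs, discs of close dongles, or discs of other new dongles. A key structural input is that since every dongle in $\mathcal{G}$ is far from every vortex, an application of Lemma~\ref{lem:small-subgraph-is-local} to $H[D^{\max} \cup N_H(D^{\max})]$ forces $D^{\max}$ to be disjoint from the non-main part of every vortex, and therefore every vertex of $D^{\max} \cap V(\MainA)$ lies in $\MainA$'s embedding area. Combined with $|N_H(D^{\max})| \leq 3$ and the fact that the separation $(N_H[D^{\max}], V(H) \setminus D^{\max})$ belongs to $\mathcal{T}$, the drawing of $D^{\max}$ can be enclosed by a face-vertex noose through (the projections of) $N_H(D^{\max})$, bounding a disc on the appropriate side that is disjoint from all vortex discs and from the discs of close dongles. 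By pairwise disjointness of maximal predongles (Lemma~\ref{lem:max-dongles}), orbits for different $D^{\max}$ do not interfere, except possibly via the at most two boundary vertices trimmed by the $\Predongles$ definition; these are handled by a local adjustment that either returns them to $\MainA$ or hands them over to the neighbouring canonical dongle.

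After processing every orbit of far dongles and dissolving every phantom or empty-core dongle, the resulting near-embedding $\nembedB$ has the same vortices and the same surface as $\nembed$, and every one of its dongles either is a close dongle inherited from $\nembed$ or is a new dongle whose non-main part lies in $\Predongles(H,\mathcal{T})$. This yields the desired canonicity.
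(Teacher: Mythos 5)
Your high-level plan matches the paper's: classify dongles as close/far to vortices, let a far dongle's non-main part grow to its maximal predongle $D^{\max}$, trim to $D^{\mathrm{can}}\in \Predongles(H,\mathcal{T})$, and do surface surgery to replace the old dongle(s) by a new dongle with non-main part $D^{\mathrm{can}}$. The batch-by-orbit processing versus the paper's one-at-a-time iteration is a cosmetic difference. Also, your observation via Lemma~\ref{lem:small-subgraph-is-local} that $N_H[D^{\max}]$ is disjoint from vortices is essentially the paper's argument.

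However, the central step — realizability of the new disc — has a genuine gap. You assert that, because $(N_H[D^{\max}],V(H)\setminus D^{\max})\in\mathcal{T}$ and $|N_H(D^{\max})|\leq 3$, ``the drawing of $D^{\max}$ can be enclosed by a face-vertex noose through (the projections of) $N_H(D^{\max})$, bounding a disc on the appropriate side.'' This is not justified and can actually fail: Lemma~\ref{lem:canonize-dongles} carries no face-width hypothesis on $\nembed$, so the projection of the connected small side $N_H[D^{\max}]$, although bounded in size, may be embedded in a topologically nontrivial way (e.g.\ containing a noncontractible cycle wrapping a handle), in which case no noose through the three boundary vertices bounds a disc that encloses it. The tangle hypothesis tells you which side is ``small'' graph-theoretically, not that the small side's drawing sits inside a disc of $\Sigma$. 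The paper sidesteps this by \emph{not} requiring the new dongle's disc to contain the drawing of the whole $D^{\mathrm{can}}\cap V(\MainA)$. Instead, it fixes a tree $T$ in $H[N_H[C]]$ with exactly the three leaves $N_H(C)$ and internal vertices in a single component of $H[C]$, projects it to $\dddot{T}\subseteq\MainPlusA$, and takes the new disc to be a small regular neighborhood of $\dddot{T}$: a small neighborhood of an embedded tree is automatically a disc regardless of the surface's topology, and the remaining vertices of $C'$ are simply deleted from $\MainA$ (their former locations become interior points of faces, which is harmless). You also only sketch how to handle the boundary vertices $v_u$ (for $u\in U$): the paper distinguishes whether $u$ was already in $\MainA$ and, in the more delicate subcase, subdivides an edge of $\dddot{T}$, declares the new subdivision point to be the embedding of $v_u$, shrinks the disc accordingly, and adds the edge $uv_u$ to $\MainA$; a ``local adjustment'' as you describe it does not by itself ensure a valid embedding with $N_H(C')$ on the disc's boundary.
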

\begin{proof}
If $\nembed$ has canonical dongles, then we are done, so assume otherwise. 
Let $\DongleA{j}$ be a dongle not satisfying the properties of the definition of canonical dongles. 
We would like to adjust $\nembedA$ to ``canonize'' $\DongleA{j}$.

As $H$ is 3-connected, $V(\DongleA{j}) \cap V(\MainA)$ is of size three. By assumption, $D \coloneqq V(\DongleA{j}) \setminus V(\MainA)$ is a $\mathcal{T}$-predongle. 
Let $C$ be the maximal predongle containing $D$; note that $C$ is uniquely defined, as maximal predongles are pairwise disjoint. 
Since the disc of $\DongleA{j}$ is further than $h+1$ in the radial distance from any vortex in $\nembedA$, while $|N_{H}[C]|\leq h$ and $H[N_H[C]]$ is connected, we infer that
$C$ and $N_{H}(C)$ are disjoint from the vortices. 
Furthermore, for every dongle $\DongleA{j'}$, the predongle $\DongleA{j'} \setminus V(\MainA)$
is either contained in $C$ or disjoint with $C$. Let $J$ be the set of indices
$j'$ for which $\DongleA{j'} \setminus V(\MainA)$ is contained in $C$.

Let $U$ be the set  of those vertices of $N_{H}(C)$ that are contained in another maximal predongle of $H$.
For every vertex $u \in U$, Lemma~\ref{lem:max-dongles} asserts that $u$ has a unique neighbor
$v_u$ in $C$. Let $C' = C \setminus \{v_u~|~u \in U\}$. 

The 3-connectivity of $H$ implies that for every connected component
$C_1$ of $H[C]$, we have $N_{H}(C_1) = N_{H}(C)$. 
Consider for the remainder of this paragraph the case $U \neq \emptyset$. 
Since every vertex of $U$ has exactly one neighbor in $C$ we have that $H[C]$ is in fact connected. 
If $|C| > 1$, then from the $3$-connectivity of $H$ we infer that the vertices $\{v_u~|~u \in U\}$ are pairwise distinct
and $H[C]$ contains no cutvertices. 

We now exclude the corner case of $C' = \emptyset$.
From the previous paragraph we infer that this can happen only if $U \neq \emptyset$ 
and $C$ is an isolated vertex or $|U|=3$, $H[C]$ is a triangle with three vertices $\{v_u~|~u \in U\}$.
In both cases, we can just add $C$ and its indicent vertices to $\MainA$, drawing it exactly in the spot
where $\PlusDongleVtx{j}$ and its indicent edges are drawn, removing the dongle $\DongleA{j}$ completely
from the near-embedding.
Thus, we can assume $C' \neq \emptyset$ and hence $C' \in \Predongles(H,\mathcal{T})$. 

We replace $C$ in $\MainA$ (and all dongles $\DongleA{j'}$ for $j' \in J$)
with a new dongle $N_{H}[C']$, where
the vertices of $N_{H}(C')$ are added to $\MainA$ if they were not there before.
The remaining challenge is to define the disc associated with the new dongle 
and how to embed the vertices of $N_{H}(C')$ that are new to $\MainA$.

Recall that the 3-connectivity of $H$ implies that for every connected component
$C_1$ of $H[C]$, we have $N_{H}(C_1) = N_{H}(C)$. 
Consequently, there exists a tree $T$ in $H[N_H[C]]$ with exactly three leaves being $N_{H}(C)$
and all internal vertices in a single connected component of $H[C]$.

Observe that a vertex $u \in N_H(C)$ can be of three types:
\begin{itemize}
\item $u \notin U$. Then $u$ is not part of any predongle, hence is not in $V(\DongleA{j'}) \setminus V(\MainA)$ for any dongle $\DongleA{j'}$, and hence $u \in V(\MainA)$.
\item $u \in U$ and $u \notin V(\MainA)$. Then, as no vertex of $N_H[C]$ is part of a vortex, $u \in V(\DongleA{j'}) \setminus V(\MainA)$ for a dongle $\DongleA{j'}$.
Since every predongle is either contained in $C$ or disjoint with $C$, we have that $V(\DongleA{j'}) \setminus V(\MainA)$ is disjoint with $C$. Consequently,
      $v_u \in V(\MainA) \cap V(\DongleA{j'})$. 
\item $u \in U$ and $u \in V(\MainA)$. 
\end{itemize}
Let $U' = U \setminus V(\MainA)$, that is, $U'$ is the set of vertices of the second type above. 

Consider now the tree $T-U'$ and its projection $\dddot{T}\coloneqq \SubProj_{\nembedA}(T-U')$.
Note that the vertices $B \coloneqq (N_H(C) \setminus U') \cup \{v_u~|~u \in U'\}$ are parts 
of both $V(\MainA)$ and $T-U'$. 
Furthermore, note that, as the vertices $\{v_u~|~u \in U\}$ are pairwise distinct,
for every $u \in U'$, the edges of $\MainPlusA$ incident with $v_u$
are all part of the projection of $C$, except for the three virtual edges 
of the dongle $\DongleA{j'}$ that contains $u$. 
All these three edges are contained in the disc of $\DongleA{j'}$. 
Consequently, one can choose an area $D$ homeomorphic to a disc around $\dddot{T}$
that contains the drawing of $\dddot{T}$ in its inside,
except for the elements of $B$ that are on the boundary of $D$. 
Furthermore, $D$ can be chosen so that $D$ is disjoint with all features of $\nembedA$
insident with the vertices of $B$ that are not contained in $C$ nor being the discs
of the dongles $\DongleA{j'}$, $j' \in J$. 

For every $u \in U \setminus U'$, proceed as follows.
Recall that $u$ is a leaf of $T$, and hence a leaf of $\dddot{T}$.
Subvidide the edge of $\dddot{T}$ incident to $u$ with a new vertex $v_u'$ and proclaim
the new vertex $v_u'$ being the drawing of $v_u$ (even if $v_u$ is already embedded in $\MainA$).
Shrink $D$ a bit along the said edge so that $u$ is no longer on the boundary of $D$,
but $v_u'$. Add the edge $uv_u'$ to $\MainA$ and its drawing in $\MainPlusA$. 
This constructs the disc $D$ for the new dongle $N_H[C']$.
See Figure~\ref{fig:tidy-dongle} for an illustration of this operation.

This finishes the description of the modification step. By iteratively applying it as long as there exists a dongle
not satisfying the definition of canonical dongles, we obtain a near-embedding of $H$ that has canonical dongles. This finishes the proof.
\end{proof}

\begin{figure}[tb]
\begin{center}
\includegraphics{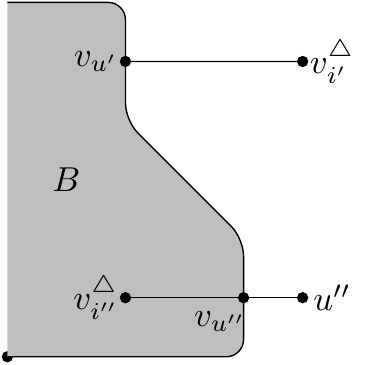}
\caption{Defining the disc in the proof of Lemma~\ref{lem:canonize-dongles}. 
In the top row, we have $u' \in U'$ and $u' \in V(\DongleA{i'}) \setminus V(\MainA)$, so $v_{u'} \in V(\MainA)$. 
In the bottom row, we have $u'' \in U \setminus U'$ but $v_{u''} \in V(\DongleA{i''}) \setminus V(\MainA)$, so we need to find a new place for $v_{u''}$ in $\MainA$.}\label{fig:tidy-dongle}
\end{center}
\end{figure}

Observe that $G(\Apices)$ is $(q-|\Apices|, k-|\Apices|)$-unbreakable similarly as $G-\Apices$, and let $\mathcal{T}_\Apices$ be the associated unbreakability tangle.
If $\Apices$ is clear from the context, we denote
$\Predongles_0 = \Predongles_0(G(\Apices),\mathcal{T}_\Apices)$
and $\Predongles = \Predongles(G(\Apices),\mathcal{T}_\Apices)$.

\begin{definition}[tidy dongles]\label{def:tidy-dongles}
We say that a near-embedding $\nembed$ with apex set $\Apices$ has \emph{tidy dongles} if
it has properly connected dongles, is aligned with 3-connected components, every dongle contains at least one vertex that is not in $\MainA$, and the corresponding near-embedding $\nembedT = \nembed(G(\Apices))$ has canonical dongles (w.r.t. $\mathcal{T}_\Apices$). 
\end{definition}
Observe that for every $(A,B) \in \mathcal{T}_\Apices$, we have $|A| \leq q$, so the value $h$ in the definition of canonical dongles is at most $q$. 

\begin{lemma}\label{lem:tidy-dongles}
For every set $\Apices$, if there exists an optimal near-embedding of $G$ with apex set $\Apices$, then there exists an optimal near-embedding of $G$ with apex set $\Apices$
that has tidy dongles.
\end{lemma}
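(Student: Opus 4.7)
The plan is to locate a suitable optimal near-embedding by canonicalizing dongles at the level of $G(\Apices)$ and then lifting the result back to $G$, with a final splitting step to restore proper connectedness.

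First, I will invoke Lemma~\ref{lem:align-3conn} to obtain an optimal near-embedding $\nembed^{(1)}$ of $G$ with apex set $\Apices$ that is aligned with $3$-connected components and has properly connected dongles. Any dongle whose vertex set lies entirely in $\MainA^{(1)}$ is just a small subgraph on at most three vertices of $\MainA^{(1)}$, and I can absorb such a dongle into the main part by placing its edges inside the former dongle disc; this modifies none of the invariants relevant to optimality. Call the result $\nembed^{(2)}$.

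Next, I pass to the induced near-embedding $\nembedT = \nembed^{(2)}(G(\Apices))$ of $G(\Apices)$, which has no apices. For every dongle $\DongleT{j}$ of $\nembedT$, the separation $(V(\DongleT{j}), V(G(\Apices)) \setminus (V(\DongleT{j}) \setminus V(\MainT)))$ has order at most $3$ and satisfies $|V(\DongleT{j})| \leq |V(\DongleA{j})| \leq q-|\Apices|$, so it lies in the unbreakability tangle $\mathcal{T}_\Apices$ of $G(\Apices)$. The hypotheses of Lemma~\ref{lem:canonize-dongles} are therefore met, and I apply it to produce a near-embedding $\nembedT'$ of $G(\Apices)$ with canonical dongles, using the same surface and the same vortices as $\nembedT$.

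Then, using the lifting procedure described before Lemma~\ref{lem:aligned-is-3conn}, I lift $\nembedT'$ back to a near-embedding $\nembed^{(3)}$ of $G$ with apex set $\Apices$, re-inserting each connected component $C$ of $G-\Apices-V(G(\Apices))$ into whichever dongle or vortex of $\nembedT'$ contains its neighbors, or creating a fresh dongle when those neighbors lie in $\MainT'$. This lifting does not increase the number of apices, the Euler genus, the number of vortices, or the maximum vortex adhesion width, so $\nembed^{(3)}$ has the same invariants as $\nembed^{(2)}$ and remains optimal; it is also aligned with $3$-connected components by construction. To restore proper connectedness, I then split each dongle of $\nembed^{(3)}$ so that (i) the lifted canonical part forms one dongle, and (ii) each re-inserted small component $C$ sits in its own dongle, drawn as a tiny sub-disc of the original dongle disc touching exactly the attachment vertices of $C$. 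This is a mild strengthening of Lemma~\ref{lem:split-dongle}, but equally easy to realize topologically. The resulting $\nembed^{(4)}$ is still optimal, has properly connected dongles, is aligned with $3$-connected components, and has every dongle containing a vertex outside $\MainA^{(4)}$.

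The hard part, and the only nontrivial verification, is to check that $\nembed^{(4)}(G(\Apices))$ still has canonical dongles with respect to $\mathcal{T}_\Apices$. The point is that each lifted canonical part projects back to exactly the corresponding canonical dongle $\DongleT{j}$ of $\nembedT'$ (since its non-$\MainA$ vertices are the predongle, which lies in $V(G(\Apices))$ and is untouched by projection), while each split-off sub-dongle consisting of a single small component $C$ has $C^\ast$ equal to the whole sub-dongle and is therefore replaced in the projection by a single edge placed into the main part. Consequently, the dongle structure of $\nembed^{(4)}(G(\Apices))$ coincides with the canonical dongles of $\nembedT'$, and so $\nembed^{(4)}$ has tidy dongles, completing the proof.
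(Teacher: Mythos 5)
Your proof is correct and follows the same route as the paper: align with $3$-connected components via Lemma~\ref{lem:align-3conn}, absorb into $\MainA$ any dongle whose vertices are all in $\MainA$, apply Lemma~\ref{lem:canonize-dongles} to the induced near-embedding $\nembed(G(\Apices))$, and lift back to $G$. Your explicit splitting step to restore proper connectedness after the lift, and your final check that the projection of the result still has canonical dongles, spell out precautions that the paper leaves implicit in its one-line conclusion.
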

\begin{proof}
By Lemma~\ref{lem:align-3conn}, there exists an optimal near-embedding with apex set $\Apices$ that has properly connected dongles and that is aligned with 3-connected components.
Furthermore, observe that if $\DongleA{i}$ is a dongle with $V(\DongleA{i}) \subseteq V(\MainA)$,
then we can move $\DongleA{i}$ to $\MainA$ as 
it is straightforward to embed $\DongleA{i}$ into the disc $\DongleDisc{i}$ and this 
operation does not break the property of being aligned with 3-connected components. 

Finally, we can use Lemma~\ref{lem:canonize-dongles} to modify the induced embedding $\nembedT$ of $G(\Apices)$ so that it has canonical dongles, and project this embedding of $G(\Apices)$
to a near-embedding of $G$ that has all the required properties. 
Note that, by the definition of $\mathcal{T}_\Apices$, every dongle $\DongleT{j}$ of $\nembedT$ satisfies $|V(\DongleT{j})| \leq q$ (so $(V(\DongleT{j}), V(G(\Apices)) \setminus (V(\DongleT{j}) \setminus V(\MainT))) \in \mathcal{T}_\Apices$), so the application of Lemma~\ref{lem:canonize-dongles} is justified. 
\end{proof}

\section{Large grids}\label{ss:large-grids}
In this section we prove a statement that intuitively says the following: if in the graph $G$ we find a huge (cylindrical) wall $W$, and on the other hand we work with some near-embedding $\nembed$ of $G$, then the apices and the vortices of $\nembed$ can affect $W$ only locally. In particular, $W$ will contain a large subwall $W'$ that is edge-disjoint with vortices and does not contain any apex.

We start with fixing two constants for a fixed near-embedding $\nembed$:
\begin{align*}
\WallHitSize(\nembed) &= 4((2\vortexbagsize+6)(\eulerg+2)\nvortices + \napices + 1 )^2+\napices+1=\Oh(q^2k^4), \\
\WallHitRadius(\nembed) &= 
  2\cdot((56 \cdot ((2\vortexbagsize+6)(\eulerg+2)\nvortices + \napices + \nvortices + 1) \cdot (\eulerg+1)^{\nvortices+2} + \nvortices) \cdot (1 + \vortexadhwidth) \\ &\qquad + \vortexbagsize + \donglesize + \napices) = \Oh(qk^{k+5}).
  \end{align*}
We often drop the argument if the near-embedding is clear from the context.
We also use $\WallHitSize_\ast$ and $\WallHitRadius_\ast$ for the maximum of respective values
for an optimal near-embedding $\nembed$ (i.e., using maximum allowed values of $\vortexbagsize$, $\vortexadhwidth$, and $\donglesize$ as in Lemma~\ref{lem:opt-ne-rep}). 

The main result of this section is the following.

\begin{lemma}\label{lem:wall-survives}
Let $\nembed$ be an optimal near-embedding of $G$ and let $W$ be a $w \times h$ wall that is a subgraph of $G$.
Then there exist a set $X$ consisting of at most $\WallHitSize$
vertices of $W$
such that every vertex of $W$ that is an apex of $\nembed$ and every edge
of $W$ that is in a vortex of $\nembed$ is within radial distance at most
$\WallHitRadius$
from a vertex of $X$ in the natural planar embedding of $W$. 
\end{lemma}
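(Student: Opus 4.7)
The plan is to proceed by contradiction. Assume no set $X \subseteq V(W)$ of size at most $\WallHitSize$ satisfies the covering property. Call a vertex of $W$ \emph{bad} if it is an apex of $\nembed$ or an endpoint of an edge of $W$ that belongs to some vortex of $\nembed$. A standard greedy argument --- repeatedly selecting a bad vertex not yet covered by any previously selected radial ball of radius $\WallHitRadius$ in the natural planar embedding of $W$ --- then yields a set $Y$ of $\WallHitSize + 1$ bad vertices of $W$ that are pairwise at radial distance strictly greater than $\WallHitRadius$. In particular, the closed radial balls of radius $\lfloor \WallHitRadius / 2 \rfloor$ around the vertices of $Y$ in $W$ are pairwise vertex-disjoint subwalls of $W$, each of order $\Theta(\WallHitRadius)$.

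Set $M := (2\vortexbagsize + 6)(\eulerg + 2)\nvortices + \napices + 1$. Observe that the definition of $\WallHitRadius$ is precisely calibrated so that $\WallHitRadius/2$ equals the right-hand side of Lemma~\ref{lem:DKMW12} evaluated at $\tw(\MainA') = M$, while $\WallHitSize - \napices - 1 = 4M^2$. Since there are at most $\napices$ apices, at least $4M^2 + 2$ vertices of $Y$ are incident in $W$ to an edge that lies in a vortex. Applying Corollary~\ref{cor:gallai} to the wall $W$ (which, after suppression of degree-$2$ vertices, is $3$-connected and planar) with these vortex-incident vertices as the set $S$ yields $\Omega(M^2)$ vertex-disjoint paths in $W$ whose endpoints are vortex-incident. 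A pigeonhole step over the assignment of endpoints to their witnessing vortex, followed by a second pigeonhole within the path decomposition of the dominant vortex $\VortexA{i^\star}$, lets us extract $\Omega(M)$ paths in $W$ whose endpoints lie in a common vortex and whose pattern in the vortex bags can be amalgamated through the bag-and-adhesion structure to assemble a subgraph $J$ of $G$ with $\tw(J) > \WallHitRadius/2$.

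We then apply Lemma~\ref{lem:DKMW12} to $J$, equipped with the near-embedding inherited from $\nembed$ (restricted so that all features of $\nembed$ intersecting $J$ remain intact). Lemma~\ref{lem:DKMW12} forces the treewidth of the corresponding restriction of $\MainA'$ to strictly exceed $M$. On the other hand, the projection of $J$ to $\MainPlusA$ via the operation from the preliminaries (Lemmas~\ref{lem:project-wall} and~\ref{lem:small-subgraph-is-local}) lives on the surface $\Sigma$ of Euler genus $\eulerg$, involves only the $\nvortices$ virtual vortex vertices and $\napices$ apex projections, and a direct count bounds its treewidth by $M$. This contradicts the treewidth lower bound coming from Lemma~\ref{lem:DKMW12}, closing the argument.

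The main obstacle is the construction of $J$ in the middle step: one must splice together the scattered subwalls around $Y$, the many disjoint wall-paths supplied by Corollary~\ref{cor:gallai}, and the bag-structure of the dominant vortex, so as to simultaneously achieve the treewidth lower bound $\tw(J) > \WallHitRadius/2$ while keeping the projection to $\MainPlusA$ of treewidth at most $M$. The projection and lifting machinery (Lemmas~\ref{lem:project-wall} and~\ref{lem:lift-wall}), the properties of $\Sigma$-walls (Lemmas~\ref{lem:sigma-wall-cover} and~\ref{lem:sigma-wall-holes}), and the locally-bounded-treewidth estimate of Corollary~\ref{cor:ltw} are the principal technical tools that allow the treewidth bookkeeping on both sides of the contradiction to go through with the specific constants $\WallHitSize$ and $\WallHitRadius$ fixed above.
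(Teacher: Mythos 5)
Your proposal diverges from the paper's proof in a fundamental way, and the divergence leaves a genuine gap. The paper does not argue by a global treewidth contradiction. Its engine is the topological Lemma~\ref{lem:AtoB}, which bounds the number of paths of a linkage (between two connected, vortex-edge-free, apex-free subgraphs) that can contain an apex or an edge of a vortex. After extracting vortex-free connected sets $A_\alpha$ in the four corners of $W$ via Lemma~\ref{lem:DKMW12} read in reverse (the only point at which treewidth arithmetic enters), the paper builds two ``clean'' transversal paths $P_1,P_2$ and applies Lemma~\ref{lem:AtoB} to linkages between $P_1$ and $P_2$ to bound the number of column-affected (and then row-affected) indices by $2a$. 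The proof of Lemma~\ref{lem:AtoB} is a surface surgery built on Lemma~\ref{lem:diestelB6} and the arrangement of disjoint closed curves in $\Sigma$; your plan bypasses this tool entirely.

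The hole in your plan is exactly the middle step you flag yourself: assembling a subgraph $J$ with $\tw(J) > \WallHitRadius/2$ whose projection to $\MainPlusA$ nevertheless has treewidth at most $M$. These two demands are in tension, and I do not see a way to satisfy both. On one side, the $\Omega(M)$ paths obtained from Corollary~\ref{cor:gallai} are vertex-disjoint paths in a planar wall, collectively of treewidth one; gluing in the vortex $\VortexA{i^\star}$ cannot raise the treewidth significantly, because $\VortexA{i^\star}$ carries a path decomposition of adhesion at most $\vortexadhwidth$, so any subgraph assembled from that path decomposition together with a union of disjoint paths has treewidth on the order of $\vortexadhwidth$ plus the number of paths meeting a single bag --- far below $\WallHitRadius/2$, which lives on a completely different scale. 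On the other side, if you inflate $J$ by the large subwalls around $Y$ to push $\tw(J)$ up, those subwalls project essentially intact into $\MainA$ (this is precisely what a DKMW-style extraction guarantees), and the projection then has treewidth well above $M$, voiding the intended contradiction. The quantity $M$ counts near-embedding features, not bag sizes; Lemmas~\ref{lem:project-wall} and~\ref{lem:small-subgraph-is-local} bound path lengths and preserve wall structure, but say nothing about the treewidth of a projection, so the claimed ``direct count'' is unsupported. A second concrete difficulty: a pigeonhole ``within the path decomposition of the dominant vortex'' cannot help, because by Lemma~\ref{lem:opt-ne-rep} the number of bags of $\VortexA{i^\star}$ is the vortex length, which is of order $k'''$ and vastly exceeds everything else in the argument. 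In short, you are trying to replace a surface-topological argument with treewidth bookkeeping, and that replacement does not appear to be available; Lemma~\ref{lem:AtoB} (or something with comparable topological content) is the missing ingredient.
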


A weaker version of Lemma~\ref{lem:wall-survives} has been proven as~\cite[Lemma 22]{DiestelKMW12} and recalled as Lemma~\ref{lem:DKMW12}.
Lemma~\ref{lem:DKMW12} says that if $G$ contains a large subdivided wall (which has large treewidth),
then still a large-treewidth part of this wall needs to be contained in $\MainPlusA$. 
We use the following statement to bootstrap this result. Here, recall that a {\em{linkage}} from $A$ to $B$ is a family of vertex-disjoint paths, each with one endpoint in $A$ and the other in $B$.

\begin{lemma}\label{lem:AtoB}
Let $\nembed$ be a near-embedding of $G$ into a surface $\Sigma$ of Euler genus $\eulerg$,
with $\nvortices$ vortices, maximum vortex bag size $\vortexbagsize$, and $\napices$ apices.
Assume in $G$ there are two vertex-disjoint connected subgraphs $A$ and $B$
that contain no apex nor an edge of a vortex. 
Then, for every linkage $\linkage$ from $A$ to $B$, at most 
\[(2\vortexbagsize+6)(\eulerg+2)\nvortices + \napices\]
paths of $\linkage$ contain an apex or an edge of a vortex.
\end{lemma}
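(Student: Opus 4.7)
The plan is to bound contributions from apices and from vortices separately. By vertex-disjointness of $\linkage$, each apex lies on at most one path, which immediately gives at most $\napices$ paths containing an apex. It remains to show that for every fixed vortex $\VortexA{i}$, at most $(2\vortexbagsize+6)(\eulerg+2)$ paths of $\linkage$ contain an edge of $\VortexA{i}$; summing this per-vortex bound over the $\nvortices$ vortices and combining with the apex estimate then yields the claimed inequality (paths containing both an apex and a vortex edge get counted in the apex term).

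For the per-vortex bound, I would fix $\VortexA{i}$ and let $t_i$ be the number of paths of $\linkage$ using an edge of $\VortexA{i}$. Each such path $P$, restricted to $\VortexA{i}$, is a disjoint union of subpaths whose endpoints are society vertices of $\VortexA{i}$; I view each such subpath as a chord of the disc $\VortexDisc{i}$. Vertex-disjointness of $\linkage$ implies that the society endpoints of distinct chords are all distinct. Two structural ingredients then control $t_i$. First, the linear path decomposition of $\VortexA{i}$ has bags of size at most $\vortexbagsize$, and a bag indexed at a ``middle'' cyclic position common to a family of pairwise-crossing chords acts as a vertex separator that each such chord must intersect, giving a Menger-style bound of $\vortexbagsize$ on the size of any pairwise-crossing family. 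Second, each chord endpoint inherits a natural 2-coloring according to whether the ambient path $P$ continues outside $\VortexA{i}$ toward $A$ or toward $B$, and repeated alternations of this coloring along $\partial \VortexDisc{i}$ force the drawing of $A$, $B$, and the outer pieces of the paths on the bordered surface $\Sigma\setminus\VortexDisc{i}$ to realize a complex cyclic attachment (morally a $K_{2,k}$-type subdivision with a prescribed boundary trace), each additional alternation layer consuming one unit of Euler genus. Balancing the two bounds via a charging scheme — the factor $2\vortexbagsize+6$ absorbs crossing families of size $\vortexbagsize$, their two orientations, and a constant number of degenerate chords with same-colored endpoints or with an endpoint adjacent to $A$ or $B$; the factor $\eulerg+2$ accounts for the planar base case plus one alternation layer per Euler genus unit — produces the desired estimate.

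The main obstacle is that a single path of $\linkage$ may enter and leave $\VortexA{i}$ several times, producing multiple chords whose interactions couple the ``crossing'' and ``alternation'' mechanisms above, so the charging has to be designed carefully to avoid double counting and to attribute each chord unambiguously to exactly one of the two pots. I expect the technical heart of the argument to be a classification of chord patterns (pairwise crossing versus nested/parallel) followed by an Euler-formula computation on $\Sigma\setminus\VortexDisc{i}$ after contracting the connected subgraphs $A$ and $B$ to single points and contracting each outer-vortex piece of a path to a single edge; once this topological bookkeeping is in place, the combinatorial step that reaches $(2\vortexbagsize+6)(\eulerg+2)$ per vortex should be a direct verification, and summing over $i$ concludes the proof.
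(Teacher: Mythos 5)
The outer decomposition --- charge at most $\napices$ paths to apices, then sum a per-vortex bound of $(2\vortexbagsize+6)(\eulerg+2)$ --- is sound and matches the paper's structure. The genuine gap is in your per-vortex argument. You propose to view excursions of paths into $\VortexA{i}$ as chords of $\VortexDisc{i}$, bound pairwise-crossing chord families by $\vortexbagsize$ via the vortex bags acting as separators, and then have a genus-consuming ``alternation'' argument absorb the rest. But the alternation step --- which you yourself call the technical heart --- is left entirely as a hope: you never specify how an alternating endpoint pattern is to yield disjoint noncontractible curves or an Euler-formula count, and the $2$-coloring of chord endpoints by ``continues toward $A$'' versus ``continues toward $B$'' is not well-defined once a path leaves the vortex and later re-enters it (the continuation outside $\VortexA{i}$ may reach neither $A$ nor $B$ before coming back). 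The charging that routes each chord to exactly one of the two mechanisms is also unspecified, and you correctly flag this as an obstacle that you did not resolve.

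The paper's proof avoids chords altogether and is both simpler and more robust. For each path $P_j$ it keeps only the subpath $Q_j$ from $A$ to the \emph{first} vortex edge, yielding endpoints $y_1,\ldots,y_\ell$ in cyclic order on $\partial\VortexDisc{i}$. Combining $\eulerg+2$ consecutive blocks of these subpaths with a spanning tree of $A$ and the virtual vertex $\PlusVortexVtx{i}$ produces $\eulerg+2$ noncrossing closed curves, and Lemma~\ref{lem:diestelB6} forces two of them to bound disjoint discs $D_1,D_2$. The $\vortexbagsize$ factor enters via Claim~\ref{cl:AtoB:crux}: any path crossing $\partial D_\alpha$ inside the vortex must hit one of the two boundary vortex bags, so among the more than $2\vortexbagsize+4$ paths in that block at least one is trapped inside $D_\alpha$; concatenating the two trapped paths through $B$ yields a path whose projection, by Lemma~\ref{lem:proj-2-paths}, avoids both boundary curves yet has endpoints in the disjoint discs $D_1$ and $D_2$, a contradiction. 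This first-entry simplification is precisely the idea your proposal is missing, and it makes the re-entry and chord-classification difficulties you foresee disappear rather than requiring a delicate charging scheme to manage them.
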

\begin{proof}
First, note that the statement is trivial if $|V(A)| \leq 1$ or $|V(B)| \leq 1$, so assume otherwise.
In particular, the connectedness of $A$ and $B$ imply that $A$ and $B$ do not contain vertices of vortices
that are not society vertices.

Without loss of generality we may assume that every path in $\linkage$ does not contain any internal vertex in $A \cup B$. 
At most $\napices$ paths of $\linkage$ contain an apex.
Thus, it suffices to prove the following: if no path of $\linkage$ contains an apex
and every path of $\linkage$ contains an edge of a vortex, then 
\[ |\linkage| \leq (2\vortexbagsize+6)(\eulerg+2)\nvortices.\]

For every $P \in \linkage$, follow $P$ from the endpoint in $A$ and indicate the first edge 
$e_P$ that is in a vortex. For $1 \leq i \leq \nvortices$, let $\linkage_i$ be the set
of those paths $P \in \linkage$ for which $e_P$ lies in $\VortexA{i}$. 
Fix $1 \leq i \leq \nvortices$. It suffices to prove the following:
\[ |\linkage_i| \leq (2\vortexbagsize+6)(\eulerg+2). \]
Let $\linkage_i'$ be the set of subpaths of the paths of $\linkage_i$ between the endpoint in $A$ (inclusive) and $e_P$ (exclusive). Note that $\linkage_i'$ is a linkage of the same size
as $\linkage_i$, leading from $A$ to the society vertices of $\VortexA{i}$. 

Let $\ell = |\linkage_i'|$ and assume $\ell > (2\vortexbagsize+6)(\eulerg+2)$.
Let $y_1,y_2,\ldots,y_\ell$ be the endpoints of $\linkage_i'$ in society vertices of $\VortexA{i}$,
    in the order along $\VortexDisc{i}$.
Let $Q_j \in \linkage_i'$ be the path ending in $y_j$, let $x_j \in A$ be the other endpoint of $Q_j$, and let $P_j \in \linkage_i$ be the path of which $Q_j$ is a subpath.

Let $H_A$ be a spanning tree of $A$. 
For indices $1 \leq j < j' \leq \ell$, let $R_{j,j'}$ be a concatenation
of $Q_j$, a path in $H_A$ between $x_j$ and $x_{j'}$, and $Q_{j'}$, 
and let $C_{j,j'}$ be a cycle in $\MainPlusA$ consisting of $\PathProj_{\nembedA}(R_{j,j'})$
and the path $y_j-\PlusVortexVtx{i}-y_{j'}$.
Note that as $A$ does not contain an edge of a vortex, $C_{j,j'}$ is indeed a simple cycle. 

Consider now cycles $C_{(\iota-1)\cdot \lfloor \ell/(\eulerg+2) \rfloor + 1, \iota \cdot \lfloor \ell/(\eulerg+2) \rfloor}$ for $\iota \in [\eulerg+2]$.
These cycles may intersect in $H_A$ and in $\PlusVortexVtx{i}$, but they correspond to noncrossing
(possibly touching) closed curves $\{\gamma_\iota~|~\iota \in [\eulerg+2]\}$
in $\Sigma$, and every curve $\gamma_\iota$ passes through $\PlusVortexVtx{i}$. 
Each curve $\gamma_\iota$ splits the disc $\VortexDisc{i}$ into two parts, one containing society vertices $y_j$ for $(\iota-1) \cdot \lfloor \ell/(\eulerg+2) \rfloor < j \leq \iota \cdot \lfloor \ell/(\eulerg+2) \rfloor$ on its boundary, 
and the other containing the remaining society vertices on its boundary; we refer to the first part as the \emph{inside} of $\gamma_\iota$. 
Modify each curve $\gamma_\iota$ a bit around $\PlusVortexVtx{i}$, shrinking the part inside $\gamma_\iota$, so that the curves
$\gamma_\iota$ are disjoint within $\VortexDisc{i}$.
Now, the intersection of the outside components of $\VortexDisc{i} \setminus \gamma_\iota$ over $\iota \in [\eulerg+2]$
is a component of $\VortexDisc{i} \setminus \bigcup_{\iota \in [\eulerg+2]} \gamma_\iota$ that contains $\PlusVortexVtx{i}$ and all curves $\gamma_\iota \cap \VortexDisc{i}$ in its closure. 
Lemma~\ref{lem:diestelB6} now implies that 
among the curves $\{\gamma_\iota\colon \iota\in [\eulerg+2]\}$, at least two enclose a disc that contains the inside of $\gamma_\iota$. 
Let $\gamma^1=\gamma_{\iota_1}$ and $\gamma^2=\gamma_{\iota_2}$ be two such curves, and let $D_1$ and $D_2$ be the respective two discs, as explained above. For $\alpha\in \{1,2\}$, let 
$$j_\alpha=(\iota_\alpha-1)\cdot \lfloor \ell/(\eulerg+2) \rfloor+1\qquad\textrm{and}\qquad j_\alpha'=\iota_\alpha\cdot \lfloor \ell/(\eulerg+2) \rfloor,$$
so that $\gamma^\alpha$ corresponds to the cycle  $C_{j_\alpha,j_\alpha'}$. Note that we have $j_\alpha'-j_\alpha \geq \ell/(\eulerg+2)-2$.

We have the following crucial observation:
\begin{claim}\label{cl:AtoB:crux}
Fix $\alpha \in \{1,2\}$. 
If a path $P_j$ contains a vertex of $\MainA$ inside $D_\alpha$ and a vertex of $\MainA$ that is not in the closure of $D_\alpha$, 
   then $P_j$ contains a vertex of the vortex bag of $y_{j_\alpha}$ or of $y_{j_\alpha'}$. 
\end{claim}
\begin{proof}
Let $P_j$ be a path satisfying the assumptions. 
If $j \in \{j_\alpha, j_\alpha'\}$, then we are done, as $y_{j_\alpha}$ lies on $P_{j_\alpha}$ and $y_{j_\alpha'}$ lies on $P_{j_\alpha'}$, so assume otherwise.
Let $R$ be a minimal subpath of $P_j$ with one endpoint in $\MainA$ inside $D_\alpha$ and one endpoint in $\MainA$ not in the closure of $D_\alpha$. 

By minimality, no internal vertex of $R$ belongs to $\MainA$, so $R$ is either a single edge of $\MainA$, is contained in a single dongle, or is contained in a single vortex. 
Observe that the projection $\PathProj_\nembed(R)$ needs to intersect $\gamma_\iota$. 
However, since $R$ is vertex-disjoint with $R_{j_\alpha,j_\alpha'}$ (as $R$ does not contain
    a vertex of $A$), Lemma~\ref{lem:proj-2-paths} excludes the first two cases. 
For the remaining case,
since $C_{j,j'}$ does not pass through any vertex $\PlusVortexVtx{i'}$ for $i' \neq i$, it follows that if $R$ is contained in a single vortex, then it is contained in $\VortexA{i}$. 
However, then the endpoints of $R$ are society vertices of $\VortexA{i}$ that lie on different arcs of the boundary of $\VortexDisc{i}$ delimited by $y_{j_\alpha}$ and $y_{j_\alpha'}$.
This implies that $R$ needs to contain a vertex of the vortex bag at $y_{j_\alpha}$ or at $y_{j_\alpha'}$, as desired. 
\cqed\end{proof}

Recall that $D_\alpha$ contains the inside of $\gamma^\alpha$, so in particular, $D_\alpha$ contains all society vertices $y_j$ for $j_\alpha < j < j_\alpha'$ in its interior or on the boundary.
Since $j_\alpha'-j_\alpha \geq \ell/(\eulerg+2)-2 > 2(\vortexbagsize+1) + 2$, 
at least one path $P_{j^\alpha}$ for some $j_\alpha < j^\alpha < j_\alpha'$ does not contain any vertex contained in the vortex bags at $y_{j_\alpha}$ and $y_{j_\alpha'}$. 
Recall that the next edge on $P_{j^\alpha}$, the one after $y_{j^\alpha}$ in the direction of $B$, is an edge $e_{P_{j^\alpha}}$ in the vortex $\VortexA{i}$. 
Consequently, between $y_{j^\alpha}$ and $B$, $P^{j^\alpha}$ passes through another society vertex $w_\alpha$
that is on the same arc of the boundary of $\VortexDisc{i}$ between $y_{j_\alpha}$ and $y_{j_\alpha'}$ as $y_{j^\alpha}$. 
Observe that $w_\alpha$ is not in $A$ nor on $P_{j_\alpha}$ nor on $P_{j_\alpha'}$, and thus $w_\alpha$ is strictly inside $D_\alpha$ (i.e., not on its boundary). 
By Claim~\ref{cl:AtoB:crux}, $P_{j^\alpha}$ does not contain a vertex of $\MainA$ that is not in the closure of $D_\alpha$. 
Furthermore, since all vertices of $C_{j_\alpha,j_\alpha}$ that are in $\MainA$ are in $A$, on $P_{j_\alpha}$, or on $P_{j_\alpha'}$, 
all vertices of $V(P_{j^\alpha}) \cap V(\MainA)$ are in $D_\alpha$, except for possibly the endpoint $x_{j^\alpha}$ that may be on the boundary of $D_\alpha$.

Let $z_\alpha$ be the closest-to-$B$ vertex of $\MainA$ on $P_{j^\alpha}$. Note that (thanks to the existence of $w_\alpha$) $z_\alpha$ is in $D_\alpha$. 
Consider the path $R$ defined as the concatenation of the subpaths of $P_{j^\alpha}$ between $z_\alpha$ and the endpoint in $B$, for $\alpha=1,2$, and a path contained in $B$
between the said endpoints. 
We observe that $R$ does not contain any edge of a vortex: $B$ does not contain any edge of a vortex by assumption, and
the subpath of $P_{j^\alpha}$ between $z_\alpha$ and the endpoint in $B$ does not contain an edge of a vortex, because $B$ does not contain vertices of vortices that are not society vertices
and, by the choice of $z_\alpha$, $z_\alpha$ is the only vertex of $\MainA$ on the said subpath.

Also, note that $R$ is vertex-disjoint with $R_{j_\alpha,j_\alpha'}$. 
Since $\PlusVortexVtx{i}$ does not lie on $R$, 
Lemma~\ref{lem:proj-2-paths} implies that $\PathProj_\nembed(R)$ is vertex-disjoint
with $C_{j_\alpha,j_\alpha'}$ for $\alpha=1,2$. This is a contradiction with $R$
having one endpoint in $D_1$ and the other endpoint in $D_2$. 
\end{proof}

We are now ready to prove Lemma~\ref{lem:wall-survives}.

\begin{proof}[Proof of Lemma~\ref{lem:wall-survives}.]
Let 
\begin{align*}
a &= (2\vortexbagsize+6)(\eulerg+2)\nvortices + \napices + 1,\\
b &= (56 \cdot (4a + \nvortices) \cdot (\eulerg+1)^{\nvortices+2} + \nvortices) \cdot (1 + \vortexadhwidth) + \vortexbagsize + \donglesize + \napices. 
\end{align*}
Note that $\WallHitSize(\nembed)=4a^2+\napices+1$ and $\WallHitRadius(\nembed)=2b$.
We say that an edge of $W$ is a \emph{deep edge} if it is not within radial distance
at most $2b$ from the outerface of $W$ in the natural plane embedding. 
It suffices to expose a set $X$ of at most $4a^2$ vertices of $W$ such that every edge of $W$
that is deep and in a vortex of $\nembed$, lies within radial distance at most $b$ from a vertex
of $X$ in the natural plane embedding of $W$. For the lemma statement, we return
$X$, all apices of $\nembed$ that are in $W$, and one vertex in the first row of $W$. 

If $w \leq 4b$ or $h \leq 4b$, then there are no deep edges of the grid, so assume otherwise. 
Let $W_\alpha$, $\alpha \in [4]$, be the four $b \times b$ walls that are subwalls
of $W$ in its four corners. 

\begin{claim}\label{cl:wall-survives:A}
Fix $\alpha \in [4]$. There exists a connected subgraph $A_\alpha$ of $W_\alpha$
that is edge-disjoint with vortices, does not contain apices, and contains vertices
in at least $a$ distinct rows and at least $a$ distinct columns of $W_\alpha$.
\end{claim}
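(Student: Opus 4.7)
My plan is to prove the claim via \Cref{lem:AtoB} applied to two small clean anchor subgraphs $A,B$ placed in opposite corners of $W_\alpha$, connected through a large linkage drawn inside $W_\alpha$ itself. Specifically, I fix two disjoint corner subwalls $W_\alpha^{(A)}$ and $W_\alpha^{(B)}$ of $W_\alpha$ of moderate size (depending polynomially on $a$, $\vortexbagsize$, $\nvortices$, $\napices$, but negligible compared to $b$), and argue that one can locate inside each a small clean connected subgraph (single vertex, or an edge) to play the role of $A$ and $B$. Each of these anchors is trivially connected and edge-disjoint with vortices and avoids apices, so it meets the hypotheses of \Cref{lem:AtoB}.

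Next, since $W_\alpha$ is a $b\times b$ wall with $b\gg 2a$, one can route a linkage $\linkage_{\mathrm{row}}$ of $2a$ vertex-disjoint paths from $A$ to $B$ inside $W_\alpha$ such that each path traverses a distinct row of $W_\alpha$. (One concrete construction: descend from $A$ down the left columns of $W_\alpha^{(A)}$ into $2a$ different rows of $W_\alpha$, traverse those rows across $W_\alpha$, and ascend into $B$ through the columns of $W_\alpha^{(B)}$.) By \Cref{lem:AtoB}, at most $(2\vortexbagsize+6)(\eulerg+2)\nvortices+\napices=a-1$ paths of $\linkage_{\mathrm{row}}$ contain an apex of $\nembed$ or an edge of a vortex of $\nembed$, so at least $a+1$ paths are clean. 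Their union with $A\cup B$ is a clean connected subgraph of $W_\alpha$ that visits at least $a+1$ distinct rows of $W_\alpha$. A symmetric argument (routing a linkage $\linkage_{\mathrm{col}}$ through distinct columns) yields a clean connected subgraph visiting at least $a+1$ distinct columns. Since both constructed subgraphs contain $A$, their union $A_\alpha$ is a clean connected subgraph of $W_\alpha$ visiting $\geq a$ distinct rows and $\geq a$ distinct columns, as required.

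The main obstacle is constructing the clean anchors $A$ and $B$ inside the corner subwalls $W_\alpha^{(A)}, W_\alpha^{(B)}$: these subwalls can contain many vortex edges and some apices, and we need one pair of adjacent clean vertices (or even a single non-apex vertex) in each. I would handle this by a pigeonhole/counting argument: the number of apices is at most $\napices$, and the interaction of each vortex with such a small corner subwall is sufficiently constrained by the bag size $\vortexbagsize$ and the vortex's disc embedding so that the remaining clean vertices in a corner subwall of size polynomial in $a$ cannot all be saturated. A secondary delicate point is ensuring that the $2a$ paths of the linkage $\linkage_{\mathrm{row}}$ can be chosen pairwise vertex-disjoint while traversing distinct rows; this follows from the very regular column/row structure of the wall $W_\alpha$ and the fact that $b$ is much larger than $2a$ plus the sizes of $W_\alpha^{(A)}$ and $W_\alpha^{(B)}$.
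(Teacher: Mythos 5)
Your plan has a fatal flaw in the interaction between the size of the anchors and the size of the linkage. You propose that $A$ and $B$ are ``a small clean connected subgraph (single vertex, or an edge)'' and then route a linkage $\linkage_{\mathrm{row}}$ of $2a$ vertex-disjoint paths from $A$ to $B$. By the paper's definition, a linkage from $A$ to $B$ consists of vertex-disjoint paths each having one endpoint in $A$ and one in $B$; distinct paths of the linkage must therefore have distinct endpoints in $A$, so $|V(A)|\ge 2a$ and $|V(B)|\ge 2a$ are forced. With anchors of one or two vertices, a linkage cannot exceed size $2$, while here $a$ is quite large, on the order of $\mathrm{poly}(q,k)$. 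Fixing this requires clean connected anchors of size at least $2a$ in each corner subwall, but producing such a clean connected subgraph in a corner subwall is essentially the same difficulty as the claim itself, so the argument becomes circular.

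A second gap is that your pigeonhole sketch for finding even small clean anchors does not go through as stated. Only the bag size $\vortexbagsize$ and the adhesion width of a vortex are bounded; the total number of vertices in a vortex is not, since it is a path decomposition with arbitrarily many bags. Thus a priori a corner subwall of any fixed size could be entirely covered by vortex edges, and a counting argument on apices plus per-bag sizes does not by itself exclude this. Some structural argument using the wall's high treewidth against the vortex's low pathwidth is what is actually needed to certify cleanness. The paper's proof of the claim does exactly this: it restricts $\nembed$ to a near-embedding of $W_\alpha$, applies Lemma~\ref{lem:DKMW12} to conclude that the projection of $W_\alpha$ to $\MainPlusA$ (after removing apices, virtual vortex vertices/edges, and dongle cliques) still has treewidth at least $a$, lifts it back to $G$, and observes that a connected component of treewidth $\ge a$ must meet at least $a$ rows and $a$ columns of $W_\alpha$. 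Your route via Lemma~\ref{lem:AtoB} is a genuinely different idea, and Lemma~\ref{lem:AtoB} is indeed used in the subsequent part of the proof of Lemma~\ref{lem:wall-survives} (after the claim), but there it is applied \emph{to} the clean subgraphs $A_\alpha$ produced by this claim; it is not a suitable tool for producing those subgraphs in the first place.
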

\begin{proof}
We restrict the near-embedding $\nembed$ to a near-embedding $\nembedB$ of $W_\alpha$
as follows. First, the apices of $\nembedB$ are $\Apices \cap V(W_\alpha)$. 
Second, $\MainB$ consists of vertices and edges of $\MainA$ that are present in $W_\alpha$.
Third, the vortices of $\nembedB$ are the vortices of $\nembed$, with the vertices
not in $W_\alpha$ iteratively deleted.
Finally, for every dongle of $\nembed$, we first restrict it to its intersection with
$W_\alpha$, and then apply Lemma~\ref{lem:split-dongle} to it. 
Thus, $\nembedB$ is a near-embedding of $W_\alpha$ into a surface of Euler genus at most
$\eulerg$ with properly connected dongles. 

Let $W_\alpha'$ be equal to $\SubProj_{\nembedB}(W_\alpha-\Apices)$
   with all virtual vortex vertices and edges removed.
Let $W_\alpha''$ be $W_\alpha'$ with additionally, for every dongle
$\DongleB{j}$ that contains an edge of $W_\alpha$, the vertices of $V(W_\alpha) \cap V(\DongleB{j}) \cap V(\MainB)$ turned into a clique. 
As the treewidth of $W_\alpha$ is $b$,
by Lemma~\ref{lem:DKMW12}, applied to $\nembedB$, we infer
that the treewidth of $W_\alpha''$ is at least $4a$. 

Observe that one can turn a tree decomposition of $W_\alpha'$ into
a tree decomposition of $W_\alpha''$ by adding, for every dongle $\DongleB{j}$ with
$\PlusDongleVtx{j} \in V(W_\alpha')$, all vertices of
$V(W_\alpha) \cap V(\DongleB{j}) \cap V(\MainB)$ to all bags containing $\PlusDongleVtx{j}$. 
Consequently, the treewidth of $W_\alpha'$ is at least $a$.

Let $A_\alpha'$ be any lift of $W_\alpha'$ in $\nembedB$ (which is well-defined as 
$W_\alpha'$ does not contain any virtual vortex edge
and $\nembedB$ has properly connected dongles).  
An important observation is that since $\nembedB$ has properly connected dongles, 
$A_\alpha'$ can be obtained from $W_\alpha'$ by performing the following operations a number of times: (a) subdivision of an edge; (b) suppression
of a degree-2 vertex, (c) addition or deletion of a vertex of degree at most $1$. 

As these operations do not change the treewidth of a graph, we infer that the treewidth of $A_\alpha'$ is also at least $a$. Hence, $A_\alpha'$ contains a connected component $A_\alpha$ whose treewidth is also at least $a$. Note that $A_\alpha$ has to contain
vertices from at least $a$ rows and at least $a$ columns of $W_\alpha$, for otherwise its treewidth would be smaller than $a$. Hence, $A_\alpha$ has all the desired properties.
\cqed\end{proof}

Call a path in $G$ {\em{clean}} if it is edge-disjoint with vortices of $\nembed$ and does not contain any apex of $\nembed$.

Let $C_1,\ldots,C_w$ be the consecutive columns of $W$ and $R_1,\ldots,R_h$ be the consecutive rows of $W$. 
Without loss of generality, assume that $W_1$ and $W_2$ are in the top-left and top-right
corners of~$W$, respectively, that is, they both intersect rows $R_1\ldots,R_b$. Since $A_1$ and $A_2$ intersect at least $a$ different rows of $W$, within $W$ there exists a linkage $\cal L$ of size $a$ from $A_1$ to $A_2$ that is disjoint from rows $R_{b+1},\ldots,R_h$. By Lemma~\ref{lem:AtoB}, there exists a clean path $P_1\in {\cal L}$. Note that $P$ intersects each of the columns $C_{b+1},\ldots,C_{w-b}$ and is disjoint with rows $R_{b+1},\ldots,R_h$. By applying an analogous construction to subwalls $W_3$ and $W_4$ we find a clean path $P_2$ that is disjoint with rows $R_1,\ldots,R_{h-b}$ and intersects each of the columns $C_{b+1},\ldots,C_{w-b}$.

For $i\in \{b+1,\ldots,w-b-1\}$, let $S_i$ be the set of edges of $W$ that either are contained in the column $C_i$, or lie on a degree-$2$ path in $W$ connecting a vertex of $C_i$ with a vertex of $C_{i+1}$. Thus, sets $S_i$ are pairwise disjoint and each deep edge in $W$ belongs to one of them.
Let $F$ be the set of all deep edges of $W$ that are contained in vortices of $\nembed$. 
Call an index $i\in \{b+1,\ldots,w-b-1\}$ {\em{column-affected}} if $F\cap S_i\neq \emptyset$. Let $I\subseteq \{b+1,\ldots,w-b-1\}$ be the set of all column-affected indices that are even. Observe that for each $i\in I$ we can find a path $Q_i$ such that (i) $Q_i$ starts in a vertex of $P_1$, ends in a vertex of $P_2$, and otherwise is disjoint with $P_1\cup P_2$, (ii) $E(Q_i)\subseteq S_i\cup S_{i+1}$, and (iii) $Q_i$ traverses an edge of $F$. Moreover, since all indices $i\in I$ are even, paths $Q_i$ can be chosen to be vertex-disjoint. Thus, $\{Q_i\colon i\in I\}$ is a linkage connecting $P_1$ and $P_2$ that would contradict the statement of Lemma~\ref{lem:AtoB} unless $|I|\leq a$. A symmetric reasoning shows that there are at most $a$ odd column-affected indices. Therefore, the total number of column-affected indices is at most $2a$.

By switching the role of rows and columns we can perform a symmetric analysis, yielding a conclusion that there are at most $2a$ row-affected indices. Now construct $X$ by selecting one vertex from the intersection of the $i$th row and the $j$th column, for every row-affected index $i$ and column-affected index $j$. Then $|X|\leq 4a^2$ and every edge of $F$ is at radial distance at most $1$ from a vertex of $X$ in the natural plane embedding of $W$.
\end{proof}

We now make two corollaries of Lemma~\ref{lem:wall-survives}. 
The first one is immediate.
\begin{lemma}\label{lem:cylinder-survives}
Let $\nembed$ be a near-embedding of $G$ and let $W$ be a $w \times h$ cylindrical wall that is a subgraph of $G$ where $h > 4\WallHitRadius$. 
Then there exist a family $X$ of at most $2\WallHitSize$
vertices of $W$
such that every vertex of $W$ that is an apex of $\nembed$ and every edge
of $W$ that is in a vortex of $\nembed$ is within radial distance at most $\WallHitRadius$
from a vertex of $X$ in the natural plane embedding of $W$. 
\end{lemma}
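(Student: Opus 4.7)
The plan is to reduce Lemma~\ref{lem:cylinder-survives} to Lemma~\ref{lem:wall-survives} by splitting the cylindrical wall $W$ into two overlapping regular walls. I would pick two ``cut columns'' $c_1, c_2$ of $W$ that divide the cylinder into roughly equal halves, for instance $c_1=1$ and $c_2=\lceil w/2\rceil + 1$, and define $W_1$ as the subgraph of $W$ induced by columns $c_1, c_1+1,\ldots, c_2$, and $W_2$ as the subgraph induced by the complementary sequence $c_2, c_2+1,\ldots, w, 1$ going around the cylinder the other way. Each $W_i$ is a regular wall of height $h$, so Lemma~\ref{lem:wall-survives} provides a set $X_i$ of at most $\WallHitSize$ vertices covering the apices and vortex-edges of $W_i$ within radial distance $\WallHitRadius$ in the natural plane embedding of $W_i$. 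Setting $X=X_1\cup X_2$ yields $|X|\leq 2\WallHitSize$, as required.

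The correctness amounts to transferring the radial-distance bound from the planar embedding of $W_i$ to that of $W$. The key observation is that every internal (hexagonal) face of $W_i$'s embedding is also a face of $W$'s annulus embedding, so any face-vertex walk in $W_i$ that avoids the outer face of $W_i$ is also a valid face-vertex walk of the same length in $W$. For a ``deep'' affected edge $e$ in $W_i$ (in the sense used inside the proof of Lemma~\ref{lem:wall-survives}, i.e.\ at face-vertex distance more than $2\WallHitRadius$ from $W_i$'s outer face), the witness path of length at most $2\WallHitRadius$ to some interior $x\in X_i$ cannot touch $W_i$'s outer face, and hence transfers verbatim to $W$. For non-deep edges, I would ensure (by the free choice inside the proof of Lemma~\ref{lem:wall-survives}) that the ``outer-face vertex'' placed into $X_i$ sits on $C_1$ for $W_1$ and on $C_h$ for $W_2$; then a non-deep edge close to $C_1$ (resp.\ $C_h$) reaches $X$ via the inner (resp.\ outer) annulus face of $W$'s embedding in a single additional face-vertex step.

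The main obstacle is the case in which the non-deep witness lies close to $W_i$'s internal column cut (that is, close to column $c_1$ or $c_2$ of $W$, which is an internal column of $W$ and therefore touches no ``big'' face of its annulus embedding). I plan to handle this by exploiting the overlap of the split: by symmetry, an edge close to $W_1$'s column cut lies deep inside $W_2$, and vice versa, so one just analyses the edge via the other wall. The hypothesis $h>4\WallHitRadius$ is used exactly to ensure that a sufficiently thick ``middle'' strip of $W$ exists in which the row-depth condition of Lemma~\ref{lem:wall-survives} is met in either wall. The pathological case of a very narrow cylinder (say $w$ smaller than a constant multiple of $\WallHitRadius$) might need an additional ad-hoc step — for instance, applying Lemma~\ref{lem:AtoB} directly with $A=C_1$ and $B=C_h$, using the columns of $W$ as a linkage, to bound the number of ``affected columns'' and cover their incidences with $X$ — but this case is secondary, since the main combinatorial content is already packaged inside the two regular-wall applications.
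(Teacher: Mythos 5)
The overall plan — split the cylinder into two flat walls, apply Lemma~\ref{lem:wall-survives} to each, and take $X=X_1\cup X_2$ — is the same as the paper's, and your observation that interior hexagonal faces of a flat subwall are also faces of the cylindrical $W$ (so a witness walk avoiding the outer face transfers verbatim) is the right key point. But your choice of $W_1$ and $W_2$ does not give you this: you let $W_1$ be columns $c_1,\ldots,c_2$ and $W_2$ the complementary arc $c_2,\ldots,w,1$, so the two walls share only the two cut columns $c_1,c_2$ and otherwise partition $W$. An edge of $W$ at, say, column $c_1+1$ in a middle row is just inside the column-boundary of $W_1$ (hence non-deep there) and is not present in $W_2$ at all; likewise near $c_2$. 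So your proposed fix — ``an edge close to $W_1$'s column cut lies deep inside $W_2$, and vice versa'' — is simply false for the walls as you define them, because both walls are cut at the same two columns $c_1,c_2$. For such an edge the witness walk from Lemma~\ref{lem:wall-survives} may go through $W_1$'s outer face; near an internal column of the cylinder that face does not correspond to any face of $W$'s annulus embedding, and the bound does not transfer.

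The repair is to make the two flat walls overlap by much more than $2\WallHitRadius$ columns, so that every column of $W$ has depth more than $2\WallHitRadius$ from the column-boundary in at least one of them; then each apex or vortex edge is deep in one wall, and your verbatim-transfer argument applies directly. The cleanest realization, and the one the paper takes, is to let $W_1$ and $W_2$ be the \emph{full} cylinder with the wrap-around edges removed at two diametrically opposite positions — two unfoldings of the same $W$, shifted by half the circumference — rather than two half-cylinders that partition it. The dimension hypothesis of the lemma is precisely what guarantees the two cuts are far enough apart, so the ``pathological narrow cylinder'' case you flag at the end is already excluded by the hypothesis and does not need a separate argument via Lemma~\ref{lem:AtoB}. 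Your additional idea of placing the outer-face vertex of $X_1$ on $C_1$ and of $X_2$ on $C_h$ is a genuinely useful refinement and worth keeping: it handles the residual non-deep elements near the two annulus boundary cycles, which is the one place the transfer argument does need care even after the overlap is fixed.
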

\begin{proof}
Cover $W$ with two $w \times h$ walls $W_1$ and $W_2$, shifted by $\lfloor h/2 \rfloor$ columns with respect to each other.
Apply Lemma~\ref{lem:wall-survives} to each $W_\alpha$, $\alpha \in \{1,2\}$, obtaining
a set $X_\alpha$. It is evident that that $X \coloneqq X_1 \cup X_2$ satisfies the requirements of the lemma.
\end{proof}

The second one requires a bit more care, but the main combinatorial observation is provided by Lemma~\ref{lem:sigma-wall-cover}.

\begin{lemma}\label{lem:Sigma-wall-survives}
There exists a universal constant $\WallHitC$ such that the following holds.
Let $\nembed$ be a near-embedding of $G$ and let $W$ be a subdivision of a $\Sigma$-wall
of order $h$ that is a subgraph of $G$, where $h > 4\WallHitRadius$.
Then there exists a family $X$ of at most $\WallHitC (\eulerg+1)^2 \WallHitSize$ vertices of $W$
such that every vertex of $W$ that is an apex of $\nembed$ or every edge of $W$ that
is in a vortex of $\nembed$ is within radial distance at most $\WallHitRadius$
from a vertex of $X$ in the natural embedding of $W$ in $\Sigma$.
\end{lemma}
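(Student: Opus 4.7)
The plan is to generalize Lemma~\ref{lem:cylinder-survives} by covering the $\Sigma$-wall $W$ with $\Oh((\eulerg+1)^2)$ local ``patches'', each of which is either a plane wall or a cylindrical wall as a subgraph of $W$, and then applying Lemma~\ref{lem:wall-survives} or Lemma~\ref{lem:cylinder-survives} to each patch.

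First, I would invoke Lemma~\ref{lem:sigma-wall-cover} on $W$ to obtain the family $\mathcal{C}(W)$ of $\Oh((\eulerg+1)^2)$ elementary $h\times h$ subwalls of $W$. For each $U \in \mathcal{C}(W)$, applying Lemma~\ref{lem:wall-survives} to $U$ (viewed as a subgraph of $G$) yields a set $X_U\subseteq V(U)$ of size at most $\WallHitSize$. By Lemma~\ref{lem:sigma-wall-cover}, every vertex and edge of $W$ falls in one of two cases: (i) it is strictly between the $(\lfloor h/4\rfloor-1)$-th and $(h-\lfloor h/4\rfloor+1)$-th rows of some $U\in \mathcal{C}(W)$, or (ii) it is within radial distance $\lceil h/4\rceil$ from one of the arc endpoints $u_{i,1},u_{i,h}$ (for $i\in [2\ell]$). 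In case (i) I would use $X_U$: since $h > 4\WallHitRadius$ and $U$'s natural plane embedding locally matches $W$'s $\Sigma$-embedding away from the arc identifications, the radial ball of radius $\WallHitRadius$ around the element in $U$ coincides with the corresponding ball in $W$, so the element lies within radial distance $\WallHitRadius$ from $X_U$ in $W$ as well.

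Case (ii) requires additional patches. For each identification pair $L_k = \{a,b\}$, I would isolate a subwall $W_k$ of $W$ comprising the arcs $A_a, A_b$, the identification edges between them, and the strips of rows of the underlying $2\ell h \times 2\ell h$ wall adjacent to $A_a$ and $A_b$, chosen wide enough (roughly $\lceil h/4\rceil + \WallHitRadius$) so that any vertex or edge of $W$ within radial distance $\lceil h/4\rceil$ from an endpoint of $A_a$ or $A_b$ lies strictly in its interior. If $L_k$ identifies the arcs with matching orientation, $W_k$ is naturally an elementary cylindrical wall of height $\Omega(h)$ and width $\Omega(h)$, and Lemma~\ref{lem:cylinder-survives} produces $X_k\subseteq V(W_k)$ of size $\Oh(\WallHitSize)$. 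If the arcs are identified with opposite orientation, $W_k$ becomes an elementary M\"obius-band wall; a variant of Lemma~\ref{lem:cylinder-survives} with essentially the same proof --- obtained by cutting $W_k$ along a noncontractible curve to reduce to a plane wall to which Lemma~\ref{lem:wall-survives} applies --- yields the same conclusion.

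Setting $X \coloneqq \bigcup_{U\in \mathcal{C}(W)} X_U \cup \bigcup_k X_k$, the bound $|X|\le \WallHitC(\eulerg+1)^2\WallHitSize$ follows for a sufficiently large universal constant $\WallHitC$, since there are $\Oh((\eulerg+1)^2)$ subwalls in $\mathcal{C}(W)$ and $\Oh(\eulerg)$ arc pairs. I expect the main obstacle to be case (ii): carefully constructing the cylindrical or M\"obius patch around each identification pair, verifying that it is a subgraph of $W$ whose intrinsic embedding is compatible with $W$'s embedding in $\Sigma$ for the purpose of translating radial distance bounds, and handling the M\"obius case with a small adaptation of the proof of Lemma~\ref{lem:cylinder-survives}.
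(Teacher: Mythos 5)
Your overall plan --- cover the $\Sigma$-wall $W$ with the family $\mathcal{C}(W)$ from Lemma~\ref{lem:sigma-wall-cover} and apply Lemma~\ref{lem:wall-survives} to each piece --- matches the paper's. Case~(i) is handled identically. Where you diverge is case~(ii): the paper simply adds the $\Oh(\ell) = \Oh(\eulerg)$ arc-endpoint vertices (the vertices $u_{i,1},u_{i,h}$) to $X$, whereas you construct additional ``cylindrical/M\"obius'' patches around each identification pair and invoke Lemma~\ref{lem:cylinder-survives} (or a M\"obius variant) on them.

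There is a genuine gap in that step: the topological classification of the patches $W_k$ is wrong. Take an identification pair $L_k=\{a,b\}$ and form $W_k$ from the two width-$W$ strips of the underlying $2\ell h \times 2\ell h$ wall adjacent to $A_a$ and $A_b$, together with the $h$ identification edges. Combinatorially this is two rectangular plane walls glued along one side via the identification edges, which is again a rectangle --- i.e.\ a plane $h \times \Oh(h)$ wall --- not a cylindrical wall and not a M\"obius-band wall. What may be an annulus or M\"obius band is the ambient \emph{surface} neighborhood of the identified arc in $\Sigma$; the \emph{subgraph} $W_k$ is planar regardless, because a width-$W$ strip is far too narrow to wrap around and close up into a cycle in the row direction. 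Consequently Lemma~\ref{lem:cylinder-survives}, which requires the input to be a genuine cylindrical wall (and whose proof hinges on the $C_i$'s being cycles), does not apply, and the proposed M\"obius variant makes no sense here.

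The underlying idea is salvageable, and the fix is simple: treat $W_k$ as the plane wall that it actually is and apply Lemma~\ref{lem:wall-survives} directly. Your choice of strip width $\lceil h/4\rceil + \WallHitRadius$ already gives the necessary safety margin so that for elements in the region of interest (within $\lceil h/4\rceil$ of an arc endpoint), radial balls of radius $\WallHitRadius$ in the plane embedding of $W_k$ coincide with radial balls in the $\Sigma$-embedding of $W$; this is exactly the same margin argument you already rely on in case~(i). With that replacement, the count $|X|\leq \WallHitC(\eulerg+1)^2\WallHitSize$ still goes through since you add only $\Oh(\eulerg)$ extra patches of size $\Oh(\WallHitSize)$ each. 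In contrast, the paper's published treatment of case~(ii) is considerably terser --- it only adds the $\Oh(\eulerg)$ arc endpoints to $X$ and cites Lemmas~\ref{lem:sigma-wall-cover} and~\ref{lem:wall-survives} without elaboration --- so your instinct that case~(ii) deserves a more explicit argument is sound; it is only the cylindrical/M\"obius framing of that argument that is incorrect.
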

\begin{proof}
Let ${\cal C}(W)$ be the family of subwalls of $W$ provided by Lemma~\ref{lem:sigma-wall-cover}. Recall that $|{\cal C}(W)|\leq \Oh((\eulerg+1)^2)$. For each $U\in {\cal C}(W)$, apply Lemma~\ref{lem:wall-survives} yielding a set $X_U$. It now suffices to take $X\coloneqq Y\cup \bigcup_{U\in {\cal C}(W)} X_U$, where $Y$ comprises the first and the last vertices in the $h$th, the $(2h)$th, $\ldots$, and the $(\ell h)$th row of the elementary wall underlying $W$. Recalling that $|Y|=\Oh(\ell)=\Oh(\eulerg)$, that $X$ satisfies the required conditions follows directly from Lemmas~\ref{lem:sigma-wall-cover} and~\ref{lem:wall-survives}.
\end{proof}

\section{Radial discs}\label{ss:radial-discs}
We will need a notion similar to a radial ball, called a \emph{radial disc}.
Fix an optimal near-embedding $\nembed$ of the graph $G$ with tidy dongles. 
Let $\rballX{\nembed}{r}{v}$ denote the ball $\rball{r}{v}$ in the graph $\MainPlusA$.
For a vertex $v \in V(\MainPlusA)$
and a radius parameter
$$r \leq \rmax \coloneqq \lfloor k''/(200(q+k+1)) \rfloor - 2q - k - 2,$$
we define the \emph{radial disc} $\rdisc{\nembed}{r}{v}$, which is a subgraph
of $\MainPlusA$ containing $\rballX{\nembed}{r}{v}$ and a bit more. 
Note here that the facewidth of 
the embedding of $\MainPlusA$ is much larger than $2\rmax + 3$.
Formally, the radial disc is introduced through the following statement.
\begin{lemma}\label{lem:radial-disc}
For every optimal near-embedding $\nembed$ with tidy dongles,
vertex $v \in V(\MainPlusA)$
and radius $2 \leq r \leq \rmax$, 
there exists a simple contractible cycle $\rcycle{\nembed}{r}{v} \subseteq \partial \rballX{\nembed}{r}{v}$
such that the following holds. There is a closed disc $\Delta$ with boundary $\rcycle{\nembed}{r}{v}$ that contains $v$ together with the whole graph $\rballX{\nembed}{r}{v}$. Let $\rdisc{\nembed}{r}{v}$ be the subgraph of $\MainPlusA$ comprised of all vertices and edges embedded in $\Delta$.
Then $\rdisc{\nembed}{r}{v}$ is of treewidth at most $2q''+10r$ and every vertex
of $\rdisc{\nembed}{r}{v}$ is within radial distance at most $2q''+10r$ from $v$. Further,
the subgraph of $\MainPlusA$ consisting of all edges and vertices embedded
not in the interior $\Delta$
has Euler genus $\eulerg_\ast$ and contains more than $2q''$ vertices.

Furthermore, $\rdisc{\nembed}{r}{v}$ contains a sequence of cycles $(\rintcycle{\nembed}{r}{i}{v})_{i=1}^{r-1}$, all vertex-disjoint and disjoint from $\rcycle{\nembed}{r}{v}$ and $v$,
  such that $\rintcycle{\nembed}{r}{i}{v}$ encloses a disc containing $v$ and $\rintcycle{\nembed}{r}{i'}{v}$ for $i' < i$ and not containing $\rcycle{\nembed}{r}{v}$ nor $\rintcycle{\nembed}{r}{i'}{v}$ for $i' > i$. 
Finally,
$\rdisc{\nembed}{r}{v}$ also contains three paths that are vertex-disjoint except for $v$ and connect $v$ with $\rcycle{\nembed}{r}{v}$.
\end{lemma}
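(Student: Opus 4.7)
My plan is to construct everything from Lemma~\ref{lem:rballs}, or Lemma~\ref{lem:rballs:planar} in the planar case, applied to $\MainPlusA$. Since $\nembed$ has tidy dongles, Lemma~\ref{lem:aligned-is-3conn} ensures that $\MainPlusA$ is 3-connected, and Lemma~\ref{lem:opt-ne-rep} gives its face-width as at least $k'''/100 - 2q - k$, which by the growth of $\funstep$ dwarfs $2\rmax + 3$ and in particular $2r+3$. If $\eulerg_\ast \geq 1$, a direct application of Lemma~\ref{lem:rballs} at $v$ yields vertex-disjoint contractible cycles $C_1,\ldots,C_r$ with $C_i \subseteq \partial\rballX{\nembed}{i}{v}$ and the closed disc bounded by $C_i$ containing $\rballX{\nembed}{i}{v}$. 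If $\eulerg_\ast = 0$, I would first invoke the standing treewidth assumption together with Corollary~\ref{cor:ltw} to produce some vertex $u$ at radial distance greater than $r$ from $v$, and then use Lemma~\ref{lem:rballs:planar} to obtain analogous nested cycles, trivially contractible in the plane. I then set $\rcycle{\nembed}{r}{v} := C_r$, $\rintcycle{\nembed}{r}{i}{v} := C_i$ for $1\leq i<r$, let $\Delta$ be the closed disc bounded by $C_r$ containing $v$, and take $\rdisc{\nembed}{r}{v}$ to be the subgraph of $\MainPlusA$ embedded in $\Delta$.

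Next, the three paths from $v$ to $\rcycle{\nembed}{r}{v}$ come from Menger's theorem applied to the 3-connected $\MainPlusA$ with $v$ and $V(C_r)$ as the endpoints; truncating each at its first crossing of $C_r$ keeps it inside $\Delta$, since $C_r$ topologically separates interior from exterior. For the exterior claims, the part of $\MainPlusA$ outside $\Delta$ inherits an embedding in the surface obtained by cutting $\Sigma$ along the contractible cycle $C_r$ and capping off the new cuff, which is homeomorphic to $\Sigma$; hence the exterior has Euler genus exactly $\eulerg_\ast$. The assertion that it has more than $2q''$ vertices is deferred: it follows from the standing assumption that $\tw(\MainPlusA)$ is far larger than any fixed target bound (propagated from $\tw(G)$ by Lemma~\ref{lem:DKMW12}) once I have bounded $\tw(\rdisc{\nembed}{r}{v})$.

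The main obstacle is the treewidth and radial-distance bound on $\rdisc{\nembed}{r}{v}$. I view $C_r$ as a face-vertex noose $\gamma$ of length $|V(C_r)|$ in $\MainPlusA$, obtained by choosing, for each edge of $C_r$, the adjacent face on the exterior side, and apply Lemma~\ref{lem:preimage-cut}: the set $\preimage(V(C_r))\cup\Apices$ separates the preimage of the interior of $\Delta$ from that of the exterior in $G$, and Lemma~\ref{lem:preimage-size} bounds its size by $|V(C_r)|\cdot q + \napices_\ast$. The key sub-step is to bound $|V(C_r)|$ polynomially in $r$ and $\eulerg_\ast$, via a careful tracing of the construction inside Lemma~\ref{lem:rballs} (Proposition~5.5.10 of~\cite{MTbook}), exploiting that the face-width of $\MainPlusA$ is far greater than $2r+3$. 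Once this is in place, one can choose $\funstep$ so that $|V(C_r)|\cdot q + \napices_\ast \leq k''$, whereupon $(q'',k'')$-unbreakability of $G$ forces one side of the separation to contain at most $q''$ vertices. The exterior must be the large side, as it lives in a surface of full genus $\eulerg_\ast$ and inherits the bulk of the treewidth of $\MainPlusA$; the interior is therefore small, which gives $|V(\rdisc{\nembed}{r}{v})| \leq q'' + |V(C_r)|$. This yields the radial-distance bound immediately, and the treewidth bound then follows either from this size estimate or, more crisply, from Corollary~\ref{cor:ltw} applied to the planar subgraph $\rdisc{\nembed}{r}{v}$. Pinning down the precise constants $2$ and $10$ in the $2q''+10r$ bound is where I expect the technical accounting to be most delicate.
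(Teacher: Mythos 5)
There is a genuine gap in the central step, and it is fatal to the plan as written. You propose to bound $|V(C_r)|$ polynomially in $r$ and $\eulerg_\ast$ by ``carefully tracing'' the construction in Lemma~\ref{lem:rballs}, and then to apply Lemma~\ref{lem:preimage-cut} to the single noose around $C_r$. But no such bound exists: the cycle $C_r\subseteq\partial\rballX{\nembed}{r}{v}$ consists of vertices at radial distance exactly $r$ from $v$, and already for $r=1$ this set is essentially $N_{\MainPlusA}(v)$, which can have arbitrarily many vertices (a high-degree vertex is not excluded by 3-connectivity, face-width, or unbreakability). Large face-width controls the length of \emph{noncontractible} nooses, not contractible ones like $C_r$. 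Consequently the set $\preimage(V(C_r))\cup\Apices$ may have size far exceeding $k''$, and the appeal to $(q'',k'')$-unbreakability collapses. The secondary suggestion --- deducing the treewidth bound from Corollary~\ref{cor:ltw} applied to $\rdisc{\nembed}{r}{v}$ --- suffers a similar circularity: that corollary requires a bound on the radial \emph{radius} of the subgraph, which is precisely what you are trying to establish.

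The paper sidesteps this by never attempting to bound $|V(C_r)|$. Instead it takes a shortest-path tree $T$ in the face-vertex graph from $v$ to the vertices of $C_{r+1}$ and, for each edge $w_1w_2$ of $C_{r+1}$, forms the closed walk $C(w_1w_2)$ consisting of $\sigma(w_1)$, $\sigma(w_2)$, and the edge $w_1w_2$. Crucially, the length of each such walk is only $\Oh(r)$ (it contains $2r+3$ vertices of $\MainPlusA$), regardless of how long $C_{r+1}$ itself is. So the preimage of each walk is bounded by $(2r+3)(q+1)<k''$, and unbreakability can be applied to each ``wedge'' separately, showing each contains at most $\Oh(q'')$ vertices. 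The discs cut out by these walks partition the disc inside $C_{r+1}$, and a path decomposition whose bags are the individual wedges (together with the spine path $\sigma(w_0)$) yields the $2q''+10r$ treewidth bound and the radial-distance bound simultaneously. The identification of the small side uses a specific witness vertex $u$ far from $v$ (as in your sketch), but this is applied per-wedge, not to a single noose around the whole disc. Your high-level outline (Lemma~\ref{lem:rballs}, setting $\rcycle{\nembed}{r}{v}=C_r$, Menger for the three paths, cutting and capping for the exterior genus) matches the paper, but the missing wedge decomposition is the essential idea that makes the treewidth argument go through.
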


\begin{proof}
Assume $v \in V(\MainPlusA)$ and $2 \leq r \leq \rmax$ are given. 
By Corollary~\ref{cor:ltw}, there exists a vertex $u \in V(\MainPlusA)$ that lies within
radial distance larger than $2q''+r+2$ from $v$ in $\MainPlusA$, as otherwise the treewidth of $\MainPlusA$
is bounded by $f_\mathrm{ltw}(\eulerg_\ast) \cdot (2q'' + r+2)$.
Together with Lemma~\ref{lem:DKMW12}, this would contradict the assumption on the treewidth of $G$.
Fix one such vertex $u$; note that $q'' > r$, so $u \notin V(\rballX{\nembed}{r+1}{v})$. 
Let $D_u$ be the connected component of $\MainPlusA-V(\rballX{\nembed}{r}{v})$ that contains~$u$
and let $D_u'$ be the connected component of $\MainPlusA-V(\rballX{\nembed}{r+1}{v})$ that contains
$u$. Clearly $D_u'$ is a subgraph of $D_u$.
Note that $|V(D_u)| > 2q''+2$ and $|V(D_u')| > 2q''$.
Furthermore, since no two vertices of $V(\MainPlusA) \setminus V(\MainA)$ lie in the interior of the same
face of $\MainA$, $|V(\MainA) \cap V(D_u)| > q''+1$ and $|V(\MainA) \cap V(D_u')| > q''$.

If $\eulerg_\ast = 0$ (i.e., $\MainPlusA$ is embedded in the sphere),
then apply Lemma~\ref{lem:rballs:planar}, obtaining a sequence $C_1,C_2,\ldots,C_{r+1}$
of cycles separating $v$ and $u$.
If $\eulerg_\ast > 0$, then apply Lemma~\ref{lem:rballs}, obtaining a sequence $C_1,C_2,\ldots,C_{r+1}$
of cycles around $v$. 
In both cases, the component of $\Sigma-C_{i}$ that contains $v$ (and is isomorphic to a disc)
  is called the \emph{inside of $C_{i}$}. 
Also in both cases,
fix a shortest-path tree $T$ in the face-vertex graph of $\MainPlusA$ rooted in $v$
with leaves $V(C_{r+1})$; for $w \in V(C_{r+1})$, let $\sigma(w)$ be the shortest path in $T$ from $w$ to $v$.
Let $w_1,w_2$ be two consecutive vertices on the cycle $C_{r+1}$. 
Then, $\sigma(w_1)$, $\sigma(w_2)$, and the edge $w_1w_2$ form a closed walk $C(w_1w_2)$
of length $4r+3$ in the union
of $\MainPlusA$ and the face-vertex graph of $\MainPlusA$. Since $C_{r+1}$ is contractible
and $\rballX{\nembed}{r+1}{v}$ is contained in the inside of $C_{r+1}$, $C(w_1w_2)$ is contractible as well.
By the \emph{inside of $C(w_1w_2)$} we mean the connected component of $\Sigma-C(w_1w_2)$
that is homeomorphic to a disc and contained in the inside $C_{r+1}$.

Note that $C(w_1w_2)$ contains $2r+3$ vertices of $\MainPlusA$
and we can treat $C(w_1w_2)$ as a closed face-vertex curve of $\MainPlusA$ (following 
 $w_1w_2$ on either of its sides). 
By Lemmas~\ref{lem:preimage-size} and~\ref{lem:opt-ne-rep}, 
   $|\preimage(C(w_1w_2))| \leq (2r+3)(q+1) < k''$. 
Furthermore, Lemma~\ref{lem:opt-ne-rep} implies that $C(w_1w_2)$ may contain society
vertices or the vertex $\PlusVortexVtx{i}$ of at most one vortex. 

Let $Y'$ be the set of vertices of $\MainPlusA$ that are inside $C(w_1w_2)$ 
and let $Y$ be the preimage of the inside of $C(w_1w_2)$. 
By Lemma~\ref{lem:preimage-cut}, 
$\preimage(C(w_1w_2))$ separates $Y$ from the remainder of the graph $G-\Apices$. 
By $(q'',k'')$-unbreakability of $G$, either $|Y| + |\preimage(C(w_1w_2))| \leq q''$ or $|V(G) \setminus Y| \leq q''$.

Assume first that the latter holds. 
This in particular implies that $V(D_u') \cap V(\MainA) \subseteq Y$.
However, in the case $\eulerg_\ast = 0$ Lemma~\ref{lem:rballs:planar} asserts that $v$ and $u$ (and thus $D_u'$) are on the opposite sides of $C_{r+1}$, which is a contradiction.
For the case $\eulerg_\ast > 0$, we modify $\nembed$ by cutting along 
$\sigma(w_1)$ and $\sigma(w_2)$ and declaring $G-(\Apices \cup Y)$ 
a dongle. As discussed, $C(w_1w_2)$ may pass through society vertices of at most one vortex. 
Furthermore, since $C(w_1w_2)$ consists of two shortest paths with a common endpoint in the radial graph and an edge of $\MainPlusA$, 
  $C(w_1w_2)$ visits at most six society vertices of the said vortex and the cutting operation splits it into at most 3 vortices. 
However, the resulting near-embedding is planar, that is, has strictly smaller Euler genus
than $\nembed$. This is a contradiction with the optimality of $\nembed$.

Hence, we have $|Y|+|\preimage(C(w_1w_2))| \leq q''$. 
Note that for every vertex $\PlusDongleVtx{i} \in Y'$, there is at least
one vertex of $V(\DongleA{i}) \setminus V(\VortexA{i})$ in $Y$, as $\nembed$ has tidy dongles.
For every vertex $\PlusVortexVtx{i} \in Y'$, all vertices of $\VortexA{i}$ are in
$Y \cup \preimage(C(w_1w_2))$. Hence, $|Y'| \leq 2q''$. 
In particular, we have that in the case $\eulerg_\ast > 0$, $D_u'$ is not inside $C(w_1w_2)$. The bound on the number of vertices inside $C(w_1w_2)$ immediately implies the promised bound on the radial distance between those vertices and $v$.

Observe that the discs inside the closed walks  $C(w_1w_2)$ for $w_1w_2 \in E(C_{r+1})$ 
form a partition of the disc inside $C_{r+1}$.
Noting that $D_u'$ has size larger than $q''+1$ and is disjoint with those closed walks, we conclude that $D_u'$ is not inside $C_{r+1}$ in the case $\eulerg_\ast > 0$
(recall that this conclusion has been already obtained for the case $\eulerg_\ast = 0$ as it is immediate from Lemma~\ref{lem:rballs:planar}).
Consequently, $C_{r+1}$ is contained in $D_u$. In particular, $D_u$ is not in the inside of $C_r$.

We define $\rcycle{\nembed}{r}{v} \coloneqq C_r$ and $\Delta$ to be the closed disc inside.
Then $\rdisc{\nembed}{r}{v}$ is the subgraph of $\MainPlusA$
contained in the $\Delta$.
Furthermore, we define $\rintcycle{\nembed}{r}{i}{v} = C_i$; the separation properties of these cycles follow from Lemma~\ref{lem:rballs} or Lemma~\ref{lem:rballs:planar}.
The existence of the three paths connecting $v$ and $\rcycle{\nembed}{r}{v}$ follows from the $3$-connectivity of $\MainPlusA$ and Menger's theorem.

Let us show that the treewidth of $\rdisc{\nembed}{r}{v}$ is bounded by $q'' + 10r$
and that every vertex of the said disc is within radial distance $2q''+10r$ from $v$.
This follows from the claim that the inside of every $C(w_1w_2)$ contains at most $2q''$
vertices of $\MainPlusA$.
More precisely, enumerate vertices on $C_{r+1}$ as $w_0, w_1, \ldots, w_\ell$ (in this order)
  and for every $0 \leq i < \ell$ define a bag $W_i$ consisting of all vertices of $\rdisc{\nembed}{r}{v}$
  enclosed by $C(w_iw_{i+1})$ and all vertices on the path $\sigma(w_0)$. 
  Then, $W_0, \ldots, W_{\ell-1}$ constitute a path decomposition of $\rdisc{\nembed}{r}{v}$ of width at most $2q'' + 10r$.
  
Finally, let $G'$ be the subgraph of $\MainPlusA$ comprising all vertices and edges not embedded in the interior of $\Delta$. Then $G'$ has more than $q''$ vertices, as it contains $D_u$.
Further, removing all edges and vertices from the interior of $\Delta$ decreases the face-width of $\MainPlusA$ by at most $2r$, hence the embedding of $G'$ inherited from $\MainPlusA$ has face-width at least $k'''/100-2q-k-2r$. It follows that $G'$ has Euler genus $\eulerg_\ast$.
\end{proof}

We make the following simple observation that will be implicitly used in many further reasonings.
\begin{lemma}\label{lem:eat-disc}
Let $\nembed$ be an optimal near-embedding with tidy dongles, $u,v \in V(\MainPlusA)$, and $r_u,r_v$ be such that $1 \leq r_u < r_v \leq \rmax$ and 
the radial distance between $u$ and $v$ is at most $r_v-r_u$. Then
$\rdisc{\nembed}{r_u}{u} \subseteq \rdisc{\nembed}{r_v}{v}$. 
\end{lemma}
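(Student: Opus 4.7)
The plan is to reduce the subgraph containment $\rdisc{\nembed}{r_u}{u}\subseteq\rdisc{\nembed}{r_v}{v}$ to a topological containment $\Delta_u\subseteq\Delta_v$ of the two closed discs supplied by Lemma~\ref{lem:radial-disc}. Write $d(\cdot,\cdot)$ for radial distance in $\MainPlusA$. First I would observe that a short induction on the ball radius, driven by the triangle inequality $d(w,v)\leq d(w,u)+d(u,v)\leq r_u+(r_v-r_u)=r_v$, yields $\rballX{\nembed}{r_u}{u}\subseteq\rballX{\nembed}{r_v}{v}$: the inclusion propagates through the face-by-face expansion defining the balls. Consequently the contractible cycle $\rcycle{\nembed}{r_u}{u}\subseteq\partial\rballX{\nembed}{r_u}{u}$ sits inside the closed disc $\Delta_v$ together with all of its vertices and edges.

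To make the two discs directly comparable I would exploit the flexibility in Lemma~\ref{lem:radial-disc}'s construction by choosing a common ``far vertex.'' That proof picks some $u'\in V(\MainPlusA)$ with $d(u',v)>2q''+r_v+2$ and arranges the boundary cycle of $\Delta_v$ so that $u'\in\Sigma\setminus\Delta_v$. The triangle inequality gives
\[
d(u',u)\;\geq\;d(u',v)-d(u,v)\;>\;(2q''+r_v+2)-(r_v-r_u)\;=\;2q''+r_u+2,
\]
so the very same $u'$ qualifies as a witness vertex for the construction of $\Delta_u$. Performing both constructions with this common $u'$ additionally forces $u'\in\Sigma\setminus\Delta_u$.

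The conclusion is then purely topological. Since $\rcycle{\nembed}{r_u}{u}$ is a simple contractible closed curve, $\Sigma\setminus\rcycle{\nembed}{r_u}{u}$ has exactly two connected components, $\mathrm{int}(\Delta_u)$ and $\Sigma\setminus\Delta_u$. The set $\Sigma\setminus\Delta_v$ is connected and disjoint from $\rcycle{\nembed}{r_u}{u}$ (as the cycle lies in $\Delta_v$), so it is contained in exactly one of these two components. If $\Sigma\setminus\Delta_v\subseteq\mathrm{int}(\Delta_u)$, then $u'\in\Sigma\setminus\Delta_v\subseteq\Delta_u$, contradicting our arrangement; hence $\Sigma\setminus\Delta_v\subseteq\Sigma\setminus\Delta_u$, i.e., $\Delta_u\subseteq\Delta_v$. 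The desired subgraph inclusion follows from the definition of $\rdisc{\nembed}{\cdot}{\cdot}$ as the subgraph of $\MainPlusA$ embedded in the corresponding disc. The delicate point is the sphere case $\eulerg_\ast=0$: both sides of $\rcycle{\nembed}{r_u}{u}$ are then topological discs, so a bare Euler-genus-additivity argument (which would already finish in positive genus, since $\Sigma\setminus\Delta_u$ cannot there be a disc) is insufficient on its own, and the common-far-vertex trick is precisely what disambiguates which side of the cycle is the ``outer'' one and handles both cases uniformly.
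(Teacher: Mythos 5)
Your proof is correct and pursues essentially the same containment strategy as the paper's: first establish $\rballX{\nembed}{r_u}{u}\subseteq\rballX{\nembed}{r_v}{v}$ via the triangle inequality, then pass from the containment of the cycle $\rcycle{\nembed}{r_u}{u}$ inside $\Delta_v$ to the containment of discs. The paper's own proof is two sentences and simply asserts ``Therefore, this disc also contains the disc enclosed by $\rcycle{\nembed}{r_u}{u}$ that contains $u$'', leaving the topological step implicit. You are right to flag that when $\eulerg_\ast=0$ both sides of $\rcycle{\nembed}{r_u}{u}$ are discs, so merely knowing that $\Delta_v$ contains the cycle and the point $u$ does not immediately identify which side is swallowed; and you are also right that in positive genus the complement of a disc cannot sit in a disc, which already closes the argument there. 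Your common-far-vertex device resolves the sphere case cleanly. One small point you leave tacit: for the argument to transfer to the objects $\rdisc{\nembed}{\cdot}{\cdot}$ actually fixed by the paper, you need that the discs do not depend on which witness far vertex was used in Lemma~\ref{lem:radial-disc}'s construction. This is true but deserves a line: in the planar case the cycle $\rcycle{\nembed}{r}{v}$ is the boundary of the face of $\rballX{\nembed}{r}{v}$ containing the far component, and by $(q'',k'')$-unbreakability the component of $\MainPlusA-V(\rballX{\nembed}{r}{v})$ of size exceeding $q''$ is unique, so every admissible far vertex determines the same face, hence the same cycle and disc. Equivalently, one may bypass the far-vertex picker entirely and argue directly that the big component for $(v,r_v)$ is contained in the big component for $(u,r_u)$, and both lie outside the respective discs, which pins down a common vertex of $(\Sigma\setminus\Delta_u)\cap(\Sigma\setminus\Delta_v)$ and finishes exactly as you do.
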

\begin{proof}
The condition on the radial distance implies that $\rballX{\nembed}{r_u}{u} \subseteq \rballX{\nembed}{r_v}{v}$. 
Consequently, the disc enclosed by $\rcycle{\nembed}{r_v}{v}$ that contains $\rdisc{\nembed}{r_v}{v}$ encloses also
$\rballX{\nembed}{r_u}{u}$. Therefore, this disc also contains the disc enclosed by $\rcycle{\nembed}{r_u}{u}$ that contains $u$,
and hence the whole $\rdisc{\nembed}{r_u}{u}$.
\end{proof}

We will need the following statement on touching radial discs.
\begin{lemma}\label{lem:packdiscs}
Let $\nembed$ be an optimal near-embedding with tidy dongles, $u,v \in V(\MainPlusA)$, and $2 \leq r_u,r_v$, $r_u+r_v \leq \rmax$ be such that 
$\rdisc{\nembed}{r_v}{v}$ and $\rdisc{\nembed}{r_u}{u}$ are not vertex-disjoint.
Then either $\rdisc{\nembed}{r_u}{u} \subseteq \rdisc{\nembed}{r_v}{v}$, $\rdisc{\nembed}{r_v}{v} \subseteq \rdisc{\nembed}{r_u}{u}$, or the radial distance between $u$ and $v$ is at most $r_u+r_v$.
Furthermore, in all cases there exists $w \in V(G)$ such that $\rdisc{\nembed}{r_v}{v}, \rdisc{\nembed}{r_u}{u} \subseteq \rdisc{\nembed}{r_u+r_v}{w}$.
\end{lemma}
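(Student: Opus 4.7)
My plan is a topological case analysis of how the two boundary cycles $C_u \coloneqq \rcycle{\nembed}{r_u}{u}$ and $C_v \coloneqq \rcycle{\nembed}{r_v}{v}$ sit in $\Sigma$; by Lemma~\ref{lem:radial-disc} the closed discs $\Delta_u,\Delta_v$ they bound realize the embeddings of $\rdisc{\nembed}{r_u}{u}$ and $\rdisc{\nembed}{r_v}{v}$ respectively. First, if $V(C_u)\cap V(C_v)$ contains a vertex $x$, then because $C_u\subseteq\partial\rballX{\nembed}{r_u}{u}$ and $C_v\subseteq\partial\rballX{\nembed}{r_v}{v}$, both endpoints of any edge on these cycles are at radial distance exactly $r_u$ (respectively $r_v$) from $u$ (respectively $v$), so $x$ gives $d_{\mathrm{rad}}(u,x)=r_u$ and $d_{\mathrm{rad}}(v,x)=r_v$, and the triangle inequality in the face--vertex graph yields the third alternative $d_{\mathrm{rad}}(u,v)\le r_u+r_v$.

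Assume now $V(C_u)\cap V(C_v)=\emptyset$, so $C_u$ and $C_v$ are disjoint simple closed curves in $\Sigma$; I will show that either $\Delta_u\subseteq\Delta_v$, $\Delta_v\subseteq\Delta_u$, or $\Delta_u\cap\Delta_v=\emptyset$. When $\eulerg_\ast>0$, Lemma~\ref{lem:radial-disc} ensures that $\Sigma\setminus\Delta_u$ carries positive Euler genus, so $\Delta_u$ is the unique disc side of $C_u$; inspecting whether $C_v$ lies in $\Delta_u^\circ$ or in $\Sigma\setminus\Delta_u$, and repeating the argument for $\Delta_v$, yields the three alternatives directly. When $\eulerg_\ast=0$, both sides of each cycle are discs and \emph{a priori} one additional configuration $\Delta_u\cup\Delta_v=\Sigma$ (with intersection an annulus) could occur; I exclude it by noting that every vertex of $\Delta_u$ (respectively $\Delta_v$) lies within radial distance $2q''+10\rmax$ of $u$ (respectively $v$) by Lemma~\ref{lem:radial-disc}, while Corollary~\ref{cor:ltw} together with the freely adjustable large-treewidth assumption on $\MainPlusA$ produces a vertex of $\MainPlusA$ at radial distance greater than $2q''+10\rmax$ from both $u$ and $v$, which therefore lies outside $\Delta_u\cup\Delta_v$. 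Since the two $\rdisc$s share a vertex of $\MainPlusA$ by hypothesis, $\Delta_u\cap\Delta_v$ contains a vertex of $\MainPlusA$, ruling out the disjoint case and leaving exactly one of the subgraph inclusions $\rdisc{\nembed}{r_u}{u}\subseteq\rdisc{\nembed}{r_v}{v}$ or $\rdisc{\nembed}{r_v}{v}\subseteq\rdisc{\nembed}{r_u}{u}$. The main obstacle is precisely this exclusion of the spherical annular configuration: the local diameter bound coming from Lemma~\ref{lem:radial-disc} alone does not suffice, and one must combine it with the global treewidth hypothesis through Corollary~\ref{cor:ltw}.

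For the common enclosing disc $\rdisc{\nembed}{r_u+r_v}{w}$, I choose $w$ according to which alternative holds. In the inclusion case $\rdisc{\nembed}{r_v}{v}\subseteq\rdisc{\nembed}{r_u}{u}$ (and symmetrically in the other), I take $w\coloneqq u$; one application of Lemma~\ref{lem:eat-disc} at zero distance with radii $r_u<r_u+r_v\le\rmax$ gives $\rdisc{\nembed}{r_u}{u}\subseteq\rdisc{\nembed}{r_u+r_v}{u}$, and the second containment follows by transitivity. In the distance case $d_{\mathrm{rad}}(u,v)\le r_u+r_v$, take a shortest face--vertex $u$--$v$ path and let $w$ be the vertex of $\MainPlusA$ on it at face--vertex distance exactly $2\min(r_u,d_{\mathrm{rad}}(u,v))$ from $v$; this vertex exists because shortest face--vertex paths alternate vertex--face--vertex and contain a vertex of $\MainPlusA$ at every even step. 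The choice guarantees $d_{\mathrm{rad}}(v,w)\le r_u$ and $d_{\mathrm{rad}}(u,w)=d_{\mathrm{rad}}(u,v)-d_{\mathrm{rad}}(v,w)\le r_v$, so two applications of Lemma~\ref{lem:eat-disc} with outer radius $r_u+r_v$ and inner radii $r_u,r_v$ respectively place both $\rdisc{\nembed}{r_u}{u}$ and $\rdisc{\nembed}{r_v}{v}$ inside $\rdisc{\nembed}{r_u+r_v}{w}$, concluding the proof.
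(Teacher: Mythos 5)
Your proof is correct and follows the same approach as the paper's: a topological case analysis of how the two boundary cycles $\rcycle{\nembed}{r_u}{u}$ and $\rcycle{\nembed}{r_v}{v}$ sit in $\Sigma$, followed by the identical choice of $w$ (either one of $u,v$ in the inclusion case, or a midpoint of a shortest face--vertex path in the distance case, closing via two applications of Lemma~\ref{lem:eat-disc}). The genuinely valuable addition is your explicit treatment of the annular configuration on the sphere. The paper's proof simply asserts that if the discs intersect and neither contains the other then the boundary cycles must intersect; as you correctly observe, this is only topologically forced when $\eulerg_\ast>0$ (since then Lemma~\ref{lem:radial-disc} makes $\Delta_u$ the unique disc side of its boundary), and on the sphere a fourth alternative $\Delta_u\cup\Delta_v=\Sigma$ with annular overlap is a priori possible. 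Your exclusion of it is sound, though the key claim ``there is a vertex of $\MainPlusA$ at radial distance greater than $2q''+10\rmax$ from both $u$ and $v$'' deserves one extra sentence of justification: if no such vertex existed then connectedness of $\MainPlusA$ would force the radial distance between $u$ and $v$ to be at most $2(2q''+10\rmax)+1$, hence every vertex would lie within radial distance $3(2q''+10\rmax)+1$ of $u$, and Corollary~\ref{cor:ltw} would bound $\tw(\MainPlusA)$ — contradicting the standing large-treewidth assumption. With that step spelled out, the argument is complete and is in fact more careful than the paper's own.
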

\begin{proof}
If $\rdisc{\nembed}{r_u}{u} \subseteq \rdisc{\nembed}{r_v}{v}$ or $\rdisc{\nembed}{r_v}{v} \subseteq \rdisc{\nembed}{r_u}{u}$, then we can set in the last statement $w =v$ or $w =u$, respectively.
Assume then that neither $\rdisc{\nembed}{r_u}{u} \subseteq \rdisc{\nembed}{r_v}{v}$ nor $\rdisc{\nembed}{r_v}{v} \subseteq \rdisc{\nembed}{r_u}{u}$.
In other words, the disc enclosed by $\rcycle{\nembed}{r_v}{v}$ is not contained in the disc enclosed by $\rcycle{\nembed}{r_u}{u}$ and vice-versa.
On the other hand, these discs need to intersect to accommodate a vertex in the intersection of $\rdisc{\nembed}{r_v}{v}$ and $\rdisc{\nembed}{r_u}{u}$.
This is only possible if $\rcycle{\nembed}{r_v}{v}$ and $\rcycle{\nembed}{r_u}{u}$ intersect, which implies that $u$ and $v$ are within radial distance $r_u+r_v$ from each other.
This proves the first claim. 

For the second claim, consider a shortest path $P$ from $u$ to $v$ in the face-vertex graph of $\tilde{G}_0$ and let $w$ be a vertex on this path within radial distance at most $r_v$ from $u$ and within radial
distance at most $r_u$ from $v$. Then, $\rballX{\nembed}{r_v}{v}$ and $\rballX{\nembed}{r_u}{u}$ are contained in $\rballX{\nembed}{r_u+r_v}{w}$. 
Since $\rcycle{\nembed}{r_v}{v}$ encloses the disc with $\rdisc{\nembed}{r_v}{v}$ and is contained in $\rballX{\nembed}{r_v}{v}$, and a similar statement holds for $u$ and $r_u$,
      we infer that $\rdisc{\nembed}{r_v}{v}$ and $\rdisc{\nembed}{r_u}{u}$ are contained in $\rdisc{\nembed}{r_u+r_v}{w}$, as desired.
\end{proof}
The statement of Lemma~\ref{lem:packdiscs} may seem overcomplicated at first glance, but we remark that it covers an important special case
when $\rdisc{\nembed}{r_v}{v}$ is completely contained in $\rdisc{\nembed}{r_u}{u}$, but disjoint from $\rballX{\nembed}{r_u}{u}$. Then, the radial distance from 
$v$ and $u$ can be as large as $\Omega(q'')$ (but bounded by $\Oh(q'')$ in order not to break the unbreakability assumption) and we can have $q'' \gg r_u,r_v$. 

We will need the operations of \emph{clearing} and \emph{wheeling} a radial disc $\rdisc{\nembed}{r}{v}$. 
By \emph{clearing} the radial disc $\rdisc{\nembed}{r}{v}$ we mean deleting all edges and vertices strictly inside $\rcycle{\nembed}{r}{v}$, 
i.e., the whole subgraph $\rdisc{\nembed}{r}{v}$ except for $\rcycle{\nembed}{r}{v}$. 
By \emph{wheeling} the radial disc $\rdisc{\nembed}{r}{v}$ we mean first clearing it, but leaving the vertex $v$ itself, 
and then making $v$ adjacent to all the vertices of $\rcycle{\nembed}{r}{v}$. 
By \emph{contracting the inside} of the radial disc $\rdisc{\nembed}{r}{v}$ 
we mean contracting the connected component of $\rdisc{\nembed}{r}{v} - \rcycle{\nembed}{r}{v}$
that contains $v$ onto $v$ (but keeping everything else strictly inside $\rcycle{\nembed}{r}{v}$ intact). 

If one clears or wheels a number of disjoint radial discs, then the graph keeps most of its connectivity properties, including the facewidth. This is formalized in the next lemma.
\begin{lemma}\label{lem:genus:punctured-fw}
Let $\nembed$ be an optimal near-embedding with tidy dongles.
Let $\mathcal{D}$ be a family of pairs $(v,r_v)$, $v \in V(\MainPlusA)$, $2 \leq r_v \leq \rmax$ such that
the radial discs $\rdisc{\nembed}{r_v}{v}$ are pairwise vertex disjoint. 
Let $H$ be the result of, for every $(v,r_v) \in \mathcal{D}$, either clearing or wheeling the radial disc $\rdisc{\nembed}{r_v}{v}$.
Let $f$ be the face-width of $\MainPlusA$. 
Then the face-width of the induced embedding of $H$ is at least $f - 2\sum_{(v,r_v) \in \mathcal{D}} r_v$. 
\end{lemma}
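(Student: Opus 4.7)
The plan is to take any noncontractible noose in the embedding of $H$ and construct from it a noncontractible noose in $\MainPlusA$ whose length exceeds the original by at most $2\sum_{(v,r_v)\in \mathcal{D}} r_v$. Since the face-width is the minimum length of a noncontractible noose, this directly yields the desired inequality. So fix a shortest noncontractible noose $\gamma$ in $H$, of length $\ell$. Outside the discs $\Delta_v$ bounded by the cycles $\rcycle{\nembed}{r_v}{v}$ the embeddings of $H$ and $\MainPlusA$ coincide, so all modifications will be local to these discs.

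\paragraph{Step 1 (at most one arc per disc).}
I would first argue that by the minimality of $\gamma$, for every $(v,r_v)\in \mathcal{D}$ the noose $\gamma$ has at most one maximal sub-walk whose faces lie strictly inside $\Delta_v$. Suppose towards a contradiction that two such arcs $A_1,A_2$ exist in some $\Delta_v$, with endpoints $u_1,u_2$ and $u_3,u_4$ on $\rcycle{\nembed}{r_v}{v}$, in the cyclic order on $\gamma$, and let $O_1$ (from $u_2$ to $u_3$) and $O_2$ (from $u_4$ to $u_1$) be the outside subwalks immediately around them. The classical swap operation re-pairs the endpoints as $(u_1,u_4)$ and $(u_2,u_3)$, replacing $A_1,A_2$ with new inside face-vertex arcs $A_1',A_2'$. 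In the cleared case each of $A_1',A_2'$ has length $1$ (using the unique big face bounded by $\rcycle{\nembed}{r_v}{v}$); in the wheeled case each has length at most $2$ (through the central vertex $v$; the two arcs end up in different closed walks, so both may use $v$). This splits $\gamma$ into two closed face-vertex walks $\gamma_a,\gamma_b$, and a standard chain-level computation (using that the disc $\Delta_v$ is contractible, hence $A_1+A_2+A_1'+A_2'$ is null-homologous) yields $[\gamma_a]+[\gamma_b]=[\gamma]$ in $H_1(\Sigma;\mathbb{Z}/2)$, so at least one of them is noncontractible. Using $|A_i|\geq 1$ and $|O_i|\geq 1$ (the latter because $\gamma$ has no repeated vertex), a direct count gives $|\gamma_a|,|\gamma_b|<|\gamma|$. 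Extracting a simple noncontractible noose from the noncontractible one yields a strictly shorter noncontractible noose, contradicting minimality of $\gamma$.

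\paragraph{Step 2 (local lifting inside each disc).}
Next, for every $(v,r_v)\in \mathcal{D}$ for which $\gamma$ has one inside arc, with endpoints $u,u'\in V(\rcycle{\nembed}{r_v}{v})$, I would replace this arc with a face-vertex arc $A^*$ in $\rdisc{\nembed}{r_v}{v}$ that passes through $v$. Since every vertex of $\rcycle{\nembed}{r_v}{v}$ lies at radial distance exactly $r_v$ from $v$ in $\MainPlusA$, concatenating two shortest face-vertex paths $u\to v$ and $v\to u'$ (each of length $r_v$) and shortcutting any self-intersections produces a simple arc of length at most $2r_v$ from $u$ to $u'$ inside $\rdisc{\nembed}{r_v}{v}$. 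Since the shortcutting stays inside the contractible disc $\Delta_v$, the resulting arc remains homotopic to the original one relative to its endpoints.

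\paragraph{Conclusion and main obstacle.}
Gluing the unchanged outside parts of $\gamma$ with the new inside arcs yields a simple closed face-vertex curve $\gamma^*$ in $\MainPlusA$: the discs $\rdisc{\nembed}{r_v}{v}$ are pairwise vertex-disjoint by assumption, so the local replacements do not interfere with one another; and $\gamma^*$ is homotopic to $\gamma$ in $\Sigma$ (each modification is confined to a contractible disc), hence noncontractible. Its length is bounded by
\[ |\gamma^*|\;\leq\; \ell+\sum_{(v,r_v)\in \mathcal{D}}\bigl(2r_v-|A_v|\bigr)\;\leq\; \ell+2\sum_{(v,r_v)\in \mathcal{D}}r_v, \]
where $A_v$ denotes the (possibly empty) inside arc at $\Delta_v$. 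This gives $f\leq \ell+2\sum r_v$, i.e., $\ell\geq f-2\sum r_v$, as required. The main obstacle is Step~1: one must rigorously justify the chain-level identity $[\gamma_a]+[\gamma_b]=[\gamma]$, verify that extracting a simple noose from a noncontractible closed walk preserves noncontractibility, and check the integer length inequalities showing $|\gamma_a|,|\gamma_b|<|\gamma|$ despite the fact that the swap may increase the combined length by up to $2$ in the wheeled case.
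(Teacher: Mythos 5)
Your proof takes the same core approach as the paper: given a short noncontractible face-vertex curve in $H$, replace each maximal arc inside a modified disc by a radial face-vertex path through $v$ of length at most $2r_v$, observe the replacement is a homotopy confined to the contractible disc, and conclude. The conclusion and Step~2 are fine. However, the paper avoids your Step~1 entirely via two simplifications: first, since clearing is a subgraph of wheeling (and deleting vertices/edges can only decrease face-width), it suffices to prove the bound for the all-cleared graph; second, it takes a shortest noncontractible \emph{cycle} in the face-vertex graph (length $2f$), which by definition visits each face-vertex --- in particular the cleared face $f_v$ --- at most once. With these two observations there is at most one arc per disc automatically and no re-pairing is needed.

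More importantly, your Step~1 as written contains a genuine gap. You compute $[\gamma_a]+[\gamma_b]=[\gamma]$ in $H_1(\Sigma;\mathbb{Z}/2)$ and conclude ``so at least one of them is noncontractible.'' This inference is not valid: noncontractibility is a statement about $\pi_1$, and on surfaces there are plenty of noncontractible curves that are null-homologous (any separating noncontractible curve, e.g.\ on genus $\geq 2$). Thus $\gamma$ noncontractible does not give $[\gamma]\neq 0$ in $H_1$, and $[\gamma_a]=[\gamma_b]=0$ in $H_1$ does not give contractibility. What one actually needs is the $\pi_1$-level identity $[\gamma]=[\gamma_b]\cdot[\gamma_a]$ in $\pi_1(\Sigma,p)$ for a common basepoint $p$, which does hold here (the cleared arcs meet at the face $f_v$, and the wheeled re-paired arcs can be routed through $v$; then one writes each $A_i$ and $A_i'$ as $\alpha^{-1}\beta$ for paths from $p$ to the boundary inside the contractible disc and checks the concatenation). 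Your own commentary flags Step~1 as ``the main obstacle,'' but as it stands the homological argument is a false step, not merely an unfilled detail; either replace it with the $\pi_1$ argument, or better, adopt the paper's reduction to the cleared case with a face-vertex cycle, which sidesteps the issue altogether.
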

\begin{proof}
We prove the claim for $H$ being the result of clearing all discs $\rdisc{\nembed}{r_v}{v}$; the face-width in case of wheeling is clearly not smaller. 
Consider a noncontractible cycle $C$ in the face-vertex graph of $H$. 
Whenever $C$ visits a face $f_v$ enclosed by $\rcycle{\nembed}{r_v}{v}$ for some $(v,r_v) \in \mathcal{D}$, replace
the two edges $f_vu_1$ and $f_vu_2$ of $C$ incident with $f_v$ it with a shortest path in the face-vertex graph of $\MainPlusA$ between $u_1$ and $u_2$ with all internal vertices
in $\rdisc{\nembed}{r_v}{v}$. Since $u_1,u_2 \in \rballX{\nembed}{r_v}{v}$, this path has length at most $2r_v$. 
Hence, the resulting closed walk $C'$ is of length at most $|C| + 2\sum_{(v,r_v) \in \mathcal{D}} r_v$.
Furthermore, $C'$ is noncontractible as well, because $C'$ can be obtained from $C$ by local continuous transformations inside disjoint discs and hence $C$ and $C'$ are homotopic.
The lemma follows. 
\end{proof}

\begin{lemma}\label{lem:genus:clear-2conn}
Let $\nembed$ be an optimal near-embedding with tidy dongles.
Let $\mathcal{D}$ be a family of pairs $(v,r_v)$, $v \in V(\MainPlusA)$, $2 \leq r_v \leq \rmax - 1$ such that
the radial discs $\rdisc{\nembed}{r_v+1}{v}$ are pairwise vertex disjoint. 
Let:
\begin{itemize}
\item $H_1$ be created from $\MainPlusA$ by iteratively clearing discs $\rdisc{\nembed}{r_v}{v}$ for $(v,r_v) \in \mathcal{D}$;
\item $H_2$ be created from $\MainPlusA$ by iteratively wheeling discs $\rdisc{\nembed}{r_v}{v}$ for $(v,r_v) \in \mathcal{D}$;
\item $H_3$ be created from $\MainPlusA$ by contracting the inside of $\rdisc{\nembed}{r_v}{v}$
for $(v,r_v) \in \mathcal{D}$;
\item $H_1'$ be created from $\MainPlusA$ by iteratively clearing discs $\rdisc{\nembed}{r_v+1}{v}$ for $(v,r_v) \in \mathcal{D}$.
\end{itemize}
Then:
\begin{enumerate}
\item $H_1$ is 2-connected and $H_2$ and $H_3$ are both 3-connected. %
\item If
$(A,B)$ is a separation of $H_1$ of order $2$ with both $A \setminus B$ and $B \setminus A$ nonempty, then
there exists $(v,r_v) \in \mathcal{D}$ such that $A \cap B \subseteq V(\rcycle{\nembed}{r_v}{v})$ and 
either $A \setminus B$ or $B \setminus A$ is contained in $\rdisc{\nembed}{r_v+1}{v} \setminus V(\rcycle{\nembed}{r_v+1}{v})$.
\item There is a unique 3-connected component $H_1^\ast$ of $H_1$ that has more than $2q''$ vertices and
this component contains $H_1'$.
\item The treewidth of $H_1$ is larger than $2q''$. 
\end{enumerate}
\end{lemma}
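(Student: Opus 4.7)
The strategy across all four parts is to transfer a putative small-order separation of $H_1$, $H_2$, or $H_3$ back to a separation of $\MainPlusA$ and invoke $3$-connectivity of the latter from Lemma~\ref{lem:aligned-is-3conn}. The key observation is that after surgery, each former radial disc leaves behind a highly connected skeleton: in $H_2$, the cycle $\rcycle{\nembed}{r_v}{v}$ together with the hub $v$ adjacent to every cycle vertex forms a $3$-connected wheel; in $H_3$, the three vertex-disjoint paths from $v$ to $\rcycle{\nembed}{r_v}{v}$ supplied by Lemma~\ref{lem:radial-disc} ensure that the supervertex $v$ resulting from contraction has at least three neighbors on $\rcycle{\nembed}{r_v}{v}$, again yielding a $3$-connected wheel-like structure; in $H_1$ the skeleton is just the $2$-connected cycle $\rcycle{\nembed}{r_v}{v}$.

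Point~1 then proceeds uniformly. Take a separation $(A,B)$ of order at most $1$ in $H_1$, or of order at most $2$ in $H_2$ or $H_3$; since each skeleton is $2$- or $3$-connected as appropriate, deleting the separator keeps each skeleton connected, so every modified disc is cleanly classified as ``$A$-type'' or ``$B$-type''. Setting $A^\ast = A \cup \bigcup_{A\text{-type } (v,r_v)} V(\rdisc{\nembed}{r_v}{v})$ and $B^\ast$ symmetrically yields a separation of $\MainPlusA$ of the same order, because every edge of $\MainPlusA$ either lies outside all discs (unaffected), strictly inside one disc (both endpoints on the same type-side), or joins an interior vertex of a disc to its boundary cycle (the interior endpoint is on the type-side and the boundary endpoint lies on the type-side or in the separator). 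Lemma~\ref{lem:aligned-is-3conn} then forces one of $A^\ast \setminus B^\ast$, $B^\ast \setminus A^\ast$ to be empty, which pulls back to the same for $A \setminus B$, $B \setminus A$. For point~2, an order-$2$ separation $(A,B)$ of $H_1$ with both sides nonempty can escape this template only if the separator $S = \{x_1, x_2\}$ lies in $V(\rcycle{\nembed}{r_v}{v})$ for a single $v$ (unique because the enlarged discs are pairwise vertex-disjoint) and splits this cycle into two nontrivial arcs on opposite sides.

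In this exceptional case the outer cycle $\rcycle{\nembed}{r_v+1}{v}$ is disjoint from $S$, and the argument of point~1 applied to all the remaining disc structures places $\rcycle{\nembed}{r_v+1}{v}$ entirely on one side, say $A$. Suppose for contradiction some $u \in B \setminus A$ lies outside $\rdisc{\nembed}{r_v+1}{v}$. By $3$-connectivity of $\MainPlusA$ and Menger's theorem, there are three internally vertex-disjoint paths in $\MainPlusA$ from $u$ to $\rcycle{\nembed}{r_v+1}{v}$; after truncation at their first hit of the cycle they stay outside the interior of $\rdisc{\nembed}{r_v+1}{v}$, and at least one such path $P$ avoids both $x_1$ and $x_2$. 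Each excursion of $P$ through some other cleared disc $\rdisc{\nembed}{r_{v'}}{v'}$ can be rerouted along an arc of $\rcycle{\nembed}{r_{v'}}{v'}$, and this arc lies in $H_1$ and is disjoint from $S$ because $S \subseteq V(\rcycle{\nembed}{r_v}{v})$ while distinct enlarged discs $\rdisc{\nembed}{r_{v'}+1}{v'}$ are pairwise vertex-disjoint by hypothesis. The resulting path connects $u \in B \setminus A$ to $\rcycle{\nembed}{r_v+1}{v} \subseteq A$ within $H_1 - S$, a contradiction.

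Point~4 follows from Lemma~\ref{lem:opt-ne-rep} (face-width of $\MainPlusA$ is at least $k'''/100 - 2q - k$), Lemma~\ref{lem:genus:punctured-fw} (clearings deplete face-width by at most $2\sum r_v$, which is negligible provided $\funstep$ grows quickly enough), and standard wall-to-treewidth bounds via Theorem~\ref{thm:GM7}, which together exhibit a large $\Sigma$-wall inside $H_1$ and hence force $\tw(H_1) > 2q''$. Then Lemma~\ref{lem:tutte-tw} provides a $3$-connected component $H_1^\ast$ of $H_1$ with more than $2q''$ vertices, and by point~2 every order-$2$ separation has $H_1'$ entirely on its large side, so $H_1' \subseteq H_1^\ast$. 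Any other $3$-connected component is enclosed in some $\rdisc{\nembed}{r_v+1}{v} \setminus V(\rcycle{\nembed}{r_v+1}{v})$; bounding $|\preimage(\rcycle{\nembed}{r_v+1}{v})|$ via Lemma~\ref{lem:preimage-size} and using $(q'',k'')$-unbreakability of $G$ with the observation from Lemma~\ref{lem:radial-disc} that the outside of $\rdisc{\nembed}{r_v+1}{v}$ already contains more than $2q''$ vertices of $\MainPlusA$, forces the interior-side preimage to have at most $q''$ vertices, whence the component has at most $q'' \leq 2q''$ vertices, yielding uniqueness. The main technical difficulty is the rerouting step in paragraph~3, which is what demands the pairwise disjointness of the \emph{enlarged} discs $\rdisc{\nembed}{r_v+1}{v}$ rather than just the original $\rdisc{\nembed}{r_v}{v}$: without this buffer there would be no guarantee that auxiliary boundary cycles avoid $S$.
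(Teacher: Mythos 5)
Your overall strategy for Point~1 (transfer a putative small-order separation from $H_i$ to $\MainPlusA$ and invoke Lemma~\ref{lem:aligned-is-3conn}) is essentially dual to the paper's (which starts from a component $C$ of $\MainPlusA - V(H_i)$ whose neighborhood meets both sides), and works for $H_1$ and $H_2$. However, there are two genuine gaps.

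First, the claim that in $H_3$ the structure consisting of $\rcycle{\nembed}{r_v}{v}$ together with the contracted vertex $v$ and its (at least three) spokes is 3-connected is false. If $v$ happens to be adjacent to three consecutive cycle vertices $a_1,a_2,a_3$, then removing $\{a_1,a_3\}$ leaves $\{v,a_2\}$ disconnected from the rest of the cycle. Moreover, $H_3$ also retains intact all components of $\rdisc{\nembed}{r_v}{v}-\rcycle{\nembed}{r_v}{v}$ other than the one containing $v$, which your ``wheel-like skeleton'' description ignores. The paper's treatment of $i=3$ is different and correct: since some component $C$ of $\MainPlusA-V(H_3)$ has neighborhood meeting both $A\setminus B$ and $B\setminus A$, and $C$ is contracted into $v$, the contracted vertex $v$ is adjacent to both sides in $H_3$, hence $v\in A\cap B$; together with the two cycle vertices already in $A\cap B$ this forces $|A\cap B|\geq 3$. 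No 3-connectivity of any local structure is needed.

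Second, in Point~3 you propose to bound the order of a separation of $G$ via $|\preimage(\rcycle{\nembed}{r_v+1}{v})|$ using Lemma~\ref{lem:preimage-size}. But there is no bound anywhere in the paper on the number of vertices on $\rcycle{\nembed}{r_v+1}{v}$ — Lemma~\ref{lem:radial-disc} bounds the treewidth and radial radius of the disc, not the length of the bounding cycle, which can be arbitrarily long (think of a star: $\rball{1}{v}$ can already contain $\Theta(n)$ cycle vertices). Hence $|\preimage(\rcycle{\nembed}{r_v+1}{v})|$ need not be $\leq k''$ and you cannot invoke $(q'',k'')$-unbreakability this way. The paper instead picks two radial shortest paths $P_1,P_2$ of length at most $r_v+1$ from $v$ to the two separator vertices $v_1,v_2$; these paths (together with $\Apices$) have preimage of size at most $k''$ and separate $A\setminus B$ from $B\setminus A$ in $\MainPlusA$, which is what makes the unbreakability argument go through.

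Your Point~4 argument (face-width survives clearing by Lemma~\ref{lem:genus:punctured-fw}, then Theorem~\ref{thm:GM7} exhibits a large $\Sigma$-wall and hence large treewidth) is a valid alternative to the paper's direct tree-decomposition transfer; it is somewhat heavier machinery but requires only that $\funstep$ grows quickly enough to absorb $f_{\mathrm{fwf}}$, which is assumed throughout. Your elaboration of Point~2 via Menger and rerouting through auxiliary boundary cycles is correct and usefully spells out what the paper leaves as ``the connectivity of $\MainPlusA$ yields the second claim.''
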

\begin{proof}
Let $(A,B)$ be a separation of $H_i$ ($i \in \{1,2,3\}$) of order at most $2$ with both $A\setminus B$ and $B \setminus A$ nonempty.
Since $\MainPlusA$ is 3-connected (Lemma~\ref{lem:aligned-is-3conn}), %
there exists a connected component $C$
of $\MainPlusA - V(H_i)$ such that $N_{\MainPlusA}(C)$ intersects both $A \setminus B$ and $B \setminus A$. 
This is only possible if there exists $(v,r_v) \in \mathcal{D}$ such that $\rcycle{\nembed}{r_v}{v}$ contains both a vertex of $A \setminus B$ and a vertex of $B \setminus A$, and these two vertices are adjacent to a connected component $C$ of $\MainPlusA-V(H_i)$.

Consequently, $\rcycle{\nembed}{r_v}{v}$ contains at least two vertices of $A \cap B$. Furthermore, if $i=2$, then $v \in A \cap B$, a contradiction with the assumption $|A \cap B| \leq 2$. Also,
if $i=3$, then in $H_3$ the component $C$ is either contracted onto the vertex $v$ or is left intact, but in any case the contracted $C$ has to contain a vertex of $A\cap B$ that does not belong to $\rcycle{\nembed}{r_v}{v}$, giving at least three vertices in $A\cap B$ in total.   
This proves the first claim.

For second claim, we are left with the case $i=1$ and $A \cap B$ consists of two vertices of $\rcycle{\nembed}{r_v}{v}$. 
Then, $\rcycle{\nembed}{r_v+1}{v}$ is completely contained either in $A \setminus B$ or $B \setminus A$, and the connectivity of $\MainPlusA$ yields the second claim.

For the third claim, note that 
if $|V(H_1')| \leq 2q''$, then the treewidth of $\MainPlusA$ is bounded by $4q''+10k''$ by Lemma~\ref{lem:radial-disc} as
every connected component of $\MainPlusA-V(H_1')$ is contained in a single disc $\rdisc{\nembed}{r_v+1}{v}$. 
This is a contradiction with Lemma~\ref{lem:DKMW12} and the assumption on the treewidth of $G$.

Thus, we have that $|V(H_1')| > 2q''$. It follows that $|V(H_2)| > 2q''$, $|V(H_1)| > 2q''$, and, by the analysis of separations of order $2$ in $H_1$ presented above, there is a 3-connected component $H_1^\ast$ of $H_1$ that contains $H_1'$.
Consider now a separation $(A,B)$ of $H_1$ of order at most $2$; as discussed, we may assume that
we have $V(H_1') \subseteq A \setminus B$ while $B$ is contained in $\rdisc{\nembed}{r_v+1}{v}$ for some $(v,r_v) \in \mathcal{D}$. 
Let $A \cap B = \{v_1,v_2\}$ and let $P_\alpha$ be a shortest path in the radial graph from $v$ to $v_\alpha$ for $\alpha=1,2$; without loss of generality, assume that $P_1$ and $P_2$ share a prefix and then are vertex-disjoint. 
We have $|V(P_\alpha)| \leq r_v+1$ while 
$P_1$ and $P_2$ separate $A \setminus B$ from $B\setminus A$ in $\MainPlusA$. 
Hence, the union of $\Apices$ and the preimages of $P_1$ and $P_2$ is of size at most $k''$. 
Since $A$ contains $H_1$, by the $(q'',k'')$-unbreakability of $G$, the size
of the preimage of $B$ is at most $q''$. 
Since every vertex $\PlusDongleVtx{i} \in B$ corresponds to at least one vertex
of $V(\DongleA{i}) \setminus V(\MainA)$ in the preimage of $B$
and every vertex $\PlusVortexVtx{i} \in B$ corresponds to at least one society
vertex of $\VortexA{i}$ in $B$, we have $|B| \leq 2q''$. 

Consequently, $H_1^\ast$ is the only 3-connected component of $H_1$ that has more than $2q''$ vertices, as desired.

We proceed to the fourth claim.
Assume now that there is a tree decomposition $(T_1,\beta_1)$ of $H_1$ of width $\ell$.
For every $(v,r_v) \in \mathcal{D}$, proceed as follows.
Fix a shortest path tree $S_v$ rooted at $v$ in the face-vertex graph of $\rdisc{\nembed}{r_v}{v}$ enclosed by $\rcycle{\nembed}{r_v}{v}$. For $w \in V(\rcycle{\nembed}{r_v}{v})$, let $\sigma_v(w)$ be the path from $w$ to $v$ in $S_v$. 
For every $w \in V(\rcycle{\nembed}{r_v}{v})$ and every $t \in V(T_1)$, if $w \in \beta_1(t)$, add all vertices of $V(\MainPlusA) \cap V(\sigma_v(w))$ to $\beta_1(t)$.
Furthermore, for every $w_1w_2 \in E(\rcycle{\nembed}{r_v}{v})$, fix one node $t(w_1w_2) \in V(T_1)$ such that $w_1,w_2 \in \beta_1(t(w_1w_2))$
and create a new node $t'(w_1w_2)$, adjacent to $t(w_1w_2)$, with a bag comprising all vertices of $\MainPlusA$ that lie in the disc contained in $\rdisc{\nembed}{r_v}{v}$ enclosed by the concatenation of
$\sigma_v(w_1)$, $\sigma_v(w_2)$, and the edge $w_1w_2$. 
Similarly as argued in the previous paragraphs, the $(q'',k'')$-unbreakability of $G$ implies that the size of the bag at $t'(w_1w_2)$ is bounded by $2q''$. 

In the end, we obtain a tree decomposition of $\MainPlusA$ of width at most $\rmax \cdot \ell + 2q''$, a contradiction to Lemma~\ref{lem:DKMW12} and the assumption on the treewidth of $G$.
\end{proof}

\section{Universal apices}\label{ss:universal-apex}
For integers $a,b \geq 2$, an \emph{$(a,b)$-apex-forcer} for a vertex $v \in V(G)$ is a subgraph $H$ of $G$ consisting of:
\begin{itemize}
\item $b$ pairs $\{W_i^1, W_i^2\}_{i=1}^b$ of cylindrical $a \times a$ walls; we denote the extremal cycles of $W_i^j$ as $C_i^j$ and $D_i^j$;
\item for every $1 \leq i \leq b$ and $1 \leq j \leq 2$, a path $P_i^j$ connecting $v$ with $C_i^j$;
\item for every $1 \leq i \leq b$, a path $Q_i$ connecting $D_i^1$ with $D_i^2$.
\end{itemize}
Note that all objects above are vertex-disjoint (except for the obvious overlaps: all paths $P_i^j$ share the vertex $v$ and paths $P_i^j$ and $Q_i$ have an endpoint in $W_i^j$). 
See Figure~\ref{fig:apex-forcer} for an illustration.

\begin{figure}[tb]
\begin{center}
\includegraphics{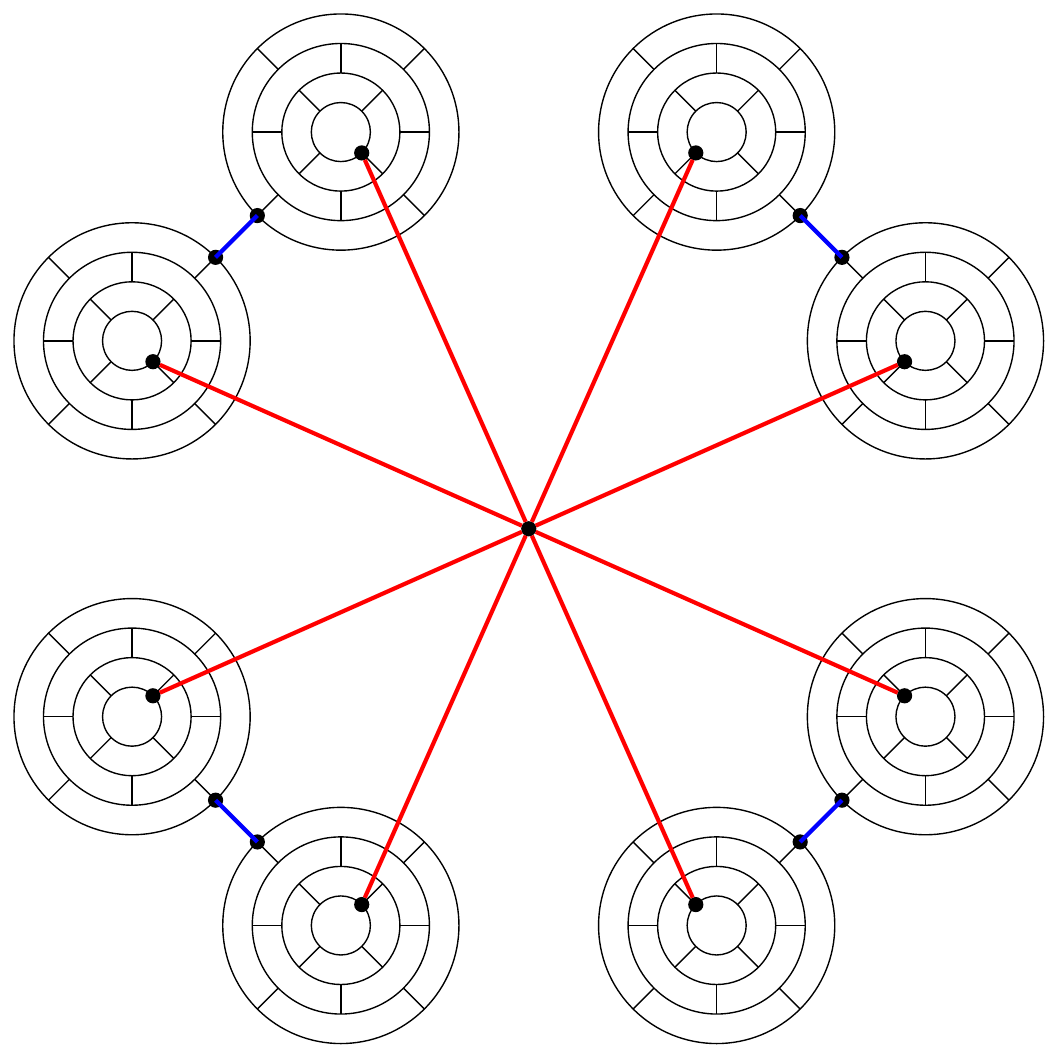}
\caption{A $(4,4)$-apex forcer. Paths $P_i^j$ are depicted in red, paths $Q_i$ are depicted in blue. Each cylindrical wall $W_i^j$ is depicted so that $C_i^j$ is the innermost cycle and $D_i^j$ is the outermost cycle.}\label{fig:apex-forcer}
\end{center}
\end{figure}

The main result of this section is the following statement, which intuitively says that a large apex forcer for $v$ is a certificate that $v$ has to be an apex in every optimal near-embedding.

\begin{lemma}\label{lem:apex-forcer}
Let $a$ and $b$ be integers satisfying
\begin{align*}
a &\geq q + 100(\eulerg_\ast + 3) + 2\WallHitSize_\ast \cdot (2\WallHitRadius_\ast+1) = 
\Oh(q^3 k^{k+9}), \\
b &\geq k/100 + \eulerg_\ast + 1 = \Oh(k).
\end{align*}
Assume that $G$ contains an $(a,b)$-apex-forcer for a vertex $v \in V(G)$. Then $v$ is an apex in every optimal near-embedding of $G$. 
\end{lemma}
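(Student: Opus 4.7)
Suppose for contradiction that $v$ is not an apex in some optimal near-embedding $\nembed$ of $G$; by Lemma~\ref{lem:tidy-dongles} we may assume that $\nembed$ has tidy dongles, with main part embedded in a surface $\Sigma$ of Euler genus $\eulerg_\ast$. Throughout write $H_i\coloneqq \{v\}\cup W_i^1\cup W_i^2\cup P_i^1\cup P_i^2\cup Q_i$ for the $i$-th block of the apex-forcer. The plan is to derive, from the presence of the apex-forcer, a subgraph of $\MainPlusA$ of Euler genus strictly greater than $\eulerg_\ast$, contradicting that $\MainPlusA$ embeds in $\Sigma$.

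\emph{Cleaning the walls.} I first apply Lemma~\ref{lem:cylinder-survives} to each cylindrical wall $W_i^j$, obtaining a set $X_i^j\subseteq V(W_i^j)$ with $|X_i^j|\le 2\WallHitSize_\ast$ such that every apex of $\nembed$ lying in $W_i^j$ and every edge of $W_i^j$ contained in a vortex lies within radial distance $\WallHitRadius_\ast$ of $X_i^j$ in the natural embedding of $W_i^j$. Since a cylindrical wall is a $\Sigma$-wall for the cylinder and $2\WallHitRadius_\ast+1<a$, Lemma~\ref{lem:sigma-wall-holes} then produces inside $W_i^j$ a \emph{clean} cylindrical subwall $\tilde W_i^j$ (disjoint from apices and edge-disjoint from vortices) of order at least $a-2\WallHitSize_\ast(2\WallHitRadius_\ast+1)\ge q+100(\eulerg_\ast+3)$.

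\emph{Non-planarity and discard of apex-affected pairs.} Next I show that each $H_i$ is non-planar by a Jordan-curve argument in the sphere. The $3$-connected walls $W_i^j$ are each forced to embed as topological annuli, and the paths $P_i^j$ (sharing no vertex with the walls except at their endpoints on $C_i^j$) force $v$ to lie strictly inside the disk bounded by $C_i^j$ for both $j\in\{1,2\}$. In the sphere, these two disks are either disjoint---impossible as $v$ lies in both---or nested, say with the disk of $C_i^2$ contained in that of $C_i^1$; in the nested case, $D_i^2$ sits inside the disk of $C_i^1$ whereas $D_i^1$ sits outside it, and then $Q_i$ from $D_i^1$ to $D_i^2$ must cross the wall $W_i^1$, a contradiction with $Q_i$ being vertex-disjoint from $W_i^1$. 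Since distinct $H_i$'s share only $v$ and $v$ is not an apex, at most $\napices_\ast\le k/100$ of the $H_i$'s intersect $\Apices$; discarding them leaves a set $I$ of apex-free pairs with $|I|\ge b-k/100\ge \eulerg_\ast+1$. Corollary~\ref{cor:genus-manyK5s} applied to $H^\ast\coloneqq\bigcup_{i\in I}H_i\subseteq G-\Apices$ (pieces glued at $v$) then gives $\eulerg(H^\ast)\ge |I|\ge \eulerg_\ast+1$.

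\emph{Projection into $\MainPlusA$ and contradiction.} For each $i\in I$ and $j\in\{1,2\}$, Lemma~\ref{lem:project-wall} projects the clean subwall $\tilde W_i^j$ to a cylindrical wall $\tilde W_i^j{}'\subseteq \MainPlusA$ of the same order: the single-dongle alternative is excluded because $\tilde W_i^j$ has more than $q$ degree-$3$ vertices while every dongle has at most $q$ vertices. Moreover, cleanness of $\tilde W_i^j$ ensures that $\tilde W_i^j{}'$ uses no virtual vortex vertex. The paths $P_i^j$ and $Q_i$, extended through $W_i^j$ to reach $\tilde W_i^j$, project via $\PathProj$ to walks in $\MainPlusA$, giving a subgraph $H^{\ast\prime}\subseteq \MainPlusA$ whose projected pieces $H_i'$ share pairwise only $\SubProj(v)$ and at most $\nvortices_\ast=\Oh(\alpha_H)$ virtual vortex vertices (through which the $P_i^j,Q_i$ may be routed). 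A careful block-decomposition argument based on Lemma~\ref{lem:genus-blocks} then transfers the genus bound: each shared virtual vortex vertex can only merge a bounded collection of $H_i'$'s into a single $2$-connected block of $H^{\ast\prime}$, but the merged block still carries the sum of the individual Euler genera of its constituents, and since $|I|$ substantially exceeds $\nvortices_\ast$, one concludes $\eulerg(H^{\ast\prime})>\eulerg_\ast$. As $H^{\ast\prime}\subseteq \MainPlusA$ embeds in $\Sigma$ of Euler genus $\eulerg_\ast$, this is the desired contradiction.

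The hard part is the last step: lifting the combined non-planarity witnessed by $H^\ast$ inside $G-\Apices$ into a genuine genus obstruction inside $\MainPlusA$. Distinct projected pieces $H_i'$ may share virtual vortex vertices (because many $P_i^j$'s or $Q_i$'s can route through the same vortex), and such sharings can merge what would otherwise be independent non-planar blocks. Controlling the resulting block decomposition of $H^{\ast\prime}$---and verifying that the available slack $|I|\ge \eulerg_\ast+1$ survives all such merges---is the primary technical difficulty; a secondary one is ensuring that the detours used to extend $P_i^j,Q_i$ into the clean subwalls are realized within $W_i^j$ without introducing new interactions across pairs.
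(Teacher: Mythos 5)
The gap is in the final step, and it is not merely a technical difficulty to be filled in: the mechanism you propose cannot work. Your argument shows that $H^\ast \subseteq G-\Apices$ has Euler genus $\geq |I| \geq \eulerg_\ast+1$ via Corollary~\ref{cor:genus-manyK5s}, and then tries to deduce that $H^{\ast\prime} = \SubProj_{\nembed}(H^\ast) \subseteq \MainPlusA$ has Euler genus $> \eulerg_\ast$ by a block-decomposition argument. But Lemma~\ref{lem:genus-blocks} gives additivity of Euler genus only over blocks, i.e., pieces glued at a single cutvertex. Once two projected pieces $H_i'$ and $H_{i'}'$ share both $\SubProj(v)$ and a virtual vortex vertex $\PlusVortexVtx{\ell}$ (which happens precisely when $P_i^j$ and $P_{i'}^{j'}$ or $Q_i$ and $Q_{i'}$ route through the same vortex), they fall into a common $2$-connected block of $H^{\ast\prime}$, and there is no theorem saying that this block "carries the sum of the individual Euler genera of its constituents". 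In fact the opposite can happen: gluing non-planar graphs along two vertices can drastically lower Euler genus. So the asserted implication $\eulerg(H^{\ast\prime})>\eulerg_\ast$ has no basis, and of course it is also what the entire argument is trying to establish by contradiction — you have in effect restated the goal rather than proven it. A secondary issue you gloss over is that the projected walks $\PathProj(P_i^j)$ and $\PathProj(Q_i)$ may interlace with the projected walls in uncontrolled ways after being shortcut through virtual vortex vertices, so even whether each individual $H_i'$ remains non-planar after projection is unproven.

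The paper sidesteps this entirely: it never tries to push Euler genus from $G-\Apices$ through $\SubProj_\nembed$ to $\MainPlusA$. Instead, it works with a \emph{single} cleaned wall $W_i^j$ and exhibits the genus obstruction directly inside $\MainPlusA$. The role of the surrounding structure (paths $P_i^j$, $P_i^{3-j}$, and $Q_i$, and the other walls) is only to force a topological fact about one projected row $E_i^j$: its projection bounds a disc $\Delta_i^j$ in $\Sigma$ on one side (this uses Lemma~\ref{lem:diestelB6} to discard at most $\eulerg_\ast$ one-sided/nonseparating curves), and both pieces of $W_i^j - V(E_i^j)$ — the one containing $C_i^j$, reached by $R_i^j$ from $v$, and the one containing $D_i^j$, reached by $Q_i$ — must lie on the \emph{other} side. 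Hence $\Delta_i^j$ is a face of the projected wall $\SubProj_\nembed(W_i^j)$, and inserting an auxiliary vertex $x$ in $\Delta_i^j$ adjacent to $V(E_i^j)\cap V(\MainA)$ yields $\eulerg_\ast+3$ $K_5$-minor models sharing only $x$ inside a graph embedded in $\Sigma$, contradicting Corollary~\ref{cor:genus-manyK5s}. The key idea you are missing is this "empty disc on one side of $E_i^j$" argument; it replaces any attempt at genus transfer and is what the whole machinery of the apex-forcer (the pair of walls, the return path $Q_i$) is designed to enable.
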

\begin{proof}
Let $\nembed$ be an optimal near-embedding of $G$ where $v$ is not an apex. By Lemma~\ref{lem:tidy-dongles}, we may assume that $\nembed$ has tidy dongles. We aim at a contradiction.

Let $b' = b-k/100 \geq \eulerg_\ast + 1$ and note that for at least $b'$ values of $i \in [b]$, the objects $W_i^j$, $P_i^j$ for $j=1,2$, and $Q_i$ do not contain any apex of $\nembed$. 
Furthermore, let $a' = a - 2\WallHitSize_\ast \cdot (2\WallHitRadius_\ast+1) \geq q + 100(\eulerg_\ast + 3)$.
By Lemma~\ref{lem:cylinder-survives}, every $W_i^j$ contains an $a' \times a'$ cylindrical wall that is edge-disjoint from vortices. 
Note that both paths $Q_i$ and $P_i^j$ can be prolonged via $W_i^j$ to the corresponding extremal cycle of the said $a' \times a'$ cylindrical wall. 
Consequently, $G$ contains an $(a',b')$-apex-forcer for $v$ that does not contain an apex of $\nembed$ and, furthermore, whose every cylindrical wall is edge-disjoint with vortices of
$\nembed$. By slightly abusing the notation, we will denote the elements of this apex-forcer still with $W_i^j$, $P_i^j$, and $Q_i$, and the extremal cycles of $W_i^j$ as $C_i^j$ and $D_i^j$. 

Let $E_i^j$ be any row of $W_i^j$ that is not one of the first three rows nor one of the
last three rows. Consider $\PathProj_\nembed(E_i^j)$. 
A few remarks are in place. Recall that $E_i^j$s are edge-disjoint with vortices, so
the path projection is well-defined. 
Furthermore, $E_i^j$ has at least $a' \geq q + 100(\eulerg_\ast + 3)$ vertices, so in particular it is not contained in a single dongle and contains vertices of $\MainA$.
Consequently, $\PathProj_\nembed(E_i^j)$ corresponds to a closed curve without self-intersections 
$\gamma_i^j$ and the curves $(\gamma_i^j)_{i \in [b'], j \in [2]}$ are pairwise disjoint. 
Finally, note that since $E_i^j$ is not among of the first or last three rows of $W_i^j$,
no path within $W_i^j$ connecting a vertex of $E^i_j$ with a vertex of $C_i^j\cup D_i^j$ can be entirely contained in a single dongle of $\nembed$, for otherwise either the size of the neighborhood of this dongle would be larger than $3$, or almost the whole wall $W_i^j$ would be contained in the dongle, contradicting $a'\ \gg q$.

Obtain a path $R_i^j$ as the prolonged 
path $P_i^j$ through $C_i^j$ and $W_i^j$ to a vertex of $V(E_i^j)$, and then along $E_i^j$ to a first vertex of $V(E_i^j) \cap V(\MainA)$.
Let $R = \bigcup_{i \in [b'], j \in [2]} R_i^j$. 
Observe that, as paths $R_i^j$ share the endpoint $v$, 
$\PathProj_{\nembed}(R)$ is contained in a single connected
component $\Delta$ of $\Sigma - \bigcup_{i \in [b'], j \in [2]} \gamma_i^j$,
except for the endpoint and possibly a suffix if the first intersection of $R_i^j$ and $E_i^j$ lies in a dongle; 
then the said suffix goes along $\gamma_i^j$. 
Furthermore, the closure of $\Delta$ contains all curves $\gamma_i^j$. 
By Lemma~\ref{lem:diestelB6}, every but at most $\eulerg_\ast$ curves $\gamma_i^j$ is two-sided, with $\Delta$ on one side and a disc on the other side; furthermore, these 
discs are pairwise disjoint. 
Without loss of generality, assume that this is the case for every $1 \leq i \leq b' - \eulerg_\ast$ and $1 \leq j \leq 2$ and let us denote the corresponding disc by $\Delta_i^j$. 
Note that $b' - \eulerg_\ast \geq 1$.

Fix $1 \leq i \leq b' - \eulerg_\ast$ and $1 \leq j \leq 2$. The curve $\gamma_i^j$ splits the surface into two parts, one $\Delta_i^j$ and the other one containing $\Delta$
and all discs $\Delta_{i'}^{j'}$ for $(i,j) \neq (i',j')$. 
On the other hand, the removal of $E_i^j$ from $W_i^j$ splits $W_i^j$ into two connected components, one containing $C_i^j$ and the other containing $D_i^j$. 
Since $W_i^j$ is edge-disjoint with vortices, the projection of each part
  needs to be fully contained on one side of $\gamma_i^j$. 
Since each path $R_i^j$ contains a vertex of $C_i^j$ as an internal vertex, 
and no dongle contains both a vertex of $C_i^j$ and $E_i^j$, 
the image of the connected component of $W_i^j - V(E_i^j)$ that contains $C_i^j$ needs to be contained in the side of $\gamma_i^j$ containing $\Delta$. 

Consider now $\SubProj_{\nembed}(Q_i)$. 
Since $Q_i$ is vertex-disjoint with $W_{i'}^{j'}$ for every $1 \leq i,i' \leq b'$ and $1 \leq j' \leq 2$ except for the endpoints, and no dongle contains both a vertex of $E_i^j$ and of $D_i^j$, 
the projection $\SubProj_\nembed(Q_i)$ is disjoint with all the curves $\gamma_{i'}^{j'}$. Consequently, for every $1 \leq i \leq b'-\eulerg_\ast$ and $1 \leq j \leq 2$, $\SubProj_\nembed(Q_i)$ has its endpoint corresponding
to the endpoint of $Q_i$ in $W_i^{3-j}$ outside of $\Delta_i^j$, so it is entirely contained outside of                                                                                                                                                                                               $\Delta_i^j$. We infer that the part of $W_i^j-V(E_i^j)$ that contains $D_i^j$ needs to
be contained in the side of $\gamma_i^j$ containing $\Delta$ as well.

Since $a' \geq q + 100(\eulerg_\ast+3)$, 
Lemma~\ref{lem:project-wall} implies that $\SubProj_{\nembed}(W_i^j)$ is also a cylindrical
$a \times a$ wall in $\MainPlusA$. 
By the discussion of the previous paragraph, $\PathProj_{\nembed}(E_i^j)$ bounds a face of $\SubProj_{\nembed}(W_i^j)$whose interior is $\Delta_i^j$. 

Augment $\SubProj_{\nembed}(W_i^j)$ by adding to $W_i^j$ a vertex $x$ adjacent to all vertices of $V(E_i^j) \cap V(\MainA)$, embedding $x$ and its incident edges into $\Delta_i^j$. 
Now, the cylindrical wall $\SubProj_{\nembed}(W_i^j)$ with $x$ contains $\eulerg_\ast+3$ minor models of $K_5$ that contain $\{x\}$ as one of the branch sets, and otherwise are vertex-disjoint;
see Figure~\ref{fig:K5} for a self-explanatory figure how to construct these models. This is a contradiction with the fact that the Euler genus of the considered
surface equals $\eulerg_\ast$ and Corollary~\ref{cor:genus-manyK5s}.
\end{proof}

\begin{figure}[tb]
\begin{center}
\includegraphics[width=0.5\linewidth]{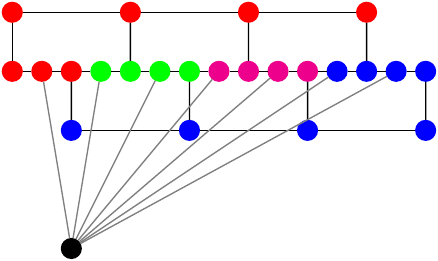}
\caption{Construction of a $K_5$ minor in Lemma~\ref{lem:apex-forcer}.
  The black vertex is the vertex $x$, the middle row belongs to $E_i^j$, the top row to $C_i^j$,
      the bottom row to $D_i^j$. The colors of vertices correspond to branch sets 
of the $K_5$ minor.}\label{fig:K5}
\end{center}
\end{figure}

Consider now a near-embedding $\nembed$ and an apex $v$ in $\nembed$. 
We define the \emph{attachment points} of $v$ as the following vertices:
\begin{itemize}
\item all neighbors of $v$ that belong to $\MainA$;
\item for every dongle $\DongleA{i}$ that contains a neighbor of $v$ in
$V(\DongleA{i}) \setminus V(\MainA)$, the vertex $\PlusDongleVtx{i}$;
\item for every vortex $\VortexA{i}$ that contains a neighbor of $v$ that is 
not a society vertex, the vertex $\PlusVortexVtx{i}$. 
\end{itemize}

To find the cylindrical walls of an apex-forcer, we will need the following variant of a well-known construction
(cf. Grigoriev's proof of a linear grid minor theorem in planar graphs~\cite{Grigoriev11}). 
\begin{lemma}\label{lem:find-cylinder}
Let $h \geq 2$ be an integer and let $H$ be a plane graph that contains:
\begin{itemize}
\item $h$ concentric cycles $C_1, C_2, \ldots, C_{h}$ (i.e., the cycles are embedded in the plane so that $C_i$ separates $C_{i'}$ from $C_{i''}$ for all $i' < i < i''$); and
\item a linkage $\mathcal{P}$ of size $2h$ from $C_1$ to $C_h$.
\end{itemize}
Then, $H$ contains an $h \times h$ cylindrical wall with $C_1$ and $C_h$ as the two extremal cycles and the remainder of the wall embedded in the ring between $C_1$ and $C_h$.
\end{lemma}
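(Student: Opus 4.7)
The plan is to exhibit the rows of the wall as the cycles $C_1,\ldots,C_h$ and to build the columns from the linkage $\mathcal{P}$. First, I would restrict each path $P \in \mathcal{P}$ to a subpath $P'$ running from the last vertex of $P$ on $C_1$ to the first subsequent vertex on $C_h$, so that $P'$ lies in the closed annulus bounded by $C_1$ and $C_h$ and is internally disjoint from $C_1 \cup C_h$. For each such $P'$ I would then carve out a monotone parametrization: a sequence of interface vertices $p_P^1,\ldots,p_P^h$ with $p_P^i \in V(P') \cap V(C_i)$, appearing in this order along $P'$, such that the subpath from $p_P^i$ to $p_P^{i+1}$ lies in the closed sub-annulus $R_i$ bounded by $C_i$ and $C_{i+1}$ and is internally disjoint from $C_i \cup C_{i+1}$. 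This is the technical heart of the argument; the right definition is iterative, choosing $p_P^i$ as (essentially) the last visit of $P'$ to $C_i$ before its first visit to $C_{i+1}$, which ensures that each segment stays in the correct sub-annulus even though $P'$ may cross intermediate cycles multiple times.

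With the monotone decomposition at hand, I would invoke the standard topological fact that pairwise disjoint paths in a planar annulus preserve the cyclic order of their endpoints on the two boundary cycles. Applied in each $R_i$ to the $2h$ disjoint subpaths (one from each $P \in \mathcal{P}$, running from $p_P^i$ to $p_P^{i+1}$), this yields a uniform labeling $P_1,\ldots,P_{2h}$ of the linkage such that on every $C_i$ the interface vertices $p_{P_1}^i, p_{P_2}^i, \ldots, p_{P_{2h}}^i$ appear in this cyclic order (for a fixed orientation of the cycles).

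Finally, I would assemble the cylindrical wall as follows. Take $C_1,\ldots,C_h$ as the $h$ rows. Pair the paths into consecutive pairs $(P_{2j-1}, P_{2j})$ for $j \in [h]$, with each pair forming one column. Column $j$ is a zig-zag: between row $i$ and row $i+1$, it uses the subpath of $P_{2j-1}$ from $p_{P_{2j-1}}^i$ to $p_{P_{2j-1}}^{i+1}$ if $i$ is odd, and the subpath of $P_{2j}$ from $p_{P_{2j}}^i$ to $p_{P_{2j}}^{i+1}$ if $i$ is even; on each intermediate row, the rung of column $j$ is the short arc of $C_i$ between $p_{P_{2j-1}}^i$ and $p_{P_{2j}}^i$ that avoids all other interface vertices. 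By the consistent cyclic order, these short arcs are pairwise disjoint across the $h$ columns, and the vertical segments are pairwise disjoint because the original linkage is. A routine check shows that the resulting subgraph is a subdivision of a suppressed elementary $h \times h$ cylindrical wall with $C_1$ and $C_h$ as its extremal cycles, all remaining vertices and edges lying in the closed ring between them.

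The main obstacle is producing the monotone decomposition in the first step: naive choices of interface vertex (simply first or simply last visit to each cycle) do not yield subpaths confined to the correct sub-annulus, because the original linkage paths may cross intermediate cycles arbitrarily many times. The iterative ``last-visit-before-first-visit'' refinement, together with a careful analysis of which component of the plane minus a nested pair of cycles a given subpath must lie in, resolves this; once this step is settled, the remaining arguments are essentially combinatorial bookkeeping supported by the annulus-linkage fact.
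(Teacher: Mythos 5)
Your proposal has a genuine gap at the step you yourself single out as the technical heart: the ``monotone parametrization'' does not exist in general, and the proposed ``last-visit-before-first-visit'' rule does not produce it. Consider a linkage path $P'$ whose trace through the cycles, after your initial restriction, reads $C_1,\,C_2,\,C_3,\,C_2,\,C_3,\,C_4$, with each transit confined to the open sub-annulus between the two cycles it joins. Your rule yields $p_{P'}^2=$ the first visit to $C_2$ and $p_{P'}^3=$ the second visit to $C_3$, and the segment between them internally traverses the first $C_3$-visit and the second $C_2$-visit, so it is not internally disjoint from $C_2\cup C_3$. An exhaustive check of all orderings with $p^1<p^2<p^3<p^4$ and $p^i\in V(P')\cap V(C_i)$ shows that \emph{no} choice makes all three segments internally disjoint from the cycles, so the failure is not in the selection rule but in the existence of the object. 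Worse, if $P'$ additionally dips back toward $C_1$ between its two $C_2$-visits, the segment between your $p_{P'}^2$ and $p_{P'}^3$ even leaves the closed sub-annulus $R_2$. Consequently the per-annulus application of the linkage-order fact, which drives your consistent cyclic ordering in step~3, is not available.

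What is missing is the device the paper uses to dispense with wiggly paths altogether: a minimal-counterexample argument. Taking $H$ to minimize $|E(H)|$ among counterexamples forces every edge of $H$ to lie on some $C_i$ or some $P\in\mathcal{P}$, and one then shows (Claims~\ref{cl:connected-intersection} and~\ref{cl:connected-intersection2}) that $P\cap C_i$ is a single subpath of $P$ for every $P$ and every $i$; any ``excursion'' of $P$ across a $C_i$ lets you reroute either $C_i$ or the linkage and free an edge, contradicting minimality. Once $P\cap C_i$ is a subpath, the interface vertex is simply the first or the last vertex of $C_i\cap P$ (alternating by the parity of $i+j$), the cyclic orders are automatically consistent, and the wall assembly in your final paragraph goes through essentially verbatim --- indeed it coincides with the paper's. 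The gap, then, is that monotone rungs cannot be extracted directly from an arbitrary linkage; the extremal choice of $H$ (or some equivalent rerouting argument) is what grants you that structure.
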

\begin{proof}
 We proceed by contradiction, so let us suppose that $H$ is a counterexample to the lemma statement with a minimal number of edges. By minimality, every edge of $H$ participates in a cycle $C_i$ or in a path $P\in \cal P$. Further, we may assume that each path $P\in {\cal P}$ starts at a vertex of $C_1$, ends at a vertex of $C_h$, and otherwise is disjoint with $C_1\cup C_h$. In particular, $P$ is entirely contained in the ring in the plane between $C_1$ and $C_h$. 
 
Also, by minimality one cannot find cycles $C_2',\ldots,C_{h-1}'$ such that $C_1,C_2',\ldots,C_{h-1}',C_h$ is a concentric family of vertex-disjoint cycles and there is an edge $e$ that belongs to neither of the cycles $C_1,C_2',\ldots,C_{h-1}',C_h$ nor to any of the paths $P\in \cal P$. Indeed, then removing the edge $e$ from $H$ and replacing cycles $C_2,\ldots,C_{h-1}$ with $C_2',\ldots,C_{h-1}'$ would yield a counterexample with a smaller number of edges. Similarly, there is no linkage $\cal P'$ from $C_1$ to $C_h$ of size $2h$ for which there would be an edge $e$ that would belong neither to any of the cycles $C_1,\ldots,C_h$ nor to any of the paths $P\in \cal P'$. 
 
 The main combinatorial observation is embedded in the following claim.
 
 \begin{claim}\label{cl:connected-intersection}
  For every $P\in {\cal P}$, $P\cap C_2$ is a subpath of $P$.
 \end{claim}
 \begin{proof}
 For any $P\in {\cal P}$, an {\em{excursion}} on $P$ is a subpath of $P$ that has both endpoints on $C_2$ but otherwise is vertex-disjoint and edge-disjoint with $P$. Observe that the claim is equivalent to verifying that there are no excursions on any $P\in \cal P$. Note also that every excursion is either entirely contained in the ring between $C_1$ and $C_2$, or entirely contained in the ring between $C_2$ and $C_h$. Excursions of these two types will be called {\em{outbound}} and {\em{inbound}}, respectively.
 
 First, we observe that there are no outbound excursions on any $P\in \cal P$. Suppose otherwise: let $E$ be an outbound excursion on $P\in \cal P$, say with endpoints $a,b\in V(C_2)$. Then $E$ together with one of the two paths into which $C_2$ is split by $a$ and $b$ forms a cycle $C_2'$ that separates $C_1$ from $C_3,\ldots,C_h$. Since $E$ is an outbound excursion, $E$ is vertex-disjoint with $C_1,C_3,\ldots,C_h$, hence $C_1,C_2',C_3,\ldots,C_h$ form a concentric family of vertex-disjoint cycles. Now the edge of $E(C_2)\setminus E(C_2')$ incident to $a$ is neither on any of the cycles $C_1,C_2',\ldots,C_h$ nor on any of the paths $P\in \cal P$. This is a contradiction with the minimality of the counterexample.
 
 Second, we observe that there are no outbound excursions. Suppose otherwise: let $E$ be an outbound excursion on $P\in \cal P$. Let $\Delta$ be the disc enclosed by $C_2$, that is, the part of the plane with $C_2$ removed that contains $C_h$. Then $E$ splits $\Delta$ into two discs, and let $\Delta'$ be the one that does not contain $C_h$. We now observe that no path $Q\in \cal P$, $Q\neq P$, intersects $C_2\cap \partial \Delta'$, for this would require the existence of an outbound excursion on $Q$, which we excluded in the previous paragraph. Consequently, if we construct $P'$ from $P$ by replacing the excursion $E$ with $C_2\cap \partial \Delta'$, and obtain $\cal P'$ from $\cal P$ by replacing $P$ with $P'$, then $\cal P'$ is a linkage of size $2h$ from $C_1$ to $C_h$. However, now the edges of $E$ incident to the endpoints of $E$ belong neither to the cycles $C_1,\ldots,C_h$ nor to any path in $\cal P'$. This is a contradiction with the minimality of the counterexample.
 \cqed\end{proof}
 
 Having Claim~\ref{cl:connected-intersection} in place, we can apply the same reasoning to cycles $C_2,C_3,\ldots,C_h$ and the linkage $\cal Q$ from $C_2$ to $C_3$ obtained from $\cal P$ by restricting every $P\in \cal P$ to a suffix starting from the last vertex of $V(P)\cap V(C_2)$ on $P$. Thus, we infer that for every $P\in \cal P$, $E(C_3)\cap E(P)$ is a subpath of $P$. Proceeding with this reasoning further in an analogous manner, we conclude the following.

 \begin{claim}\label{cl:connected-intersection2}
  For every $P\in \cal P$ and $i\in \{1,\ldots,h\}$, $P\cap C_i$ is a subpath of $P$.
 \end{claim}

 Note that by planarity, for every $P\in \cal P$ the subpaths $P\cap C_1,P\cap C_2,\ldots,P\cap C_h$ must appear in this order on $P$. Also, we can enumerate the paths of $\cal P$ as $P_1,\ldots,P_{2h}$ so that the paths $P_1\cap C_i,P_2\cap C_i,\ldots,P_{2h}\cap C_i$ appear in this cyclic order on the cycle $C_i$, for each $i\in \{1,\ldots,h\}$. It now remains to observe that one can obtain a subdivision $W$ of an $h\times h$ cylindrical wall as follows:
 \begin{itemize}
  \item For $i,j\in \{1,\ldots,h\}$, pick $v_{i,j}$ to be the first or last vertex of $C_i\cap P_j$ on $P_j$, depending on whether $i+j$ is odd or even.
  \item The rows of the wall are cycles $C_1,\ldots C_h$.
  \item The remaining edges of the wall, that is edges $v_{i,j}v_{i+1,j}$ where $i+j$ is even, are modelled as subpaths of $P_i$ between $v_{i,j}$ and $v_{i+1,j}$.
 \end{itemize}
 Thus, $W$ is a subdivision of a cylindrical wall that satisfies all the required properties.
\end{proof}

The next lemma shows that if an apex has many attachment points that are sufficiently spread
in $\MainPlusA$, then there is an apex-forcer for the said apex.
\begin{lemma}\label{lem:exists-apex-forcer}
Let $\nembed$ be an optimal near-embedding with tidy dongles and let $v$ be an apex in $\nembed$.
Let $a \geq b \geq 2$ be integers, let $r \coloneqq 3(a+4)+1 + 2q'$ and assume that
\[ 2r+11 < k''/(200(q+1)) - k \quad \mathrm{and} \quad (3a+1)(q+1) + k < k'. \]
Furthermore, assume there  exists a set $L$ of attachment points of $v$ of size at least
$\nvortices_\ast + 100(b + \nvortices_\ast + \eulerg_\ast)$ 
such that the radial discs $\rdisc{\nembed}{r+5}{w}$ for $w \in L$ are pairwise disjoint.
Then $G$ contains an $(a,b)$-apex-forcer for $v$. 
\end{lemma}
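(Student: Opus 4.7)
The plan is to use each attachment point $w \in L$ as an anchor for one cylindrical wall of the apex-forcer, and then to pair these walls up using $b$ disjoint paths in $\MainPlusA$. I first discard from $L$ every attachment point $w$ for which $\rdisc{\nembed}{r+5}{w}$ contains a virtual vortex vertex; the condition $2r+11 < k''/(200(q+1)) - k$ ensures $r + 5 \leq \rmax$ so these discs are well-defined via Lemma~\ref{lem:radial-disc}, and since the discs are pairwise disjoint this step discards at most $\nvortices_\ast$ elements. Let $L' \subseteq L$ denote the resulting set; we have $|L'| \geq 100(b + \nvortices_\ast + \eulerg_\ast)$. For every $w \in L'$ the radial disc contains no virtual vortex edges, and together with the tidy-dongles property this makes Lemma~\ref{lem:lift-wall} applicable to lift structures from $\MainPlusA$ into $G$.

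For each $w \in L'$ I construct an $a \times a$ cylindrical wall $W(w)$ inside $\rdisc{\nembed}{r+5}{w}$ by invoking Lemma~\ref{lem:find-cylinder}. As the $a$ concentric cycles I use $\rintcycle{\nembed}{r+5}{3i}{w}$ for $i = 1, \ldots, a$, which are pairwise at radial distance at least $3$ by Lemma~\ref{lem:radial-disc}. The required linkage of $2a$ vertex-disjoint paths inside the disc from the innermost to the outermost chosen cycle is obtained from Menger's theorem: were no such linkage to exist, there would be a vertex cut $X$ of size at most $2a - 1$ inside the disc, which in the planar embedding of the disc can be closed into a contractible face-vertex noose in $\MainPlusA$ passing through at most $3a + 1$ vertices. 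The preimage of this noose together with $\Apices$ gives a separation of $G$ of order at most $(3a+1)(q+1) + k < k'$, so $(q',k')$-unbreakability forces the inside of the noose to contain at most $q'$ vertices in $G$. On the other hand, the inside encloses $w$ together with a region of radial thickness of order $2q'$ (this is where the $2q'$ summand in the definition of $r$ provides slack), and a volume argument using the face-width bound of Lemma~\ref{lem:opt-ne-rep} and locally bounded treewidth (Corollary~\ref{cor:ltw}, together with Lemma~\ref{lem:DKMW12} and the large-treewidth assumption on $G$) forces this region to contain more than $q'$ vertices, a contradiction. I then lift $W(w)$ from $\MainPlusA$ to $G$ via Lemma~\ref{lem:lift-wall}, and denote its extremal cycles by $C(w)$ (close to $w$) and $D(w)$ (close to $\rcycle{\nembed}{r+5}{w}$).

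Next I build the paths of the apex-forcer. For each $w \in L'$ I construct $P(w)$ from $v$ to $C(w)$ by concatenating a short path from $v$ through its neighborhood to a vertex of $\MainPlusA$ adjacent to $w$ (going through a dongle if $w = \PlusDongleVtx{i}$), one of the three vertex-disjoint paths from $w$ to $\rcycle{\nembed}{r+5}{w}$ provided by Lemma~\ref{lem:radial-disc}, and a path through $W(w)$ to $C(w)$; the availability of three choices at $w$ allows the paths $P(w)$ to be made pairwise vertex-disjoint outside of $v$. To construct the paths pairing up walls, I form $H$ from $\MainPlusA$ by wheeling each disc $\rdisc{\nembed}{r+5}{w}$ for $w \in L'$, keeping $w$ itself present in $H$. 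By Lemma~\ref{lem:genus:clear-2conn}, $H$ is $3$-connected, and as a topological minor of $\MainPlusA$ it has Euler genus at most $\eulerg_\ast$. Applying Corollary~\ref{cor:gallai} to $H$ with $S = L'$ then yields $b$ pairwise vertex-disjoint paths in $H$ with endpoints in $S$, since $|L'| > 6b + 2\eulerg_\ast - 4$. Each such path pairs two attachment points; after reinstating the interiors of the corresponding radial discs and lifting to $G$, it becomes a path $Q_i$ from $D(w)$ to $D(w')$ internally disjoint from every wall $W(w'')$, $w'' \in L'$. Taking the resulting $b$ pairs of walls together with the matching $P$ and $Q$ paths yields the required $(a,b)$-apex-forcer.

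The main obstacle is the wall construction in the second step. The concentric cycles come essentially for free from the radial disc, but obtaining the Menger linkage rests on the delicate interplay between a graph-theoretic separator inside the disc and a topological noose in $\MainPlusA$; the numerical bounds in the hypothesis are tuned precisely so that the preimage bound $(3a+1)(q+1) + k < k'$ and the volume lower bound (needing the $2q'$ slack in $r$) both fire against the $(q', k')$-unbreakability tangle, ruling out small cuts. Care is also needed when lifting to $G$ so that the paths $P(w)$ can reuse attachment-point vertices without colliding, which is where the three internally disjoint radial paths of Lemma~\ref{lem:radial-disc} become essential.
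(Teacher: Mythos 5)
Your overall architecture follows the paper closely: build $a\times a$ cylindrical walls in disjoint radial discs around the attachment points via Lemma~\ref{lem:find-cylinder}, pair them up using Corollary~\ref{cor:gallai} after wheeling/contracting, and lift via Lemma~\ref{lem:lift-wall}. However, the central technical step --- obtaining the Menger linkage needed by Lemma~\ref{lem:find-cylinder} --- has a genuine gap.

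You choose concentric cycles $\rintcycle{\nembed}{r+5}{3i}{w}$ for $i=1,\ldots,a$, leaving only two cycles of ``inner buffer'' between $w$ and the innermost chosen cycle $\rintcycle{\nembed}{r+5}{3}{w}$. If Menger produces a cut $X$ with $|X|\le 2a-1$ separating cycle $3$ from cycle $3a$, the noose through $X$ bounds a disc whose inside contains only $w$ and at most two intact concentric cycles. Your claim that ``the inside encloses $w$ together with a region of radial thickness of order $2q'$'' is incorrect: the $2q'$ summand in $r$ sits \emph{outside} the outermost chosen cycle $\rintcycle{\nembed}{r+5}{3a}{w}$ and contributes nothing to the inside of the noose. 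Hence the inside can legitimately contain fewer than $q'$ vertices of $G$, and unbreakability gives no contradiction. Your appeal to face-width (Lemma~\ref{lem:opt-ne-rep}) and locally bounded treewidth (Corollary~\ref{cor:ltw}) also cannot rescue this: face-width rules out short \emph{noncontractible} curves, but the noose here is contractible, and local treewidth bounds are upper bounds, not lower bounds on the vertex count inside the disc. The paper's construction avoids this by taking the $a'=3a+1$ concentric cycles at indices $q'+1,\ldots,q'+a'$, so that every potential cut of order less than $a'$ leaves at least $q'$ intact concentric cycles on each side. Since each such cycle contributes vertices of $\MainA$, both sides of the resulting separation of $G$ have more than $q'$ vertices of $\MainA$, and unbreakability is genuinely violated. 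That is the precise role of the $2q'$ term in $r$.

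Two smaller issues. First, you apply Corollary~\ref{cor:gallai} to get only $b$ paths with endpoints in $L'$, but some of these may pass through virtual vortex vertices $\PlusVortexVtx{i}$ and thus not lift to $G$; the paper requests $b+\nvortices_\ast$ paths and then discards the (at most $\nvortices_\ast$) offending ones, and additionally contracts small discs $\rdisc{\nembed}{2}{\PlusVortexVtx{i}}$ beforehand to make the vortex vertices easy to avoid. Second, invoking Lemma~\ref{lem:genus:clear-2conn} for wheeling with radius parameter $r+5$ formally requires the discs $\rdisc{\nembed}{r+6}{w}$ to be pairwise disjoint, whereas the hypothesis only guarantees disjointness at radius $r+5$; the paper avoids this by working with the strictly smaller discs $\rdisc{\nembed}{q'+a+1}{w}$.
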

\begin{proof}
Consider radial discs $\rdisc{\nembed}{i}{w}$ for $w \in L$ and $1 \leq i \leq r+4$. 
By Lemma~\ref{lem:genus:clear-2conn},
clearing the discs $\{\rdisc{\nembed}{r+4}{w}~|~w \in L\}$ leaves a 2-connected subgraph $H'$ of $\MainPlusA$ that is contained in one connected
component $H$ of the graph $\MainPlusA - \bigcup_{w \in L} V(\rdisc{\nembed}{r}{w})$.

Note that for every vortex $\VortexA{i}$, for at most one $w \in L$ the radial disc $\rdisc{\nembed}{r+2}{w}$ may contain a society vertex of $\VortexA{i}$ or the vertex $\PlusVortexVtx{i}$. 
Hence, we can restrict $L$ to only those $w \in L$ for which $\rdisc{\nembed}{r+2}{w}$
does not contain any society vertex nor a vertex $\PlusVortexVtx{i}$; we have $|L| \geq 100(b+\eulerg_\ast)$. 

Let $a' = 3a+1$. 
Fix $w \in L$ and consider cycles $\rintcycle{\nembed}{r}{i}{w}$ (see Lemma~\ref{lem:radial-disc}).
The first $q'$ cycles and the last $q'$ cycles are separated by all cycles $\rintcycle{\nembed}{r}{i}{w}$, $q' < i \leq q' + a'$. 
Furthermore, every cycle $\rintcycle{\nembed}{r}{i}{w}$ contains at least one vertex of $\MainA$. 
If $\MainPlusA$ contains a separation of order less than $a'$ that separates $\rintcycle{\nembed}{r}{q'+1}{w}$ from $\rintcycle{\nembed}{r}{q'+a'}{w}$, then 
$\MainPlusA$ contains a set $X \subseteq V(\MainPlusA)$ of size less than $a'$ that separates all cycles $\rintcycle{\nembed}{r}{i}{w}$ for $i \leq q'+1$ from all cycles $\rintcycle{\nembed}{r}{i}{w}$ for $i \geq q'+a'$.
Then, $\preimage(X) \cup \Apices$ is a separator of size less than $a'(q+1)+k < k'$
in $G$ that has more than $q'$ vertices of $\MainA$ on both sides (as each cycle $\rintcycle{\nembed}{r}{i}{w}$ contains at least one vertex of $\MainA$),
a contradiction to the $(q',k')$-unbreakability of $G$. 

Hence, $\MainPlusA$ contains a family $\mathcal{P}_w$ of $a'$ vertex-disjoint paths from $\rintcycle{\nembed}{r}{q'+1}{w}$ to $\rintcycle{\nembed}{r}{q'+a'}{w}$.
Without loss of generality, assume that only the endpoints of paths of $\mathcal{P}_w$ lie on those two cycles; in particular, these paths are embedded in the ring between them. 
Lemma~\ref{lem:find-cylinder} implies that the subgraph of $\MainPlusA$
consisting of $\mathcal{P}_w$ and cycles $\rintcycle{\nembed}{r}{q'+i}{w}$ for $i \in [a']$ 
contains an $a \times a$ cylindrical wall $W_w$
with $C_w \coloneqq \rintcycle{\nembed}{r}{q'+1}{w}$
and $D_w \coloneqq \rintcycle{\nembed}{r}{q'+a}{w}$ as external cycles. 
Note that, as $\rdisc{\nembed}{r+2}{w}$ does not contain any society vertex of a vortex
nor a vertex $\PlusVortexVtx{i}$, $W_w$ does not contain any virtual vortex edges.

Now consider the graph $G'$ defined as $\MainPlusA$ after contracting
the insides of all radial discs $\rdisc{\nembed}{q'+a+1}{w}$ for $w \in L$
and all radial discs $\rdisc{\nembed}{2}{\PlusVortexVtx{i}}$ for $i \in [\nvortices_\ast]$;
note that these radial discs are pairwise vertex-disjoint
due to the assumption that $\rdisc{\nembed}{r+4}{w}$ does not contain
any vertex $\PlusVortexVtx{i}$ for $w \in L$ and
due to Lemma~\ref{lem:opt-ne-rep}. 
Lemma~\ref{lem:genus:clear-2conn} asserts that $G'$ is 3-connected.
By Corollary~\ref{cor:gallai}, $G'$ admits a family $\mathcal{Q}_0$ of
$b+\nvortices_\ast$ vertex-disjoint paths with endpoints in $L$. 
Let $\mathcal{Q} \subseteq \mathcal{Q}_0$ be a subfamily of $b$ paths
that do not contain any vertex $\PlusVortexVtx{i}$.
A path $Q \in \mathcal{Q}$ connecting $w_1$ and $w_2$ can be lifted
to a path $Q'$ between corresponding cycles $D_{w_1}$ and $D_{w_2}$
in $\MainPlusA$ that (because we contracted $\rdisc{\nembed}{2}{\PlusVortexVtx{i}}$ for $i \in [\nvortices_\ast]$) does not pass through any society vertex or any vertex $\PlusVortexVtx{i}$.

Recall that $\nembed$ has tidy dongles, in particular, it has properly connected dongles.
Furthermore, we have now that all objects $W_w$, $\mathcal{P}_w$, and $Q'$ for $Q \in \mathcal{Q}$
do not contain society vertices nor vertices $\PlusVortexVtx{i}$. 
By Lemma~\ref{lem:lift-wall}, any lift of $W_w$ is an $a \times a$ wall. Note
that (again thanks to the fact that dongles are properly connected) $P_w$ can be lifted to a path
from $v$ to the extremal cycle of $W_w$ that corresponds to $C_w$, while
for any $Q \in \mathcal{Q}$, $Q'$ with endpoints on $D_{w_1}$ and $D_{w_2}$
can be lifted to a path in $G$ between the extremal cycle of the lift of $W_{w_1}$ that corresponds
to $D_{w_1}$ and the extremal cycle of the lift of $W_{w_2}$ that corresponds to $D_{w_2}$. 
Furthermore, the fact that neither of the lifted objects contain a society vertex
nor a $\PlusVortexVtx{i}$ vertex ensures that the lifts are vertex-disjoint (except for the
obvious required overlaps).
This gives the desired $(a,b)$-apex-forcer for $v$. 
\end{proof}

Let $\nembed$ be an optimal near-embedding and let $v$ be an apex of $\nembed$.
The vertex $v$ is an \emph{universal apex} if \emph{for every} optimal near-embedding
$v$ is an apex, and a \emph{local apex} otherwise. 
We have the following corollary of Lemmas~\ref{lem:apex-forcer} and~\ref{lem:exists-apex-forcer}.

\begin{corollary}\label{cor:univ-apex}
There are integers
\begin{align*}
\AttCoverSize &\in \Oh(\napices_\ast(k + \eulerg_\ast + 1)), \\
\AttCoverRadius &\in \Oh(q'+\eulerg_\ast + \WallHitSize \cdot \WallHitRadius + 1)
\end{align*}
such that the following holds. 
Let $\nembed$ be an optimal near-embedding with tidy dongles.
Then there exists a set $L \subseteq V(\MainPlusA)$ of size 
at most $\AttCoverSize$ 
such that every attachment point of a local apex of $\nembed$ lies
in $\rdisc{\nembed}{\AttCoverRadius}{w}$ for some $w \in L$.
\end{corollary}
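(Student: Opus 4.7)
The plan is to combine Lemma~\ref{lem:apex-forcer} and Lemma~\ref{lem:exists-apex-forcer} via a covering argument run separately for each local apex. Pick $a$ and $b$ as the smallest integers allowed by Lemma~\ref{lem:apex-forcer}, and set $r \coloneqq 3(a+4)+1+2q'$ and $M \coloneqq \nvortices_\ast + 100(b+\nvortices_\ast+\eulerg_\ast)$ in agreement with Lemma~\ref{lem:exists-apex-forcer}. Take $\AttCoverRadius \coloneqq 3(r+5)$ and $\AttCoverSize \coloneqq \napices_\ast\cdot(M-1)$; using that $\nvortices_\ast$ and $\eulerg_\ast$ are both $\Oh(\alpha_H)\le \Oh(k)$, these values match the orders of magnitude asserted in the statement. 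It then suffices to build, for each local apex $v$, a set $L_v\subseteq V(\MainPlusA)$ with $|L_v|<M$ so that every attachment point of $v$ lies in $\rdisc{\nembed}{\AttCoverRadius}{w}$ for some $w\in L_v$; then $L := \bigcup_{v} L_v$ works.

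Fix a local apex $v$ and let $\Gamma(v)\subseteq V(\MainPlusA)$ denote the set of its attachment points. The key idea is first to eliminate the containment alternative in Lemma~\ref{lem:packdiscs}: let $\Gamma^\ast(v)\subseteq \Gamma(v)$ consist of one representative per inclusion-wise maximal element of $\{\rdisc{\nembed}{r+5}{w}\colon w\in \Gamma(v)\}$. For any two distinct $w,w'\in \Gamma^\ast(v)$, neither radial disc contains the other, so by Lemma~\ref{lem:packdiscs} their discs intersect only when the radial distance between $w$ and $w'$ in $\MainPlusA$ is at most $2(r+5)$. Now greedily build a maximal $L_v\subseteq \Gamma^\ast(v)$ with all pairwise radial distances exceeding $2(r+5)$; contrapositively, the discs $\rdisc{\nembed}{r+5}{w_i}$ for $w_i\in L_v$ are pairwise vertex-disjoint.

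If $|L_v|\geq M$, Lemma~\ref{lem:exists-apex-forcer} yields an $(a,b)$-apex-forcer for $v$, and then Lemma~\ref{lem:apex-forcer} would force $v$ to be an apex in every optimal near-embedding, contradicting that $v$ is local; hence $|L_v|<M$. For the coverage, take any $w\in \Gamma(v)$. If $w\in L_v$, then trivially $w\in \rdisc{\nembed}{r+5}{w}$. If $w\in \Gamma^\ast(v)\setminus L_v$, maximality of $L_v$ produces $w_i\in L_v$ at radial distance at most $2(r+5)$ from $w$, giving $w\in \rballX{\nembed}{2(r+5)}{w_i}\subseteq \rdisc{\nembed}{2(r+5)}{w_i}$. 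Finally, if $w\in \Gamma(v)\setminus \Gamma^\ast(v)$, extend $w$ to a maximal $w'\in \Gamma^\ast(v)$ with $\rdisc{\nembed}{r+5}{w}\subseteq \rdisc{\nembed}{r+5}{w'}$; the previous cases place $w'$ within radial distance $2(r+5)$ of some $w_i\in L_v$, and Lemma~\ref{lem:eat-disc} upgrades this to $\rdisc{\nembed}{r+5}{w'}\subseteq \rdisc{\nembed}{3(r+5)}{w_i}$, yielding $w\in \rdisc{\nembed}{\AttCoverRadius}{w_i}$. The subtle step is precisely the initial restriction to maximal discs $\Gamma^\ast(v)$: without it, the containment alternative in Lemma~\ref{lem:packdiscs} would allow two intersecting discs whose centers are arbitrarily far apart in radial distance, which would break the final radius bound obtained through Lemma~\ref{lem:eat-disc}.
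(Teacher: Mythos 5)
Your proof is correct and fills in the argument that the paper leaves implicit (the paper states the corollary as following from Lemmas~\ref{lem:apex-forcer} and~\ref{lem:exists-apex-forcer} without spelling out the packing step). You combine those two lemmas in exactly the intended way: a greedy maximal packing of far-apart centers whose radial discs are pairwise disjoint, and then a cap on the packing size coming from the apex-forcer machinery. The restriction to $\Gamma^\ast(v)$ is a genuine and necessary observation --- Lemma~\ref{lem:packdiscs} alone permits two intersecting discs whose centers are at radial distance up to $\Oh(q'')$ when one disc is nested inside the other, so passing to inclusion-wise maximal discs before invoking the greedy argument is what makes the final radius bound $3(r+5)$ work via Lemma~\ref{lem:eat-disc}. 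Two minor points worth recording if you write this up formally: (i) all applications of Lemmas~\ref{lem:packdiscs} and~\ref{lem:eat-disc} need $3(r+5)\le \rmax$, which is ensured by taking $\funstep$ sufficiently quickly growing (standard in the paper, but should be stated); (ii) your bound comes out in terms of $\WallHitSize_\ast\WallHitRadius_\ast$ rather than $\WallHitSize\WallHitRadius$, since Lemma~\ref{lem:apex-forcer} is phrased with the starred quantities --- this is harmless (and arguably corrects a small notational slip in the corollary statement), but worth being aware of.
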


Note that, if we choose $\funstep$ sufficiently quickly-growing, we have
\[ \AttCoverSize = \Oh(k^2) \quad\mathrm{and}\quad \AttCoverRadius = \Oh(q'). \]

\section{Proximity of attachment points}\label{ss:proximity}
We say that an optimal near-embedding $\nembedA$ with tidy dongles has \emph{minimal vortices}
if there is no other optimal near-embedding $\nembedB$ with tidy dongles and exactly the same set of apices
such that $\bigcup_{i \in [\nvortices_\ast]} V(\VortexB{i}) \subsetneq \bigcup_{i \in [\nvortices_\ast]} V(\VortexA{i})$. 
In this section we prove the following structural result about optimal near-embeddings, which is the cornerstone of our approach. The result intuitively says that if we look at two optimal near-embedding with tidy dongles, say $\nembedA$ and $\nembedB$ where $\nembedB$ has minimal vortices, then the attachment points of apices of $\nembedB$ that are not apices of $\nembedA$ are somewhat local in $\nembedA$: they can be covered by a bounded number of radial discs of bounded radius. Moreover, this cover can be defined based only on $\nembedA$ so that it works for every possible other embedding $\nembedB$.
Later, this statement will allow us to relate different optimal embeddings with each other, as it shows that in terms of apices, they may  differ only locally and in a controlled manner.

Recall that we use the convention that objects related to embedding $\nembedA$ are named $\Apices$, $\MainPlusA$, etc., while objects related to embedding $\nembedB$ are named $\ApicesB$, $\MainPlusB$, etc.

\begin{lemma}\label{lem:proximity}
There are integers
\begin{align*}
\ProxCoverSize &= \AttCoverSize + \nvortices_\ast = \Oh(\napices_\ast(k + \eulerg_\ast+1) + \nvortices_\ast) = \Oh(k^2), \\
\ProxCoverRadius &= 4(\AttCoverRadius + 
 3(\ProxCoverSize + \napices_\ast+1)(q+\AttCoverRadius+3\\
     &\qquad +4f_{\mathrm{fwf}}(\Sigma(\eulerg_\ast,\eulerg_\ast,\ProxCoverSize + \napices_\ast+1),\\
     &\qquad\qquad\qquad 5+q+2f_{\mathrm{ue}}(\eulerg_\ast) + (2q+7)\nvortices_\ast + \WallHitC(\eulerg_\ast+1)^2\WallHitSize_\ast(2\WallHitRadius_\ast+1)))) \\
 &\quad = \Oh(q'k^2) \qquad\qquad \textrm{(assuming }\funstep\textrm{ is sufficiently quickly growing)}
\end{align*}
such that the following holds. 
For every optimal near-embedding $\nembedA$ of $G$ with tidy dongles, 
there exists a set $R \subseteq V(\MainPlusA)$ of size at most $\ProxCoverSize$ such that
for every optimal near-embedding $\nembedB$ that has tidy dongles and minimal vortices,
all the following vertices lie in $\bigcup_{w \in R} \rdisc{\nembed}{\ProxCoverRadius}{w}$:
\begin{enumerate}
\item every attachment point of a local apex of $\nembedA$;
\item every vertex $\PlusVortexVtx{i}$ and society vertex of $\nembedA$;
\item the vertex $\SubProj_{\nembedA}(v)$ for every vertex $v \in (\ApicesB \cup \bigcup_{j \in [\nvortices_\ast]} V(\VortexB{j})) \setminus \ApicesA$.
\end{enumerate}
\end{lemma}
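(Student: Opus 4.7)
\textbf{Plan for the proof of Lemma~\ref{lem:proximity}.}

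I would define $R$ as the union of: (i) the cover $L$ of size at most $\AttCoverSize$ provided by Corollary~\ref{cor:univ-apex}, which by construction handles item~1 with radius $\AttCoverRadius\leq \ProxCoverRadius$; and (ii) the set $\{\PlusVortexVtx{i}~\colon~i\in [\nvortices_\ast]\}$, which handles item~2 since every society vertex of $\nembedA$ is at radial distance $1$ from the corresponding virtual vortex vertex. This gives $|R|\leq \ProxCoverSize$, so the work lies entirely in item~3.

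For item~3 I would argue by contradiction. Suppose some $v\in (\ApicesB\cup \bigcup_{j}V(\VortexB{j}))\setminus \ApicesA$ has $w_v\coloneqq \SubProj_{\nembedA}(v)$ lying outside $\bigcup_{w\in R}\rdisc{\nembedA}{\ProxCoverRadius}{w}$. The goal is to modify $\nembedB$ into another optimal near-embedding with tidy dongles that has the same apex set as $\nembedB$ but a strictly smaller union of vortex vertex sets, contradicting the minimality of $\nembedB$. Set an intermediate radius $r$ satisfying $\AttCoverRadius+3 f_{\mathrm{fwf}}(\cdot,\cdot) \ll r \ll \ProxCoverRadius/2$, so that all of $\rdisc{\nembedA}{r}{w_v}$ is also outside $\bigcup_{w\in R}\rdisc{\nembedA}{r+\AttCoverRadius}{w}$. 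By the choice of $R$ and $r$, this radial disc contains no attachment point of a local apex of $\nembedA$, no vertex $\PlusVortexVtx{i}$, and no society vertex of $\nembedA$; hence it intersects $\nembedA$ only in $\MainA$ together with some dongles of $\nembedA$, and the preimage under $\nembedA$ of its interior lifts canonically to a clean subgraph of $G$ embedded in a disc, using Lemma~\ref{lem:radial-disc} to control treewidth and the tidy-dongles property to lift.

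Inside this disc I would extract a large subdivided $\Sigma'$-wall $W$ (where $\Sigma'$ is a disc), of size chosen to exceed the threshold $f_{\mathrm{fwf}}$ of Theorem~\ref{thm:GM7} plus the $\WallHitC(\eulerg_\ast+1)^2\WallHitSize_\ast(2\WallHitRadius_\ast+1)$ vertices that Lemma~\ref{lem:Sigma-wall-survives} allows $\nembedB$ to disturb. Lifting $W$ to a subgraph of $G$ (Lemma~\ref{lem:lift-wall}) and then projecting onto $\nembedB$ (Lemma~\ref{lem:project-wall} together with Lemma~\ref{lem:Sigma-wall-survives}) yields a large subwall $W'$ of $W$ whose projection to $\MainPlusB$ is again a $\Sigma'$-wall avoiding both apices and vortex edges of $\nembedB$, with $v$ necessarily outside $W'$ because $v$ lies in $\ApicesB$ or in a vortex of $\nembedB$. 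Now Theorem~\ref{thm:GM7} and the face-width bound from Lemma~\ref{lem:opt-ne-rep} imply that $\MainPlusB$ contains a $\Sigma'$-wall as a surface minor; the uniqueness-of-embedding result of Theorem~\ref{thm:fw-unique}, applied to $\MainPlusA$ and $\MainPlusB$ (both $3$-connected by Lemma~\ref{lem:aligned-is-3conn}, both of face-width at least $k'''/100-2q-k$), pins down the embedding of this common wall on $\Sigma$, so the neighborhoods of $w_v$ in $\nembedA$ and in $\nembedB$ agree up to a small boundary strip.

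The hard part is the surgical replacement. Using the pinned common piece, I would surgically replace the structure of $\nembedB$ inside the region corresponding to $\rdisc{\nembedA}{r/2}{w_v}$ by the structure inherited from $\nembedA$. If $v\in \ApicesB$, all edges of $G$ incident to $v$ whose other endpoint projects into this region can be dropped from the apex role, because the preimage already contains a consistent planar drawing witnessed by $\nembedA$; $v$ therefore need not remain an apex. If instead $v$ belongs only to $\VortexB{j}$, a similar surgery removes $v$ from the vortex --- splitting $\VortexB{j}$ appropriately at the society vertices where the replacement region meets $\VortexDisc{j}$, analogous to the cutting-along-$\gamma$ surgery from Section~\ref{sec:prelims} --- and the bounded cost of this splitting (measured in number of vortices, apices, maximum adhesion size, and Euler genus) is absorbed because the replacement region stays clear of $\nembedB$'s vortex discs except in a controlled boundary. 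Finally, one re-tidies the dongles using Lemma~\ref{lem:tidy-dongles}. The obtained near-embedding keeps all parameters of $\nembedB$ non-increased, has the same apex set, but the set $\bigcup_{j}V(\VortexB{j})$ has lost at least $v$, contradicting that $\nembedB$ has minimal vortices. The main obstacle I expect is the bookkeeping of this surgery: carefully verifying that after the replacement, the new near-embedding is still aligned with $3$-connected components and has tidy dongles, and that the radius $\ProxCoverRadius$ as defined is just large enough to provide room for the $\Sigma'$-wall, the $f_{\mathrm{fwf}}$-buffer, and the local disturbances caused by the bounded number of vortices of $\nembedB$ that may intersect the replacement region.
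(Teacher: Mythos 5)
Your overall strategy — reduce to item~3, argue by contradiction via a replacement argument, and use a wall to compare the two near-embeddings — matches the paper. But there are two serious gaps in the details that I do not think can be repaired without substantially changing the plan.

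First, the choice to extract a $\Sigma'$-wall where $\Sigma'$ is a \emph{disc} removes exactly the structure you need to relate $\nembedA$ and $\nembedB$. A planar wall does not have a unique embedding in a surface of positive Euler genus, so Theorem~\ref{thm:fw-unique} gives you nothing about where the projection of $W$ lands in $\MainPlusB$. Applying Theorem~\ref{thm:fw-unique} separately to $\MainPlusA$ and $\MainPlusB$ also does not help: they are different graphs with different virtual vertices and dongle structure, so uniqueness of each of their embeddings says nothing about how they compare on the lifted subgraph $W$. The paper instead builds a $\Sigma^\bullet$-wall of genus $\eulerg_\ast$ (with cuffs around the chosen cover set) and invokes Theorem~\ref{thm:fw-unique} on the wall itself, so that $\SubProj_{\nembedA}(W)$ and $\SubProj_{\nembedB}(W)$ are both naturally embedded $\Sigma$-walls and hence literally the same embedded graph; this is what pins down which side of the cycle $\Gamma$ is ``inside'' in both pictures. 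A disc-wall gives no such anchor. Relatedly, Theorem~\ref{thm:GM7} produces surface minors inside the \emph{same} surface as the host graph, so it will not hand you a disc-wall from $\MainPlusB$ embedded in $\Sigma$ of positive genus.

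Second, your cover $R$ is $L\cup\{\PlusVortexVtx{i}\}$, but the process that produces the cuffs has to also include the $\nembedA$-projections of \emph{every} vertex in $\ApicesB\setminus\ApicesA$, not just the one bad $v$. Without those extra cuffs, a vertex $w\in\ApicesB\setminus\ApicesA$ may project into (or right next to) the wall $W$ in $\nembedA$, which breaks the key correspondence between predongles and between inside-components in the two embeddings (this is the content of Claims~\ref{cl:prox:predongles-the-same} and~\ref{cl:prox:the-same-inside}). The paper's iterative disc-merging process starts from $L\cup\{\SubProj_{\nembedA}(u)\mid u\in\{v_0\}\cup(\ApicesB\setminus\ApicesA)\}$ precisely to guarantee that the wall stays radially far from all such projections. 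Your plan quarantines only the $L$-discs and $w_v$'s disc, and also takes the final cover $R$ to have size $|L|+\nvortices_\ast$, which is consistent with the statement, but your proof of the ``no bad $v$'' claim never enforces distance from the other $\ApicesB\setminus\ApicesA$ projections. Finally, the claim that after the replacement ``the obtained near-embedding keeps all parameters of $\nembedB$ non-increased, has the same apex set, but the set of vortex vertices has lost at least $v$'' requires exactly the case analysis the paper carries out at the end: either $\ApicesC\subsetneq\ApicesB$ (contradicting $\napices_\ast$), or $\ApicesC=\ApicesB$ and one can only then conclude the vortex set strictly shrinks. Your proposal conflates these two cases; they are not the same surgery and they produce different contradictions.
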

\begin{proof}
We invoke Corollary~\ref{cor:univ-apex} on $\nembedA$, obtaining a set $L$, and add the vertices $\{\PlusVortexVtx{i}~|~i \in [\nvortices_\ast]\}$
to $L$. 
Clearly, $|L| \leq \AttCoverSize + \nvortices_\ast$. By Corollary~\ref{cor:univ-apex},
every attachment point of a local apex of $\nembedA$ and every 
society vertex of $\nembedA$ lies in $\bigcup_{w \in L}\rdisc{\nembed}{\AttCoverRadius}{w}$.
The challenge is to show that, for any choice of $\nembedB$ as in the lemma statement, 
the apices and vertices of vortices
of $\nembedB$ are either apices in $\nembedA$ or (their projections in $\nembedA$) also lie close to the aforementioned radial balls.

To this end, observe the following.
\begin{claim}\label{cl:proximity-finish}
Assume that for every 
\[ v \in \left(\ApicesB \cup \bigcup_{j \in [\nvortices_\ast]} V(\VortexB{j})\right) \setminus \ApicesA \]
there exists $u \in L$ and $w \in V(\MainPlusA)$ such that both $\SubProj_{\nembedA}(v)$ and $u$ lie in $\rdisc{\nembedA}{\ProxCoverRadius/4}{w}$.
Then for every $u \in L$ there exists $w(u) \in V(\MainPlusA)$ such that $R \coloneqq \{w(u)~|~u \in L\}$ satisfies the statement of the lemma.
\end{claim}
\begin{proof}
For every $u \in L$ let $W(u) \subseteq V(\MainPlusA)$ be the family of vertices $w$
such that $\rdisc{\nembedA}{w}{\ProxCoverRadius/4}$ contains $u$ and is inclusion-wise maximal. 
Then, Lemma~\ref{lem:packdiscs} implies that the radial distance (in $\MainPlusA$)
between any two vertices of $W(u)$ is at most $\ProxCoverRadius/2$ and then Lemma~\ref{lem:eat-disc}
implies that for every $w,w' \in W(u)$, the radial disc
$\rdisc{\nembed}{w}{\ProxCoverRadius}$ contains the radial disc $\rdisc{\nembed}{w'}{\ProxCoverRadius/4}$. Hence, any choice $w(u) \in W(u)$ for every $u \in L$ satisfies the statement of the claim.
\cqed\end{proof}

Assume the contrary; let
$v_0$ be such that 
\begin{equation}\label{eq:v0assumption}
v_0 \in \left(\ApicesB \cup \bigcup_{j \in [\nvortices_\ast]} V(\VortexB{j})\right) \setminus \ApicesA \qquad \mathrm{and} \qquad \neg \exists_{w \in R} \exists_{u \in L} 
\{\SubProj_\nembedA(v_0),u\} \subseteq \rdisc{\nembed}{\ProxCoverRadius/4}{w}.
\end{equation}
Our goal is to reach a contradiction by showing that $\nembedB$ cannot be optimal
with minimal vortices. Intuitively, the ``area around $v_0$'' can be redrawn using a part of the embedding $\nembedA$ so that either the number of apices  strictly decreases, or the set of vertices
in vortices gets strictly smaller. 

Assume $\Sigma = \Sigma(a,b)$. 
Set 
\[\mu := 2f_{\mathrm{fwf}}(\Sigma(a,b,|L|+\napices_\ast+1), h_1)\]
where $f_{\mathrm{fwf}}$ comes from Theorem~\ref{thm:GM7} and
\[ h = 5+q+2f_{\mathrm{ue}}(\eulerg_\ast), \qquad h_3 = h + (2q+7)\nvortices_\ast, \qquad h_1 = h_3 + \WallHitC(\eulerg_\ast+1)^2\WallHitSize_\ast(2\WallHitRadius_\ast+1). \]
Here, $f_{\mathrm{ue}}$ comes from Theorem~\ref{thm:fw-unique}
and $\WallHitC$ comes from Lemma~\ref{lem:Sigma-wall-survives}.

Consider now the following process. 
Start with $R_0 \coloneqq L \cup \{\SubProj_{\nembedA}(u)~|~u \in \{v_0\} \cup \ApicesB \setminus \ApicesA\}$ and assign radius $r_u \coloneqq \AttCoverRadius + q + 3$ for every $u \in R_0$.
Exhaustively, apply one of the steps below if applicable.
\begin{itemize}
\item If there exists $u \in R_0$ such that for some $v \in V(\MainPlusA)$ we have that the radial disc $\rdisc{\nembed}{r_u}{u}$ is a proper subset of
the radial dist $\rdisc{\nembed}{r_u}{v}$, replace $u$ with $v$ in $R_0$ and set $r_v \coloneqq r_u$. 
\item If there exist $u_1,u_2 \in R_0$ such that the radial discs $\rdisc{\nembed}{r_{u_1}+\mu}{u_1}$ and $\rdisc{\nembed}{r_{u_2}+\mu}{u_2}$ are not disjoint,
      apply Lemma~\ref{lem:packdiscs} to these two discs, obtaining a vertex~$w$;
set $R_0 \coloneqq (R_0 \setminus \{u_1,u_2\}) \cup \{w\}$ and set $r_w = r_{u_1}+r_{u_2} + 2\mu$. 
\end{itemize}
Let $R$ be $R_0$ at the end of the process.

Observe that when the process ends, 
we have 
\[ \sum_{w \in R} r_w \leq (q+\AttCoverRadius+3+2\mu)(|L|+\napices_\ast+1) \leq \ProxCoverRadius/4 = \Oh(q'k^2) \]
assuming $\funstep$ is sufficiently quickly growing. 
In particular, no $r_w$ ever exceeds $\rmax/2$ and the application of Lemma~\ref{lem:packdiscs} is justified. 
The assumption~\eqref{eq:v0assumption} implies
that $\SubProj_{\nembedA}(v_0)$ is in a radial disc $\rdisc{\nembedA}{r_w}{w}$ for some $w \in R$
that does not contain any vertex of $L$. 

Let $H_1$ be the graph $\MainPlusA$ with all radial discs $\rdisc{\nembedA}{r_u}{u}$ for $u \in R$
cleared.
Furthermore, let $\Sigma^\bullet$ be $\Sigma$ with a small cuff cut out around every vertex $u \in R$
so that this cuff is completely inside the face of $H_1$ that contains $u$ and
$H_1$ is embedded into $\Sigma^\bullet$ without any vertex on a cuff. 

We now make the following construction.
\begin{claim}\label{cl:proximity:W}
$G$ contains a $\Sigma$-wall $W$ of order $h$ with the following properties:
\begin{itemize}[nosep]
\item $W$ is vertex-disjoint with apices and vortices of both $\nembedA$ and $\nembedB$;
\item $\SubProj_\nembed(W)$ is a $\Sigma^\bullet$-wall of order $h$ with the natural embedding that is a subgraph of $H_1$;
\item in $\nembedA$, every vertex of $\SubProj_{\nembed}(W)$ is within radial distance at least $q + 3$ from any virtual vortex vertex, any attachment point of a local apex, and any projection
of a vertex of $\ApicesB \setminus \ApicesA$;
\item $\SubProj_{\nembedB}(W)$ is a $\Sigma$-wall of order $h$ with the natural embedding;
\item in $\nembedB$, every vertex of $\SubProj_{\nembedB}(W)$ is within radial distance at least $q+3$ from any virtual vortex vertex;
\item all degree-3 vertices of $W$ are contained in both $G(\ApicesA)$ and $G(\ApicesB)$ (that is, $W$ projects to a $\Sigma$-wall $W(\ApicesA)$ of order $h$ in $G(\ApicesA)$
 and to a $\Sigma$-wall $W(\ApicesB)$ of order $h$ in $G(\ApicesB)$). 
\end{itemize}
\end{claim}
\begin{proof}
Lemma~\ref{lem:genus:punctured-fw} asserts that $H_1$ is still embedded
in $\Sigma^\bullet$ with high facewidth, namely at least 
\[ k'''/100-2q-k - 2\sum_{w \in R} r_w \geq k'''/100-2q-k - 2(\AttCoverRadius+3)(\AttCoverSize+1).\]
By setting $\funstep$ sufficiently quickly-growing, we can assume that that the said facewidth
is at least $\mu$.
By the stopping condition of the process, the radial distance in $H_1$ between distinct cuffs
is at least $2\mu$.
As $\Sigma^\bullet$ has at most $|L|+\napices_\ast+1$ cuffs, Theorem~\ref{thm:GM7} implies that 
$H_1$ contains a $\Sigma^\bullet$-wall $W_1$ of order $h_1$ as a surface minor. 

Note that $W_1$ does not contain any society vertex of $\nembedA$ nor any vertex $\PlusVortexVtx{i}$. 
Recall also that $\nembedA$ has tidy dongles, in particular, it has properly connected dongles.
Hence, Lemma~\ref{lem:lift-wall} implies that $G-\ApicesA$ contains a 
$\Sigma$-wall $W_2$ of order $h_1$ with $\SubProj_{\nembedA}(W_2) = W_1$.

Lemma~\ref{lem:Sigma-wall-survives} (and Lemma~\ref{lem:sigma-wall-holes}; recall that Lemma~\ref{lem:sigma-wall-holes} accommodates surfaces with a boundary)
implies in turn that $W_2$ contains a $\Sigma$-wall $W_3$
of order $h_3$ that does not contain any apex of $\ApicesB$ nor any edge of a vortex of $\nembedB$
and such that $\SubProj_{\nembedA}(W_3)$ is in fact a $\Sigma^\bullet$-wall of
order $h_3$ embedded naturally into $\Sigma^\bullet$
(i.e., informally speaking, the cuffs of $\Sigma^\bullet$, that correspond
to the vertices of $R$, lie in the analogous places of $\SubProj_{\nembedA}(W_3)$
as they lie in $W_1$). 

By the choice of $h_3$, the embedding of $\SubProj_{\nembedA}(W_3)$ into $\Sigma$ is of facewidth
larger than $f_{\mathrm{ue}}(\eulerg_\ast)$. By Theorem~\ref{thm:fw-unique},
it is an embedding of this graph of minimum Euler genus and is the unique embedding
into $\Sigma$. 

By Lemma~\ref{lem:project-wall}, $\WB_3 \coloneqq \SubProj_{\nembedB}(W_3)$ is a $\Sigma$-wall of order $h_3$.
By Theorem~\ref{thm:fw-unique}, the embedding of $\WB_3$ in $\nembedB$ is the natural
embedding of a $\Sigma$-wall of order $h_3$ into $\Sigma$.
By the choice of $h_3$ and Lemma~\ref{lem:sigma-wall-holes}, $\WB_3$ contains a $\Sigma$-wall $\WB$ of order $h$ as a subwall,
with the natural embedding, such that every vertex of $\WB$ is within radial distance
larger than $q+3$ from any virtual vortex vertex of $\nembedB$. 
By Lemma~\ref{lem:lift-wall}, $W_3$ contains a $\Sigma$-wall $W$ of order $h$ 
with $\SubProj_{\nembedB}(W) = \WB$. 
Again, $\SubProj_{\nembedA}(W)$ is not only a $\Sigma$-wall of order $h$ with its natural
embedding, but also a $\Sigma^\bullet$-wall of order $h$ with its natural embedding.

The above verifies that $W$ satisfies all required properties but the last one.
The last property follows in turn from $h > q$ and Lemma~\ref{lem:lift-wall}.
\cqed\end{proof}

We now leverage how we distantiated $\WB$ from the apices of both $\nembedA$ and $\nembedB$ in the following claim. In what follows, $\Predongles$ and $\PredonglesB$ are the families
considered in the definition of tidy dongles (Section~\ref{ss:tidy-dongles}), constructed
 for graphs $G-\ApicesA$ and $G-\ApicesB$, respectively,
 and the corresponding unbreakability tangle.
\begin{claim}\label{cl:prox:predongles-the-same}
The following holds:
\[V(W(\Apices)) \setminus \bigcup_{D \in \Predongles} D = 
  V(W(\ApicesB)) \setminus \bigcup_{\widetilde{D} \in \PredonglesB} \widetilde{D}. \]
\end{claim}
\begin{proof}
Let $\Predongles_0$ be the family of maximal $\mathcal{T}_{\Apices}$-predongles in $G(\Apices)$ and
let $\PredonglesB_0$ be the family of maximal $\mathcal{T}_{\ApicesB}$-predongles in $G(\ApicesB)$.

Consider an element $D_0 \in \Predongles_0$ such that $N_{G(\Apices)}[D_0]$ contains a vertex $v_1 \in W(\Apices)$. Let $X_0 = N_{G(\Apices)}(D_0)$; note that $|X_0| = 3$. 
Let $G_1 = G(\Apices)[D_0 \cup X_0]$
and let $G_2$ be the preimage of $G_1$ in the graph $G$, that is: start with $G_2=G[V(G_1)]$ and for every connected component $C$ of $G-\Apices-V(G(\Apices))$ whose neighborhood in $G-\Apices$ is nonempty and contained in $V(G_1)$, add $C$ and all edges between $C$ and $V(G_1)$ to $G_2$. 
By the definition of $\Predongles_0$, 
\[ (N_G[V(G_2) \setminus X_0], N_G[V(G) \setminus N_G[V(G_2) \setminus X_0]]) \]
is a separation of order at most $k$ with $N_G[V(G_2) \setminus X_0] \subseteq V(G_2) \cup \Apices$ being the side of size at most $q$.

By the 3-connectivity of $G(\Apices)$, $G_1$ is connected. 
By the definition of $G(\Apices)$, $G_2$ is connected as well. 

Recall that the radial distance in $\nembedA$ between any vertex of $\SubProj_{\nembedA}(W)$
and an attachment point of a local apex or a virtual vortex vertex 
of $\nembedA$ is larger than $q+1$. 
By the connectivity of $G_2$, $G_2$ is edge-disjoint with vortices of $\nembedA$.
Hence, Lemma~\ref{lem:small-subgraph-is-local} implies that 
$G_2$ does not contain any vertex $u$ whose projection $\SubProj_{\nembedA}(u)$ is an
attachment point of a local apex of $\nembedA$. 
Similarly, since the radial distances between projections of vertices of $\ApicesB \setminus \ApicesA$ and $\SubProj_{\nembedA}(W)$ are larger than $q+1$ in $\nembedA$, 
Lemma~\ref{lem:small-subgraph-is-local} implies that 
$G_2$ does not contain any
vertex $u \in \ApicesB \setminus \ApicesA$ nor its neighbor.

Moreover, we 
claim that 
there is no separation
$(A,B)$ of $G-\ApicesB$ of order at most $2$ with $|A| \leq q$ and $V(G_2) \subseteq A$.
Assume the contrary. Without loss of generality, assume that $G[A]$
is connected. 
From Lemma~\ref{lem:small-subgraph-is-local}
applied in $\nembedA$ to the connected component of $G[A]-\ApicesA$ that contains $G_2$,
we infer that $A$ does not contain any vertex of $\ApicesA$.
Then, again from Lemma~\ref{lem:small-subgraph-is-local},
applied in $\nembedA$ to $G[A]$ we infer that $G[A]$ 
does not contain any vertex of $\ApicesB \setminus \ApicesA$ in its closed neighborhood.
Hence, $A$ is also a small side of a separation of order at most $2$ in $G-\Apices$,
a contradiction to the fact that it contains (in $D_0$) a vertex of $G(\Apices)$.

We infer that 
\[ V(G_2) \cap \ApicesB = \emptyset \qquad \mathrm{and} \qquad N_G(V(G_2) \setminus X_0) \subseteq X_0 \cup (\ApicesB \cap \ApicesA). \]
Hence, $G_1$ is also a subgraph of $G(\ApicesB)$ and thus there exists $\widetilde{D}_0 \in \PredonglesB_0$
with $D_0 \subseteq \widetilde{D}_0$. Furthermore, as $G_1$ is a subgraph of $G(\ApicesB)$,
     we infer that $v_1 \in V(W(\ApicesB))$ and $v_1 \in N_{G(\ApicesB)}[\widetilde{D}_0]$, that
     is, $N_{G(\ApicesB)}[\widetilde{D}_0] \cap V(W(\ApicesB)) \neq \emptyset$.

A symmetric argument proves that for every $\widetilde{D}_0 \in \PredonglesB_0$ such that
$N_{G(\ApicesB)}[\widetilde{D}_0]$ contains a vertex of $W(\ApicesB)$ there exists
$D_0 \in \Predongles_0$ 
such that 
$N_{G(\ApicesA)}[D_0]$ contains a vertex of $W(\ApicesA)$ 
and with $\widetilde{D}_0 \subseteq D_0$. 
As families $\Predongles_0$ and $\PredonglesB_0$ both contain pairwise disjoint sets
of vertices, we infer that 
\[ \{D_0 \in \Predongles_0~|~N_{G(\Apices)}[D_0] \cap V(W(\Apices)) \neq \emptyset\} = 
   \{\widetilde{D}_0 \in \PredonglesB_0~|~N_{G(\ApicesB)}[\widetilde{D}_0] \cap V(W(\ApicesB)) \neq \emptyset\}. \]
The claim follows from the definition of $\Predongles$ and $\PredonglesB$.
\cqed\end{proof}
Let
\[ Z \coloneqq V(W(\Apices)) \setminus \bigcup_{D \in \Predongles} D. \]
Since every $D \in \Predongles$ satisfies $|N_{G(\Apices)}(D)|=3$
and the elements of $\Predongles$ are pairwise disjoint and nonadjacent in $G(\Apices)$, 
we infer that every maximal path in $W(\Apices)$ between two degree-3 vertices contains at least
one vertex of $Z$. Thus, $W(\Apices)-Z$ is a forest of paths and subdivided claws, 
and similarly $W(\ApicesB)-Z$. 

Furthermore, since both $\nembedA$ and $\nembedB$ have tidy dongles, $W$ 
does not contain any apex of $\nembedA$ or $\nembedB$
and its projection is further in radial distance than $q$ from any vortex
in both $\nembedA$ and $\nembedB$, we infer that 
$Z \subseteq V(\MainA)$ and $Z \subseteq V(\MainB)$. 

We now consider $\SubProj_{\nembedA}(W)$ and apply Lemma~\ref{lem:rballs} (or Lemma~\ref{lem:rballs:planar} if $\eulerg_\ast = 0$) to obtain a cycle $\Gamma$ in $W$ such that
$\PathProj_{\nembedA}(\Gamma)$ consists of vertices within radial distance (in $W$) exactly 3 from 
the face containing $\SubProj_{\nembedA}(v_0)$ (and one cuff of $\Sigma^\bullet$)
and separates the said face from all other cuffs of $\Sigma^\bullet$. 
In $\Sigma$, $\PathProj_{\nembedA}(\Gamma)$ bounds a disc containing $\SubProj_{\nembedA}(v_0)$; we will
call this disc the \emph{inside} of $\PathProj_{\nembedA}(\Gamma)$, and the other component
of $\Sigma-\PathProj_{\nembedA}(\Gamma)$ the \emph{outside}.

Recall that $\SubProj_{\nembedB}(W)$ is also the natural embedding of a $\Sigma$-wall
into $\Sigma$. Hence, $\PathProj_{\nembedB}(\Gamma)$ is also a contractible cycle
such that $Z \setminus V(\Gamma)$ is split between
the two connected components of $\Sigma-\PathProj_{\nembedB}(\Gamma)$ 
exactly in the same way as it is split in $\Sigma-\PathProj_{\nembedA}(\Gamma)$. 
We again call the corresponding sides of $\PathProj_{\nembedB}(\Gamma)$ the \emph{inside} and the \emph{outside} and note that the inside is a disc.

We will also need the following definition for a predongle $D \in \Predongles$: the \emph{preimage (w.r.t. $\Apices$)} of
$D$ is a subgraph $G_D$ of $G$ that consists of:
\begin{itemize}
\item the vertices of $D$ and $N_{G(\Apices)}(D)$;
\item all edges of $G(\Apices)[N_{G(\Apices)}[D]]$ that are present in $G$;
\item for every connected component $C$ of $G-\Apices-V(G(\Apices))$ with $N_{G-\Apices}(C) \cap D \neq \emptyset$
(so in particular $N_{G-\Apices}(C) \subseteq N_{G(\Apices)}[D]$ as $D \subseteq V(G(\Apices))$),
the entire graph $G[C]$ and all edges between $C$ and $N_{G(\Apices)}[D]$. 
\end{itemize}
Note that for every connected component $C$ of $G(\Apices)[D]$ there exists one connected component $C'$ of $G_D-N_{G(\Apices)}(D)$
such that $C' \cap V(G(\Apices)) = C$ and, vice versa, for every connected component $C'$ of $G_D-N_{G(\Apices)}(D)$
we have exactly one connected component $C$ of $G(\Apices)[D]$ such that $C' \cap V(G(\Apices))=C$. 

Consider now the graph $G-Z-\Apices$ and its connected component $C$. 
Every neighbor $v \in N_G(C)$ is either in $\Apices$ or in $Z$; in the latter case, $v \in V(\MainA)$ as $Z \subseteq V(\MainA)$.
Hence, we can speak about the placement of vertices of $N_{G-\Apices}(C)$ in $\nembedA$. 
We have the following observation:

\begin{claim}\label{cl:prox:C}
For every connected component $C$ of $G-Z-\Apices$, one of the following holds:
\begin{enumerate}
\item There exists a single face $f_C$ of $\SubProj_{\nembedA}(W)$ whose closure contains all vertices of $N_{G-\Apices}(C)$ and such that 
all vertices of $\{\SubProj_{\nembed}(u)~|~u \in C\}$ lie either in the closure of $f_C$
or in one of the faces of $\SubProj_{\nembedA}(W)$ sharing an edge with $f_C$. 
\item There exists $D \in \Predongles$ that contains a degree-3 vertex of $W(\Apices)$ (so in particular $N_{G(\Apices)}(D) \subseteq Z$) 
  and $C$ is one of the connected components of $G_D-N_{G(\Apices)}(D)$, where $G_D$ is the preimage of $D$ in $\nembedA$. 
\end{enumerate}
In particular, either all vertices of $N_{G-\Apices}(C)$ 
are inside or on $\PathProj_{\nembedA}(\Gamma)$ in $\nembedA$, or all vertices
of $N_{G-\Apices}(C)$ are outside or on $\PathProj_{\nembedA}(\Gamma)$ in $\nembedA$. 

The same statement holds for $\nembedA$ replaced with $\nembedB$. 
\end{claim}
\begin{proof}
The proofs for $\nembedA$ and $\nembedB$ are symmetric and hence we present only the first one. Recall that
both in $\nembedA$ and $\nembedB$, the vertices of the projection of $W$ are within radial distance more than $q+1$ from 
the virtual vortex vertices and the society vertices. 

Assume first that $C$ does not contain any vertex $u$ such that $\SubProj_{\nembedA}(u)$ lies on $\SubProj_{\nembedA}(W)$. 
Then, the connectivity of $C$ and the fact that $W$ is edge-disjoint with vortices of $\nembedA$ implies that
there exists a single face of $\SubProj_{\nembedA}(W)$ that contains $\{\SubProj_{\nembedA}(u)~|~u \in C\}$
and that all vertices of $N_{G-\Apices}(C)$ lie in or on the boundary of the said face. This proves that the first option holds in this case. 

Consider now any vertex $u \in C$ such that $u' \coloneqq \SubProj_{\nembedA}(u)$ is a vertex of $\SubProj_{\nembedA}(W)$. 
Note that we have either $u \in V(G(\Apices))$ or there exists a connected component $C_u$ of $G-\Apices-V(G(\Apices))$ that contains $u$. 
Let us explore a bit deeper the latter case. 
Clearly, $|N_{G-\Apices}(C_u)| \leq 2$. 
Also, since $\nembedA$ has tidy dongles, in particular it is aligned with 3-connected components,
there exists a dongle $\DongleA{j}$ such that
$u' = \PlusDongleVtx{j}$ and $C_u \subseteq V(\DongleA{j}) \setminus V(\MainA)$.

We now exclude the corner case when $u \notin V(G(\Apices))$, so
$C_u$ is well-defined, but there is no $D \in \Predongles$ with
$N_{G-\Apices}(C_u) \cap D \neq \emptyset$. 
In that case, $C_u$ is not contained in $G_D$ for any $D \in \Predongles$.
Hence, $C_u \subseteq V(\DongleA{j}) \setminus V(\MainA)$
for a dongle $\DongleA{j}$ with $u' = \PlusDongleVtx{j}$,
$|V(\DongleA{j}) \cap V(\MainA)| = 2$ and with
both vertices of $V(\DongleA{j}) \cap V(\MainA)$ in $Z$. 
Consequently, $C_u$ is one of the connected components of $\DongleA{j} - V(\MainA)$. 
Thus, $C = C_u$ and the first outcome of the claim holds for any of the two faces of
$\SubProj_{\nembedA}(W)$ incident with the virtual edge connecting
$V(\DongleA{j}) \cap V(\MainA)$. 

We now claim that in the remaining case there exists a unique $D \in \Predongles$
such that the preimage $G_D$ contains $u$. 
To this end, consider two cases. If $u \in V(G(\Apices))$, 
then, as $u \notin Z$, there exists a predongle $D \in \Predongles$ with $u \in D$. 
Thus, $u$ is in $G_D$ and, since the predongles in $\Predongles$ are vertex-disjoint and nonadjacent
in $G(\Apices)$, $u$ is not in $G_{D'}$ for any other $D' \in \Predongles$.
In the other case, if $u \notin V(G(\Apices))$ and $C_u$ is defined, 
since we are not in the case excluded in the previous paragraph, there exists
$D \in \Predongles$ with $N_{G-\Apices}(C_u) \cap D \neq \emptyset$. 
Hence, $G_D$ contains $C_u$ by the definition of $G_D$. Again, since distinct elements
of $\Predongles$ are disjoint and nonadjacent in $G(\Apices)$, no other $G_{D'}$
for $D' \in \Predongles$ contains a vertex of $C_u$. This finishes the proof of the existence
of the unique $D \in \Predongles$ with $u$ in $G_D$.

Since $\SubProj_{\nembedA}(u)$ lies on $\SubProj_{\nembedA}(W)$ while the elements of $N_{G(\Apices)}(D) \cap W(\Apices)$ are in $Z$,
we have that actually $u \in V(G_D) \setminus N_{G(\Apices)}(D)$. Since $W(\Apices)$ is 2-connected, we have 
$|N_{G(\Apices)}(D) \cap W(\Apices)| \in \{2,3\}$. 
Note that all vertices of $N_{G(\Apices)}(D) \cap W(\Apices)$ are in $Z$
and these are the only vertices of $G_D$ that are in $Z$.

Consider first the case $|N_{G(\Apices)}(D) \cap W(\Apices)| = 3$.
Since $|N_{G(\Apices)}(D)| = 3$, we have that all vertices of $N_{G(\Apices)}(D)$
are in $Z$ and, hence, $C$ is the connected component of $G_D - N_{G(\Apices)}(D)$
that contains $D$. 
If $G_D - N_{G(\Apices)}(D)$ contains a degree-3 vertex of $W(\Apices)$,
then the second outcome of the claim holds. 
Otherwise, $G_D \cap W$ consists of three vertices of $Z$ and possibly
a path between two of these vertices. 
Since $W$ is edge-disjoint with vortices in $\nembedA$, we infer that
all vertices of $N_{G(\Apices)}(D)$ lie in the closure
of a single face $f_C$ of $W$ and every vertex of $\SubProj_{\nembed}(C)$
lies either in the closure of $f_C$ or in a face of $\SubProj_{\nembed}(W)$
that contains in its closure at least two vertices of $N_{G(\Apices)}(D)$. 

We are left with the case 
when for every choice of $u \in C$ with $\SubProj_{\nembed}(u)$ being
in $\SubProj_{\nembed}(W)$,
the corresponding predongle $D(u) \in \Predongles$ exists
and $|N_{G(\Apices)}(D(u)) \cap W(\Apices)| = 2$. 
Hence, $G_{D(u)} \cap W$ consists of two vertices of $Z$ 
and possibly a path between these two vertices. 
In particular, $G_{D(u)}$ does not contain a degree-3 vertex of $W(\Apices)$
Since $\nembedA$ has tidy dongles, the unique vertex $w(u)$ of $N_{G(\Apices)}(D(u)) \setminus Z$
lies in $\MainA$; in particular, $w(u)$ does not lie on $\SubProj_{\nembed}(W)$. 
Furthermore, the whole $G_{D(u)}$ and $w(u)$ lies in $C$. 
Let $f(u)$ be the face that accommodates $w(u)$.

The crucial observation now is that, regardless of the choice of $u$ as above, $f(u)$ is the same face.
Indeed, assume that for some $u_1,u_2$ as above, $f(u_1) \neq f(u_2)$. In paricular, $D(u_1) \neq D(u_2)$ and $w(u_1) \neq w(u_2)$. Let $P$ be a path in $C$ from $w(u_1)$ to $w(u_2)$
and consider $\PathProj_{\nembedA}(P)$. As $f(u_1) \neq f(u_2)$ and $W$ does not contain an edge of a vortex of $\nembedA$,
this projection needs to contain a vertex $u' = \SubProj_{\nembedA}(u)$ in $\SubProj_{\nembed}(W)$. 
However, $P$ can visit only one vertex of $N_{G(\Apices)}(D(u))$, namely $w(u)$, as the other two are in $Z$. This is a contradiction as $P$ goes from $w(u_1)$ to $w(u_2)$, which are both 
not in $G_{D(u)}$, via a vertex $u$ in $G_{D(u)}$.
This finishes the proof that $f(u)$ is the same face regardless of the choice of $u$; we proclaim to be $f_C$. 

We observe now that  
$\SubProj_{\nembed}(v)$ for any $v \in C$ lies either in the closure of $f_C$, or otherwise
$v \in V(G_{D(u)}) \setminus Z$ for some $u$ as above and $\SubProj_{\nembed}(v)$
lies in the face of $\SubProj_{\nembed}(W)$ that contains the two vertices
of $N_{G(\Apices)}(D(u)) \cap Z$ in its closure. 
This finishes the proof of the claim.
\cqed\end{proof}

Let $\mathcal{I}$ be the family of all connected components $C$ of $G-Z-\Apices$ such that $N_{G-\Apices}(C)$ is nonempty
and all vertices of $N_{G-\Apices}(C)$ are inside or on $\PathProj_{\nembedA}(\Gamma)$ in $\nembedA$.
Similarly, let
$\widetilde{\mathcal{I}}$ be the family of all connected components $C$ of $G-Z-\ApicesB$ such that $N_{G-\ApicesB}(C)$ is nonempty
and all vertices of $N_{G-\ApicesB}(C)$ are inside or on $\PathProj_{\nembedB}(\Gamma)$ in $\nembedB$.

We say that a component $C$ of $G-Z-\Apices$ is \emph{close to the inside (in $\nembedA$)} if 
$N_{G-\Apices}(C)$ contains a vertex of $Z$ that is inside or on $\PathProj_{\nembedA}(\Gamma)$.
Claim~\ref{cl:prox:C} implies that if $C$ is close to the inside, then 
either (in the case of the first outcome) the face $f_C$ is inside $\PathProj_{\nembedA}(\Gamma)$
or one of the faces of $\SubProj_{\nembedA}(W)$ incident with $\PathProj_{\nembedA}(\Gamma)$
or (in the case of the second outcome) $C$ is one of the connected component of $G_D-N_{G(\Apices)}(D)$ for $D \in \Predongles$ that contains a degree-3 vertex of $W$ that is inside or on $\Gamma$
or incident with one of the faces incident with $\Gamma$ in the natural embedding of $W$.
In all cases, since $W$ is of order $h \gg 5$, we infer that $N_G(C)$ does not contain
any local apex of $\nembed$.
We will also use an analogous definition of a component $\widetilde{C}$ of $G-Z-\ApicesB$ being \emph{close to the inside (in $\nembedB$)}.

The separation properties of $W$ and local apices of $\nembedA$ and $\nembedB$ imply the following.
\begin{claim}\label{cl:prox:the-same-inside}
\[ \widetilde{\mathcal{I}} = \bigcup_{C \in \mathcal{I}} \{\widetilde{C}~|~\widetilde{C}\ \textrm{is a connected component of}\ G[C]-\ApicesB\ \textrm{and}\ N_G(\widetilde{C}) \cap Z \neq \emptyset\}. \] 
\end{claim}
\begin{proof}
Consider first $C \in \mathcal{I}$ and apply Claim~\ref{cl:prox:C} to it. Since $N_{G-\Apices}(C)$ is nonempty and in the inside or on $\PathProj_{\nembedA}(\Gamma)$, 
we have that $C$ lies close to the inside in $\nembedA$. 
Hence, $N_G(C)$ does not contain any vertex of $\ApicesA \setminus \ApicesB$. 
Consequently, all connected components of $G[C]-\ApicesB$ are connected components of $G-Z-\ApicesB$.
Since a vertex of $Z$ is inside or on $\PathProj_{\nembedA}(\Gamma)$ in $\nembedA$ if and only if it is inside or on $\PathProj_{\nembedB}(\Gamma)$ in $\nembedB$, 
any connected component $\widetilde{C}$ of $G[C]-\ApicesB$ with $N_G(\widetilde{C}) \cap Z \neq \emptyset$, actually contains in its neighborhood a 
vertex of $Z$ inside or on $\PathProj_{\nembedB}(\Gamma)$ in $\nembedB$, and hence belongs to $\widetilde{\mathcal{I}}$. 
This proves the $\supseteq$ inclusion of the claim.

In the second direction, consider $\widetilde{C} \in \widetilde{\mathcal{I}}$. 
Since no vertex of $Z \subseteq V(W)$ is an attachment point of a local apex of $\nembedA$,
there exists a connected component $C'$ of $G[\widetilde{C}]-\ApicesA$ such that some vertex $x \in N_G(\widetilde{C}) \cap Z$ is in $N_G(C')$.
Then, $C'$ is contained in one connected component $C$ of $G-Z-\ApicesA$ that, furthermore, has $x$ in $N_G(C)$. 
Since $x \in Z$ is inside or on $\PathProj_{\nembedB}(\Gamma)$ in $\nembedB$, it is also inside or on $\PathProj_{\nembedA}(\Gamma)$ in $\nembedA$. 
We infer that $C$ is close to the inside in $\nembedA$. 
Hence, $N_G(C)$ contains no vertex of $\ApicesA \setminus \ApicesB$. Consequently $\widetilde{C} \cap \ApicesA = \emptyset$, that is, $C' = \widetilde{C}$.

We infer that $\widetilde{C}$ is one of the connected components of $G[C]-\ApicesB$. Furthermore, $x$ witnesses that $N_G(\widetilde{C}) \cap Z \neq \emptyset$. 
It remains to show that $C \in \mathcal{I}$. This claim is immediate if $C = \widetilde{C}$, that is, $C$ contains no vertex of $\ApicesB \setminus \ApicesA$, so assume otherwise:
let $w \in C \cap (\ApicesB \setminus \ApicesA)$.

Recall that for any $w \in \ApicesB \setminus \ApicesA$, $\SubProj_{\nembedA}(w)$
is in $\Sigma$ inside one of the cuffs of $\Sigma^\bullet$.
Since $C$ is close to the inside, the only cuff where $w$ may lie is the one 
that also contains $\SubProj_{\nembedA}(v_0)$. 
By Claim~\ref{cl:prox:C} applied to $C$ in $\nembedA$, we infer that $N_{G-\Apices}(C)$
is contained in the closure of the face of $\SubProj_{\nembedA}(W)$ that contains
$\SubProj_{\nembedA}(v_0)$ and its incident faces in $\SubProj_{\nembedA}(W)$. 
Hence, $C$ is inside $\PathProj_{\nembedA}(\Gamma)$ and $C \in \mathcal{I}$, as desired.
\cqed\end{proof}

Without loss of generality, assume that in $\nembedA$ every dongle $\DongleA{j}$ with $V(\DongleA{j}) \cap V(\MainA) = \emptyset$ has its disc not in any face of $\SubProj_{\nembedA}(W)$
incident with a vertex of $Z$ that is inside or on $\PathProj_{\nembedA}(\Gamma)$ and similarly for $\nembedB$. 

\begin{figure}[tb]
\begin{center}
\includegraphics[height=0.83\textheight]{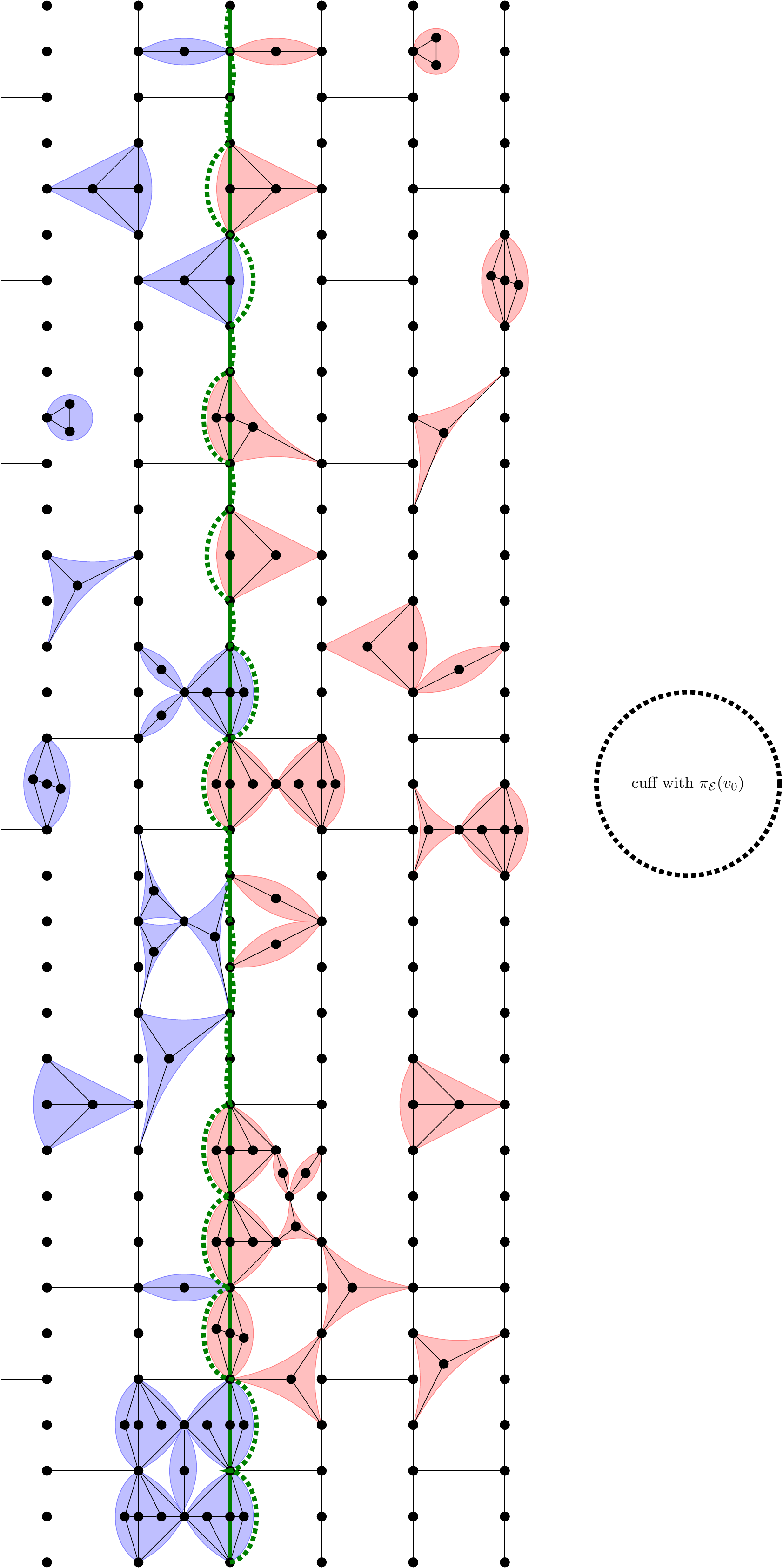}
\caption{Example situation around the curve $\Gamma$ in the wall $W$. 
  The face to the right is the face of $\nembedA$ with $\SubProj_{\nembedA}(v_0)$.
  Red and blue areas correspond to inside and outside dongles (with the subgraph inside the area being the
  dongle itself). 
  The solid green line is the cycle $\Gamma$. The dashed green line is the
  curve $\gamma$ given by Claim~\ref{cl:prox:gamma}.}\label{fig:prox:gamma}
\end{center}
\end{figure}

For the next claim, we refer to Figure~\ref{fig:prox:gamma}.
\begin{claim}\label{cl:prox:gamma}
There exists closed contractible face-vertex curve $\gamma$ in $\MainPlusA$ that visits exactly the vertices of $Z \cap \PathProj_{\nembedA}(\Gamma)$ in the order of their
appearance on $\PathProj_{\nembedA}(\Gamma)$ and encloses a disc of $\Sigma$ containing in its inside exactly the following elements of $\nembedA$ and $\Sigma^\bullet$:
the cuff of $\Sigma^\bullet$ containing $\SubProj_{\nembedA}(v_0)$, all vertices of $Z$ that are inside $\PathProj_{\nembedA}(\Gamma)$, 
and the projections of all components of $\mathcal{I}$ and edges between them and $Z$. 

An analogous curve $\widetilde{\gamma}$ exists for $\nembedB$, $\MainPlusB$, and the components of $\widetilde{\mathcal{I}}$.
\end{claim}
\begin{proof}
Follow $\PathProj_{\nembedA}(\Gamma)$ and consider two consecutive vertices $z_1$ and $z_2$ of $Z$. If there is only an edge of $\MainPlusA$ between $z_1$ and $z_2$ on $\PathProj_{\nembedA}(\Gamma)$,
draw $\gamma$ on either side of this edge from $z_1$ to $z_2$. Otherwise, there is a component $C$ of $G-Z-\Apices$ that contains all vertices of $\Gamma$ between $z_1$ and $z_2$ and, consequently,
whose projection contains all the vertices of $\PathProj_{\nembedA}(\Gamma)$ between $z_1$ and $z_2$.
If all vertices of $N_{G-\Apices}(C)$ are inside or on $\PathProj_{\nembedA}(\Gamma)$, 
we can draw $\gamma$ from $z_1$ to $z_2$ on the outside side of $\PathProj_{\nembedA}(\Gamma)$ (i.e., keeping the projection of $C$ on the same side of $\gamma$ as the inside
    of $\PathProj_{\nembedA}(\Gamma)$). 
Otherwise, if all vertices of $N_{G-\Apices}(C)$ are outside or on $\PathProj_{\nembedA}(\Gamma)$, 
we can draw $\gamma$ from $z_1$ to $z_2$ on the inside side of $\PathProj_{\nembedA}(\Gamma)$ (i.e., keeping the projection of $C$ on the same side of $\gamma$ as the outside
    of $\PathProj_{\nembedA}(\Gamma)$). 
The construction for $\nembedB$ is completely analogous.
\cqed\end{proof}
As before, the disc part of $\Sigma-\gamma$ that contains the promised objects in Claim~\ref{cl:prox:gamma} is called the \emph{inside} of $\gamma$ (and similarly of $\widetilde{\gamma}$). 

We are now ready to conclude the proof of Lemma~\ref{lem:proximity}. 
Consider a near-embedding $\nembedC$ of $G$ constructed from $\nembedB$ as follows:
\begin{enumerate}
\item Take the curve $\widetilde{\gamma}$ provided by Claim~\ref{cl:prox:gamma} and erase everything inside it.
That is, remove from $\nembedC$ all vertices of $\MainPlusC$ embedded inside $\widetilde{\gamma}$, all vortices whose virtual vertex is inside $\widetilde{\gamma}$,
     and all dongles whose virtual vertex is inside $\widetilde{\gamma}$. 
\item Draw inside $\widetilde{\gamma}$ everything exactly as it is drawn inside $\gamma$ in $\nembedA$. 
That is, add to $\MainPlusC$ all edges and vertices that are in $\MainPlusA$ inside $\gamma$ and all dongles whose virtual vertices are inside $\gamma$ in $\nembedA$. 
\end{enumerate}
The fact that the vertices of $Z \cap \PathProj_{\nembedA}(\Gamma)$ appear in the same order on $\gamma$ and $\widetilde{\gamma}$ implies that the above replacement
of a part of $\MainPlusC$ with a part of $\MainPlusA$ is possible. 
Claim~\ref{cl:prox:the-same-inside} implies that every vertex that is removed from a part of $\nembedC$ in the first step is restored in the second step and, moreover,
  every vertex $w \in \ApicesB \setminus \ApicesA$ such that $\SubProj_{\nembedA}(w)$ is inside $\gamma$ is restored as well. 
Furthermore, note that the second step does not create any vortices. 

We infer that $\ApicesC \subseteq \ApicesB$ and $\ApicesC = \ApicesB$ only if no vertex of $\ApicesB \setminus \ApicesA$ has its projection inside $\gamma$ in $\nembedA$. 
By the optimality of $\nembedB$, we infer that the latter case happens and $\nembedC$ is an optimal near-embedding as well.
Due to the existence of $v_0$, we infer that the set of vertices in vortices in $\nembedC$ is a strict subset of the set of vertices in vortices in $\nembedB$.
This contradiction with the minimality assumption of $\nembedB$ finishes the proof of the lemma.
\end{proof}

\section{Wrap-up}
We are now ready to wrap-up the proof of Theorem~\ref{thm:rigid}. 

Let $\OptNembeds$ be the family of all optimal near-embeddings of the input graph $G$
that have tidy dongles and minimal vortices.
Let $\Apices_\OptNembeds$ be the set of those vertices $v \in V(G)$ for which there exists $\nembed \in \OptNembeds$ with $v \in \Apices$.
Let $\VortexVtcs_\OptNembeds$ be the set of these vertices $v \in V(G)$ for which there exists
$\nembed \in \OptNembeds$ and a vortex $\VortexA{i}$ of $\nembed$ with $v \in V(\VortexA{i})$. 

The following statement can be proven using standard irrelevant vertex and bounded treewidth
techniques. 
If $\OptNembeds$ were just the family of optimal near-embeddings (not only the ones with tidy dongles and minimal vortices), then such a statement would follow directly from~\cite{GroheKR13}. 
In Appendix~\ref{app:compute-opt-nembed} we discuss that it is straightforward to adapt
the techniques of~\cite{GroheKR13} to our setting. 
\begin{lemma}\label{lem:compute-opt}
Given the input as in Theorem~\ref{thm:rigid},
the following parameters of an optimal near-embedding can be computed within the running time bound promised by Theorem~\ref{thm:rigid}:
the level $\iota_\ast$, $\napices_\ast$, $\nvortices_\ast$, $\eulerg_\ast$, as well as the sets $\Apices_\OptNembeds$ and $\VortexVtcs_\OptNembeds$.
\end{lemma}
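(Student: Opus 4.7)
The strategy is to adapt the algorithmic framework of Grohe, Kawarabayashi, and Reed~\cite{GroheKR13}, which solves essentially the same task for the standard notion of a near-embedding, to the more restrictive notion of optimality employed here (highest level $\iota_\ast$ first, then lexicographically smallest $\eulerg_\ast$, $\napices_\ast$, $\nvortices_\ast$). Since all the discretized quality measures we optimize over take only finitely many values bounded by a computable function of $H$ and $\sum_i q_i$, the existing decision procedure for ``does $G$ admit a near-embedding capturing $\mathcal{T}$ with prescribed parameters?'' can be invoked on each candidate tuple in lexicographic order, yielding $\iota_\ast$, $\eulerg_\ast$, $\napices_\ast$, $\nvortices_\ast$ together with one witnessing near-embedding. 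The post-processing given by Lemmas~\ref{lem:align-3conn} and~\ref{lem:tidy-dongles} then turns this witness into one with tidy dongles in polynomial time, and a further straightforward search (iteratively trying to shrink the set $\bigcup_i V(\VortexA{i})$ by local moves such as absorbing a vortex vertex into $\MainA$ or into a dongle) forces minimality of vortices.

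To compute $\Apices_\OptNembeds$, I would run, for each $v \in V(G)$, the decision procedure from~\cite{GroheKR13} subject to the extra hard constraint that $v$ lies in the apex set, and report $v\in \Apices_\OptNembeds$ iff the outcome is still a near-embedding with the optimal parameter tuple. Forcing a single vertex into the apex set is a natural specialization of the tangle-directed recursion of~\cite{GroheKR13}: in every branch one records the additional stipulation that $v$ is an apex, and this only further restricts the already bounded search space. By Lemma~\ref{lem:tidy-dongles}, restricting attention to tidy dongles does not change the family of realizable apex sets, so this gives the correct answer for our family $\OptNembeds$.

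Computing $\VortexVtcs_\OptNembeds$ is the main obstacle, because of the minimality-of-vortices condition: checking whether $v$ can sit in a vortex of \emph{some} $\nembed \in \OptNembeds$ requires reasoning about the whole family of tidy optimal near-embeddings with each fixed apex set, not merely about the existence of one such near-embedding. Here I would combine the~\cite{GroheKR13} decomposition with a two-level enumeration. First, the outer recursion isolates each candidate apex set $\Apices$ realizable in an optimal tidy near-embedding; this is already implicit in the branching of~\cite{GroheKR13}, and can be made explicit without affecting the running-time bound. Second, for every such $\Apices$, the remaining graph $G-\Apices$ admits a bounded-treewidth ``skeleton'' once the large $3$-connected component $G(\Apices)$ and its essentially unique near-embedding (up to the canonical dongle choice of Section~\ref{ss:tidy-dongles}) have been determined. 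Standard dynamic programming on a tree decomposition of bounded width then enumerates the inclusion-minimal candidates for $\bigcup_i V(\VortexA{i})$, and asks for each $v$ whether it appears in at least one such minimal set.

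The final subtlety is that the bounded-treewidth hypothesis needed for the DP may fail: $G$ itself is assumed to have very large treewidth. This is resolved by the irrelevant-vertex philosophy of~\cite{GroheKR13}: the decisions one has to make about apices and vortices are local with respect to the surface-embedded part of the structure, and vertices deep inside $\MainPlusA$ (far from any vortex, apex projection, or attachment point, in the sense of Sections~\ref{ss:large-grids}--\ref{ss:proximity}) are provably irrelevant and can be contracted away, reducing the problem to a bounded-treewidth instance on which Courcelle-style dynamic programming enumerates all tidy optimal near-embeddings with minimal vortices. Totalled, all of $\iota_\ast$, $\napices_\ast$, $\nvortices_\ast$, $\eulerg_\ast$, $\Apices_\OptNembeds$, and $\VortexVtcs_\OptNembeds$ are computed within the promised time bound of $\funtime(H,\sum_i q_i)\cdot |V(G)|^{\ctime}$.
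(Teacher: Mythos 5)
Your overall strategy — adapt the algorithmic machinery of \cite{GroheKR13} (bounded-treewidth case plus irrelevant-vertex reduction) to the present optimality criteria — is the correct one and matches the paper's plan. However, the paper realizes the bounded-treewidth case very differently than you do, and the difference is not cosmetic: the paper encodes near-embeddings, the tidy-dongle condition, and the minimal-vortices condition directly as a single \emph{MSO$_2$ formula} and then applies Courcelle's theorem, so that existential and universal quantification over near-embeddings (and over apex and vortex sets) is handled for free by the model-checker. This is precisely what makes the running time $\funtime(H,\sum_i q_i)\cdot n^{\ctime}$ achievable.

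Your proposal for $\VortexVtcs_\OptNembeds$ breaks this: you propose to ``isolate each candidate apex set $\Apices$ realizable in an optimal tidy near-embedding'' and then run a dynamic program per apex set. The number of realizable apex sets can be as large as $n^{\Theta(\napices_\ast)}$ — and $\napices_\ast$ is bounded only by a (large) function of $H$ and $\sum_i q_i$, not by a universal constant — so this outer enumeration does not stay within the promised $\funtime(H,\sum_i q_i)\cdot n^{\ctime}$ bound. Your assertion that this enumeration is ``already implicit in the branching of~\cite{GroheKR13}'' is not accurate; \cite{GroheKR13} never branches over apex sets explicitly, precisely because MSO$_2$ quantification over monadic set variables sidesteps this.

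A second gap: your route to minimal vortices — starting from one tidy witness and ``iteratively trying to shrink $\bigcup_i V(\Vortex{G}{i})$ by local moves'' — is not justified. Minimality is defined with respect to \emph{all} other optimal tidy near-embeddings with the same apex set; a local move on the current embedding (absorbing one vortex vertex into $\MainA$ or into a dongle) need not converge to the global minimum, and reaching the minimum may require a global re-embedding of a large part of $\MainA$. The paper avoids this entirely by expressing ``for every $V'\subsetneq V$, there is no valid tidy optimal near-embedding with apex set $\Apices$ and vortex-vertex set $V'$'' inside the MSO$_2$ formula, which Courcelle's theorem evaluates directly. Relatedly, Lemmas~\ref{lem:align-3conn} and~\ref{lem:tidy-dongles} are existence statements and their proofs are constructive only once an optimal near-embedding with the fixed apex set is in hand; they do not, by themselves, give a polynomial-time procedure to ``post-process'' an arbitrary witness into a tidy one, which you seem to assume. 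If you replace the per-apex-set enumeration and the local-search for minimality by the MSO$_2$/Courcelle encoding, your argument lines up with the paper's.
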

Observe that the sets $\Apices_\OptNembeds$ and $\VortexVtcs_\OptNembeds$ are
defined in an isomorphic-invariant way. 

Pick one $\nembed \in \OptNembeds$ and apply Lemma~\ref{lem:proximity} to it, obtaining
a set $L \subseteq V(\MainPlusA)$ of size at most~$\ProxCoverSize$. 

We consider the following construction, parameterized by a set $Y \subseteq V(G)$ of
size at most~$9$. 
Initialize $R \coloneqq L \cup \{\SubProj_\nembed(v)~|~v \in Y \setminus \Apices\}$
and for every $v \in R$ set $r_v \coloneqq \ProxCoverRadius + 3$.
Iteratively, execute the following process:
while there exist two $u_1,u_2 \in R$ such that the radial discs $\rdisc{\nembed}{r_{u_1}+3}{u_1}$ and $\rdisc{\nembed}{r_{u_2}+3}{u_2}$ are not disjoint,
      apply Lemma~\ref{lem:packdiscs} to these two discs, obtaining a vertex $w$;
set $R \coloneqq (R \setminus \{u_1,u_2\}) \cup \{w\}$ and set $r_w = r_{u_1}+r_{u_2} + 6$. 
(We will later in the proof verify that the radii $(r_v)_{v \in R}$ are small enough during the process so that the application
 of Lemma~\ref{lem:packdiscs} is always justified.)
At the end of the process, we have that the radial discs $\rdisc{\nembed}{r_u+3}{u}$ for $u \in R$ are pairwise disjoint and
\[ |R| \leq \ProxCoverSize + 9\quad\mathrm{and}\quad \sum_{u \in R} r_u \leq (9+\ProxCoverSize)(9 + \ProxCoverRadius). \]
Since $\ProxCoverSize$ and $\ProxCoverRadius$ are bounded by a computable function of $q'$ and $k'$, we can pick $\funstep$ sufficiently quickly growing so
that during the process the radii $r_u$ are smaller than $\rmax/100$ and all applications of Lemma~\ref{lem:packdiscs} are justified.
Furthermore, note the process maintains the invariant
that for every $u \in L$ there exists $u' \in R$ such that
$\rdisc{\nembed}{\ProxCoverRadius+3}{u}$ is contained in $\rdisc{\nembed}{r_{u'}}{u'}$. 

Lemma~\ref{lem:proximity} asserts that for every $v \in \Apices_\OptNembeds \cup \VortexVtcs_\OptNembeds$ either $v \in \Apices$ or there exists $u \in L$ such that $\SubProj_\nembed(v)$ lies in
$\rdisc{\nembed}{\ProxCoverRadius}{u}$. 

For $i=1,2,3$, let $\MainPlusA^i(Y)$ be the result of clearing in $\MainPlusA$ all radial discs
$\rdisc{\nembed}{r_u+i}{u}$ for $u \in R$. By Lemma~\ref{lem:genus:clear-2conn}, %
the graphs $\MainPlusA^i(Y)$ are 2-connected and $\MainPlusA^1(Y)$ contains the unique 3-connected component
$\MainPlusA^\ast(Y)$ that has more than $2q''$ vertices and contains $\MainPlusA^2(Y)$.
(In what follows we mostly use the case $i=1$ and $i=2$ and only look at $i=3$ in the last claim.)

By Lemmas~\ref{lem:opt-ne-rep} and~\ref{lem:genus:punctured-fw}, $\MainPlusA^2(Y)$ still has very large facewidth, namely
at least
\[ k'''/100 - 2q - k - 2(9+\ProxCoverSize)(15+\ProxCoverRadius).\]
As both $\ProxCoverSize$ and $\ProxCoverRadius$ are bounded by a computable function of $q'$ and $k'$, 
   we can pick $\funstep$ sufficiently quickly growing so that the said facewidth is greater than
   \begin{equation}\label{eq:wrap-up-fw}
   2f_{\mathrm{fwf}}(\Sigma, 3q''+2f_{\mathrm{ue}}(\eulerg_\ast)).
   \end{equation}
Since $\MainPlusA^1(Y)$ does not contain any virtual vortex edge or vertex, 
the above is also a (crude) lower bound on the number of vertices
of $V(\MainPlusA^2(Y)) \cap V(\MainA)$. 

Again since $\MainPlusA^1(Y)$ does not contain any virtual vortex edge or vertex,
we can take a lift $G^1(Y)$ of $\MainPlusA^1(Y)$ in $G$. 
Since the projection of $G^1(Y)$ is again $\MainPlusA^1(Y)$, 
it has face-width at least as in~\eqref{eq:wrap-up-fw}. By Theorem~\ref{thm:GM7}, $G^1(Y)$ contains a $\Sigma$-wall $H^1(Y)$
  of order 
  \[h := 3q''+2f_{\mathrm{ue}}(\eulerg_\ast).\] 
Note that all the degree-3 vertices of $H^1(Y)$ are contained in the same 3-connected component
of $G-\Apices_\OptNembeds-\VortexVtcs_\OptNembeds$ and, furthermore, this 3-connected component
contains a $\Sigma$-wall $H^\ast(Y)$ of order $h$ as well, as it contains $H^1(Y)$ with possibly some
of the degree-2 vertices suppressed.
Let us denote this 3-connected component by $H(Y)$. The presence of the wall implies that the treewidth
of $H(Y)$ is at least $h$. Furthermore, note that the 3-connectivity
of $\MainPlusA^\ast(Y)$ implies that the projection $\SubProj_{\nembed}(H(Y))$ contains $\MainPlusA^2(Y)$ (which is a subgraph of $\MainPlusA^\ast(Y)$ present in $\MainPlusA$). 

Since $H(Y)$ is a 3-connected component of a subgraph of $G-\Apices$, it is a minor of one
of the 3-connected components of $G-\Apices$. The only 3-connected component of $G-\Apices$
that has treewidth at least $h$ is $G(\Apices)$. Hence, $H(Y)$ is a minor of $G(\Apices)$. 

Recall that $\MainPlusA^\ast(Y)$ is the 3-connected component of $\MainPlusA^1(Y)$ that contains
$\MainPlusA^2(Y)$. Since $\nembed$ has tidy dongles (in particular, is aligned with 3-connected components) we infer that $V(H(Y))$ contains all vertices of $V(\MainPlusA^2(Y)) \cap V(\MainA)$. 

Since $\MainPlusA^1(Y)$ does not contain any virtual vortex edge or vertex
we infer the following:

\begin{claim}\label{cl:Csmall}
Every connected component
$C$ of $G-\Apices-(V(\MainPlusA^2(Y)) \cap V(\MainA))$ either:
\begin{itemize}
\item has its projection $\SubProj_{\nembedA}(C)$ contained in the disc
$\rdisc{\nembed}{r_u+2}{u}$ for one $u \in R$, or
\item is contained in one of the dongles of $\nembedA$.
\end{itemize}
In particular, Lemma~\ref{lem:radial-disc} implies that the treewidth
of $C$ is bounded by $2q''+10\rmax < 2q''+k''$. 
\end{claim}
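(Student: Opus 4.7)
The plan is to split on whether $C$ contains a vertex of $V(\MainA)$. Write $M \coloneqq V(\MainPlusA^2(Y)) \cap V(\MainA)$ and let $\overline{\Delta_u}$ denote the closed disc in $\Sigma$ bounded by $\rcycle{\nembedA}{r_u+2}{u}$. As a preliminary step, I would observe that the closed discs $\{\overline{\Delta_u} : u \in R\}$ are pairwise disjoint as subsets of $\Sigma$: this follows from the loop invariant that the radial discs $\rdisc{\nembedA}{r_u+3}{u}$ are pairwise vertex-disjoint, combined with the disc structure provided by Lemma~\ref{lem:radial-disc}, which traps each $\overline{\Delta_u}$ strictly inside the buffer disc bounded by $\rcycle{\nembedA}{r_u+3}{u}$; the buffers must then be topologically disjoint, since otherwise one would be contained in another and this would force a shared vertex on the two bounding cycles.

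If $V(C) \cap V(\MainA) = \emptyset$, then because distinct dongles meet only in $V(\MainA)$, distinct vortices are pairwise vertex-disjoint, and vortex--dongle intersections are contained in $V(\MainA)$, the component $C$ must lie in a single dongle $\DongleA{j}$ (second bullet of the claim) or in $V(\VortexA{i}) \setminus V(\MainA)$ for some single vortex $\VortexA{i}$. In the latter case $\SubProj_{\nembedA}(C) = \{\PlusVortexVtx{i}\}$, and Lemma~\ref{lem:proximity} together with the construction of $R$ from $L$ places $\PlusVortexVtx{i}$ in $V(\rdisc{\nembedA}{r_u+2}{u})$ for some $u \in R$, yielding the first bullet.

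In the remaining case pick $v \in V(C) \cap V(\MainA)$; since $v \notin M$, there is a unique $u \in R$ with $v$ strictly inside $\Delta_u$. I would show that $\SubProj_{\nembedA}(x) \in V(\rdisc{\nembedA}{r_u+2}{u})$ for every $x \in C$ by walking along a path in $C$ from $v$ to $x$ and exploiting the fact that no drawn edge of $\MainPlusA$ can cross $\rcycle{\nembedA}{r_u+2}{u}$ except at shared vertices, since edges in the embedding do not cross. Specifically: (i) an $\MainA$-edge $v_1 v_2$ inside $C$ with $v_1$ strictly inside $\Delta_u$ forces $v_2 \in \overline{\Delta_u}$, and since $v_2 \notin M$, pairwise disjointness of the closed discs places $v_2$ strictly inside $\Delta_u$; (ii) if $x$ is a non-$\MainA$ vertex of a dongle $\DongleA{j}$, connectedness of $C$ together with proper connectedness of $\DongleA{j}$ forces some $a \in V(\MainA) \cap V(\DongleA{j}) \cap C$ to exist (the path must enter the dongle at an $\MainA$-vertex), and the virtual dongle edges drawn inside $\DongleDisc{j}$, which are genuine edges of $\MainPlusA$, place $\PlusDongleVtx{j}$ and every vertex of $V(\MainA) \cap V(\DongleA{j})$ in $\overline{\Delta_u}$ by the same non-crossing argument; (iii) if $x \in V(\VortexA{i}) \setminus V(\MainA)$, the path must enter $\VortexA{i}$ at some society vertex $a \in V(\MainA) \cap V(\VortexA{i}) \cap C$ strictly inside $\Delta_u$, and the virtual vortex edge $a \PlusVortexVtx{i}$ inside $\VortexDisc{i}$ forces $\PlusVortexVtx{i} \in \overline{\Delta_u}$.

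The ``in particular'' treewidth bound is then immediate from Lemma~\ref{lem:radial-disc}, which gives $\tw(\rdisc{\nembedA}{r_u+2}{u}) \leq 2q'' + 10(r_u+2) \leq 2q'' + 10\rmax$; if instead $C$ lies inside a single dongle, Lemma~\ref{lem:opt-ne-rep} gives $|V(C)| \leq q$ and the bound is trivial. The main technical obstacle will be executing the non-crossing arguments with care: one has to consistently distinguish drawn edges of $\MainPlusA$ from arbitrary curves in $\Sigma$, and in particular make crucial use of the fact that the virtual clique edges inside dongle discs and the virtual spokes inside vortex discs are genuine edges of $\MainPlusA$ and hence respect the no-crossing property of the embedding.
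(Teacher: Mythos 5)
Your proposal is correct and fleshes out, in essentially the same way, the terse inference the paper draws from the preceding discussion: every virtual vortex vertex, society vertex, and virtual vortex edge of $\nembedA$ sits strictly inside a cleared disc, and a component intersecting $\MainA$ propagates its projection within a single disc because the drawn edges of $\MainPlusA$ (including the virtual dongle and vortex edges) cannot cross $\rcycle{\nembedA}{r_u+2}{u}$. Two small remarks: (1) in your preliminary disjointness argument, the alternative to one buffer disc being nested inside the other is that the two jointly cover $\Sigma$ (possible only on the sphere), which is excluded by the same vertex-disjointness contradiction but is not literally covered by the phrase "one would be contained in another"; likewise proper connectedness of dongles is not actually used -- that the path enters $\DongleA{j}$ through a vertex of $V(\MainA)\cap V(\DongleA{j})$ already follows from the edge-disjointness of the partition. (2) The closing treewidth bound is not quite \emph{immediate}: Lemma~\ref{lem:radial-disc} bounds $\tw(\rdisc{\nembedA}{r_u+2}{u})$ as a subgraph of $\MainPlusA$, whereas $C$ lives in $G-\Apices$, so one still needs a preimage/bag blow-up in the spirit of Lemma~\ref{lem:DKMW12} -- a step the paper also leaves implicit.
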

 Recall that $\Apices \subseteq \Apices_\OptNembeds$ while
$V(\MainPlusA^2(Y)) \cap V(\MainA) \subseteq V(H(Y))$. 
Hence, the same treewidth bound applies to any 
of any 3-connected component of $G-\Apices_\OptNembeds-\VortexVtcs_\OptNembeds$ different
than $H(Y)$.
As the said bound is smaller than $3q''$,
   we infer that $H(Y)$ is the unique 3-connected component
of $G-\Apices_\OptNembeds-\VortexVtcs_\OptNembeds$ that has treewidth at least $h$.
In particular, $H(Y)$ is defined in a way independent of the choice of $Y$
and in an isomorphic-invariant way.
Henceforth we will denote $H(Y)$ by just $H$.

Observe that $\nembed$ induces a near-embedding of $G-\Apices_\OptNembeds-\VortexVtcs_\OptNembeds$
into $\Sigma$ that does not contain any apices or vortices.
Since $H$ is a minor of $G-\Apices_\OptNembeds-\VortexVtcs_\OptNembeds$, from the near-embedding $\nembedA$ considered on $G-\Apices_\OptNembeds-\VortexVtcs_\OptNembeds$ we may obtain
a near-embedding $\nembedT$ of $H$ that has no apices, no vortices, and is of Euler
genus at most $\eulerg_\ast$. Since $H$ contains a $\Sigma$-wall $H^\ast(\emptyset)$ of order $h$
that is not entirely contained in a dongle of this near-embedding (because $h \gg q$ and dongles are of size at most $q$), $\nembedT$ in fact is a near-embedding
of $H$ into $\Sigma$ (i.e., is of Euler genus exactly $\eulerg_\ast$). 

We now make use of the parameterization of the process by $Y$ to observe
the following.
\begin{claim}\label{cl:Hcut}
For every separation $(A,B)$ of $H$ of order at most $9$, 
 either $H[A \setminus B]$ is of treewidth at most $2q''+k''$ and $H[B \setminus A]$ is of treewidth at least $h$, or vice versa.
Furthermore, the set $\mathcal{T}$ of all separations $(A,B)$ of $H$ of order at most $3$
such that $H[B \setminus A]$ is of treewidth at least $h$ is a tangle of order $4$ in $H$.
\end{claim}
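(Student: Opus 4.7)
The plan is to exploit the parameterization by a set $Y\subseteq V(G)$ of size at most nine in the construction preceding the claim. Given a separation $(A,B)$ of $H$ of order at most nine, I set $Y\coloneqq A\cap B$ and run the pipeline with this choice, obtaining the $\Sigma$-wall $H^\ast(Y)$ of order $h$ inside $H$. The first step I would make precise is that $V(H^\ast(Y))\cap Y=\emptyset$: for each $v\in Y$, the projection $\SubProj_\nembed(v)$ is placed in $R$ as a seed of initial radius $\ProxCoverRadius+3\geq 3$, so $\SubProj_\nembed(v)$ sits strictly inside a disc $\rdisc{\nembed}{r_u+2}{u}$ that is cleared when forming $\MainPlusA^2(Y)$; consequently $v$ (or, if $v$ lives in a dongle, the entire hosting dongle of $\nembed$) disappears from the lift $G^1(Y)$ and hence from $V(H^\ast(Y))\subseteq V(H^1(Y))$.

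For the first part of the claim, connectivity of a $\Sigma$-wall forces $V(H^\ast(Y))$ to sit on one side of $(A,B)$; say $V(H^\ast(Y))\subseteq A\setminus B$. Then $H[A\setminus B]$ contains a wall of order $h$, so $\tw(H[A\setminus B])\geq h$. The main work is proving $\tw(H[B\setminus A])\leq 2q''+k''$. For this, I show that $V(\MainPlusA^2(Y))\cap V(\MainA)$, together with $V(H^\ast(Y))$, spans a connected subgraph $H^\sharp$ of $H$ that is disjoint from $Y$. Connectivity stems from the $2$-connectivity of $\MainPlusA^2(Y)$ (Lemma~\ref{lem:genus:clear-2conn}(1)), lifted into $H$ by replacing each occurrence of a virtual dongle vertex $\PlusDongleVtx{j}$ on a connecting path by a genuine path in $\DongleA{j}$. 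Such a path exists because $\nembed$ has properly connected dongles, and the entire $\DongleA{j}$ lies in $V(H)$: indeed, if $\PlusDongleVtx{j}$ evades the cleared region of $\MainPlusA^2(Y)$ then it also evades the $\ProxCoverRadius$-balls around $L$, so by Lemma~\ref{lem:proximity} vertices of $\DongleA{j}$ belong neither to $\Apices_\OptNembeds$ nor to $\VortexVtcs_\OptNembeds$. Since $H^\sharp$ meets $V(H^\ast(Y))\subseteq A\setminus B$, is connected in $H$, and avoids $Y=A\cap B$, we infer $H^\sharp\subseteq A\setminus B$. Therefore every connected component of $H[B\setminus A]$ is disjoint from $V(\MainPlusA^2(Y))\cap V(\MainA)$, hence is contained in a connected component of $G-\Apices-(V(\MainPlusA^2(Y))\cap V(\MainA))$, and Claim~\ref{cl:Csmall} yields $\tw(H[B\setminus A])\leq 2q''+k''$.

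The tangle assertion now follows by a textbook argument. For axiom (i), every separation of order at most three satisfies Part~1, which provides one side of treewidth at least $h$, so either $(A,B)$ or $(B,A)$ belongs to $\mathcal{T}$. For axiom (ii), suppose $(A_i,B_i)\in\mathcal{T}$ for $i=1,2,3$ and $A_1\cup A_2\cup A_3=V(H)$. Take $Y\coloneqq\bigcup_{i=1}^{3}(A_i\cap B_i)$, of size at most nine, and construct $H^\ast(Y)$; it is connected, disjoint from $Y$, and nonempty. Since $\bigcap_{i=1}^{3}(B_i\setminus A_i)=V(H)\setminus(A_1\cup A_2\cup A_3)=\emptyset$, some $i_0$ has $V(H^\ast(Y))\subseteq A_{i_0}\setminus B_{i_0}$. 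Applying the proof of Part~1 to $(A_{i_0},B_{i_0})$ with this very $Y$ yields $\tw(H[B_{i_0}\setminus A_{i_0}])\leq 2q''+k''$, contradicting $(A_{i_0},B_{i_0})\in\mathcal{T}$ since $h>2q''+k''$ by the choice of $\funstep$.

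The main obstacle is the connectivity argument in Part~1 that transports the $2$-connectivity of $\MainPlusA^2(Y)$ across the lift into the $3$-connected component $H$. The delicate point is handling virtual dongle vertices: a path between two $V(\MainA)$-vertices in $\MainPlusA^2(Y)$ may pass through a $\PlusDongleVtx{j}$, which is not a vertex of $H$, and we must replace it by a genuine path inside the dongle $\DongleA{j}$ in $H$. This is exactly where the tidy-dongles hypothesis on $\nembed$ becomes essential, both to provide such a path (via proper connectedness) and to ensure $V(\DongleA{j})\subseteq V(H)$ via Lemma~\ref{lem:proximity}.
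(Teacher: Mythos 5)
Your proposal follows essentially the same path as the paper's own proof: set $Y\coloneqq A\cap B$, run the disc-clearing process with this $Y$, observe that the resulting $\Sigma$-wall of order $h$ is disjoint from $Y$ and hence sits on one side of the separation, argue that $V(\MainPlusA^2(Y))\cap V(\MainA)$ is all in one connected component of $H-Y$ (so it travels with the wall), and finish the treewidth bound on the other side via Claim~\ref{cl:Csmall}. The tangle part is the standard three-separation argument via $Y\coloneqq\bigcup_i (A_i\cap B_i)$, matching what the paper does (you phrase it as a contradiction, the paper writes it directly). So at the level of structure, this is the paper's proof.

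One intermediate assertion in your connectivity argument overshoots, though. You claim that whenever a surviving virtual dongle vertex $\PlusDongleVtx{j}$ is used, ``the entire $\DongleA{j}$ lies in $V(H)$'', justifying it via Lemma~\ref{lem:proximity}. That lemma only guarantees $V(\DongleA{j})\setminus V(\MainA)$ is disjoint from $\Apices_\OptNembeds\cup\VortexVtcs_\OptNembeds$; it does \emph{not} place the dongle interior in the $3$-connected component $H$. Indeed, when $|V(\DongleA{j})\cap V(\MainA)|\leq 2$ the two boundary vertices form a $2$-separator of $G-\Apices_\OptNembeds-\VortexVtcs_\OptNembeds$, so Tutte's decomposition puts the dongle interior in a different bag, and $V(\DongleA{j})\not\subseteq V(H)$. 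The argument survives because in exactly that case $H$ has a torso edge joining the two boundary vertices of the dongle, so the ``lift'' of the step $a-\PlusDongleVtx{j}-b$ should be the edge $ab$ of $H$ rather than a genuine path through the dongle interior (and $ab$ avoids $Y$ since $a,b\in V(\MainPlusA^2(Y))$ and $V(\MainPlusA^2(Y))\cap Y=\emptyset$). With that repair, your $H^\sharp$ construction goes through and the rest of the proof is sound. The paper compresses this lift step into the single observation that $\SubProj_\nembed(H)$ contains $\MainPlusA^2(Y)$ together with the $3$-connectivity of $\MainPlusA^\ast(Y)$; your more explicit version is correct in spirit but should be phrased via torso edges rather than via a blanket ``$V(\DongleA{j})\subseteq V(H)$''.
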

\begin{proof}
Let $(A,B)$ be a separation of $H$ of order at most $9$.
Consider the aforementioned process for $Y \coloneqq A \cap B$, yielding graphs $\MainPlusA^\ast(Y),\MainPlusA^1(Y),\MainPlusA^2(Y)$.
As proven, all connected components of
$G-\Apices-(V(\MainPlusA^2(Y)) \cap V(\MainA))$ have treewidth bounded $2q''+k''$.

On the other hand, recall that $\MainPlusA^2(Y)$ contains a $\Sigma$-wall
of order $h$. Observe that this wall is disjoint with $Y$. Hence,
either $H[B \setminus A]$ or $H[A \setminus B]$ contains a $\Sigma$-wall of order $h$;
without loss of generality, say it is $H[B \setminus A]$.
Furthermore, 
recall that the 3-connectivity of $\MainPlusA^\ast(Y)$ implies that the
projection $\SubProj_{\nembedA}(H)$ contains $\MainPlusA^2(Y)$ (as a subgraph).
Hence, all vertices of $V(\MainPlusA^2(Y)) \cap V(\MainA)$ are in the same connected component of $H-Y$.
Claim~\ref{cl:Csmall} now implies that this component lies in $H[B \setminus A]$, that is, $H[B \setminus A]$ contains all vertices of $V(\MainPlusA^2(Y)) \cap V(\MainA)$.
Consequently, by Claim~\ref{cl:Csmall}, $H[A \setminus B]$ is of treewidth at most $2q''+k''$,
as desired.

For the second part of the claim, let $\mathcal{T}$ be the set of all separations
$(A,B)$ of $H$ of order at most $3$ such that $H[B \setminus A]$ is of treewidth at least $h$.
To show that it is a tangle, we need to show that for any $(A_1,B_1),(A_2,B_2),(A_3,B_3) \in \mathcal{T}$ we have $A_1 \cup A_2 \cup A_3 \neq V(H)$. However, by the above reasoning
for $Y = \bigcup_{i=1}^3 A_i \cap B_i$ we have that $V(\MainPlusA^2(Y)) \cap V(\MainA)$
is disjoint with $A_1 \cup A_2 \cup A_3$. This finishes the proof of the claim.
\cqed\end{proof}

Consider now the family $\OptNembeds_H$ of all near-embeddings of $H$ without any apices or vortices,
and into a surface of Euler genus at most $\eulerg_\ast$. 
The discussed near-embedding $\nembedT$ is in $\OptNembeds_H$, so in particular
$\OptNembeds_H$ is nonempty.
Since $H$ is 3-connected, trivially any near-embedding from $\OptNembeds_H$
is aligned with 3-connected components.

Let $\mathcal{T}$ be the tangle of Claim~\ref{cl:Hcut}. 
By Lemma~\ref{lem:canonize-dongles}, there exists a near-embedding in $\OptNembeds_H$ that has
canonical dongles with respect to $\mathcal{T}$.
We remark that the above uses the set of predongles $\Predongles(H,\mathcal{T})$ 
and that, as there are no vortices in the embeddings of $\OptNembeds_H$, 
every dongle $\DongleB{j}$ of a near-embedding $\nembedB \in \OptNembeds_H$ satisfies
$V(\DongleH{j}) \setminus V(\MainH) \in \Predongles(H,\mathcal{T})$. 

Consider now the graph $H'$ constructed from $H$ by replacing, for every $D \in \Predongles(H,\mathcal{T})$, 
the set $D$ with a single vertex $x_D$, adjacent to $N_H(D)$, and turning $N_H(D)$ into a clique. 
As the elements of $\Predongles(H,\mathcal{T})$ are pairwise disjoint and nonadjacent, this operation
is well-defined. 
As $H$ is 3-connected, and each operation replaces $N_H[D]$ with a 4-vertex clique $N_{H'}[x_D]$,
$H'$ is also 3-connected. Also, note that $H'$ is defined in an isomorphic-invariant way. 

Furthermore, observe that any near-embedding $\nembedB \in \OptNembeds_H$ with canonical
dongles induces an embedding of $H'$ into $\Sigma$. Indeed, start with the
embedding of $\MainPlusH$
and then, for every $D \in \Predongles(H,\mathcal{T})$ we have two options:
\begin{itemize}
\item There is a dongle $\DongleH{j}$ with $V(\DongleH{j}) \setminus V(\MainH) = D$;
then note that $N_{\MainPlusH}[\PlusDongleVtx{j}]$ is isomorphic to 
$N_{H'}[x_D]$ and we are done with $D$.
\item Otherwise, $H[D]$, and all edges between $D$ and $N_H(D)$ are embedded in $\MainB$. 
Since $H$ is 3-connected, every connected component of $H[D]$ is connected and incident
with all three vertices of $N_H(D)$. Discard all but one such connected components and
contract the remaining one into $x_D$.
Furthermore, draw 
the three edges connecting elements of $N_H(D)$
along the edges between $N_H(D)$ and $x_D$.
\end{itemize}

Consider now the $\Sigma$-wall $H^\ast(\emptyset)$ in $H$. Every $D \in \Predongles(H,\mathcal{T})$
contains from $H$ either a subpath of one of the paths between the degree-3 vertices (without the endpoints) or a single degree-3 vertex with parts of the incident paths with internal vertices of degree $2$. Consequently, $H'$ also contains a $\Sigma$-wall of the same order as $H^\ast(\emptyset)$, that is, $h$. As $h > 2f_{\mathrm{ue}}(\eulerg_\ast)$, $H'$ has a unique embedding
into a surface of Euler genus at most $\eulerg_\ast$, and this is the discussed embedding
into $\Sigma$. 

Consider now $H'' \coloneqq H'-\{x_D~|~D \in \Predongles(H,\mathcal{T})\}$. Note that $H''$ is still 3-connected
(we discarded from $H'$ an independent set of simplicial vertices) and 
the discussed embedding of $H'$ into $\Sigma$ yields an embedding of $H''$ into $\Sigma$
with facewidth at least $h/2 > f_{\mathrm{ue}}(\eulerg_\ast)$. 
Consequently, Theorem~\ref{thm:fw-unique} asserts that this is also the unique embedding
of $H''$ of Euler genus at most $\eulerg_\ast$. 
Corollary~\ref{cor:small-auto} allows us to compute an isomorphism-invariant
family $\mathcal{F}_{H''}$ of size at most $4|E(H'')|$ of bijections $V(H'') \to [|V(H'')|]$. 

Note that $V(H'') \subseteq V(G)$. We proclaim $\Vgenus \coloneqq V(H'')$, $\Vtw \coloneqq V(G) \setminus \Vgenus$, and $\famgenus \coloneqq \mathcal{F}_{H''}$. 
The promised properties and bounds are immediate, except for the bound on the treewidth
of $G[\Vtw]$.
To this end, we prove the following.

\begin{claim}\label{cl:Vtw-bound}
For every connected component $C$ of $G-V(H'')-\Apices$,
$\SubProj_{\nembed}(C)$ is contained in $\rdisc{\nembed}{\rmax}{u}$ for some $u \in V(\MainA)$. 
\end{claim}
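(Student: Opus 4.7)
The plan is to decompose $C$ based on the location of its vertices in the near-embedding $\nembed$, then show that each ``piece'' projects into a bounded region of $\MainPlusA$, which we subsequently merge into a single radial disc. Every vertex $v \in C$ falls into one of two categories: (a) $v$ lies in some predongle $D \in \Predongles(H, \mathcal{T})$, since $C$ excludes $V(H'')$ but can touch $V(H) \setminus V(H'') = \bigcup_D D$; or (b) $v$ lies outside $V(H)$, in which case $v$ is in $\Apices_\OptNembeds \setminus \Apices$, in $\VortexVtcs_\OptNembeds \setminus \Apices$, or in a ``small'' $3$-connected component of $G - \Apices_\OptNembeds - \VortexVtcs_\OptNembeds$ other than $H$. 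For type (b), Lemma~\ref{lem:proximity} applied to $\nembed$ combined with the reasoning of Claim~\ref{cl:Csmall} for $Y = \emptyset$ places $\SubProj_\nembed(v) \in \rdisc{\nembed}{r_u + 2}{u}$ for some $u \in R$.

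The main new ingredient is a locality argument for predongles. I would argue that the induced embedding of $H''$ into $\Sigma$, with face-width at least $h/2 > f_{\mathrm{ue}}(\eulerg_\ast)$ (established earlier via the $\Sigma$-wall $H^\ast(\emptyset)$), forces every predongle $D$ to be topologically confined: the $3$-separation $(N_H[D], V(H) \setminus D) \in \mathcal{T}$ has its small side $N_H[D]$ enclosed by a face-vertex noose in $\MainPlusA$ through the three vertices of $N_H(D)$, of length bounded by pairwise radial distances among those three vertices. Combining this noose with the $(q'', k'')$-unbreakability of $G$ to bound preimages on either side, $\SubProj_\nembed(D)$ sits inside a radial disc of radius $\Oh(q'')$ centered at a vertex of $N_H(D)$.

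With each ``piece'' (individual predongles and non-$V(H)$ chunks) thus localized to a bounded-radius radial disc, I would use the connectedness of $C$ to tie the pieces together. Each edge of $C$ bridging two pieces yields a short face-vertex path between their centers in $\MainPlusA$, so Lemma~\ref{lem:packdiscs} applied iteratively merges all these pieces into a single radial disc whose radius is bounded by the sum of the individual radii plus the gluing paths. By choosing $\funstep$ sufficiently quickly-growing at the outset, this total stays comfortably below $\rmax$, and Lemma~\ref{lem:eat-disc} lets us replace a possibly virtual center by a vertex $u \in V(\MainA)$ as required by the statement.

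\textbf{The main obstacle} is the topological-smallness claim for predongles: while their boundaries have size at most $3$, the interior of a single predongle could a priori sprawl over a large part of $\Sigma$. Overcoming this requires the large face-width of the induced embedding of $H$ together with a careful argument that the ``big side'' of the $3$-separation, containing the wall $H^\ast(\emptyset)$ of order $h$, cannot be contained in a disc, thereby forcing the small side into one. Quantifying the radius of the resulting bounding disc and accounting for how it grows when several predongles and non-$V(H)$ chunks of $C$ are merged into one cover is the delicate bookkeeping that must fit within the $\rmax$ budget.
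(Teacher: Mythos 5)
Your proposal takes a genuinely different route from the paper, and I believe there is a real gap in it.

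The paper's proof is structured around a dichotomy that your proposal does not notice and that does all the heavy lifting. First, if $C$ does not meet $V(\MainPlusA^3(\emptyset)) \cap V(\MainA)$, then (since virtual vortex vertices and society vertices are all inside the cleared discs, and a dongle's $\MainA$-attachments cannot straddle two pairwise disjoint cleared discs) the connected graph $\SubProj_\nembed(C)$ is entirely trapped in a \emph{single} disc $\rdisc{\nembed}{r_u+3}{u}$ for some $u \in R$; no merging of pieces is ever needed. Second, if $C$ does meet $V(\MainPlusA^3(\emptyset)) \cap V(\MainA)$, the paper shows that $C$ is \emph{exactly} the preimage $C_D'$ of one connected component $C_D$ of $H[D]$ for a single predongle $D$ --- because the $\MainPlusA^2(\emptyset)$-containment of $\SubProj_\nembed(N_H[C_D])$ forces $N_G(C_D)\setminus\Apices$ to avoid $\Apices_\OptNembeds \cup \VortexVtcs_\OptNembeds$, so $C$ cannot spill over into another predongle or another non-$V(H)$ chunk. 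The three vertices of $N_H(C_D) \subseteq V(H'')$ then all lie on a common un-cleared face of $\MainPlusA^2(\emptyset)$, giving a radial disc of radius only $3$.

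Your proposal instead decomposes $C$ into pieces (one per predongle it touches, plus non-$V(H)$ chunks), bounds each piece separately, and then merges them with Lemma~\ref{lem:packdiscs}, invoking ``sufficiently quickly-growing $\funstep$'' to keep the total under $\rmax$. This cannot work as stated: you have no bound on the \emph{number} of pieces $C$ may contain, so the sum of the individual radii (each of order $q''$) plus the gluing paths is not a priori bounded by any function that can be pre-absorbed into $\rmax$. The reason the paper never has to bound this number is precisely that in the only case where predongles come into play at all (the second case of the dichotomy), there is exactly one of them. A secondary issue is that your topological confinement argument for a single predongle needs the three vertices of $N_H(D)$ to bound a short face-vertex noose in $\MainPlusA$; but $H$ is a $3$-connected component of $G - \Apices_\OptNembeds - \VortexVtcs_\OptNembeds$, not of $G-\Apices$, so $\SubProj_\nembed(N_H(D))$ is not automatically a $3$-cut of $\MainPlusA$, and the face-width of the induced embedding of $H$ alone does not give the closeness you assert. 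The paper obtains this closeness only after first deducing $\SubProj_\nembed(N_H[C_D]) \subseteq V(\MainPlusA^2(\emptyset))$, which is in turn a consequence of being in the second case of the dichotomy --- so the two gaps are intertwined.
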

\begin{proof}
The claim is straightforward if $C$ is contained in a dongle or a vortex
of $\nembedA$, so assume otherwise.
Thus, $C$ contains a vertex of $\MainA$.

Assume first that $C$ does not contain a vertex of $\MainPlusA^3(\emptyset) \cap V(\MainA)$.
Since $C$ is connected, so is $\SubProj_{\nembedA}(C)$. 
Recall that
$\MainPlusA^2(\emptyset)$ does not contain a virtual vortex vertex nor any society vertex.
Since $C$ is not contained in a single dongle, $\SubProj_{\nembedA}(C)$ is not an isolated
virtual dongle vertex. 
Hence, $\SubProj_{\nembedA}(C)$ is contained
in $\rdisc{\nembed}{r_u+3}{u}$ for some $u \in R$. This finishes the proof of the claim in this case.

We are left with the case where
$C$ contains a vertex $v$ of $\MainPlusA^3(\emptyset) \cap V(\MainA)$. 
Recall that 
\[ V(H'') = V(H) \setminus \bigcup \{D~|~D \in \Predongles(H,\mathcal{T})\}. \]
Since $V(\MainPlusA^2(\emptyset)) \cap V(\MainA) \subseteq V(H)$, there exists
$D \in \Predongles(H,\mathcal{T})$ with $v \in V(D)$. 
Hence, for the connected component $C_D$ of $H[D]$ that contains $v$
we have $C_D \subseteq C$, while $N_H(C_D) = N_H(D) \subseteq V(H'')$. 
Let $C_D'$ consist of $C_D$ and all connected components of $G-V(H)-\Apices$
that have at least one neighbor in $C_D$. Clearly, $C_D' \subseteq C$.
Furthermore, as $N_H(D) \subseteq V(H'')$, we have $N_H(C_D) \subseteq N_G(C_D') \cap N_G(C)$.

Recall that $\SubProj_{\nembed}(H)$ contains $\MainPlusA^2(\emptyset)$.
Hence, by connectivity of $C_D$, 
from $C_D \cap V(\MainPlusA^3(\emptyset)) \cap V(\MainA) \neq \emptyset$
and $|N_H(C_D)| = 3$ we infer that 
$\SubProj_{\nembed}(N_H[C_D])$ is contained in $\MainPlusA^2(\emptyset)$.
Since every vertex of $\Apices_\OptNembeds \cup \VortexVtcs_\OptNembeds$
is either in $\Apices$ or its projection is in $\bigcup_{w \in R} \rdisc{\nembedA}{r_w}{w}$, 
$N_G(C_D) \setminus \Apices$ does not contain any vertex
of $\Apices_\OptNembeds \cup \VortexVtcs_\OptNembeds$. 
Since $H$ is a 3-connected component of $G-(\Apices_\OptNembeds \cup \VortexVtcs_\OptNembeds)$,
this implies that actually $C_D' = C$ and $N_H(C_D) = N_G(C) \setminus \Apices$.

Furthermore, from the previous paragraph
we have that for every two vertices of the three-vertex set $N_H(C_D)$ lie on a common
face of $\MainPlusA^2(\emptyset)$ that is not a face created upon clearing
one of the discs $\rdisc{\nembed}{r_w+2}{w}$ for $w \in R$. 
We infer that $\SubProj_{\nembed}(C)$ is contained in the radial disc $\rdisc{\nembed}{3}{u}$
for any $u \in N_H(C_D)$ and we are done.
\cqed\end{proof}

By Lemma~\ref{lem:radial-disc}, the treewidth of every connected component of $G-V(H'')-\Apices$
is bounded by $2q''+10\rmax \leq 2q'' + k''$. 
As $|\Apices| \leq k/100$, the treewidth of $G[\Vtw]$ is bounded by $2q''+k''+k/100$.
This finishes the proof of Theorem~\ref{thm:rigid}.

\bibliographystyle{alpha}
\bibliography{genusIsomorphism,tw-iso,references}

\appendix

\section{Proof of an explicit bound for statement (3.5) of~\cite{RobertsonS88}}\label{app:GM7fix}
In this section we provide a proof of an explicit bound for statement (3.5) of~\cite{RobertsonS88},
which is needed for the computability claim of Theorem~\ref{thm:GM7}. 
Since this section has a purely supplementary role, we assume that the reader is familiar with
the notation of Sections 1--3 of~\cite{RobertsonS88}, in particular their
notation $\Sigma(a,b,c)$ for surfaces, cuffs, matchings, forests, and homoplasty classes. 
The \emph{size} of a matching is the number of paths in the matching, and the \emph{size} of a forest
is the number of endpoints (on cuffs) in the forest. 

We will rely on the result of Geelen, Huynh, and Richter for a specific case:
\begin{theorem}[\cite{GeelenHR18}]\label{thm:GM7-GHR}
Let $\Sigma = \Sigma(a,b,c)$ be a surface and let $M$ and $M_2$ be
two matchings on $\Sigma$, of size $n$ and $n_2$, respectively.
Furthermore, assume that $\Sigma-M$ is connected.
Then there exists a matching $M_2'$ homeomorphic
to $M_2$ such that the number of intersections of $M$ and $M_2'$
is bounded by $n_2(3^n-1)$. 
\end{theorem}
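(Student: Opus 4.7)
The plan is to proceed by induction on $n = |M|$, proving the stronger per-path bound: each single path $Q \in M_2$ can be realized as $Q'$ homeomorphic to $Q$ (with the same endpoints on the cuffs) that crosses $M$ at most $3^n - 1$ times, with the construction organized so that the family $M_2' = \{Q' : Q \in M_2\}$ remains a matching homeomorphic to $M_2$. The target bound $n_2(3^n-1)$ then follows by summing over the $n_2$ paths of $M_2$. The base case $n = 0$ is immediate with $M_2' = M_2$.

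For the inductive step, pick any $P \in M$. Since $\Sigma - M$ is connected, so is $\Sigma - (M \setminus \{P\})$. Cutting $\Sigma$ along $P$ yields a surface $\Sigma^\star$ (with one or two components) of strictly smaller complexity $\epsilon(\cdot)$, with two copies $P^+, P^-$ of $P$ appearing as new boundary arcs, and carrying the matching $M \setminus \{P\}$ of size $n-1$ whose complement inside each component of $\Sigma^\star$ is again connected. The crucial intermediate claim is that every path $Q \in M_2$ can be isotoped rel endpoints so that it meets $P$ in at most two transverse interior points. Granting this, $Q$ is split by $P$ into at most three subarcs; the collection of all such subarcs over $Q \in M_2$ is a matching $M_2^\sharp$ on $\Sigma^\star$ of size at most $3 n_2$. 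Apply the inductive hypothesis separately to each component of $\Sigma^\star$ (with $M \setminus \{P\}$ and the corresponding part of $M_2^\sharp$) to obtain $M_2^{\sharp\prime}$ in which each path crosses $M \setminus \{P\}$ at most $3^{n-1} - 1$ times. Gluing $P^+$ back to $P^-$ reassembles $M_2^{\sharp\prime}$ into a matching $M_2'$ on $\Sigma$, and the crossing count for the original path $Q$ becomes at most $2 + 3(3^{n-1} - 1) = 3^n - 1$, completing the induction.

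The main obstacle lies in the intermediate claim that each $Q$ can be isotoped to cross $P$ at most twice while simultaneously keeping the whole family $M_2$ a matching. The topological input is a bigon-removal argument performed inside $\Sigma - (M \setminus \{P\})$: three consecutive crossings of $Q$ with $P$ force two of them to cobound, jointly with subarcs of $Q$ and $P$, a disc that can be used to push $Q$ off $P$ and strictly decrease the crossing count. The connectedness of $\Sigma - (M \setminus \{P\})$ is precisely what guarantees such a disc exists; this is the unique place where the connectedness hypothesis on $\Sigma - M$ is used, and it is the reason the bound in this form fails without it. Coordinating the bigon removals across all paths of $M_2$ simultaneously, so as not to create new $M_2$--$M_2$ crossings, is the delicate bookkeeping part: one repeatedly selects an innermost bigon with respect to a well-founded lexicographic order on the joint intersection configuration, discharges it, and iterates until the per-path bound of two crossings with $P$ is achieved for every member of $M_2$ at once. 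I expect this coordination, rather than the topological input itself, to be the main technical content of the argument.
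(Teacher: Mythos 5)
The paper does not prove Theorem~\ref{thm:GM7-GHR}: it is cited verbatim from Geelen, Huynh, and Richter~\cite{GeelenHR18} and then used as a black box in the proof of Lemma~\ref{lem:GM7-two-matchings}. So there is no ``paper's own proof'' to compare against; your attempt must be judged on its own. The inductive skeleton you set up is numerically consistent --- cutting along one arc $P\in M$, obtaining $\leq 3$ sub-arcs per member of $M_2$, and recursing gives $2+3(3^{n-1}-1)=3^n-1$ --- and the observation that $\Sigma-M$ connected forces $\Sigma-P$ to be connected (so that the cut surface $\Sigma^\star$ is a single surface carrying $M\setminus\{P\}$ with connected complement) is the right thing to notice. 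But the load-bearing intermediate claim is false as stated.

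You assert that every $Q\in M_2$ can be \emph{isotoped rel endpoints} so that it meets $P$ in at most two points, and you justify this by bigon removal. Bigon removal only brings $Q$ and $P$ into minimal position within the homotopy class of $Q$ rel endpoints; it cannot reduce the crossing number below the geometric intersection number of that class, which is unbounded. Concretely, take $\Sigma=\Sigma(1,0,1)$, $M=\{P\}$ a non-separating arc (so $\Sigma-M$ is an annulus, in particular connected), and let $Q$ be the image of $P$ under $k$ Dehn twists about the core of that annulus. Then $Q$ and $P$ are already in minimal position, there is no bigon between any two crossings, yet they cross $\Theta(k)$ times; the claim ``three consecutive crossings force a bigon'' fails outright, and the hypothesis that $\Sigma-(M\setminus\{P\})$ is connected gives no help. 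The theorem survives this example only because ``homeomorphic'' means $M_2'=\phi(M_2)$ for an \emph{arbitrary} self-homeomorphism $\phi$ of $\Sigma$ --- here one simply undoes the twists. Your argument never invokes that freedom: isotopy rel endpoints is strictly stronger than the relation the theorem asks for, and strictly too weak to establish the bound, so the gap is in the topological input itself rather than in the coordination bookkeeping you flag at the end. A secondary issue you would also need to address is that after applying induction on $\Sigma^\star$, the self-homeomorphism of $\Sigma^\star$ producing $M_2^{\sharp\prime}$ must be made compatible with the identification of the two boundary copies $P^+$ and $P^-$ of $P$, or else gluing does not return a matching on $\Sigma$ homeomorphic to $M_2$.
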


We will prove the following version of the statement (3.5) of~\cite{RobertsonS88}
with explicit bounds:
\begin{theorem}\label{thm:GM7-3-5-fixed}
Let $\Sigma = \Sigma(a,b,c)$ be a surface, let $M$ be a matching in $\Sigma$ of size $n$,
   and let $F_2$ be a forest in $\Sigma$ of size $n_2$. 
Then, there exists a forest $F_2'$ from the same homoplasty class as $F_2$
such that 
the number of intersections of $M$ and $F_2'$
is bounded by $2n_2 \cdot 3^{2n + 3(2a+b)+c+n_2}$. 
\end{theorem}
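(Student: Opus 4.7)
The proof strategy is to reduce the statement to Theorem~\ref{thm:GM7-GHR} via two transformations: converting the forest $F_2$ into an equivalent matching $M_2$, and modifying the surface to ensure that the complement of $M$ becomes connected.

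For the forest-to-matching reduction, we construct a matching $M_2$ of size at most $n_2$ that represents $F_2$ up to homoplasty. For each tree $T \subseteq F_2$ with $k \geq 2$ leaves on cuffs, the regular neighborhood $N(T)$ is topologically a disc with $k$ boundary arcs on the cuffs; inside $N(T)$ we draw $k - 1$ pairwise disjoint chords pairing the leaves according to a canonical cyclic order (say, the order induced by a depth-first traversal of $T$). The chords collected over all trees of $F_2$ form the matching $M_2$, with $|M_2| \leq n_2$. The map $F_2 \mapsto M_2$ descends to a bijection between the corresponding homoplasty classes, and any matching $M_2'$ homoplasty-equivalent to $M_2$ can be reassembled into a forest $F_2'$ homoplasty-equivalent to $F_2$ with $|F_2' \cap M| \leq 2\,|M_2' \cap M|$; the factor of $2$ compensates for the cable-doubling used in the reconstruction.

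For the connectedness reduction, denote by $K_1, \ldots, K_s$ the components of $\Sigma - M$. An induction on $|M|$ shows that $s \leq n + 1$, since removing one arc from a connected surface can split it into at most two components. We enlarge $\Sigma$ into a surface $\Sigma'$ by attaching $s - 1 \leq n$ closed tubes (handles), each joining a point in one component $K_i$ to a point in a distinct component $K_j$, chosen so that $\Sigma' - M$ is connected. The tubes lie in the newly added topology of $\Sigma'$ and are disjoint from $M$, so $M$ is still a matching of size $n$ in $\Sigma'$; the genus parameters of $\Sigma'$ satisfy $2a' + b' \leq 2a + b + O(n)$ while the cuff count $c$ remains unchanged. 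Applying Theorem~\ref{thm:GM7-GHR} to $(\Sigma', M, M_2)$ yields a matching $M_2'$ in $\Sigma'$ homoplasty-equivalent to $M_2$ (viewed in $\Sigma'$) with at most $n_2 \cdot 3^n$ intersections with $M$.

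To conclude, we transport $M_2'$ from $\Sigma'$ back to $\Sigma$. Arcs of $M_2'$ that traverse an attached tube must be rerouted inside $\Sigma$; since the two endpoints of each tube lie in distinct components of $\Sigma - M$, every tube passage must be replaced by a route in $\Sigma$ crossing $M$ at least once. Bounding the total number of extra intersections by a function of the surface complexity $\epsilon(\Sigma) = 4a + 2b + c$ and the number of tube passages (which is at most $n_2$) contributes an additional multiplicative factor of at most $3^{n + 3(2a+b) + c + n_2}$ on top of the GHR bound. Combined with the factor of $2$ from the forest-to-matching reconstruction, we obtain $|F_2' \cap M| \leq 2 n_2 \cdot 3^{2n + 3(2a+b) + c + n_2}$, as desired. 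The main technical obstacle lies in this transport step: one must show that each tube passage can be canonically replaced by a controlled route in $\Sigma$, and that the resulting matching in $\Sigma$ lies in the correct homoplasty class of $M_2$.
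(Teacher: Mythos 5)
Your overall strategy---reduce to Theorem~\ref{thm:GM7-GHR} by converting the forest to a matching and forcing $\Sigma-M$ to be connected---matches the paper's plan at a high level, but the way you execute the connectedness step is genuinely different and has gaps that you flag but do not close.

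For the forest-to-matching step, the paper does something cleaner and more local than your chord construction: it punches a tiny disc out of $\Sigma$ around each point of $F_2$ of degree at least $3$, producing a surface $\Sigma(a,b,c+|P|)$ with more cuffs in which $F_2$ literally becomes a matching of size less than $2n_2$. No ``canonical cyclic order of leaves'' or ``cable-doubling'' is needed, and the map back to a forest in the same homoplasty class is just regluing the discs and restoring the tiny star fragments. Your regular-neighborhood-chord approach is plausible in spirit, but you never justify the claimed factor of $2$, and the assertion that the construction ``descends to a bijection between homoplasty classes'' is exactly the kind of claim the paper's disc-removal makes trivially true and yours leaves open.

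The more serious gap is in the connectedness reduction. You modify the \emph{topology} of $\Sigma$ by attaching tubes and then apply Theorem~\ref{thm:GM7-GHR} in $\Sigma'$. But homoplasty is a notion relative to the ambient surface: a matching $M_2'$ in $\Sigma'$ that is homeomorphic (in $\Sigma'$) to $M_2$ need not restrict to anything homoplasty-equivalent to $M_2$ in $\Sigma$, because $M_2'$ may use the new handles in an essential way. You recognize this---``one must show that each tube passage can be canonically replaced by a controlled route in $\Sigma$''---but you give no argument, and no bound on how many times an arc of $M_2'$ may enter a tube (it is not clear why this is at most $n_2$), nor why a rerouting in $\Sigma$ that avoids the tubes stays in the right homoplasty class, nor where the multiplicative factor $3^{n+3(2a+b)+c+n_2}$ comes from beyond matching the target. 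The paper sidesteps all of this by never leaving $\Sigma$: it augments $M$ by a carefully chosen auxiliary matching $M_C$ (built via Lemma~\ref{lem:GM7-cut}, with $|M_C|\le 3(2a+b)+c+|M|$ by Lemma~\ref{lem:features-additive}) so that $\Sigma-(M\cup M_C)$ is a union of discs, then peels off a subset $M_B\subseteq M$ of ``separating'' arcs so that $\Sigma-(M_A\cup M_C)$ is a single disc, applies GHR to $M_A\cup M_C$ inside $\Sigma$, and finally handles the arcs of $M_B$ by a homotopy argument inside that disc. Every step stays in $\Sigma$, so homoplasty is preserved by construction and the intersection bound is assembled explicitly. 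To make your version rigorous you would essentially have to reinvent the $M_C/M_B$ bookkeeping to control what happens along the tubes; as written, the transport step is the crux and it is missing.
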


The main technical part of the proof is a variant where $F_2$ is a matching, that is,
    a generalization of Theorem~\ref{thm:GM7-GHR} where $\Sigma-M$ is not required to be connected.
\begin{lemma}\label{lem:GM7-two-matchings}
Let $\Sigma = \Sigma(a,b,c)$ be a surface and let $M$ and $M_2$ be
two matchings on $\Sigma$, of size $n$ and $n_2$, respectively.
Then there exists a matching $M_2'$ homeomorphic to $M_2$
such that 
the number of intersections of $M$ and $M_2'$
is bounded by $n_2 \cdot 3^{2n + 3(2a+b)+c}$. 
\end{lemma}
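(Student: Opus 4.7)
My plan is to deduce Lemma~\ref{lem:GM7-two-matchings} from Theorem~\ref{thm:GM7-GHR} by a surface-modification argument that handles the disconnected complement case. The high-level idea is to attach auxiliary handles to $\Sigma$ so that the complement of $M$ in the resulting surface is connected, invoke the Geelen--Huynh--Richter theorem on this augmented surface, and then transfer the result back to $\Sigma$ with a controlled increase in the number of intersections.

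To begin, I would bound the number of connected components of $\Sigma - M$. Since $\Sigma$ is connected and each arc of $M$ can increase the component count by at most one when cut out, there are at most $n+1$ components. Let $\Sigma_1,\ldots,\Sigma_t$ denote these components, with $t \leq n+1$. I would construct an augmented surface $\Sigma^+$ by attaching at most $t-1 \leq n$ auxiliary $1$-handles, placed along a spanning tree of the component graph of $\Sigma - M$: each handle joins a small disc removed from one component to a small disc removed from another. By transversality, the handles can be chosen to be disjoint from $M \cup M_2$, so both matchings remain matchings in $\Sigma^+$ with exactly the same endpoint structure on the cuffs, while $\Sigma^+ - M$ is connected by construction. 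The Euler genus of $\Sigma^+$ exceeds that of $\Sigma$ by at most $2(t-1) \leq 2n$, and the cuff count is unchanged.

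Applying Theorem~\ref{thm:GM7-GHR} to $\Sigma^+$ with the matchings $M$ and $M_2$ yields a matching $\widetilde{M}_2$ on $\Sigma^+$, homeomorphic to $M_2$ in $\Sigma^+$, with $|M \cap \widetilde{M}_2| \leq n_2(3^n - 1)$. The hard part will be the final transfer step, which is the main technical obstacle: $\widetilde{M}_2$ may traverse the attached handles, and to recover a matching on the original $\Sigma$ one must first reroute the arcs of $\widetilde{M}_2$ to avoid the core meridian curves of the handles (so that cutting along these cores to undo the handle attachments does not introduce new arc endpoints). Each such rerouting can in turn create additional intersections with $M$, and the challenge is to control this blowup uniformly across all handles. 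I would do this by iteratively applying a GHR-style rearrangement along each meridian, using the connectivity of $\Sigma^+ - M$ to keep the minimization well-defined, and tracking the cost via an Euler-characteristic argument bounding the worst-case rerouting by a factor of $3^{n + 3(2a+b)+c}$. Multiplying this with the initial bound $n_2 \cdot 3^n$ from the first application of GHR yields the desired bound $n_2 \cdot 3^{2n + 3(2a+b)+c}$, where the exponent $3(2a+b)+c$ captures exactly the topological complexity of $\Sigma$ that the rerouting must traverse.
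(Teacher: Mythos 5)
Your strategy takes a genuinely different route from the paper's, and the transfer step you flag as the ``main technical obstacle'' hides a gap that I do not see how to close along the lines you sketch.

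The paper handles the disconnectedness of $\Sigma - M$ without changing the surface: it adds a \emph{matching} $M_C$ of auxiliary arcs inside $\Sigma$ (via Lemma~\ref{lem:GM7-cut}, iterated, with $|M_C| \leq 3(2a+b)+c+n$) until $\Sigma - (M \cup M_C)$ is a union of discs, then peels off a subset $M_B \subseteq M$ until $\Sigma - (M_A\cup M_C)$ with $M_A = M\setminus M_B$ is a single disc $\Delta$. Then Theorem~\ref{thm:GM7-GHR} is applied to $\Sigma$ (unchanged) with the matching $M_A\cup M_C$, and the residual intersections with $M_B$ are controlled by an elementary combinatorial rerouting \emph{inside the disc $\Delta$}. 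Because the ambient surface never changes, the notion of ``homeomorphic to $M_2$'' is the one we want throughout, and the disc case is trivial to handle.

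Your plan instead enlarges the surface to $\Sigma^+$ by attaching handles, applies GHR there, and then tries to push $\widetilde M_2$ off the handles. The difficulty is that ``$\widetilde M_2$ homeomorphic to $M_2$ in $\Sigma^+$'' is a strictly weaker condition than ``homeomorphic to $M_2$ in $\Sigma$'': the mapping class group of $\Sigma^+$ contains elements (e.g.\ twists and curves that interact with the new handles) that do not restrict to homeomorphisms of $\Sigma$. Concretely, the $\widetilde M_2$ produced by GHR may cross a handle meridian an odd number of times, in which case no isotopy at all pushes it off the handle; and even when it crosses an even number of times, the deformation generally changes the homeomorphism class once you cut the handles away. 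Nothing in GHR forces the minimizing matching to stay off the handles, and your proposed ``GHR-style rearrangement along each meridian'' is not an argument you have made --- it is a restatement of the obstacle. In addition, the cost factor $3^{n+3(2a+b)+c}$ you attribute to this rerouting step is asserted, not derived; with the handles present the exponent $3(2a+b)+c$ does not obviously emerge from any Euler-characteristic count, whereas in the paper's proof it comes cleanly from the bound on $|M_C|$ produced by Lemma~\ref{lem:GM7-cut}. To repair your approach you would need a lemma of the form ``one may assume $\widetilde M_2$ avoids the handles,'' and such a lemma is essentially as hard as the statement you are trying to prove. I recommend staying inside $\Sigma$ and cutting with auxiliary arcs rather than enlarging the surface with handles.
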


Let us see why Lemma~\ref{lem:GM7-two-matchings} easily implies Theorem~\ref{thm:GM7-3-5-fixed}.
\begin{proof}[Proof of Theorem~\ref{thm:GM7-3-5-fixed}.]
Let $P$ be the set of points of $F_2$ of degree more than $2$ in $F_2$; note that $|P| < n_2$.
By shifting $F_2$ a bit, we can assume that no point of $P$
lies on a path of $M$. For every point $p \in P$, fix in $\Sigma$ a tiny
disc $\Delta_p$ around $p$ such that these discs are pairwise disjoint, disjoint with $M$, disjoint with cuffs,
and intersect $F_2$ only at tiny fragments of paths with endpoints in $p$.
By removing all discs $\Delta_p$ from $\Sigma$, we obtain a surface
$\Sigma' = \Sigma(a,b,c+|P|)$, where $M$ remains a matching (of size $n$)
while $F_2$ becomes a matching $M_2$ of size $n_2' < 2n_2$. 
Furthermore, by adding $F_2 \cap \bigcup_{p \in P} \Delta_p$ to any matching $M_2'$ homeomorphic to $M_2$ in $\Sigma'$, $M_2'$ is transformed into a forest $F_2'$ in the same homoplasty class as $F_2$
and with the same number of intersections with $M$ as $M_2'$.
Hence, applying Lemma~\ref{lem:GM7-two-matchings} to $\Sigma'$, $M$, and $M_2$
concludes the proof.
\end{proof}

Thus, we are left with proving Lemma~\ref{lem:GM7-two-matchings}. 
We need the following observations.
\begin{lemma}\label{lem:GM7-cut}
Let $\Sigma = \Sigma(a,b,c)$ be a surface with at least one cuff, $c \geq 1$.
Then, unless $a=b=0$ and $c=1$ (i.e., $\Sigma$ is a disc), there exists a
matching $M_0$ in $\Sigma$ with exactly one path 
such that $\Sigma-M$ is connected 
and $\Sigma-M = \Sigma(a',b',c')$ for some $a',b',c'$ with $3(2a' + b') + c' < 3(2a+b)+c$. 
\end{lemma}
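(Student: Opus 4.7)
The plan is to reduce the problem to a single topological task: producing a properly embedded non-separating arc in $\Sigma$, with endpoints on cuffs, whose existence is standard once one separates the cases according to whether $c\geq 2$ or $c=1$. I will first do the invariant bookkeeping once and for all, so that it applies to any such arc, and only then worry about existence. Concretely, cutting a compact surface along an arc with endpoints on the boundary always raises the Euler characteristic by exactly $1$ (it is homotopy-equivalent to removing a small open half-disc neighborhood of the arc). Combining $\chi(\Sigma(a,b,c))=2-2a-b-c$ with the assumption that $\Sigma-M_0=\Sigma(a',b',c')$ is connected gives
\[ 2a'+b'+c' \;=\; 2a+b+c-1. \]
Moreover, an arc with endpoints on cuffs can only (i) merge two cuffs into one, (ii) split one cuff into two, or (iii) leave the number of cuffs unchanged, hence $c'\in\{c-1,c,c+1\}$. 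Tripling the displayed equation and subtracting $2c'$ gives $6a'+3b'+c' = 6a+3b+c + 2(c-c')-3$, which is at most $6a+3b+c-1$ in all three subcases. Thus, \emph{any} non-separating arc witnesses the desired inequality $3(2a'+b')+c' < 3(2a+b)+c$.

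It remains to produce $M_0$. When $c\geq 2$, pick any simple arc $M_0$ with its two endpoints on two distinct cuffs $K_1,K_2$ (such an arc exists because $\Sigma$ is path-connected and we can straighten a path between the cuffs to a properly embedded simple arc). Such an arc is automatically non-separating: if $\Sigma-M_0$ split into two components $X,Y$, then as each $K_i$ is connected and avoids the interior of $M_0$, each $K_i$ would lie entirely in $X$ or entirely in $Y$; but near an endpoint of $M_0$ on $K_i$, the two arcs into which $M_0$ divides $K_i$ locally lie on opposite sides of $M_0$, hence in different components, a contradiction.

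When $c=1$ and $(a,b)\neq(0,0)$, the capped surface $\widehat\Sigma$ has positive Euler genus, so it contains a simple closed curve $\gamma$ which is non-separating in $\widehat\Sigma$ (e.g., a meridian of a handle, or a core of a crosscap) and which we may isotope to avoid the cap disc. Let $p$ be a point of the unique cuff $K$ of $\Sigma$ and let $\delta$ be a simple arc in $\Sigma$ from $p$ to a point of $\gamma$, meeting $\gamma$ only at its endpoint. The concatenation going along $\delta$, once around $\gamma$, and back along (a parallel copy of) $\delta$ produces a properly embedded simple arc $M_0$ with both endpoints on $K$. Any simple closed curve in $\Sigma-M_0$ connecting the ``two sides'' of $M_0$ to $\gamma$ shows that $M_0$ does not separate $\Sigma$, because $\gamma$ is non-separating in $\widehat\Sigma$ and a separating arc in $\Sigma$ would yield a separating curve in $\widehat\Sigma$.

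The main obstacle is making the $c=1$ construction fully rigorous: one has to verify that $M_0$ is \emph{simple} (which is automatic from the parallel-copy construction) and that it is non-separating. The cleanest way will be to argue via the doubled surface $D\Sigma:=\Sigma\cup_K\Sigma$ with its orientation-reversing involution $\tau$: if $M_0$ separated $\Sigma$, then $M_0\cup\tau(M_0)$ would separate $D\Sigma$ into two pieces each swapped by $\tau$, contradicting the fact that $\gamma\subset\Sigma\subset D\Sigma$ is non-separating in $D\Sigma$ (because $D\Sigma$ has strictly larger Euler genus than $\widehat\Sigma$, and a non-separating curve in $\widehat\Sigma$ remains non-separating after attaching more handles/crosscaps across the doubling locus). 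Everything else—the invariant calculation and the $c\geq 2$ case—is a short, direct argument.
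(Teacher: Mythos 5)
Your bookkeeping is correct and in fact slightly more unified than the paper's: the Euler-characteristic argument $2a'+b'+c'=2a+b+c-1$ together with $c'\in\{c-1,c,c+1\}$ does show that \emph{any} non-separating arc with endpoints on cuffs decreases $3(2a+b)+c$, which the paper instead verifies separately in each branch. The $c\geq 2$ case is fine (an arc joining two distinct cuffs is non-separating because, at either endpoint, the two local sides of the arc are joined by traversing the cuff). The problem is in the $c=1$ case, where the non-separating claim for your lasso $M_0$ is not actually proved by either of the two arguments you sketch, and this is a genuine gap rather than a presentational one.

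The first argument (``any simple closed curve in $\Sigma-M_0$ connecting the two sides of $M_0$ to $\gamma$ shows...'') does not parse, and the ``cleanest'' doubling argument does not close either: from ``$M_0\cup\tau(M_0)$ separates $D\Sigma$'' and ``$\gamma$ is non-separating in $D\Sigma$'' there is no contradiction, because these are \emph{different} curves, and a non-separating curve can perfectly well be contained in one side of a different separating curve. The correct link is via the closed curve $\gamma':=M_0\cup\alpha$ in $\widehat\Sigma$, where $\alpha\subseteq K$ is the short boundary arc between the two endpoints of $M_0$: one checks that if $M_0$ separates $\Sigma$, then $\gamma'$ separates $\widehat\Sigma$ (the interior of the cap disc $\Delta$ attaches to the other side of $M_0$ only across the long arc $\beta=K\setminus\alpha$), and then uses that $\gamma'$ is freely homotopic to $\gamma$ -- so $[\gamma']=[\gamma]\neq 0$ in $H_1(\widehat\Sigma;\mathbb{Z}/2)$, contradicting that $\gamma'$ separates. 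Neither the homotopy $\gamma'\simeq\gamma$ nor the ``$M_0$ separates $\Rightarrow$ $\gamma'$ separates'' implication appears in your write-up, and both need an argument. Note also that the whole lasso is unnecessary: the paper instead isotopes $\gamma$ so that $\gamma\cap\Delta$ is a single arc and takes $M_0:=\gamma\cap\Sigma$ directly, whereupon $\Sigma-M_0\cong\widehat\Sigma-\gamma$ is visibly connected since $\gamma$ is non-separating -- replacing your construction by this one fixes the gap immediately, and your invariant computation can stay as is.
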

\begin{proof}
If $c > 1$, let the single path of $M_0$ be any path connecting two points on different cuffs.
Then $\Sigma-M_0 = \Sigma(a,b,c-1)$ and we are done.

Otherwise, let $\Sigma' = \Sigma(a,b,0)$ be $\Sigma$ with a disc $\Delta$ glued up to the unique
cuff of $\Sigma$. Unless $a=b=0$ and $\Sigma'$ is a sphere, there exists
a noncontractible nonseparating closed noose $\gamma$ in $\Sigma'$.
Then, $\Sigma' \setminus \gamma = \Sigma(a',b',c')$ for some $c' \leq 2$ and $2a'+b' < 2a+b$.
We can modify $\gamma$ by a homotopy so that its intersection with $\Delta$
is a nonempty connected arc. 
Then, $\gamma\setminus\Delta$ is the desired path on $\Sigma$.
\end{proof}

\begin{lemma}\label{lem:features-additive}
 Let $\Sigma = \Sigma(a,b,c)$ be a surface and let $M$ be a matching in $\Sigma$. For a connected part $\Gamma$ of $\Sigma-M$, let $a_\Gamma,b_\Gamma,c_\Gamma$ be such that $\Gamma$ is homeomorphic to $\Sigma(a_\Gamma,b_\Gamma,c_\Gamma)$. Then
 $$\sum_{\Gamma} 3(2a_\Gamma+b_\Gamma)+c_\Gamma \leq 3(2a+b)+c+|M|,$$
 where the summation ranges over all connected parts $\Gamma$ of $\Sigma-M$.
\end{lemma}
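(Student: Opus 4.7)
The plan is to reduce the inequality to a simpler form using additivity of Euler characteristic, and then prove the reduced inequality by induction on $|M|$.

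Write $g \coloneqq 2a+b$ and $g_\Gamma \coloneqq 2a_\Gamma+b_\Gamma$ for the Euler genera of the respective capped surfaces. Cutting a surface along a single arc with endpoints on its boundary increases its Euler characteristic by exactly $1$, so summing over the arcs of $M$ and over components gives $\sum_\Gamma \chi(\Gamma) = \chi(\Sigma) + |M|$. Using $\chi(\Sigma) = 2 - g - c$ and $\chi(\Gamma) = 2 - g_\Gamma - c_\Gamma$, this rearranges to
\[ \sum_\Gamma g_\Gamma + \sum_\Gamma c_\Gamma \;=\; g + c + 2(N - 1) - |M|, \]
where $N$ is the number of components of $\Sigma - M$. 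Substituting into the target inequality and simplifying, the statement of the lemma becomes equivalent to
\[ \sum_\Gamma g_\Gamma + (N - 1) \;\leq\; g + |M|. \]

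I will prove this reduced inequality by induction on $|M|$, with the base case $|M| = 0$ immediate (both sides equal $g$). For the inductive step, pick any arc $\alpha \in M$ and let $\Gamma^*$ denote the unique component of $\Sigma - (M \setminus \{\alpha\})$ that contains $\alpha$. Cutting $\Gamma^*$ along $\alpha$ falls into one of two cases. If the cut is non-separating, then $N$ is unchanged and the resulting single new component $\tilde\Gamma$ satisfies $g_{\tilde\Gamma} \leq g_{\Gamma^*}$: gluing the two boundary arcs of $\tilde\Gamma$ back together to recover $\Gamma^*$ is an operation that can only add handles or crosscaps, never remove them. If the cut is separating into two pieces $\tilde\Gamma_1, \tilde\Gamma_2$, then $N$ increases by $1$, and I will establish the genus identity $g_{\tilde\Gamma_1} + g_{\tilde\Gamma_2} = g_{\Gamma^*}$. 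In the first case, the left-hand side of the reduced inequality does not increase while the right-hand side grows by $1$; in the second case, the left-hand side grows by exactly $1$, again matching the right-hand side.

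The main obstacle will be the genus identity for the separating case, which I plan to derive from two topological observations. First, a separating arc in $\Gamma^*$ must have both endpoints on the same cuff: otherwise, if the endpoints $p \in C_1$ and $q \in C_2$ lay on distinct cuffs, then after cutting each of $C_1, C_2$ would remain a single closed curve in the cut surface and therefore lie entirely in one component; but $p$ is incident with both sides of $\alpha$ in the cut, forcing both sides to lie in the same component, contradicting separation. Second, once both endpoints of $\alpha$ lie on a common cuff $C$ and the cut separates, $C$ is split by $p$ and $q$ into two arcs, each joined with one side of $\alpha$ to form a new cuff in a different piece, while every other cuff of $\Gamma^*$ goes into exactly one of $\tilde\Gamma_1, \tilde\Gamma_2$; hence $c_{\tilde\Gamma_1} + c_{\tilde\Gamma_2} = c_{\Gamma^*} + 1$. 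Combining this with $\chi(\tilde\Gamma_1) + \chi(\tilde\Gamma_2) = \chi(\Gamma^*) + 1$ and $\chi = 2 - g - c$ on each piece yields $g_{\tilde\Gamma_1} + g_{\tilde\Gamma_2} = g_{\Gamma^*}$, completing the inductive step.
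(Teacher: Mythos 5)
Your proof is correct and follows essentially the same route as the paper: induction reducing to a single cut along one arc, with a case split on whether the arc separates its component. Your Euler-characteristic reformulation $\sum_\Gamma g_\Gamma + (N-1) \leq g + |M|$ is in fact a slightly cleaner bookkeeping device than the paper's phrasing in the connected case (the paper asserts the cut does ``not larger number of cuffs, handles, and crosscaps,'' which is literally false when the arc has both endpoints on the same cuff and is non-separating, since $c$ then increases by one; the inequality survives only because $2a+b$ drops by two, exactly the fact your reduction makes transparent).
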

\begin{proof}
 By straightforward induction, it suffices to consider the case $|M|=1$, say $M=\{e\}$. If $\Sigma-e$ is connected, then it has not larger number of cuffs, handles, and crosscaps as $\Sigma$, hence the claim follows. Otherwise, $e$ connects two points on the same cuff and $\Sigma-e$ consists of two connected parts $\Gamma_1$ and $\Gamma_2$. Then $a_{\Gamma_1}+a_{\Gamma_2}=a$, $b_{\Gamma_1}+b_{\Gamma_2}=b$, and $c_{\Gamma_1}+c_{\Gamma_2}=c+1$; this proves the claimed inequality.
\end{proof}

Consider now the following process. Start with $M_C = \emptyset$.
As long as $\Sigma - (M \cup M_C)$ contains a connected part $X$ that is not isomorphic
to a disc, apply Lemma~\ref{lem:GM7-cut} to $X$, obtaining a single-path matching $M_0$.
Without loss of generality, we can assume that $M_0$ has disjoint endpoints
with $M_C$ and $M$ (otherwise we shift them on the same cuff).  
We add $M_0$ to $M_C$ and repeat. 

At the end of the process we have the following properties:
\begin{itemize}
\item $M_C$ and $M$ are disjoint, that is, $M \cup M_C$ is a matching.
\item $\Sigma - (M \cup M_C)$ is a collection of discs.
\item With every path of $M \cup M_C$ we can associate two arcs
on the boundary of discs of $\Sigma-(M \cup M_C)$ where we cut along the said path. 
Furthermore, for each path of $M_C$, the two arcs are on the boundary of the same disc.
\item $|M_C| \leq 3(2a + b) + c +|M|$.
\end{itemize}
Here, the last inequality follows from Lemmas~\ref{lem:GM7-cut} and~\ref{lem:features-additive}.

We now consider the following process. Start with $M_B = \emptyset$.
As long as $\Sigma - ((M \setminus M_B) \cup M_C)$ contains more than two
connected parts, pick a path of $M \setminus M_B$
whose arcs on the boundaries lie on two different parts of $M \setminus M_B$
and add it to $M_B$ (which results in gluing the boundaries back in $\Sigma - ((M \setminus M_B) \cup M_C)$). The existence of such path follows from the connectivity of $\Sigma$
and the fact that every path of $M_C$ has its two corresponding arcs on the boundary
of the same disc of $\Sigma - (M \cup M_C)$. 
Furthermore, this process maintains the invariant that $M_B \subseteq M$ and $\Sigma - ((M \setminus M_B) \cup M_C)$ is a family of discs. 

Denote $M_A = M \setminus M_B$. 
Let $n_A$ and $n_B$ be the sizes of $M_A$ and $M_B$, respectively; then $n = n_A + n_B$.
Note that $\Delta \coloneqq \Sigma-(M_A \cup M_C)$ is a disc, hence in particular it is connected. 
So we can apply Theorem~\ref{thm:GM7-GHR} to $\Sigma$, matching $M_A \cup M_C$ (which has size at most
    $n+n_A + 3(2a+b)+c$), and $M_2$. 
Hence, there exists a matching $M_2'$ homeomorphic to $M_2$ with the number
of intersections with $M_A \cup M_C$ bounded by
$$ n_2 \cdot \left( 3^{n+n_A + 3(2a+b)+c} - 1 \right).$$
Hence, $\Delta \cap M_2'$ is a matching of size at most 
$$ n_2^\Delta := n_2 \cdot 3^{n+n_A + 3(2a+b)+c}$$
in the disc $\Delta$. 

Also, $M_B$ is a matching in $\Delta$. 
Observe that we can modify paths of $M_B$ --- obtaining a homotopic matching $M_B'$ in $\Delta$
--- so that for every path $P$ of $M'_B$ and every 
connected part of $\Delta-M_2'$, the intersection of $P$ with the said part is empty
or connected. Hence, each path of $M_B'$ has at most $n_2^\Delta$ intersections with $M_2'\cap \Delta$,
so the total number of intersetions of $M_B'$ and $M_2'\cap \Delta$ is bounded by $n_B \cdot n_2^\Delta$. 
By following the inverse of the homotopy between $M_B$ and $M_B'$, we obtain a matching
$M_2^\Delta$ homotopic to $M_2' \cap \Delta$ in~$\Delta$, that has at most
$n_B \cdot n_2^\Delta$ intersections with $M_B$.
Clearly, $M_2^\Delta$ lifts to a matching $M_2''$ homotopic to $M_2'$ in $\Sigma$
(and thus homeomorphic to $M_2$)
that has at most
$$ n_2 \cdot \left( 3^{n+n_A + 3(2a+b)+c} - 1 \right)$$
intersections with $M_A \cup M_C$, and at most 
$$n_B \cdot n_2^\Delta$$
intersections with $M_B$. 
Hence, the number of intersections of $M_2''$ and $M = M_A \cup M_C$ is bounded by
$$(n_B + 1) \cdot n_2 \cdot 3^{n+n_A + 3(2a+b)+c} \leq n_2 \cdot 3^{2n+3(2a+b)+c}.$$
This finishes the proof of Lemma~\ref{lem:GM7-two-matchings}
and of Theorem~\ref{thm:GM7-3-5-fixed}.

\section{Computability of optimal near-embeddings (proof sketch of Lemma~\ref{lem:compute-opt})}\label{app:compute-opt-nembed}
\newcommand{\msotwo}{MSO$_2$}

In this section we discuss how to adapt the techniques of~\cite{GroheKR13} to the setting
of Lemma~\ref{lem:compute-opt}.

The contribution of~\cite{GroheKR13} consists of two parts. 
  
\paragraph{Bounded treewidth case.} The first part
(described in Section~5 of~\cite{GroheKR13})
shows how to describe a near-embedding of a graph in monadic second-order logic (\msotwo{}).

In~\cite{GroheKR13}, the near-embedding is relative to tangle defined by a large unbreakable set;
in our case, it is sufficient just to take \emph{any} set of size more than $3q$, as we are
working with the $(q,k)$-unbreakability tangle. 

Since connectivity and cuts of constant size can be easily described in \msotwo{}, given $\Apices$
we can define $G(\Apices)$. This allows us to easily restrict the near-embeddings to those
aligned with 3-connected components by just considering near-embeddings of $G(\Apices)$ without
any apices. Also, requiring dongles to be properly connected is straightforward.

The definition of $\Predongles_0$ and $\Predongles$ is written essentially outright in \msotwo{}. 
To restrict considered near-embeddings to the ones with tidy dongles, we need also to be able
to verify being within radial distance at most $q+1$ from a vortex.
This can be done e.g. using the techniques of Lemma~5.3 of~\cite{GroheKR13}. 

Finally, once we defined near-embeddings with tidy dongles, 
 it is straightforward to take into account the notion of minimal vortices.

To sum up, with Courcelle's theorem, 
 we obtain a parameterized algorithm for the task of Lemma~\ref{lem:compute-opt},
but with the treewidth of $G$ as an additional parameter.

\paragraph{Irrelevant vertex rule.}
The second part of the contribution of~\cite{GroheKR13} is an irrelevant vertex rule:
if $G$ has very high treewidth (compared to the parameters of the near-embedding we seek),
then one can identify a vertex $v \in V(G)$ such that, given a near-embedding of $G-v$
it is easy to put $v$ back and obtain a near-embedding of $G$. Such a vertex can be iteratively
found and removed, until the treewidth drops to a value bounded by a function of the parameters
of the sought near-embedding. Then, we find a near-embedding of the current graph
using the techniques for graphs of bounded treewidth, and put back the removed vertices one-by-one.

The ``irrelevant'' vertex $v$ is found in the following way.
First, the Weak Structure Theorem of Robertson and Seymour~\cite{GM13} is invoked. 
This theorem asserts that, for every $h$ and $H$, there exists a $t$
such that any $H$-minor free graph with treewidth at least $t$ contains a
set $X \subseteq V(G)$ of size at most $\binom{|V(H)|}{2}$ and \emph{flat wall} in $G-X$
of order $h$. 

With the language introduced in this work, a \emph{flat wall} of $G' := G-X$ is 
a subgraph $G''$ of $G'$, a wall $W$
of width and height $h$ in $G''$, and a near-embedding $\nembedB$ of $G''$ into the plane
with no apices and vortices such that $V(\MainB) \subseteq V(W)$ and the vertices
of $V(G') \setminus V(G'')$ have neighbors only in the outer cycle of $W$ and the vertices
of dongles $\DongleB{j}$ with a vertex of $V(\MainB) \cap V(\DongleB{j})$ on this outer cycle. 

A flat wall $W$ has a natural notion of concentric cycles around its middle part; there
$h/2 - \Oh(1)$ of them. Let $v$ be a vertex inside all of them. 
The core part of the argument of~\cite{GroheKR13} is that if $h$ is high enough, compared
to the parameters of the considered near-embeddings of $G-v$, any near-embedding $\nembed$ 
of $G-v$ needs to embed a 
ring consisting of a few consecutive concentic cycles in the same way as in
the aforementioned near-embedding $\nembedB$ (with potentially minor differences in the choices
of dongles). This allows a replacement argument: one can cut $\nembed$ (and the surface) 
along the said ring, cap the holes with discs, and glue the interior of the said ring 
into one of this discs, as it is embedded in $\nembedB$. The resulting near-embedding
has the same (or lower, because the surface may get simpler) parameters than $\nembed$, 
and accomodates $v$. 

The replacement argument goes along the same lines as the replacement argument of
Lemma~\ref{lem:proximity}. Following the arugments of Lemma~\ref{lem:canonize-dongles},
we can also assume that $\nembedB$ has tidy dongles (except for possibly dongles
close to the boundary of $W$, similarly as Lemma~\ref{lem:canonize-dongles} chooses to
ignore dongles close to vortices). 
Thus, the replacement argument, applied to $\nembed$ with tidy dongles, also produces
a near-embedding with tidy dongles. 

We infer that the irrelevant vertex $v$, as it is identified in~\cite{GroheKR13}, also
suits our needs: from a near-embedding of $G-v$ with tidy dongles and minimal vortices,
we obtain a near-embedding of $G$ also with tidy dongles, minimal vortices, and
without increasing any of the parameters considered in the definition of
an optimal near-embedding.

This finishes the sketch of the proof of Lemma~\ref{lem:compute-opt}.

\end{document}